\newtheorem{theorem}{Theorem}[section]
\newtheorem{defn}[theorem]{Definition}
\newtheorem{remark}[theorem]{Remark}
\newtheorem{lemma}[theorem]{Lemma}
\newtheorem{prop}[theorem]{Proposition}
\newtheorem{cor}[theorem]{Corollary}
\title{Discretized Radial Projections in $\R^d$}
\author{Kevin Ren}
\date{June 2022}
\newcommand{\R}{\mathbb{R}}
\newcommand{\T}{\mathcal{T}}
\newcommand{\Z}{\mathbb{Z}}
\newcommand{\E}{\mathbb{E}}
\newcommand{\U}{\mathbb{U}}
\newcommand{\gesim}{\gtrsim}
\newcommand{\lesim}{\lesssim}
\newcommand{\leapprox}{\lessapprox}
\newcommand{\leapp}{\lessapprox}
\newcommand{\geapprox}{\gtrapprox}
\newcommand{\geapp}{\gtrapprox}
\newcommand{\app}{\approx}
\newcommand{\eps}{\varepsilon}
\newcommand{\cS}{\mathcal{S}}
\newcommand{\norm}[1]{\| #1 \|}
\newcommand{\va}{\vec{a}}
\newcommand{\vb}{\vec{b}}
\newcommand{\cP}{\mathcal{P}}
\newcommand{\cT}{\mathcal{T}}
\newcommand{\cD}{\mathcal{D}}
\newcommand{\cL}{\mathcal{L}}
\newcommand{\N}{\mathbb{N}}
\newcommand{\bT}{\mathbf{T}}
\newcommand{\oT}{\overline{\T}}
\newcommand{\by}{\mathbf{y}}
\newcommand{\bY}{\mathbf{Y}}
\newcommand{\bp}{\mathbf{p}}
\newcommand{\bx}{\mathbf{x}}
\newcommand{\bX}{\mathbf{X}}
\newcommand{\bz}{\mathbf{z}}
\newcommand{\bZ}{\mathbf{Z}}
\newcommand{\cE}{\mathcal{E}}
\newcommand{\cN}{\mathcal{N}}
\newcommand{\cB}{\mathcal{B}}
\newcommand{\A}{\mathbb{A}}
\newcommand{\bA}{\mathbf{A}}
\newcommand{\bB}{\mathbf{B}}
\newcommand{\bC}{\mathbf{C}}
\newcommand{\bcQ}{\overline{\cQ}}
\newcommand{\PP}{\mathbb{P}}
\newcommand{\cH}{\mathcal{H}}
\newcommand{\sep}{\mathrm{sep}}
\newcommand{\rarrow}{\rightarrow}
\newcommand{\tP}{\tilde{P}}
\newcommand{\obY}{\overline{\bY}}
\newcommand{\tmu}{\tilde{\mu}}
\newcommand{\tnu}{\tilde{\nu}}
\newcommand{\Gr}{\mathrm{Gr}}
\newcommand{\End}{\mathrm{End}}
\newcommand{\spt}{\mathrm{spt}}
\newcommand{\one}{\mathbbm{1}}
\newcommand{\cQ}{\mathcal{Q}}
\newcommand{\oM}{\overline{M}}
\newcommand{\bN}{\mathbf{N}}
\newcommand{\plusP}{\overset{P}{+}}
\newcommand{\Bad}{\mathrm{Bad}}
\newcommand{\odelta}{\overline{\delta}}
\newcommand{\tz}{\tilde{z}}
\begin{document}

\maketitle

\begin{abstract}
    We generalize a Furstenberg-type result of Orponen-Shmerkin to higher dimensions, leading to an $\eps$-improvement in Kaufman's projection theorem for hyperplanes and an unconditional discretized radial projection theorem in the spirit of Orponen-Shmerkin-Wang. Our proof relies on a new incidence estimate for $\delta$-tubes and a quasi-product set of $\delta$-balls in $\R^d$.
\end{abstract}

\tableofcontents







\section{Introduction}
Let $X$ be a set in $\R^n$, and define the radial projection $\pi_x (y) := \frac{y-x}{|y-x|} \in S^{n-1}$. We wish to study the size of radial projections $\pi_x (Y)$ of $Y$, where $x$ is taken in some set $X$. Recently, Orponen, Shmerkin, and Wang \cite{orponen2022kaufman} proved a strong radial projection theorem in two dimensions, but they prove a conditional result in higher dimensions. In this paper, we shall remove the condition $\dim_H (X) \ge k - \frac{1}{k} + \eta(k)$ in higher dimensions, which answers Conjecture 1.5 of \cite{shmerkin2021distance} and improves Theorem 1.9 of \cite{orponen2022kaufman}. We also improve upon the previously known result $\frac{d-1}{d} \min(\dim_H (X), \dim_H (Y)) + \eta(d, \dim_H (X), \dim_H (Y))$ of \cite[Theorem 6.15]{shmerkin2022non}.

\begin{theorem}\label{cor:shm_conj}
    Let $X, Y \subset \R^d$ be Borel sets with $\dim_H (X), \dim_H (Y) \le k$. If $X$ is not contained in a $k$-plane, then
    \begin{equation*}
        \sup_{x \in X} \dim_H (\pi_x (Y \setminus \{ x \})) \ge \min(\dim_H (X), \dim_H (Y)).
    \end{equation*}
\end{theorem}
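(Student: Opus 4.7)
Argue by contradiction: suppose $\sup_{x \in X} \dim_H \pi_x(Y \setminus \{x\}) < s$, where $s := \min(\dim_H X, \dim_H Y) \le k$, and fix $\eps > 0$ smaller than the gap. By Frostman's lemma, choose probability measures $\mu$ on $X$ and $\nu$ on $Y$ that are $(s - \eta)$-Frostman for $\eta \ll \eps$, and localize so that $\spt\mu, \spt\nu$ lie in a fixed ball. Working at a small scale $\delta$, the assumption means that for each $x \in \spt\mu$, the $\delta$-neighbourhood of $\spt\nu$ can be covered by $\lesssim \delta^{-(s - \eps/2)}$ tubes of width $\delta$ emanating from $x$. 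Dually, from the $\nu$-side, each such $x$ ``sees'' $\nu$ concentrated in a thin union of tubes through $x$.

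Next, pigeonhole over $x$ and dyadic tube-mass levels to produce a subset $X' \subset \spt\mu$ with $\mu(X') \ge \delta^{\eta}$, a tube family $\T_x$ for each $x \in X'$ of common cardinality $M \sim \delta^{-(s - \eps)}$, and a common lower bound $\nu(T) \gtrsim M^{-1}\delta^{O(\eta)}$ for $T \in \T_x$. The qualitative hypothesis ``$X$ is not contained in a $k$-plane'' must then be upgraded to a quantitative non-concentration statement: there exist $\kappa, c > 0$ and a further subset $X'' \subset X'$ with $\mu(X'') \ge c$ such that $\mu(X'' \cap V) \le \delta^{\kappa}$ for every $(k, \delta)$-plate $V$. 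This upgrade proceeds by a finite iteration; either $\mu$ already has the required non-concentration on $k$-plates, or most mass concentrates on a single $k$-plate $V_0$, and one restricts $\mu$ to $V_0$ and looks for concentration on $(k-1)$-plates inside $V_0$, etc. Because $X \not\subset$ any $k$-plane, the chain must exit after finitely many steps with a uniform exponent $\kappa = \kappa(s, k, d) > 0$.

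With $X''$ in hand, apply the new $\delta$-tube vs.\ quasi-product $\delta$-ball incidence estimate advertised in the abstract to the tube family $\bigcup_{x \in X''} \T_x$ and to an $(s - O(\eta), \delta)$-net inside $\spt\nu$. The quasi-product structure is furnished by $X''$ (via the non-concentration off $k$-plates) combined with $\nu$ being $(s-\eta)$-Frostman. The $\eps$-improvement of Kaufman's projection theorem for hyperplanes plays the role of the base case, handling the ``bush'' of tubes through each single $x$: it forces the directions of $\T_x$ to spread out more than the bare Kaufman bound allows. Combining the bush-level gain with the global incidence count produces an upper bound on $|\T_x|$ strictly better than $\delta^{-s}$, which contradicts $|\T_x| \sim \delta^{-(s-\eps)}$ once $\eps$ is chosen smaller than the Kaufman $\eps$-gain and $\eta$ is chosen much smaller still.

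The main obstacle is the middle paragraph: extracting a uniform, scale-independent non-concentration exponent $\kappa$ from the purely qualitative hypothesis that $X$ is not contained in a $k$-plane. The iteration must be arranged so that each descent step preserves enough $\mu$-mass and enough of the Frostman decay of $\nu$ to keep the final application of the incidence/Kaufman machinery quantitative. A secondary nuisance is the endpoint situation when $\dim_H X = \dim_H Y = s$, resolved by running the above contradiction along a sequence of scales $\delta_n \to 0$ chosen to realize $\dim_H \pi_x(Y)$ up to arbitrarily small error on a $\mu$-generic $x$.
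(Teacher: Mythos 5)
The overall skeleton (Frostman measures, discretization, pigeonholing to tube families, applying the quasi-product incidence estimate) is aligned with the paper, but two central mechanisms in your plan are misidentified, and one of them conceals a genuine gap.

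\textbf{The non-concentration upgrade cannot be done by iterative descent.} You claim that starting from ``$X$ is not contained in a $k$-plane,'' either $\mu$ already satisfies $\mu(V)\le\delta^\kappa$ on $(\delta,k)$-plates, or else it concentrates on a $k$-plane $V_0$, in which case you restrict to $V_0$ and descend to $(k-1)$-planes, and that this chain ``must exit after finitely many steps with a uniform exponent $\kappa=\kappa(s,k,d)>0$.'' This is false: the hypothesis $X\not\subset H$ is compatible with $\mu$ giving mass $1-r_0^{100}$ to an $(r_0,k)$-plate for every $r_0$, in which case no scale-independent power decay exponent exists. The paper instead splits into two cases. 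If $\mu$ (or $\nu$) gives \emph{positive} mass to some $k$-plane $H$, this is not an obstacle but a separate, easier regime (restrict to $H$ and reduce to orthogonal projections plus Kaufman; for $\nu(H)>0$, the radial projection from a point off $H$ is already bilipschitz). If instead $\mu(H)=\nu(H)=0$ for \emph{all} $k$-planes $H$, then compactness produces, for any $\eps>0$, a scale $r_0=r_0(\eps)>0$ with $\mu(H),\nu(H)<\eps$ on all $(r_0,k)$-plates; this is then \emph{propagated down} to a genuine power decay $\nu(H_r)\lesssim K r_0^{-\eta}r^\eta$ for $r<r_0$ by the Shmerkin-type argument (Proposition~\ref{prop:b1}). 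That propagation uses the Frostman decay $\mu(W)\lesssim r^{s-(k-1)}$ around $(k-1)$-plates (automatic from $s>k-1$), not any descent hierarchy. Your plan has no substitute for this propagation step, and the claimed uniform $\kappa$ does not exist without one.

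\textbf{The role of Kaufman's theorem is reversed.} You propose using the $\eps$-improved Kaufman projection theorem (Theorem~\ref{thm:kaufman}) as a base case that ``forces the directions of $\T_x$ to spread out.'' In this paper Theorem~\ref{thm:kaufman} is a \emph{corollary} of the Furstenberg incidence estimate (Theorem~\ref{thm:main}), not an input. The actual base case for the bootstrapping (Proposition~\ref{prop:k-thin}) is Orponen's original radial projection theorem, which gives $(0,K^N r_0^{k-1})$-thin tubes with no $\eps$-gain; the gain is then produced scale by scale via Theorem~\ref{thm:main_refined} in the bootstrap step (Proposition~\ref{prop:bootstrap}). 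Using the improved Kaufman statement as an ingredient risks circularity and in any case misses the inductive structure. Relatedly, you treat the problem as if a single application of the incidence estimate at one scale $\delta$ suffices; the quantitative radial projection bound comes from an iterated bootstrap across a hierarchy of scales, which is absent from your outline.

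If you repair these two points --- replace the descent with the compactness-plus-Shmerkin-propagation dichotomy, and replace the Kaufman ``base case'' with Orponen's radial projection theorem fed into an OSW-style bootstrap driven by Theorem~\ref{thm:main_refined} --- the remaining steps (Frostman, pigeonholing, reduction to the discretized statement, the case split for positive mass on a $k$-plane) are essentially the paper's argument.
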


In fact, we can prove the following slicing result, which improves Proposition 6.8 of \cite{orponen2022kaufman} and makes progress towards answering Conjecture 1.10 of \cite{orponen2022kaufman}.

\begin{cor}\label{cor:shm_conj_2}
Let $s \in (d-2, d]$, then there exists $\eps(s, d) > 0$ such that the following holds. Let $\mu, \nu$ be Borel probability measures on $\R^d$ with disjoint supports that satisfy $\cE_s (\mu), \cE_s (\nu) < \infty$ and $\dim_H (\spt(\nu)) < s + \eps(s, d)$. Further, assume that $\mu, \nu$ don't simultaneously give full measure to any affine $(d-1)$-plane $H \subset \R^d$. Then there exist restrictions of $\mu, \nu$ to subsets of positive measure (which we keep denoting $\mu, \nu$) such that the following holds. For almost every affine 2-plane $W \subset \R^d$ (with respect to the natural measure on the affine Grassmanian), if the sliced measures $\mu_W$, $\nu_W$ on $W$ is non-trivial, then they don't simultaneously give full measure to any line.
In other words,
    \begin{equation*}
        (\gamma_{d,2} \times \mu) \{ (V, x) : \mu_{V,x} (\ell) \nu_{V,x} (\ell) = |\mu_{V,x}| |\nu_{V,x}| > 0 \text{ for some } \ell \in \A(V + x, 1) \} = 0,
    \end{equation*}
where we parametrize affine 2-planes as $V + x$, for $x \in \R^d$ and $V$ in the Grassmannian $\Gr(d, 2)$ with the rotationally invariant Haar measure $\gamma_{d,2}$.
\end{cor}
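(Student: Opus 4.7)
The plan is to argue by contradiction and reduce to Theorem~\ref{cor:shm_conj} via a radial-projection argument on the sphere. Suppose the conclusion fails. By an exhaustion argument one may pass to restrictions (still denoted $\mu, \nu$) such that, even after any further positive-measure restriction, the bad set
\[
B := \{(V,x) \in \Gr(d,2) \times \R^d : \mu_{V,x}, \nu_{V,x} \text{ both give full mass to some } \ell \in \A(V+x,1)\}
\]
has positive $\gamma_{d,2}\times \mu$-measure. By Fubini there is a set $X_0 \subset \spt \mu$ of positive $\mu$-measure such that, for each $x \in X_0$, the set $E_x \subset \Gr(d,2)$ of ``bad'' $V$ has positive $\gamma_{d,2}$-measure. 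A standard fact about slicing shows that for $\mu$-a.e.~$x$ and $\gamma_{d,2}$-a.e.~$V$ with $\mu_{V,x}$ nontrivial, one has $x \in \spt \mu_{V,x}$; combined with $\spt \mu_{V,x} \subset \ell_{V,x}$ this forces $x \in \ell_{V,x}$ for bad $(V,x)$, so $\ell_{V,x}$ passes through $x$ with some direction $u_V \in V \cap S^{d-1}$, and $\nu_{V,x}$ is supported on a line through $x$.

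Next, set $\sigma := (\pi_x)_* \nu$ on $S^{d-1}$ and disintegrate $\sigma$ over the great circles $\{V \cap S^{d-1} : V \in \Gr(d,2)\}$ to obtain slice measures $\sigma_V$. Since $\pi_x$ maps $V+x$ radially onto $V \cap S^{d-1}$ and Mattila-type slicing commutes with push-forward under such maps, $\sigma_V = (\pi_x)_* \nu_{V,x}$. In particular, for every $V \in E_x$, $\sigma_V$ is supported on the two-point set $\{\pm u_V\}$, so $\dim \sigma_V = 0$. Marstrand's slicing theorem on $S^{d-1}$ by great circles (codimension $d-2$ submanifolds) gives $\dim \sigma_V \ge \dim \sigma - (d-2)$ for $\gamma_{d,2}$-a.e.~$V$ when $\dim \sigma > d-2$; taking the contrapositive against the positive-$\gamma_{d,2}$-measure set $E_x$ of $V$ with $\dim \sigma_V = 0$ yields $\dim_H \pi_x(\spt \nu) \le d-2$ for every $x \in X_0$.

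Finally, apply Theorem~\ref{cor:shm_conj} to $X_0$ and $Y := \spt \nu$: by the energy hypothesis $\cE_s(\mu) < \infty$, $\dim_H X_0 \ge s$; and the hypotheses that $\mu, \nu$ do not simultaneously charge a $(d-1)$-plane and that $\dim_H \spt \nu < s + \eps(s,d)$ allow an appropriate choice of $k$ (possibly after swapping $\mu \leftrightarrow \nu$) making the ``not in a $k$-plane'' hypothesis hold and the dimensions $\dim_H X_0, \dim_H \spt \nu \le k$. This yields $\sup_{x \in X_0} \dim_H \pi_x(\spt \nu) \ge \min(\dim_H X_0, \dim_H \spt \nu) \ge s > d-2$, contradicting the previous paragraph. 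The chief obstacle is the slicing-compatibility identity $\sigma_V = (\pi_x)_* \nu_{V,x}$, together with the claim that $x \in \ell_{V,x}$ for $(\gamma_{d,2}\times \mu)$-a.e.~bad $(V,x)$: making these rigorous requires matching Mattila's 2-plane slicing of $\nu$ in $\R^d$ with great-circle slicing of $\sigma$ on $S^{d-1}$ under $\pi_x$, using the finite $s$-energy hypothesis. A secondary hurdle is bridging the gap between the $\sup$ in Theorem~\ref{cor:shm_conj} and the ``every $x \in X_0$'' statement needed to close the contradiction, which is exactly the purpose of the hereditary exhaustion at the start.
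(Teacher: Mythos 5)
Your proof follows the broad strategy of Shmerkin–Wang's Proposition~6.8 (slicing on $S^{d-1}$ via radial projection), which is also the template the paper cites. However, there are two concrete gaps.

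First, you invoke Theorem~\ref{cor:shm_conj} (the $\sup_{x\in X}$ form) in the final step, but its hypotheses are never verified. Theorem~\ref{cor:shm_conj} requires $\dim_H X_0, \dim_H(\spt\nu) \le k$ and $X_0$ not contained in a $k$-plane. The energy condition $\cE_s(\mu)<\infty$ gives a \emph{lower} bound on $\dim_H X_0$, not an upper one, so $\dim_H X_0 \le k$ is not available; and the corollary's hypothesis permits one of $\mu,\nu$ (say $\mu$) to give full mass to a $(d-1)$-plane $H$, in which case $X_0 \subset H$ \emph{is} contained in a $(d-1)$-plane and the ``not in a $k$-plane'' condition fails outright. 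Your remark ``possibly after swapping $\mu\leftrightarrow\nu$'' does not repair this: even if $\nu(H)<1$, $\spt\nu$ could lie in a different $(d-1)$-plane $H'$, and the hypothesis only forbids $\mu,\nu$ from charging the \emph{same} plane fully. The paper handles this by an explicit dichotomy at the outset: if $\mu(H)>0$ for some $(d-1)$-plane, restrict $\mu$ to $H$ and $\nu$ to $H^c$ and observe the slices cannot share a line for trivial geometric reasons; otherwise assume $\mu(H)=\nu(H)=0$ for all $(d-1)$-planes $H$. That reduction must appear in your argument.

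Second, and more structurally, the paper does not use Theorem~\ref{cor:shm_conj} at all here but rather Proposition~\ref{cor:shm_conj'}, a genuinely stronger statement of the form ``for $\mu$-almost all $x$, for all positive-measure $Y$, $\dim_H(\pi_x Y)\ge s-\eta$,'' valid under $\mu(H)=\nu(H)=0$ for all $k$-planes. This ``almost every $x$'' conclusion is exactly what closes the contradiction against your positive-$\mu$-measure set $X_0$; the $\sup$ version yields only a single exceptional point and forces you into the hereditary-exhaustion detour, which in turn founders on the unverified hypotheses above. You should instead derive (or cite) the a.e.\ version: it follows from Theorem~\ref{thm:threshold_refined} together with Lemma~\ref{lem:energy} and a compactness argument giving $\mu(H),\nu(H)<\eps$ for all $(r_0,k)$-plates. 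With that in hand, and the plane dichotomy in place, your Marstrand-slicing-on-$S^{d-1}$ step matches the intended argument; as you note, the commutativity $\sigma_V = (\pi_x)_*\nu_{V,x}$ and the great-circle version of Marstrand slicing still need to be made rigorous, and are the content of the Shmerkin--Wang lemma the paper cites rather than something that can be waved through.
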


We also deduce an $\eps$-improvement in Kaufman's projection theorem for hyperplanes. The proof is a standard higher-dimensional generalization of the details in \cite[Section 3.2]{orponen2021hausdorff} and we will omit it. For $\sigma \in S^{n-1}$, let $\pi_\sigma$ be projection in the direction orthogonal to $\sigma$.

\begin{theorem}\label{thm:kaufman}
    For every $k < s < t \le d$, there exists $\eps(s, t)$ such that the following holds.
    Let $E$ be an analytic set in $\R^d$ with $\dim_H (E) = t$. Then
    \begin{equation*}
        \dim_H \{ \sigma \in S^{d-1} : \dim_H (\pi_\sigma (E)) \le s \} \le s - \eps.
    \end{equation*}
\end{theorem}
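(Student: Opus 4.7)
The strategy is to adapt the proof of Orponen--Shmerkin's $\eps$-improvement in Kaufman's projection theorem in $\R^2$ (\cite[Section 3.2]{orponen2021hausdorff}) to the setting of hyperplane projections in $\R^d$, using as a black box the Furstenberg-type theorem and the $\delta$-tube incidence estimate that drive the present paper. I argue by contradiction.

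Suppose that for some fixed $k < s < t \le d$ there exist $\eps_n \downarrow 0$ and analytic sets $E_n \subset \R^d$ with $\dim_H(E_n) = t$ such that $\dim_H F_n > s - \eps_n$, where
\[
F_n := \{\sigma \in S^{d-1} : \dim_H(\pi_\sigma(E_n)) \le s\}.
\]
By Frostman's lemma I would pass to compact subsets carrying probability measures $\mu$ on $E_n$ with $\mu(B(x,r)) \lesim r^{t-\eps_n}$ and $\nu$ on $F_n$ with $\nu(B(\sigma,r)) \lesim r^{s-2\eps_n}$. A dyadic pigeonhole in the scale then yields a single $\delta > 0$ and a positive-$\nu$-measure subset of $F_n$ on which, for every $\sigma$, the pushforward $\pi_{\sigma \#}\mu$ on $\sigma^\perp$ can be covered by $\lesim \delta^{-s-\eps_n}$ balls of radius $\delta$.

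Pulling these covers back to $\R^d$, most of the $\mu$-mass sits inside $\lesim \delta^{-s-\eps_n}$ tubes of width $\delta$ parallel to $\sigma$, for every typical $\sigma \in F_n$. As $\sigma$ varies over $F_n$, the totality of these tubes forms a Furstenberg-type configuration of $\delta$-tubes in $\R^d$: roughly $\delta^{-(s-\eps_n)}$ directions, $\delta^{-s-\eps_n}$ tubes per direction, all heavily incident to a quasi-product family of $\delta$-balls carrying $\mu$-mass one with Frostman exponent essentially $t$. This is precisely the configuration against which the $\delta$-tube incidence estimate advertised in the abstract, together with the Furstenberg-type theorem built on top of it, delivers a quantitative improvement over the trivial count.

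Feeding this configuration into the Furstenberg estimate forces the tube count per direction to be strictly less than $\delta^{-s}$, which in turn bounds $\dim_H F_n$ strictly below $s$. Choosing $\eps_n$ smaller than the absolute gain $c = c(s,t,d) > 0$ produced by that theorem gives the desired contradiction, and hence the existence of some $\eps(s,t)>0$. The genuine obstacle in filling in the details is bookkeeping rather than conceptual: one must run a careful multi-scale pigeonhole so that the quasi-product and non-concentration hypotheses required by the incidence estimate are genuinely verified after all the regularizations, and one must track how the Frostman exponents of $\mu$ and $\nu$ degrade through each step. These steps are carried out explicitly for $d=2$ in \cite[Section 3.2]{orponen2021hausdorff}, and their extension to general $d$ follows the same template with the dimensional parameters adjusted accordingly, which is why the proof can reasonably be omitted.
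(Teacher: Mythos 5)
Your proposal is correct and takes essentially the same approach as the paper, which likewise defers to the 2D argument of \cite[Section 3.2]{orponen2021hausdorff} and observes that it generalizes in a standard way once the Furstenberg-type estimate (here Theorem \ref{thm:main}) is available. Two small corrections to the sketch: the $\delta$-ball family covering $E$ need only be a $(\delta, t)$-set rather than quasi-product (the quasi-product reduction lives inside the proof of Theorem \ref{thm:main}, not in its hypotheses), and the $(\delta, \kappa, \cdot, k)$ non-concentration of $\cT(p)$ is automatic from $s > k$ by Remark \ref{rmk:s > k}; also the closing sentence is slightly garbled --- what Theorem \ref{thm:main} actually yields is the lower bound $|\cT| \gtrsim \delta^{-2s'-c}$ with $s' \approx s - O(\eps_n)$ and $c > 0$ uniform (Remark \ref{rmk:uniform eps}(a)), which directly contradicts the upper bound $|\cT| \lesssim \delta^{-(s'+s+\eps_n)}$ from the direction-and-tube count once $\eps_n$ is small, with no need for the intermediate claim that the ``tube count per direction'' is forced below $\delta^{-s}$.
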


\begin{remark}
    Kaufman's theorem is sharp when $s = k$ and $t \in (k, k+1]$ because $E$ can be contained within a $(k+1)$-plane.
\end{remark}

We also derive a higher-dimensional version of Beck's theorem (unlike in the discrete setting, the higher-dimensional version cannot proved by projection onto a generic 2D plane). The proof again follows similarly to the 2D version presented in \cite[Corollary 1.4]{orponen2022kaufman}.

\begin{cor}
    Let $X \subset \R^d$ be a Borel set such that $\dim_H (X \setminus H) = \dim_H X$ for all $k$-planes $H$. Then, the line set $\cL(X)$ spanned by pairs of distinct points in $X$ satisfies
    \begin{equation*}
        \dim_H (\cL(X)) \ge \min\{ 2 \dim_H X, 2k \}.
    \end{equation*}
\end{cor}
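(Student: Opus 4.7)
The plan is to adapt the proof of the 2D analogue \cite[Corollary 1.4]{orponen2022kaufman} to higher dimensions, substituting Theorem \ref{cor:shm_conj} for the 2D radial projection theorem. Set $s := \min(\dim_H X, k)$; we may assume $s > 0$. The goal is to show $\dim_H \cL(X) \ge 2s$, from which the stated bound $\min\{2 \dim_H X, 2k\}$ follows after case analysis.

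Using the hypothesis $\dim_H (X \setminus H) = \dim_H X$ for every $k$-plane $H$, I would first produce a Borel probability measure $\mu$ on $X$ of Frostman exponent $s - \eps$ that is non-concentrated on $k$-planes in the sense that $\mu(H) = 0$ for every affine $k$-plane $H \subset \R^d$. Such a measure is constructible by a standard exhaustion: whenever a candidate Frostman measure gives positive mass to a $k$-plane $H$, replace it with a Frostman measure on $X \setminus H$, which has full dim by hypothesis, and iterate over a countable approximating family of planes at each dyadic scale.

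Consider the line map $L: (X \times X) \setminus \Delta \to \cL(X) \subset A(d, 1)$ sending $(x, y)$ to the line through them. Parametrize lines by (direction, offset) as $(\sigma, p) \in (S^{d-1}/\pm) \times \sigma^\perp$, so $L = (D, P)$ with $D(x, y) = (y - x)/|y - x|$ and $P(x, y) = \pi_{D(x, y)}(x)$. The direction marginal of $L_*(\mu \otimes \mu)$ equals $\nu := \int \pi_x^* \mu\, d\mu(x)$, and by Theorem \ref{cor:shm_conj} in its measure-theoretic form (enabled by the non-concentration of $\mu$) one has $\dim \nu \ge s$. For the offset fiber at a generic direction $\sigma$, Marstrand's orthogonal projection theorem gives $\dim \pi_\sigma^* \mu \ge s$ for a.e.\ $\sigma \in S^{d-1}$, since $s \le k \le d - 1$. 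Combining the direction and offset bounds via an energy-Fubini argument (as in Marstrand's slicing theorem) yields $\dim L_*(\mu \otimes \mu) \ge 2s - O(\eps)$, hence $\dim_H \cL(X) \ge 2s$ after sending $\eps \to 0$.

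The main obstacle is that the fiber measure on $\sigma^\perp$ obtained by disintegrating $L_*(\mu \otimes \mu)$ over $D$ is not literally $\pi_\sigma^* \mu$ but a conditional distribution weighted by the pair density of $\mu$ in direction $\sigma$, and moreover Marstrand's bound holds Lebesgue-a.e.\ rather than $\nu$-a.e. The cleanest way to handle this is to avoid explicit disintegration altogether and estimate the $(2s - O(\eps))$-energy of $L_*(\mu \otimes \mu)$ directly, as a double integral over pairs of lines: splitting the integrand according to the relative distance between the two directions, one applies Marstrand's projection theorem (plus a uniform Frostman-type estimate on $\mu$) to the direction-close part and Theorem \ref{cor:shm_conj} to the direction-far part, using the energy bound on the direction marginal $\nu$. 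Carrying out this bookkeeping is where the bulk of the technical work lies; everything else is routine higher-dimensional generalization of \cite[Corollary 1.4]{orponen2022kaufman}.
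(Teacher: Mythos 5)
Your overall plan, estimating the energy of $L_*(\mu\otimes\mu)$ on the affine Grassmannian after building a $k$-plane non-concentrated Frostman measure, has the right shape, but the obstacle you flag at the end is fatal rather than a bookkeeping nuisance, and the proposed fix does not close it. The direction marginal $\nu=\int \pi_x^*\mu\,d\mu(x)$ has dimension only about $s$, and since $s\le k\le d-1$ (typically strictly), $\nu$ is generically singular with respect to surface measure on $S^{d-1}$. Marstrand controls $\pi_\sigma\mu$ for Lebesgue-a.e.\ $\sigma$, and Kaufman's refinement shows the exceptional set has Hausdorff dimension at most $s$---which is exactly the dimension of $\nu$. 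So there is no way, from projection theorems alone, to conclude the exceptional set has $\nu$-measure zero; $\nu$ can in principle concentrate entirely on the bad directions. The ``split by direction distance'' does not repair this: the direction-far part is trivially bounded, but the direction-close part requires a bound of the form $\mu(T)\lesssim r^\sigma$ with $\sigma$ near $s$ for $r$-tubes $T$, and the trivial Frostman estimate only gives $\mu(T)\lesssim r^{s-1}$, under which the $(2s-O(\eps))$-energy integral diverges.

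The ingredient that actually closes the gap---and what the cited 2D argument of Orponen--Shmerkin--Wang uses, so ``following the 2D proof'' really points here---is the \emph{thin tubes} property (Definition~\ref{def:thin tubes}), supplied by Theorem~\ref{thm:threshold_refined}: after discarding a set $B\subset X\times X$ of small $\mu\otimes\mu$-measure, one has $\mu(T\setminus B|_x)\lesssim r^\sigma$ for every $r$-tube $T$ through $x$ and every $\sigma<s$. The hypothesis $\dim_H(X\setminus H)=\dim_H X$ for all $k$-planes $H$ enters via Proposition~\ref{prop:b1}: since $\mu(H)=0$ for all $k$-planes, a compactness argument gives $\mu(H')<K^{-1}$ for all $(r_0,k)$-plates $H'$ at some scale $r_0$, so the concentrated set $A$ of that theorem is empty. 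With thin tubes the ball estimate is immediate: for $\sim 1$-separated pairs, $L_*(\mu\otimes\mu|_{B^c})(B(\ell_0,r))\lesssim \mu(\ell_0^{(Cr)})^2\lesssim r^{2\sigma}$, giving finite $2\sigma'$-energy for $\sigma'<\sigma$ and hence $\dim_H \cL(X)\ge 2\sigma\to 2s$ as $\sigma\to s$ and $\eps\to 0$. In short: replace Marstrand with the radial-projection thin-tubes machinery; a Lebesgue-a.e.\ projection bound cannot be upgraded to a $\nu$-a.e.\ bound at the critical dimension $s$.
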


\subsection{Connections and related work}
Radial projections have also been used to study the Falconer distance set problem, which asks for lower bounds on the Hausdorff dimension of the distance set $\Delta(X) := \{ |x - y| : x, y \in X \}$ given the value of $\dim_H (X)$ for some $X \in \R^d$. In two dimensions, Wolff \cite{wolff1999decay} used Fourier analysis to show that if $\dim_H (X) \ge \frac{4}{3}$, then $\Delta(X)$ has positive Lebesgue measure. Using Orponen's radial projection theorem \cite{orponen2018radial}, Guth-Iosevich-Ou-Wang \cite{guth2020falconer} used a good-bad tube decomposition and decoupling to improve the threshold to $\dim_H (X) \ge \frac{5}{4}$. See also works of Keleti-Shmerkin \cite{keleti2019new} \cite{keleti2019new}, Shmerkin \cite{shmerkin2021improved}, Liu \cite{liu2020hausdorff}, and Stull \cite{stull2022pinned} which provide better lower bounds for $\dim_H (\Delta(X))$ given that $\dim_H (X) \in (1, \frac{5}{4})$. In higher dimensions, the works of Du-Iosevich-Ou-Wang-Zhang \cite{du2021improved} and Wang-Zheng \cite{wang2022improvement} used a good-bad tube decomposition using Orponen's radial projection theorem and decoupling techniques \cite{orponen2018radial} to provide state-of-the-art results when the dimension $d$ is even; when $d$ is odd, a more classical approach purely based on decoupling gave the best estimates \cite{du2019sharp}, \cite{harris2021low}. More recently, Shmerkin and Wang \cite{shmerkin2022dimensions} prove a radial projection theorem in the spirit of this paper to provide an improved lower bound when $\dim_H (X) = \frac{d}{2}$, $d = 2, 3$; using their framework combined with updated results of \cite{orponen2022kaufman}, one can show for example that $\dim_H (\Delta(X)) \ge \frac{5}{8}$ when $X \subset \R^3$ satisfies $\dim_H (X) = \frac{3}{2}$. In fact, all of these works prove lower bounds on the size of the pinned distance set, $\Delta_x (X) := \{ |x - y| : y \in X \}$. In the forthcoming companion papers \cite{DOKZFalconer}, \cite{DOKZFalconerDec}, we use Theorem \ref{cor:shm_conj} to improve the lower bounds for the Falconer distance set problem in all dimensions $d$.

Very recently, radial projections in dimension $2$ have been used to prove the ABC sum-product conjecture and Furstenberg set conjecture, and yield progress on the discretized sum-product problem \cite{orponen2023projections}, \cite{ren2023furstenberg}. It is natural to wonder whether the exciting progress in 2 dimensions will generalize to higher dimensions. The starting point of the breakthrough work of \cite{orponen2023projections} (which was also used in \cite{ren2023furstenberg}) is a sharp radial projection theorem in 2 dimensions, \cite[Theorem 1.1]{orponen2022kaufman}. We hope to use our higher dimensional radial projection theorem to prove analogous results to \cite{orponen2023projections}, \cite{ren2023furstenberg} in all dimensions.


\subsection{Discretized results}
We deduce Theorem \ref{cor:shm_conj} from $\delta$-discretized versions. The following notation will be used throughout this paper.

\begin{defn}
    Let $P \subset \R^d$ be a bounded nonempty set, $d \ge 1$. Let $\delta > 0$ be a dyadic number, and let $0 \le s \le d$ and $C > 0$. We say that $P$ is a $(\delta, s, C, k)$-set if for every $(r, k)$-plate $H$ with $r \in [\delta, 1]$, we have
    \begin{equation*}
        |P \cap H|_\delta \le C \cdot |P|_\delta \cdot r^s.
    \end{equation*}
    If $k$ is not specified, we default to $k = 0$ (which becomes a standard definition from \cite{orponen2021hausdorff} because $(r, 0)$-plates are $r$-balls).
\end{defn}

\begin{defn}
    Let $\T \subset \R^d$ be a bounded nonempty set of dyadic $\delta$-tubes, $d \ge 2$. Let $\delta > 0$ be a dyadic number, and let $0 \le s \le d$, $0 \le k \le d-2$, and $C > 0$. We say that $\T$ is a $(\delta, s, C, k)$-set of tubes if for every $(r, k+1)$-plate $H$ and $\delta \le r \le 1$, we have
    \begin{equation}\label{eqn:tube non-concentration}
        |\T \cap H| \le C \cdot |\T| \cdot r^s.
    \end{equation}
    If $k$ is not specified, we default to $k = 0$. We also say $\T$ is a $(\delta, s, C, k)$-set of tubes from scale $r_1$ to $r_2$ if the non-concentration condition \eqref{eqn:tube non-concentration} holds for $r_2 \le r \le r_1$.
\end{defn}

A $(\delta, s, C, k)$-set of balls cannot be concentrated in a small neighborhood of a $k$-plane, while a $(\delta, s, C, k)$-set of tubes cannot be concentrated in a small neighborhood of a $(k+1)$-plane.

The main ingredient in the proof of Theorem \ref{cor:shm_conj} is an $\eps$-improvement to the (dual) Furstenberg set problem that generalizes Theorem 1.3 in \cite{orponen2021hausdorff} to higher dimensions.
\begin{theorem}\label{thm:main}
    For any $0 \le k < d-1$, $0 \le s < k+1$, $s < t \le d$, $\kappa > 0$, there exists $\eps(s, t, \kappa, k, d) > 0$ such that the following holds for all small enough $\delta \in 2^{-\N}$, depending only on $s, t, \kappa, k, d$. Let $\cP \subset \cD_\delta$ be a $(\delta, t, \delta^{-\eps})$-set with $\cup \cP \subset [0, 1)^d$, and let $\cT \subset \cT^\delta$ be a family of $\delta$-tubes. Assume that for every $p \in \cP$, there exists a $(\delta, s, \delta^{-\eps}, 0)$ and $(\delta, \kappa, \delta^{-\eps}, k)$-set $\cT(p) \subset \cT$ such that $T \cap p \neq \emptyset$ for all $T \in \cT(p)$. Then $|\cT| \ge \delta^{-2s-\eps}$.
\end{theorem}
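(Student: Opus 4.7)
The plan is to argue by contradiction and reduce to a higher-dimensional incidence estimate, following the multi-scale philosophy of Orponen--Shmerkin \cite{orponen2021hausdorff}. Assume $|\cT| \le \delta^{-2s-\eps}$ with $\eps > 0$ to be chosen small at the end. After passing to saturated refinements via pigeonholing, one may assume $|\cP| \gtrsim \delta^{-t+O(\eps)}$ and $|\cT(p)| \gtrsim \delta^{-s+O(\eps)}$ for each $p \in \cP$, so the incidence count $I(\cP, \cT) := \sum_{p \in \cP} |\cT(p)|$ satisfies $I(\cP, \cT) \gtrsim \delta^{-t-s+O(\eps)}$. This is the quantitative consequence of the hypothesis that must be contradicted.

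The next step is to regularize across a dyadic family of intermediate scales $\Delta_i = \delta^{i/N}$ by pigeonholing on the branching numbers of $\cP$ (refinement of $\Delta_{i-1}$-cubes into $\Delta_i$-subcubes) and on those of each $\cT(p)$ analogously. After standard uniformization, one extracts a subset $\cP' \subseteq \cP$ whose multi-scale structure forms a quasi-product configuration of $\delta$-balls in $[0,1)^d$, while preserving $(\delta, s, \delta^{-O(\eps)}, 0)$ and $(\delta, \kappa, \delta^{-O(\eps)}, k)$ versions of the tube hypotheses $\cT(p)$ for most $p \in \cP'$. This step is essentially bookkeeping via iterated pigeonhole and is where the parameter $N$ gets fixed.

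The core step is then to apply the new incidence estimate between $\delta$-tubes and a quasi-product set of $\delta$-balls in $\R^d$ advertised in the abstract. Its hypotheses match the multi-scale extraction exactly: the quasi-product point set comes from non-concentration of $\cP$, while the $(\delta, \kappa, \cdot, k)$ condition on the tubes prevents them from degenerating into a thickened $(k+1)$-plane, which is the scenario in which naive counts break down in $\R^d$ (unlike in the $2$D setting of \cite{orponen2021hausdorff}, where no such $k$-plane hypothesis is needed because tubes lie in $\R^2$ trivially). The estimate improves the standard tube--ball incidence count by a factor of $\delta^{-\eta}$ for some $\eta = \eta(s, t, \kappa, k, d) > 0$; choosing $\eps$ much smaller than $\eta/N$ then contradicts the incidence lower bound and forces $|\cT| \ge \delta^{-2s-\eps}$.

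The main obstacle I anticipate is the propagation of $(r, k+1)$-plate non-concentration through the multi-scale pigeonhole. The $(\delta, \kappa, \delta^{-\eps}, k)$ property of $\cT(p)$ is fragile: after grouping tubes at scale $\Delta_i$ and restricting to a quasi-product subset of $\cP'$, the coarsened tube family could cluster in a $(k+1)$-plane slab and invalidate the hypothesis of the incidence theorem. Controlling this inheritance --- likely by adding a pigeonholing layer that tracks plate concentration at every intermediate scale, and balancing $\eps$ against the branching depth $N$ and the parameter $\kappa$ --- is where the real technical work lies, and it is the reason why the single-scale higher-dimensional incidence estimate is the crucial new input beyond the $2$D argument.
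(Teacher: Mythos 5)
Your high-level architecture matches the paper's: assume $|\cT| \le \delta^{-2s-\eps}$, pigeonhole to a uniform multi-scale structure, reduce to a quasi-product incidence estimate, and derive a contradiction. But the reduction to the quasi-product case is not ``essentially bookkeeping,'' and the obstacle you anticipate is misidentified.

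First, the branching lemma (Lemma~\ref{lem:good branching}) does not hand you a quasi-product configuration directly from uniformization of $\cP$. It decomposes the scales into blocks where $\cP$ is either \emph{regular} (with exponent $\ge \frac{s+t}{2}$) or a $(\delta,s)$-set; the regular blocks are the only ones where a gain is extracted (via Theorem~\ref{thm:improved_incidence}), the $(\delta,s)$-blocks are handled losslessly by the elementary estimate (Corollary~\ref{cor:easy_est}), and the quasi-product structure appears \emph{inside a single regular block} only after choosing $\Delta = \delta^{1/2}$, locating a special $\Delta$-tube $\bT_0$, and dilating $\bT_0$ to $[0,1]^d$. That is a real argument, not uniformization bookkeeping.

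Second, and more importantly: Proposition~\ref{prop:improved_incidence_weaker}, the quasi-product incidence estimate, places the $(\delta,\kappa,\cdot,k)$ non-concentration hypothesis on the \emph{ball set} $\bX_\by \subset \R^{d-1}$ (and explicitly makes no non-concentration assumption on the tube sets $\T(\bz)$). So preserving tube-family $(r,k+1)$-plate non-concentration through the multi-scale pigeonhole is not where the argument can break --- and indeed Proposition~\ref{prop:nice_tubes} shows the $(\delta,\kappa,C_2,k)$ tube property survives the scale change with only polylog losses, which is routine. The genuinely new step, which your proposal omits, is \emph{converting} the $(\delta,\kappa,\cdot,k)$ non-concentration of the tube directions into $(\Delta,\kappa',\cdot,k)$ non-concentration of the projected, rescaled ball set $\pi_{\sigma(\bT_0)}(\cP_Q)$. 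This is Proposition~\ref{prop:projections}, carried out with a Kaufman-style energy argument on the $(s,k+1)$-Riesz energy (Lemma~\ref{lem:energy}). Without that conversion, the quasi-product estimate cannot be applied, because its hypothesis is about points, not tubes. This energy/projection step is the place the $k$-plane hypothesis on $\cT(p)$ actually enters and is the right locus of the extra difficulty in $d \ge 3$; your proposed ``add a pigeonholing layer tracking plate concentration at every scale'' would not supply it.
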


\begin{remark}\label{rmk:s > k}
    The condition of $\T(p)$ being a $(\delta, \kappa, \delta^{-\eps}, k+1)$-set is to prevent the counterexample in (say) $\R^3$ when $s = 1, t \in (1, 2]$, and $\T$ is a maximal set of $\delta^{-2}$ many essentially distinct tubes in $[0, 1]^2$. This condition is automatically taken care of when $s > k$: any $(\delta, s, \delta^{-\eps}, 1)$-set is a $(\delta, \kappa, \delta^{-\eps}, k+1)$-set with $\kappa = s-k$.
\end{remark}

\begin{remark}
    We can make this decay around $k$-plane assumption assuming that (1) $P$ is a $(\delta, \kappa, \delta^{-\eps}, k+1)$-set and (2) for $p \in P$, $|\T(p) \cap P| \ge \delta^\eps |P|$. This will be useful for radial projection estimates, since we can guarantee (1) by Theorem B.1 of \cite{shmerkin2022non} and (2) because we can get rid of $\lesim \delta^{\eps} |P|$ many pairs $(p, q)$ for a fixed $p$.
\end{remark}

In fact, we can prove the following refined version of Theorem \ref{thm:main}.

\begin{theorem}\label{thm:main_refined}
    For any $0 \le k < d-1$, $0 \le s < k+1$, $\max(s, k) < t \le d$, $\kappa > 0$, $r_0 \le 1$, there exists $\eps(s, t, \kappa, k, d) > 0$ such that the following holds for all small enough $\delta/r_0 \in 2^{-\N} \cap (0, \delta_0)$, with $\delta_0$ depending only on $s, t, \kappa, k, d$. Let $H$ be a $(r_0, k+1)$-plate, $\cP \subset \cD_\delta \cap H$ be a $(\delta, t, (\delta/r_0)^{-\eps})$-set with $\cup \cP \subset [0, 1)^d$, and let $\cT \subset \cT^\delta \cap H$ be a family of $\delta$-tubes. Assume that for every $p \in \cP$, there exists a set $\T(p) \subset \T$ such that:
    \begin{itemize}
        \item $T \cap p \neq \emptyset$ for all $T \in \cT(p)$;

        \item $\cT(p)$ is a $(\delta, s, (\delta/r_0)^{-\eps} r_0^{k-s}, 0)$-set down from scale $r$;

        \item $\cT(p)$ is a $(\delta, \kappa, (\delta/r_0)^{-\eps} r_0^{-\kappa}, k)$-set.
    \end{itemize}
    Then $|\cT| \ge (\frac{\delta}{r_0})^{-\eps} \delta^{-2s} r_0^{2(s-k)}$.
\end{theorem}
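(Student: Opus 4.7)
The plan is to reduce Theorem~\ref{thm:main_refined} to Theorem~\ref{thm:main} by an anisotropic rescaling of the plate $H$ to the unit cube. Let $T : \R^d \to \R^d$ be the linear map that is the identity on the $(k+1)$-plane underlying $H$ and multiplication by $1/r_0$ on its orthogonal complement; set $\tilde{\delta} := \delta/r_0$. Then $T(H)$ is comparable to $[0,1]^d$; each $\delta$-ball of $\cP$ is sent to an ellipsoid of dimensions $\delta^{k+1} \times \tilde{\delta}^{d-k-1}$, and each $\delta$-tube of $\cT$ is sent to an anisotropic tube whose cross-section has width $\delta$ in the $k$ perpendicular directions tangent to the plane of $H$ and width $\tilde{\delta}$ in the $d-k-1$ directions orthogonal to that plane.

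Next, cover these images by standard isotropic $\tilde{\delta}$-balls and $\tilde{\delta}$-tubes in $[0,1]^d$, producing families $\tilde{\cP}$, $\tilde{\cT}$, and $\tilde{\cT}(\tilde p)$. A direct computation verifies the hypotheses of Theorem~\ref{thm:main} at scale $\tilde{\delta}$: for $\tilde{\cP}$, one uses that the preimage of a $\tilde r$-ball under $T$ is an ellipsoid contained in a $\tilde r$-ball of the same radius in $\R^d$, together with the assumption $t > \max(s, k)$ to control the density after coarsening; for $\tilde{\cT}(\tilde p)$, the factors $r_0^{k-s}$ and $r_0^{-\kappa}$ in the original non-concentration constants are calibrated precisely so that the rescaled constants remain at $\tilde{\delta}^{-O(\eps)}$ (this is the key bookkeeping computation, using that a $(r, 1)$-plate and $(r, k+1)$-plate respectively pull back to anisotropic plates whose isotropic enclosures have thickness $r_0 \tilde r$). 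Theorem~\ref{thm:main} then applies and yields $|\tilde{\cT}| \ge \tilde{\delta}^{-2s-\eps_0}$ for some $\eps_0 > 0$ depending only on $s, t, \kappa, k, d$.

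The final step is to convert this bound back to one on $|\cT|$. The coarsening $\cT \to \tilde{\cT}$ has multiplicity at most $r_0^{-2k}$: an isotropic $\tilde{\delta}$-tube in $\tilde{\cT}$ contains up to $r_0^{-k}$ essentially distinct anisotropic tubes in each of angular parameter and transversal position across the $k$ narrow in-plane perpendicular directions. Using the $(\delta, s, \cdot, 0)$-set structure of each $\cT(p)$ (whose strengthened constant $r_0^{k-s}$ forces the tubes to spread in these narrow parameters), combined with the spreading of $\cP$ in the $(k+1)$-plane direction, a careful double-counting shows that on average this maximum multiplicity is (polylogarithmically) realized. Hence $|\cT| \gesim r_0^{-2k} |\tilde{\cT}| \ge r_0^{-2k} \tilde{\delta}^{-2s-\eps_0} = (\delta/r_0)^{-\eps_0} \delta^{-2s} r_0^{2(s-k)}$, which is the desired bound with $\eps = \eps_0$. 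The main obstacle is this final lifting step: naively one only has the trivial chain $|\cT| \ge |\tilde{\cT}|$, which is weaker than the target by exactly $r_0^{-2k}$; recovering the extra factor requires showing that the narrow-direction multiplicity is genuinely saturated, which is where the calibrated hypothesis constants must be used in full.
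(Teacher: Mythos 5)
Your rescaling $T$ is essentially the paper's map $S_H$, and your observation that the constants $r_0^{k-s}$ and $r_0^{-\kappa}$ are calibrated so that the rescaled non-concentration survives is correct (it corresponds to \eqref{eqn:non_conc2} and \eqref{eqn:non_conc3}). The problem is the step you yourself flag as the ``main obstacle'': once you coarsen $\cT$ to a family $\tilde\cT$ of isotropic $\tilde\delta$-tubes, the trivial chain only gives $|\cT|\ge|\tilde\cT|$, and the missing factor $r_0^{-2k}$ is precisely the gap between $\tilde\delta^{-2s}$ and the target $\delta^{-2s}r_0^{2(s-k)}$. Asserting that ``a careful double-counting shows that on average this maximum multiplicity is (polylogarithmically) realized'' is not a proof; it is a restatement of the theorem. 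The claimed saturation is not a consequence of the rescaled $(\tilde\delta,t)$, $(\tilde\delta,s)$, $(\tilde\delta,\kappa)$ conditions alone, because the coarsened families $\tilde\cP$, $\tilde\cT$, $\tilde\cT(\tilde p)$ have forgotten all the sub-$\tilde\delta$ structure (the $\delta$-separation of $\cP$ inside $H$ and the $(\delta,s,(\delta/r_0)^{-\eps}r_0^{k-s})$ non-concentration of $\cT(p)$ at scales $\delta\le r\le\tilde\delta$) that would be needed to force the multiplicity to be large.

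The paper does not try to lift a bound on coarse tubes back to fine ones. Instead, it keeps all tubes at scale $\delta$ and performs an explicit multiscale split (Proposition \ref{prop:nice_tubes_2}, the rescaled analogue of Proposition \ref{prop:nice_tubes}). A good intermediate scale $\Delta'\in(\delta/r_0,(\delta/r_0)^\beta)$ is extracted via the branching-function pigeonholing of Lemma \ref{lem:extract_t'_set}, chosen so that the renormalized interior pieces inherit a $t'$-dimensional spacing condition with $t'>\max(s,k)$. Theorem \ref{thm:main} is then applied at the top (scales $1$ to $\Delta'$) to get a gain, while Corollary \ref{cor:easy_est_2} is applied at the bottom (scales $\Delta'$ down to $\delta$) to produce the factor $r_0^{s-k}\,(\delta/\Delta')^{-s}$; combined with the lower bound $M\ge(\delta/r_0)^{\eps}r_0^{s-k}\delta^{-s}$ on $|\cT(p)|$ and the product inequality \eqref{eqn:item6'}, this yields the $r_0^{2(s-k)}$. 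Corollary \ref{cor:easy_est_2} is exactly the piece your proposal is missing: it is a genuine (if elementary) Cauchy--Schwarz incidence estimate, proved in the paper by summing over dyadic annuli both above and below the transition scale $\delta/r_0$, and it is what converts the calibrated hypotheses into a lower bound on the number of $\delta$-tubes rather than $\tilde\delta$-tubes. To salvage your approach, you would need to reprove something equivalent to Corollary \ref{cor:easy_est_2}; at that point the detour through isotropic coarsening is superfluous and you recover the paper's argument.
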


\begin{remark}\label{rmk:uniform eps}
    (a) Given fixed $k, \kappa$, the value of $\eps$ can be chosen uniformly in a compact subset of $\{ (s, t) : 0 \le s < k+1, \max(s, k) < t \le d \}$. Indeed, if $\eps > 0$ works for $(s, t)$, then $\frac{\eps}{2}$ works in the $\frac{\eps}{2}$-neighborhood of $(s, t)$.

    (b) Conjecture: can we replace the condition of being in $H$ by $\T(p)$ being a $(\delta, k, (\delta/r_0)^{-\eps}, 0)$-set from scales $1$ to $r_0$?
\end{remark}

Using Theorem \ref{thm:main_refined}, a bootstrap argument based on \cite{orponen2022kaufman} gives the following.




\begin{theorem}\label{thm:intermediate}
Let $k \in \{ 1, 2, \cdots, d-1 \}$, $k-1 < \sigma < s \le k$, and $\eps > 0$. There exist $N, K_0$ depending on $\sigma, s, k$, and $\eta(\eps) > 0$ (with $\eta(1) = 1$) such that the following holds. Fix $r_0 \le 1$, and $K \ge K_0$. Let $\mu, \nu$ be $\sim 1$-separated $s$-dimensional measures with constant $C_\mu, C_\nu$ supported on $E_1, E_2$, which lie in $B(0, 1)$. Assume that $|\mu|, |\nu| \le 1$. Let $A$ be the pairs of $(x, y) \in E_1 \times E_2$ that lie in some $K^{-1}$-concentrated $(r_0, k)$-plate. Then there exists a set $B \subset E_1 \times E_2$ with $\mu \times \nu (B) \lesim K^{-\eta}$ such that for every $x \in E_1$ and $r$-tube $T$ through $x$, we have
\begin{equation*}
    \nu(T \setminus (A|_x \cup B|_x)) \lesim \frac{r^\sigma}{r_0^{\sigma-(k-1)+N\eps}} K^N.
\end{equation*}
The implicit constant may depend on $C_\mu, C_\nu, \sigma, s, k$.
\end{theorem}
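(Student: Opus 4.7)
The plan is to argue by contradiction and iteratively apply Theorem \ref{thm:main_refined} via a bootstrap scheme in the spirit of \cite{orponen2021hausdorff,orponen2022kaufman}. Suppose the conclusion fails for a given bad set $B$. Then there is a set $B' \subset (E_1 \times E_2) \setminus (A \cup B)$ with $(\mu \times \nu)(B') \gesim K^{-c\eta}$ such that for every $(x, y) \in B'$ there is an $r$-tube $T = T(x,y) \ni x$ with $\nu(T \setminus (A|_x \cup B|_x)) \ge \frac{r^\sigma}{r_0^{\sigma-(k-1)+N\eps}} K^N$. By dyadic pigeonholing in the scale $r$, in the common mass level $M \approx \nu(T)$, and (if necessary) in the plate scale $r_0$, I may assume these quantities are essentially constant on $B'$, losing only factors absorbable into $(\delta/r_0)^{-\eps}$.

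Let $\cP \subset \cD_r$ collect those $r$-cubes containing some bad $x$, and for each $p \in \cP$ let $\cT(p)$ be the family of heavy $r$-tubes $T \ni p$ with $\nu(T) \approx M$. I would then verify each hypothesis of Theorem \ref{thm:main_refined} in turn. First, the $s$-Frostman property of $\mu$ descends to $\cP$ as a $(r, t, r^{-\eps})$-set with $t$ close to $s$ after a standard uniformization \`a la Shmerkin. Second, for the $(r, \sigma', \cdot, 0)$-set property of $\cT(p)$: if $L$ tubes of $\cT(p)$ cluster into a thicker $(r', 1)$-plate $\bT$, then $\nu(\bT) \gesim L M / |\cT(p)|$, while the bootstrap hypothesis bounds $\nu(\bT)$ above by $(r')^{\sigma'}/r_0^{\sigma'-(k-1)}$; comparing yields the Frostman count on $\cT(p)$ with the correct $r_0^{k-\sigma'}$ normalization. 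Third, the $(r, \kappa, \cdot, k)$-set property uses $(x,y) \notin A$: concentration of $\cT(p)$ into a $(r', k+1)$-plate would force a $(r_0, k)$-plate through $p$ carrying $\nu$-mass $\gesim K^{-1}$, contradicting $(x,y) \notin A$ for most pairs generated by these tubes.

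With these hypotheses in hand, Theorem \ref{thm:main_refined} gives $|\cT| \gesim (r/r_0)^{-\eps'} r^{-2\sigma'} r_0^{2(\sigma'-k)}$, which together with the double-counting bound $|\cT| \lesim |\cP| \cdot |\cT(p)|$ and $|\cT(p)| \cdot M \lesim \nu(\text{union of tubes})$ forces a strict improvement of the tube-mass exponent from $\sigma'$ to some $\sigma'' > \sigma'$ (quantified by the $\eps(s, \sigma'', \kappa, k, d)$ from Theorem \ref{thm:main_refined} and Remark \ref{rmk:uniform eps}). I would start the bootstrap with the trivial Frostman exponent $\sigma' = s-1$ (or more naively $\sigma' = 0$) and iterate. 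Because Remark \ref{rmk:uniform eps}(a) provides $\eps$ uniformly on compact subsets of $\{(s',t'): s' < k+1,\, \max(s',k)<t'\le d\}$, after finitely many rounds (bounded in terms of $\sigma, s, k$) the exponent rises past $\sigma$, yielding the desired contradiction. The dependence of the final exponent on $\eps$ is tracked through each round, giving $\eta(\eps) > 0$ with $\eta(1)=1$, and setting $N$ to absorb the cumulative $\eps$-losses from all rounds.

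The main obstacle will be the verification of the $(r, \kappa, \cdot, k)$-set property of $\cT(p)$ from $(x,y) \notin A$. The geometry requires carefully tracking how concentration of $r$-tubes into $(r', k+1)$-plates, for intermediate scales $r' \in [r, r_0]$, induces $(r_0, k)$-plates through $p$ with large $\nu$-mass; one has to handle the case $r' < r_0$ (where the induced plate is thinner than $r_0$) and the case $r' \in [r_0, 1]$ separately, upgrading the $(r', k+1)$ concentration to a genuine $(r_0, k)$-concentration. A secondary subtlety is propagating the Frostman exponent through each bootstrap round without accumulating too many $\eps$-losses, which is precisely what forces $N$ to depend on $\sigma, s, k$ and on the number of bootstrap steps needed.
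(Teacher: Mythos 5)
There is a genuine gap: your plan tries to apply Theorem \ref{thm:main_refined} directly to the full configuration of bad pairs, but Theorem \ref{thm:main_refined} requires \emph{both} $\cP$ and $\cT$ to be confined to a single $(r_0,k+1)$-plate $H$ (the hypotheses read $\cP \subset \cD_\delta \cap H$ and $\cT \subset \cT^\delta \cap H$). In the general setting of Theorem \ref{thm:intermediate}, where $\mu,\nu$ are merely supported in $B(0,1)$, the heavy $r$-tubes through different points of $\cP$ lie in many distinct $(r_0,k)$-plates, and there is no single plate $H$ hosting the whole configuration. Your pigeonholing aside ``(if necessary) in the plate scale $r_0$'' does not rescue this, because restricting to one plate costs a factor of the number of heavy plates, which you have not controlled. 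The paper avoids this by a two-level structure you are missing: first the single-plate case (Theorem \ref{thm:threshold_refined}, where by Lemma \ref{lem:concentrated_points} all relevant tubes genuinely lie in one $(O(r_0),k+1)$-plate and Theorem \ref{thm:main_refined} is legitimately applicable inside the bootstrap of Proposition \ref{prop:bootstrap}), and then a scale hierarchy (Lemma \ref{lem:intermediate_r}) passing from scale $r_0^{n\eps}$ to $r_0^{(n+1)\eps}$, where Lemma \ref{lem:few_large_plates} bounds the number of heavy plates by $K^{-N\eta_n}$ and a union bound over $\cH$ closes the loop. That hierarchy is also where the non-trivial dependence $\eta(\eps)$ with $\eta(1)=1$ originates ($\eta_n = (N+2)^{n-M}$, $M=1/\eps$); your direct bootstrap on the exponent $\sigma'$ offers no mechanism that would produce such a degradation.

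A secondary gap: your mechanism for the $(r,\kappa,\cdot,k)$-set property of $\cT(p)$ — that concentration of $\cT(p)$ in a $(r',k+1)$-plate forces a $K^{-1}$-heavy $(r_0,k)$-plate through $p$, contradicting $(x,y)\notin A$ — only gives single-scale non-concentration at scale $r_0$. It does not by itself yield the multiscale power decay $\nu(W) \lesim K (\rho/r_0)^\kappa$ for all $(\rho,k)$-plates $W$ with $\rho \ll r_0$ that the proof actually needs. The paper derives this via the separate iterative argument of Proposition \ref{prop:b1}, which upgrades one-scale non-concentration to a genuine power decay (paying $K_0 K^{-1}$ in mass). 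You would need this step, or something equivalent, to feed the $(\delta,\kappa,\cdot,k)$-set hypothesis into Theorem \ref{thm:main_refined}; it does not come for free from removing $A$.
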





\begin{remark}
    (a) It is not assumed that $\mu, \nu$ are a probability measures, just that $\mu(B(0,1)), \nu(B(0,1)) \le 1$.

    (b) If $\alpha>d-1$, then the numerology of Theorem \ref{thm:intermediate} doesn't apply. Instead, Orponen's radial projection theorem \cite{orponen2018radial} in dimension $d$ applies. The result (stated in \cite[Lemma 3.6]{guth2020falconer} for $d = 2$, but can be generalized to all dimensions $d$) is that for $\gamma = \eps/C$, there exists a set $B \subset E_1 \times E_2$ with $\mu_1 \times \mu_2 (B) \le r^{\gamma}$ such that for every $x \in E_1$ and $\delta$-tube $T$ through $x$, we have
\begin{equation*}
    \mu_2 (T \setminus B|_x) \lesim r^{d-1-\eps}.
\end{equation*}
Note that the set $A$ of ``concentrated pairs'' is not needed here.
    
    (c) If $r \sim r_0$, we can obtain a slightly better result by projecting to a generic $k$-dimensional subspace and following the argument in \cite[Section 3.2]{du2021improved}. The result is that for $\gamma = \eps/C$, there exists a set $B \subset E_1 \times E_2$ with $\mu_1 \times \mu_2 (B) \le \delta^{\gamma}$ such that for every $x \in E_1$ and $r$-tube $T$ through $x$, we have
\begin{equation*}
    \mu_2 (T \setminus B|_x) \lesim r^{k-1-\eps}.
\end{equation*}
    The set $A$ is again not needed in this case. The main novelty of Theorem \ref{thm:intermediate} comes when $r < r_0$.
\end{remark}




\subsection{Proof ideas}
The main proof ideas for Theorem \ref{thm:main} are as follows:
\begin{enumerate}
    \item Perform a standard multiscale decomposition argument due to \cite{orponen2021hausdorff} to reduce the original problem to two building blocks: the case when $\cP$ is a $(\delta, s)$-set and when $\cP$ is a $t$-regular set. The first case doesn't happen all the time and has no loss by an elementary incidence argument, so we focus on gaining an $\eps$-improvement in the second case. A $t$-regular set $\cP$ has the special property that $|\cP \cap Q|$ is still a $(\Delta, t)$-set for $Q \in \cD_{\Delta} (\cP)$, $\Delta = \delta^{1/2}$.

    \item If $\cP$ is $t$-regular with $\Delta = \delta^{1/2}$, we may find a $\Delta$-tube $\bT$ such that upon dilation of $\bT$ to $[0, 1]^d$, we obtain a new Furstenberg problem with the ball set having a quasi-product structure. See Appendix A of \cite{orponen2021hausdorff}.
    
    \item Finally, we will use discretized sum-product type arguments to conclude an $\eps$-improvement to the dual Furstenberg problem assuming $\cP = X \times Y \subset \R^{d-1} \times \R$ has a quasi-product structure. In very rough terms, we shall lift $Y$ to have dimension close to $1$, and apply multi-linear Kakeya. This idea of lifting the dimension was found in He's work on a higher-rank discretized sum-product theorem \cite{he2016discretized} in a slightly different context.
\end{enumerate}

To prove Theorem \ref{thm:main_refined}, we use a similar multiscale decomposition argument as in (1) to reduce to two building blocks: a smaller version of the setting of Theorem \ref{thm:main_refined} and a smaller version of Theorem \ref{thm:main}. The smaller version of Theorem \ref{thm:main_refined} has no loss by an elementary incidence argument, and the smaller version of Theorem \ref{thm:main} admits a gain.

For Theorem \ref{thm:intermediate}, we first prove the case when $\mu, \nu$ are supported in a $r_0 K$ plate (where $K$ is a small power of $r_0^{-1}$). This uses a similar argument as in \cite[Lemma 2.8]{orponen2022kaufman}. The general case follows from applying this special case many times.





\subsection{Structure of the paper}
In Section \ref{sec:prelims}, we introduce some key concepts that will be used throughout the paper. In Sections \ref{sec:quasi-product sets} through \ref{sec:main thm general}, we prove Theorem \ref{thm:main} first for quasi-product sets following ideas of \cite{he2020orthogonal}, and then for regular sets and finally for general sets following \cite{orponen2021hausdorff}. In Section \ref{sec:refined from main}, we prove Theorem \ref{thm:main_refined} from Theorem \ref{thm:main}. In Section \ref{sec:power decay}, we generalize a radial projection theorem of Shmerkin \cite[Theorem 6.3]{shmerkin2022non} that enables us to assume our sets have power decay around $k$-planes. In Section \ref{sec:threshold}, we prove Theorem \ref{thm:threshold_refined} following ideas from \cite{orponen2022kaufman}. Finally, in Section \ref{sec:corollaries}, we prove Theorem \ref{cor:shm_conj} and \ref{cor:shm_conj_2} from the discretized results.

\textbf{Acknowledgments.} The author is supported by a NSF GRFP fellowship. The author would like to thank Xiumin Du, Tuomas Orponen, Yumeng Ou, Pablo Shmerkin, Hong Wang, and Ruixiang Zhang for helpful discussions. We thank Paige Bright and Yuqiu Fu for suggesting to include a higher-dimensional version of Beck's theorem in this paper.

\section{Preliminaries}\label{sec:prelims}
This section will summarize the argument of \cite{orponen2021hausdorff}, and in lieu of proofs (with the exception of Proposition \ref{prop:easy_est}), we either refer the reader to \cite{orponen2021hausdorff} or defer the proof to a later section.
\subsection{Definitions}
We use $A \lesim B$ to denote $A \le CB$ for some constant $C$. We use $A \lesim_N B$ to indicate the constant $C$ can depend on $N$. We will also use $A \leapp B$ in future proofs; its exact meaning will always be clarified when used.

For a finite set $A$, let $|A|$ denote the cardinality of $A$. If $A$ is infinite, let $|A|$ denote the Lebesgue measure of $A$.

For a set $A$, let $A^c = \R^d \setminus A$.

For a tube $T$, let $\ell(T)$ denote the central line segment of $T$.

For a set $E$, let $E^{(\delta)}$ be the $\delta$-neighborhood of $E$.

For $A \subset X \times Y$ and $x \in X$, define the slice $A|_x = \{ y \in Y : (x, y) \in A \}$ and $A|^y = \{ x \in X : (x, y) \in A \}$.

For a measure $\mu$ and a set $G$, define the restricted measure $\mu|_G$ by $\mu|_G (A) = \mu(G \cap A)$. The renormalized restricted measure is $\mu_G = \frac{1}{\mu(G)} \mu|_G$.

For vectors $v_1, \cdots, v_i \in \R^d$, $1 \le i \le d$, the quantity $|v_1 \wedge \cdots \wedge v_i|$ is the non-negative volume of the parallelepiped spanned by $v_1$ through $v_i$.


$B(x, r)$ is the ball in $\R^d$ of radius $r$ centered at $x$. We also use the notation $B_r$ for an arbitrary $r$-ball in $\R^d$.

For sets $A, B$ and $P \subset A \times B$, let $A \plusP B := \{ a + b : (a, b) \in P \}$.

\begin{defn}
    We say $\mu$ supported in $\R^d$ is an $\alpha$-dimensional measure with constant $C_\mu$ if $\mu(B_r) \le C_\mu r^\alpha$ for all $r \le 1$ and balls $B_r$ of radius $r$.
\end{defn}

\subsection{Plates}\label{subsec:r-net}
We work in $\R^d$. An $(r, k)$-plate is the $r$-neighborhood of a $k$-dimensional hyperplane in $\R^d$. We construct a set $\cE_{r,k}$ of $(r, k)$-plates with the following properties:
\begin{itemize}
    \item Each $(\frac{r}{2}, k)$-plate intersecting $B(0, 1)$ lies in at least one plate of $\cE_{r,k}$;

    \item For $s \ge r$, every $(s, k)$-plate contains $\lesim \left( \frac{s}{r} \right)^{(k+1)(d-k)}$ many $(r, k)$-plates of $\cE_{r,k}$.
\end{itemize}

For example, when $k = 1$ and $d = 2$, we can simply pick $\sim r^{-1}$ many $r$-tubes in each of an $r$-net of directions. This generalizes to higher $k$ and $d$ via a standard $r$-net argument, but we haven't seen it in the literature, so we provide a precise construction.

An $r$-net of a metric space is a subset $S$ such that $B(x, r) \cap B(y, r) = \emptyset$ for $x \neq y, x, y \in S$. The affine Grassmanian manifold $\A(k, d)$ is the set of all $k$-planes in $\R^d$. By counting degrees of freedom, we see that $\dim \A(k, d) = (k+1)(d-k)$. Any such plane is uniquely $V = V_0 + a$ for some $k$-dimensional subspace $V_0$ and $a \in V_0^\perp$. For $V=V_0 + a$ and $W=W_0 + b$, define their distance $d_\A$ to be (following Section 3.16 of \cite{mattila1999geometry}):
\begin{equation*}
    d_\A (V, W) = \norm{\pi_{V_0} - \pi_{W_0}}_{op} + |a - b|,
\end{equation*}
where $\pi_{V_0}:\R^d\to V_0$ and $\pi_{W_0}:\R^d\to W_0$ are orthogonal projections, and $\|\cdot\|_{op}$ is the usual operator norm for linear maps. Let $\A_0 (k, d)$ be the submanifold of $k$-planes $V_0 + a$ with $a \in B(0, 10)$. Since the manifold $(\A_0 (k, d), d_\A)$ is compact and smooth, it can be covered by finitely many charts that are $\sim 1$-bilipschitz to a subset of $\R^{(k+1)(d-k)}$.

From a maximal $cr$-net $\cN$ of the set of affine planes of $\A_0 (k, d)$ with $c > 0$ a sufficiently small constant, we can construct a set $\cE_{r,k}$ of $(r, k)$-plates whose central planes are the elements of $\cN$. We now check the two properties for $\cE_{r,k}$.

To prove the first property, let $H$ be a $(\frac{r}{2}, k)$-plate intersecting $B(0, 1)$. Then the central plane $P = P_H$ must lie at distance $\le 2cr$ from some element $Q$ of $\cN$ (otherwise, we can add it to the net). Let $P = P_0 + a$ and $Q = Q_0 + b$. Hence, $\norm{\pi_{P_0} - \pi_{Q_0}}_{op} \le 2cr$ and $|a - b| \le 2cr$, so for $x \in P \cap B(0, 10)$ (so $x-a \in P_0$),
\begin{equation*}
    |\pi_{Q_0} (x-a) - (x-a)| \le 2cr |x-a| \le 2cr(|x| + |a|) \le 40cr.
\end{equation*}
Now, note that $\pi_{Q_0} (x-a) + b \in Q$. It is close to $x$ if $c < \frac{1}{100}$:
\begin{equation*}
    |\pi_{Q_0} (x-a) + b - x| \le 40cr + |a-b| \le 50cr < \frac{r}{2}.
\end{equation*}
We have proved $P \cap B(0, 10) \subset Q^{(r/2)}$ and thus $P^{(r/2)} \cap B(0, 10) \subset Q^{(r)}$. Hence, $H$ is contained in the $(r, k)$-plate with central plane $Q$.

To prove the second property, we note that the set of $k$-planes in $\A(k, d)$ whose intersection with $B(0, 10)$ is contained in a given $(s, k)$-plate is contained in an $O(s)$-ball $B$ of $\A(k, d)$. First suppose $B$ is contained within some coordinate chart; we would like to prove that $|\cN \cap B| \lesim \left( \frac{s}{r} \right)^{(k+1)(d-k)}$. To show this, note that $\{ B(x, r) : x \in \cN \cap B \}$ is a packing of $B^{(r)}$ with finitely overlapping $r$-balls. Now map the chart to $\R^{(k+1)(d-k)}$. Since the map only distorts distances by a constant factor, we can pack $|\cN \cap B|$ many finitely overlapping $c_1 r$-balls into a ball of radius $O(s)$. Thus by a volume argument, we have $|\cN \cap B| \lesim \left( \frac{s}{r} \right)^{(k+1)(d-k)}$. Since there are finitely many charts, we can apply the argument to $B$ intersecting each chart, which proves the second property.


We specialize our discussion to tubes. For each scale $\delta$, let $\T^\delta$ be a cover of $[0, 1]^d$ with $\delta$-tubes such that every $\frac{\delta}{2}$-tube (and in particular every $r$-tube with $r < \frac{\delta}{2}$) is contained in at least $1$ and at most $C_d$ many tubes of $\T^\delta$. Slightly abusing notation (\'a la \cite{orponen2021hausdorff}), we will also use $\T, \T_\delta, \T_\Delta$ to represent sets of tubes, where the subscript $\delta$ helpfully indicates a set of $\delta$-tubes.

In Theorem \ref{thm:intermediate}, we pay attention to certain plates with disproportionately much mass.

\begin{defn}
We say that a $(r, k)$-plate $H$ is $c$-concentrated on $\mu$ if $\mu(H) \ge c$.
\end{defn}

Other notation is following \cite{orponen2021hausdorff}. Unlike \cite{orponen2021hausdorff}, we work with ordinary rather than dyadic tubes. The advantage of dyadic tubes is that every $2^{-n}$-tube is in a unique $2^{-m}$-tube if $n > m$; thus, dyadic tubes will avoid the $C_d$ loss incurred by the finitely overlapping cover $\T^\delta$. However, dyadic tubes have the disadvantage that they don't behave well under rotations or dilations, and it would be more cumbersome to define $(\delta, s, C, k)$-sets of dyadic tubes (whereas the definition for ordinary tubes is more geometric). Thus, in principle it is possible to work with dyadic tubes and save on the $C_d$ loss, but it doesn't affect our numerology in the end (since our losses will depend badly on $d$ anyway), so we chose to work with ordinary tubes throughout. 

\begin{defn}\cite{orponen2021hausdorff}
    Let $P \subset \R^d$ be a bounded nonempty set, $d \ge 1$. Let $\delta > 0$ be a dyadic number, and let $0 \le s \le d$ and $C > 0$. We say that $P$ is a $(\delta, s, C)$-set if
    \begin{equation*}
        |P \cap Q|_\delta \le C \cdot |P|_\delta \cdot r^s, \qquad Q \in \cD_r (\R^d), r \in [\delta, 1].
    \end{equation*}
\end{defn}

\begin{defn}
    Let $\T \subset \R^d$ be a bounded nonempty set of dyadic $\delta$-tubes, $d \ge 2$. Let $\delta > 0$ be a dyadic number, and let $0 \le s \le d$, $0 \le k \le d-2$, and $C > 0$. We say that $\T$ is a $(\delta, s, C, k)$-set of tubes if for every $(r, k+1)$-plate $H$ and $\delta \le r \le 1$, we have
    \begin{equation*}
        |\T \cap H| \le C \cdot |\T| \cdot r^s.
    \end{equation*}
    If $k$ is not specified, we default to $k = 0$.
\end{defn}

The following is a simpler interpretation of $(\delta, s, C, k)$-set if the tubes all pass through the same point.

\begin{defn}
    Let $\sigma(t) \in S^{d-1}$ be the slope of the central axis of $t$.
\end{defn}

\begin{lemma}\label{lem:slope set dim}
    Let $\T$ be a set of $\delta$-tubes intersecting $p$. Then if $\T$ is a $(\delta, s, C, k)$-set, then $\sigma(\T)$ is a $(\delta, s, O(C), k)$-set. Conversely, if $\sigma(\T)$ is a $(\delta, s, C, k)$-set, then $\T$ is a $(\delta, s, O(C), k)$-set.
\end{lemma}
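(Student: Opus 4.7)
The proof rests on the bijection between $\delta$-tubes through the fixed point $p$ and their slopes in $S^{d-1}$: two such tubes are essentially distinct iff their slopes differ by $\gtrsim \delta$, so $|\T| \sim |\sigma(\T)|_\delta$ up to dimensional constants. The lemma then reduces to showing that $(r, k{+}1)$-plates containing tubes through $p$ correspond, under the slope map $\sigma$, to $(r, k)$-plates of slopes on the sphere, and vice versa.

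For the forward direction, fix a $(r, k)$-plate $H = V_0^{(r)} + a$ with $V_0$ a $k$-dimensional subspace and $a \in V_0^\perp$, and let $W = \mathrm{span}(V_0, a)$, enlarged to dimension exactly $k+1$ if necessary. Every tube $T \in \T$ with $\sigma(T) \in H$ has direction in $W^{(r)}$, and tracing the segment $\{p + t\,\sigma(T) : |t| \lesim 1\}$ shows that $T \subset p + W^{(O(r))}$, which is contained in a $(O(r), k{+}1)$-plate. The $(\delta, s, C, k)$-hypothesis on $\T$ bounds the number of such tubes by $C |\T| r^s$, and using $|\T| \sim |\sigma(\T)|_\delta$ converts this into the desired non-concentration bound on $|\sigma(\T) \cap H|_\delta$.

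For the reverse direction, fix a $(r, k{+}1)$-plate $H = V^{(r)} + b$ with $V$ a $(k{+}1)$-dimensional subspace. If $p$ lies outside $H^{(O(\delta))}$ then $\T \cap H = \emptyset$; otherwise, estimating $|\pi_{V^\perp}(p + t\sigma(T) - b)|$ for $|t| \lesim 1$ forces $\sigma(T) \in V^{(O(r))}$ for every $T \in \T \cap H$, so $\sigma(T)$ lies in the $O(r)$-neighborhood of the $k$-dimensional great subsphere $V \cap S^{d-1}$. Viewing this annular region as the natural $(r, k)$-plate for subsets of the sphere --- that is, the intersection of a $(k{+}1)$-dimensional linear-subspace neighborhood with $S^{d-1}$ --- the $(\delta, s, C, k)$-hypothesis on $\sigma(\T)$ bounds the number of such slopes by $C\,|\sigma(\T)|_\delta\,r^s$, which transfers to the required control on $|\T \cap H|$.

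The main subtlety is reconciling Euclidean $(r, k)$-plates (used in the definition of $(\delta, s, C, k)$-set) with spherical $(r, k)$-caps (which index slopes naturally): the region $V^{(O(r))} \cap S^{d-1}$ is a curved neighborhood of a $k$-sphere and is not literally a single Euclidean $(r, k)$-plate. One either adopts the convention that $(r, k)$-plates for subsets of the sphere mean $r$-neighborhoods of $k$-dimensional great subspheres, or covers the region by a bounded family of tangential Euclidean plates and absorbs the resulting constant into the $O(C)$ factor. The dimension shift from $k{+}1$ (for tubes) to $k$ (for slopes) reflects the single linear degree of freedom that tubes through $p$ inherit from the base point.
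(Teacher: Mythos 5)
Your proof mirrors the paper's approach: identify tubes through $p$ with their slopes on $S^{d-1}$ and establish a correspondence between $(r,k{+}1)$-plates of tubes through $p$ and plate-like slope regions. The forward direction is correct and matches the paper. For the reverse direction you correctly flag the issue the paper's terse proof glosses over: the slopes of tubes inside a $(r,k{+}1)$-plate $W$ through $p$ occupy $V^{(O(r))}\cap S^{d-1}$ for a $(k{+}1)$-dimensional linear subspace $V$ --- the $O(r)$-neighborhood of a $k$-dimensional great subsphere --- which is curved and is not literally a Euclidean $(O(r),k)$-plate. The paper's one-line claim that $(\pi_p(W-p))^{C\delta}$ is a $(r+C\delta,k)$-plate elides exactly this point.

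However, only the first of your two proposed fixes is sound; you should not present them as interchangeable. The covering alternative fails quantitatively: a Euclidean $(O(r),k)$-plate tangent to the great $k$-subsphere covers only a cap of diameter $\sim\sqrt r$ on it (the subsphere has unit-order curvature), so $\sim r^{-k/2}$ such plates are needed to cover $V^{(O(r))}\cap S^{d-1}$, and this factor degrades the exponent from $r^s$ to $r^{s-k/2}$ rather than merely enlarging $C$. Indeed, with the literal Euclidean-plate convention the reverse implication is false: in $\R^3$ with $k=1$, let $\sigma(\T)$ be a $\delta$-net of a great circle $V\cap S^2$. Any $r$-neighborhood of a line meets the circle in an arc of length $\lesssim\sqrt r$, so $\sigma(\T)$ is a Euclidean $(\delta,\tfrac12,O(1),1)$-set; yet every tube of $\T$ lies in the $(O(\delta),2)$-plate $V^{(O(\delta))}$, so $\T$ fails the $(\delta,s,C,1)$ tube non-concentration for every $s>0$. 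The lemma must therefore be read with $(r,k)$-plates on $S^{d-1}$ meaning $r$-neighborhoods of $k$-dimensional great subspheres (equivalently, intersections of $S^{d-1}$ with $(r,k{+}1)$-plates through the origin); your first fix is essential, not a stylistic option.
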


\begin{proof}
    Let $\pi_p : \R^d \to S^{d-1}$ denote spherical projection through $p$. Then $\pi_p (t \setminus B(p, 1/2))$ is well-defined and equals $\sigma(t)$, up to an additive loss of $C\delta$. Fix a $(r, k)$-plate $H \in S^{d-1}$. Then the set of tubes with slope in $H$ and passing through $p$ must lie in a $(r + C\delta, k+1)$-plate $p^{(C\delta)} + \pi_p^{-1} (H)$. Conversely, for any $(r, k+1)$-plate $W$ containing $p$, the set of possible slopes of tubes through $p$ contained in $W$ is contained in a $(r + C\delta, k)$-plate $(\pi_p (W - p))^{C\delta}$.
\end{proof}

We will need the following lemma from \cite{orponen2021hausdorff}.
\begin{lemma}[\cite{orponen2021hausdorff}, Lemma 2.7]\label{lem:small sets}
    Let $P \subset [-2, 2]^d$ be a $(\delta, s, C)$-set. Then $P$ contains a $\delta$-separated $(\delta, s, O_d (C))$-subset $P'$ with $|P'| \le \delta^{-s}$.
\end{lemma}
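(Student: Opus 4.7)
The plan is to pass from $P$ to the desired $\delta$-separated subset in two stages: a deterministic discretization, followed by a random sparsification if needed.

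For the first stage, I would extract $P_0 \subset P$ by selecting one point of $P$ from each dyadic $\delta$-cube that meets $P$. This $P_0$ is $\delta$-separated with $|P_0| \simeq_d |P|_\delta$, and the non-concentration hypothesis transfers cleanly: for any $Q \in \cD_r$ with $\delta \le r \le 1$,
\begin{equation*}
|P_0 \cap Q| \lesim_d |P \cap Q|_\delta \le C \, |P|_\delta \, r^s \lesim_d C \, |P_0| \, r^s,
\end{equation*}
so $P_0$ is a $\delta$-separated $(\delta, s, O_d(C))$-set. If already $|P_0| \le \delta^{-s}$, I would take $P' := P_0$ and stop.

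In the remaining case $|P_0| > \delta^{-s}$, I would sparsify randomly: set $p := \delta^{-s}/|P_0| < 1$ and include each point of $P_0$ in $P'$ independently with probability $p$. Then $\E|P'| = \delta^{-s}$, and for each dyadic $Q \in \cD_r$ with $\delta \le r \le 1$,
\begin{equation*}
\E|P' \cap Q| = p \, |P_0 \cap Q| \lesim_d C (r/\delta)^s,
\end{equation*}
which matches the target $O_d(C) \, |P'| \, r^s$ in expectation. Chernoff gives concentration of $|P'|$ near $\delta^{-s}$ and of each $|P' \cap Q|$ near its mean; a union bound over the $O(\delta^{-d} \log(\delta^{-1}))$ dyadic cubes at all scales produces, with positive probability, a sample $P'$ with $|P'| \le 2\delta^{-s}$ obeying the non-concentration bound at every scale. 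Finally, trimming $P'$ down to at most $\delta^{-s}$ points costs only an $O(1)$ factor in the constant.

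The main obstacle is the regime of very small $r$, where the Chernoff mean $C(r/\delta)^s$ is below $\log(\delta^{-1})$ and Chernoff alone cannot yield a clean $O_d(C)$ bound at that scale. Here I would fall back on the deterministic estimate $|P' \cap Q| \le |P_0 \cap Q| \lesim_d (r/\delta)^d$ coming from $\delta$-separation; for $r$ sufficiently close to $\delta$ this is comparable to $(r/\delta)^s$ up to dimensional constants and can be absorbed into $O_d(C)$. Combining the probabilistic bound at larger scales with the deterministic bound at the smallest scales delivers the lemma.
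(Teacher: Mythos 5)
Your two-stage plan (discretize to one point per $\delta$-cube, then sparsify randomly) is the right skeleton and matches the expected route, but the patch for small scales does not close the hole you yourself flagged. A Chernoff tail together with a union bound over $\lesim \delta^{-d}\log(\delta^{-1})$ dyadic cubes can only certify $|P'\cap Q|\lesim_d C(r/\delta)^s$ when this target itself dominates $\log(\delta^{-1})$: if one insists on failure probability $\delta^{O(d)}$ per cube, the threshold Chernoff can enforce is of size $\max\bigl(O_d(C)(r/\delta)^s,\ O(\log(\delta^{-1}))\bigr)$, and the second term is not removable by making the mean small. So the probabilistic step is reliable only for $r\gesim \delta\bigl(\log(\delta^{-1})/C\bigr)^{1/s}$. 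On the other end, the fallback $|P'\cap Q|\le|P_0\cap Q|\lesim_d(r/\delta)^d$ beats the target $(r/\delta)^s$ only when $(r/\delta)^{d-s}\lesim_d 1$, i.e.\ $r/\delta=O_d(1)$; since $d>s$, for larger $r/\delta$ the quantity $(r/\delta)^d$ in fact \emph{exceeds} $(r/\delta)^s$, so it cannot ``be absorbed into $O_d(C)$''. These two windows do not meet. For $O_d(1)\ll r/\delta\ll (\log(\delta^{-1})/C)^{1/s}$ neither estimate gives the required bound, and what the argument actually produces is a $\bigl(\delta,s,O_d\bigl(C\log^{O(1)}(\delta^{-1})\bigr)\bigr)$-set, a polylogarithmic loss over the stated $O_d(C)$.

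Closing this genuinely requires an additional idea at the intermediate scales — for instance a deterministic multiscale pruning inside each $\delta(\log(\delta^{-1})/C)^{1/s}$-cube after the random step, or a selection rule that does not rest on a crude union bound over all cubes at all scales. One mitigating remark: the only place this paper uses the lemma (to normalize $|\cP_0|\approx_\delta\delta^{-t}$ in the proof of Theorem~\ref{thm:improved_incidence}) already works up to $\log^{O(1)}(\delta^{-1})$ factors, so the weaker conclusion your argument does reach would suffice \emph{there}. But as stated the lemma promises $O_d(C)$, the paper takes this from \cite{orponen2021hausdorff} without reproving it, and your proof does not deliver it.
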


First, since $(\delta, \kappa, \delta^{-\eps}, k)$-sets are $(\delta, \kappa, \delta^{-\eps}, k')$-sets for any $k' < k$, we can assume that $k \le s < k+1$. Next, since $(\delta, t, \delta^{-\eps})$-sets are $(\delta, t', \delta^{-\eps})$-sets for $t' < t$, we may assume $t \le k+1$. In particular, we get $t - s \le 1$, a useful assumption.

We record a useful geometric fact about $(r, k)$-plates.

\begin{lemma}\label{lem:concentrated_points}
    Fix $C_\sep \ge 1$, then there exists $r_0$ depending on $C_\sep$ such that the following is true for $r < r_0$. If $(x, y)$ lie in an $(r, k)$-plate $H$ and $|x-y| = C_\sep^{-1}$, then any $r$-tube $T$ through $x, y$ will lie in $H^{(CC_\sep r)}$, which is a $(CC_\sep r, k)$-plate.
\end{lemma}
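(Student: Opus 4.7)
Let $P$ be the central $k$-plane of $H$, so that $H = P^{(r)}$. Let $x', y' \in P$ be nearest points to $x, y$ respectively; then $|x-x'|, |y-y'| \le r$. The plan is to compare the central line $\ell(T)$ of the tube (which passes through $x$ and $y$) to the line $\ell'$ through $x', y'$ (which lies entirely in the affine $k$-plane $P$, assuming $k \geq 1$; the case $k=0$ is vacuous once $r_0 < \tfrac{1}{2}C_\sep^{-1}$, since then $|x-y|\le 2r < C_\sep^{-1}$), and to show that $\ell(T)$ lies in $P^{(O(C_\sep r))}$, from which the conclusion $T \subset H^{(O(C_\sep r))}$ follows by taking an additional $r$-neighborhood.

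The key numerical input is that, provided $r_0$ is small enough (say $r_0 < \tfrac{1}{4}C_\sep^{-1}$), we have $|x'-y'| \ge |x-y| - 2r \ge \tfrac{1}{2}C_\sep^{-1}$. Writing $v = y-x$ and $v' = y'-x'$, we get $|v-v'| \le 2r$ while $|v|, |v'| \gtrsim C_\sep^{-1}$, so a standard computation gives
\begin{equation*}
    \left| \tfrac{v}{|v|} - \tfrac{v'}{|v'|} \right| \le \tfrac{|v-v'|}{|v|} + \tfrac{|\,|v|-|v'|\,|}{|v'|} \lesssim C_\sep r.
\end{equation*}

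Now any $z \in \ell(T)$ can be written as $z = x + s\tfrac{v}{|v|}$ for some $s$ bounded by the length of the tube (an absolute constant, since tubes in this paper are normalized to length $\sim 1$). Setting $z' := x' + s \tfrac{v'}{|v'|} \in \ell' \subset P$, the triangle inequality yields
\begin{equation*}
    |z - z'| \le |x-x'| + |s| \cdot \left| \tfrac{v}{|v|} - \tfrac{v'}{|v'|} \right| \lesssim r + C_\sep r \lesssim C_\sep r.
\end{equation*}
Hence $\ell(T) \subset P^{(C' C_\sep r)}$ for some absolute constant $C'$, and thus $T = \ell(T)^{(r)} \subset P^{((C'C_\sep + 1) r)} \subset H^{(C C_\sep r)}$ for a suitable absolute $C$. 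Finally, since $P^{(C C_\sep r)}$ is the $(C C_\sep r)$-neighborhood of a $k$-plane, it is by definition a $(CC_\sep r, k)$-plate.

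I do not expect any real obstacle here: the only subtlety is ensuring $|x'-y'|$ is bounded below by something comparable to $|x-y|$, which is precisely why we need to assume $r_0$ is small in terms of $C_\sep$. The rest is a direction-perturbation computation of the type used throughout the preliminaries.
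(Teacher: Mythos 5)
Your direct computation is correct and gives the same bound the paper's one-line contradiction sketch is pointing at; the two are essentially contrapositives of each other (the paper says: if $T$ escapes $H^{(CC_\sep r)}$ then its axis makes angle $\gtrsim CC_\sep r$ with the central $k$-plane, forcing $H\cap T$ into a segment of length $<(2C_\sep)^{-1}$, contradicting $|x-y|=C_\sep^{-1}$; you show directly the axis direction is within $O(C_\sep r)$ of the plane).

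One small imprecision to fix: you write ``any $z\in\ell(T)$ can be written as $z = x + s\,\tfrac{v}{|v|}$,'' which tacitly assumes the central line $\ell(T)$ passes through $x$ with exact direction $v/|v|$. In general $x,y$ are merely inside the $r$-tube $T$, not on its axis. The fix is routine: pick $\tilde x,\tilde y\in\ell(T)$ with $|\tilde x - x|,|\tilde y-y|\le r$; then $|\tilde x-\tilde y|\ge C_\sep^{-1}-2r \ge \tfrac12 C_\sep^{-1}$, and the same difference-of-unit-vectors estimate shows the actual direction $e:=\tfrac{\tilde y-\tilde x}{|\tilde y-\tilde x|}$ satisfies $|e - v/|v|| \lesssim C_\sep r$. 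Parametrizing $\ell(T)$ as $\tilde x + s e$ and comparing to $x' + s\,v'/|v'| \in P$ then produces only an extra additive $O(r)$, which is absorbed into the $O(C_\sep r)$ bound. With that adjustment the argument is complete.
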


\begin{proof}
    For $C$ sufficiently large: If $T$ does not lie in $H^{(CC_\sep r)}$, then $H \cap T$ will be contained in a $(2C_\sep)^{-1}$-tube segment of $T$.
\end{proof}

\subsection{An Elementary Estimate}
We prove a classical estimate which can be viewed as Theorem \ref{thm:main} with $\eps = 0$. We won't need the fact that $\T(p)$ is a $(\delta, \kappa, \delta^{-\eps}, k)$-set. The $d = 2$ case is proven as Proposition 2.13 and Corollary 2.14 of \cite{orponen2021hausdorff}. For higher dimensions, the proof is similar and we sketch the details. Let $A \leapp_\delta B$ denote the inequality
\begin{equation*}
    A \le C \cdot \log (\frac{1}{\delta})^C B.
\end{equation*}

\begin{prop}\label{prop:easy_est}
    Let $0 \le s \le t \le d-1$, and let $C_P, C_T \ge 1$. Let $\cP \subset \cD_\delta$ be a $(\delta, t, C_P)$-set. Assume that for every $p \in \cP$ there exists a $(\delta, s, C_T)$-family $\T(p) \subset \T^\delta$ of dyadic $\delta$-tubes with the property that $T \cap p \neq \emptyset$ for all $T \in \T(p)$, and $|\T(p)| = M$ for some $M \ge 1$.

    Let $\T \subset \T^\delta$ be arbitrary, and define $I(\cP, \T) = \{ (p, T) \in \cP \times \T : T \in \T(p) \}$. Then
    \begin{equation*}
        |I(\cP, \T)| \leapprox_\delta \sqrt{C_P C_T} \cdot (M\delta^s)^{\theta/2} \cdot |\T|^{1/2} |\cP|,
    \end{equation*}
    where $\theta = \theta(s, t) = \frac{d-1-t}{d-1-s} \in [0,1]$. (If $s=t=d-1$, then $\theta(s,t) = 0$.)
\end{prop}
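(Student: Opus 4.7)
The plan is to apply Cauchy--Schwarz in the tube variable and reduce to a dyadic two-way incidence count on tubes through pairs of $\delta$-balls. Writing $|I(\cP,\cT)| = \sum_{T \in \cT} N(T)$ with $N(T) := |\{p \in \cP : T \in \cT(p)\}|$, Cauchy--Schwarz gives $|I|^2 \le |\cT| \sum_{T \in \cT} N(T)^2$, and expanding the square yields
\[
\sum_{T \in \cT} N(T)^2 \le \sum_{p,p' \in \cP} |\cT(p) \cap \cT(p')|,
\]
so it suffices to bound this double sum.

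For fixed $p \ne p'$ with $|p-p'| \sim r \in [\delta, 1]$, any $\delta$-tube through both has direction in a spherical cap of radius $O(\delta/r)$ on $S^{d-1}$. I would bound the number of such tubes in $\cT(p)$ in two ways. By Lemma~\ref{lem:slope set dim}, $\sigma(\cT(p))$ is a $(\delta, s, O(C_T))$-set on $S^{d-1}$, which gives $\lesim C_T M (\delta/r)^s$ slopes in the cap; alternatively, a trivial packing count bounds the number of $\delta$-separated directions in the cap by $O(r^{-(d-1)})$, which in turn upper-bounds the number of tubes in $\cT^\delta$ through $p$ with slope in the cap. Taking the geometric mean with an exponent $\theta \in [0,1]$ to be chosen,
\[
|\cT(p) \cap \cT(p')| \lesim (C_T M)^\theta (\delta/r)^{s\theta} \, r^{-(d-1)(1-\theta)}.
\]

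Next, I would split $\sum_{p'}$ dyadically in $r = |p-p'|$, using the $(\delta,t,C_P)$-set bound $|\{p' \in \cP : |p-p'| \sim r\}| \le C_P |\cP| r^t$ to obtain
\[
\sum_{p' \ne p} |\cT(p) \cap \cT(p')| \lesim C_P |\cP| (C_T M \delta^s)^\theta \sum_{r \text{ dyadic in } [\delta,1]} r^{\,t - (d-1) + \theta(d-1-s)}.
\]
The key choice is $\theta := (d-1-t)/(d-1-s) \in [0,1]$, which makes the exponent of $r$ vanish, so the sum is $O(\log(1/\delta))$. Summing over $p \in \cP$ and plugging back into Cauchy--Schwarz (using $C_T^{\theta/2} \le C_T^{1/2}$ since $C_T \ge 1$ and $\theta \le 1$) yields the claimed bound. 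The diagonal $p = p'$ contributes $|\cP| M$ to the square sum, which together with the trivial bound $|I| \le |\cP| M$ is handled in a standard way and does not worsen the final estimate in the regime of interest.

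The main (and really only) nontrivial step is identifying the correct interpolation exponent $\theta = (d-1-t)/(d-1-s)$. The endpoints $\theta = 0$ (when $t = d-1$) and $\theta = 1$ (when $t = s$) are the extremes of the interpolation, using only the packing bound or the $(\delta,s)$-set slope bound respectively. Everything else is routine dyadic summation, so I do not expect any major obstacle.
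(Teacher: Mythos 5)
Your proposal is correct and follows essentially the same route as the paper's proof: Cauchy--Schwarz in the tube variable, the two-way bound on $|\cT(p)\cap\cT(p')|$ (the $(\delta,s,C_T)$-set bound via the slope set, and the trivial direction-packing count $\lesim (d(p,p')+\delta)^{-(d-1)}$), interpolation with exactly $\theta=(d-1-t)/(d-1-s)$ to kill the $r$-exponent, and dyadic summation using the $(\delta,t,C_P)$-set property. The one place you are slightly vaguer than the paper is the diagonal $p=p'$: the paper avoids a separate case by keeping the regularization $(d(p,p')+\delta)^{-t}$ throughout, under which the interpolated bound $(C_TM\delta^s)^\theta(d(p,p')+\delta)^{-t}$ remains valid at $p=p'$ (this uses $M\lesim\delta^{-(d-1)}$ and $|\cP|\ge C_P^{-1}\delta^{-t}$), so the diagonal is absorbed without appeal to the trivial bound $|I|\le|\cP|M$.
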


The following corollary of Proposition \ref{prop:easy_est} is the form we will use.

\begin{cor}\label{cor:easy_est}
    Let $0 \le s \le t \le d-1$, and let $C_P, C_T \ge 1$. Let $\cP \subset \cD_\delta$ be a $(\delta, t, C_P)$-set. Assume that for every $p \in \cP$ there exists a $(\delta, s, C_T)$-family $\T(p) \subset \T^\delta$ of dyadic $\delta$-tubes with the property that $T \cap p \neq \emptyset$ for all $T \in \T(p)$, and $|\T(p)| = M$ for some $M \ge 1$. If $\T = \cup_{p \in \cP} \T(p)$, then
    \begin{equation*}
        |\T| \geapprox_\delta (C_P C_T)^{-1} \cdot M\delta^{-s} \cdot (M\delta^s)^{\frac{t-s}{d-1-s}}.
    \end{equation*}
    (If $s = t = d-1$, then $\frac{t-s}{d-1-s} = 0$.)
\end{cor}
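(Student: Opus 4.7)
\medskip

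The plan is to deduce this as a one-line algebraic consequence of Proposition \ref{prop:easy_est}, using the obvious double-counting identity for $|I(\cP, \T)|$. Since every tube $T \in \T(p)$ belongs to $\T = \bigcup_{q \in \cP} \T(q)$, each pair $(p, T)$ with $T \in \T(p)$ contributes to $I(\cP, \T)$, so
\begin{equation*}
    |I(\cP, \T)| = \sum_{p \in \cP} |\T(p)| = M|\cP|.
\end{equation*}
This is the only structural input beyond the proposition.

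Next, I feed this identity into the upper bound from Proposition \ref{prop:easy_est}:
\begin{equation*}
    M|\cP| = |I(\cP, \T)| \leapp_\delta \sqrt{C_P C_T} \cdot (M\delta^s)^{\theta/2} \cdot |\T|^{1/2} |\cP|,
\end{equation*}
with $\theta = \frac{d-1-t}{d-1-s}$. Dividing by $|\cP|$ and squaring gives
\begin{equation*}
    |\T| \geapp_\delta (C_P C_T)^{-1} \cdot M^2 \cdot (M\delta^s)^{-\theta}.
\end{equation*}

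The final step is cosmetic rearrangement of the exponents: write $M^2 = M\delta^{-s} \cdot M \delta^s$, so that
\begin{equation*}
    M^2 (M\delta^s)^{-\theta} = M\delta^{-s} \cdot (M\delta^s)^{1-\theta},
\end{equation*}
and note $1-\theta = 1 - \frac{d-1-t}{d-1-s} = \frac{t-s}{d-1-s}$, which produces the claimed form. The edge case $s = t = d-1$ is handled by the convention $\theta = 0$ already recorded in the proposition, under which $\frac{t-s}{d-1-s}$ is interpreted as $0$ and the bound reads $|\T| \geapp_\delta (C_P C_T)^{-1} M\delta^{-s}$.

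There is no real obstacle here; the content is entirely contained in Proposition \ref{prop:easy_est}, and the only thing to be careful about is correctly identifying $|I(\cP, \T)|$ with $M|\cP|$ (which uses precisely the hypothesis $\T = \bigcup_{p \in \cP} \T(p)$ so that every element of $\T(p)$ is recorded) and then bookkeeping the exponent $1-\theta$.
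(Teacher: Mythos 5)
Your proof is correct and takes the same route as the paper: combine the double-counting identity $|I(\cP,\T)| = M|\cP|$ (the paper records it as the inequality $|I(\cP,\T)| \ge M|\cP|$, which is all that is needed) with the bound of Proposition \ref{prop:easy_est} and rearrange. The algebra, including the identity $1-\theta = \frac{t-s}{d-1-s}$ and the edge case $s=t=d-1$, is handled correctly.
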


\begin{remark}
    To use Corollary \ref{cor:easy_est}, we need $t \le d-1$. Fortunately, this is a harmless assumption because $s < d-1$, and changing $t$ to $\min(t, d-1)$ makes the hypothesis of Theorem \ref{thm:main} weaker.
\end{remark}

\begin{proof}
    We begin with an application of Cauchy-Schwarz.
\begin{align*} |I(\mathcal{P},\mathcal{T})| &= \sum_{T \in \mathcal{T}} |\{p \in \mathcal{P} : T \in \mathcal{T}(p)\}| \\
&\leq |\mathcal{T}|^{1/2} \left|  \{(T,P,P'): T\in\mathcal{T}(p)\cap \mathcal{T}(p')  \}\right|^{1/2}.
\end{align*}
Note that we have the following bounds:
\begin{equation}\label{form94} |\mathcal{T}(p) \cap \mathcal{T}(p')| \lesssim \min \left\{C_{T} \cdot M \cdot \left(\tfrac{\delta}{d(p,p')+\delta} \right)^{s}, \left( \tfrac{1}{d(p,p')+\delta} \right)^{d-1} \right\}, \end{equation}
where $d(p,p')$ stands for the distance of the midpoints of $p$ and $p'$. To prove \eqref{form94}, observe that if $T \in \mathcal{T}(p) \cap \mathcal{T}(p')$, then $T$ lies in a $\frac{\delta}{d(p,p')+\delta}$-tube with central line being the line between $p$ and $p'$. Thus, the first bound in \eqref{form94} follows from $\T(p)$ being a $(\delta, s, C_T)$-set with $|\T(p)| = M$, and the second bound is the maximum number of essentially distinct $\delta$-tubes that can fit inside a $\frac{\delta}{d(p,p')+\delta}$-tube.

Write $\theta := \theta(s,t) := \frac{(d-1) - t}{(d-1) - s} \in [0,1]$. (If $s = t = d-1$, we set $\theta := 0$.) The parameter $\theta$ is chosen so that $t = s\theta+(d-1)(1-\theta)$. Then \eqref{form94} and the inequality $\min\{a,b\} \leq a^{\theta}b^{1 - \theta}$ imply that
\begin{displaymath} |\mathcal{T}(p) \cap \mathcal{T}(p')| \lesssim (C_{T}M\delta^{s})^{\theta} \cdot d(p,p')^{-t}. \end{displaymath}
Since $\cP$ is a $(\delta,t,C_{P})$-set, for fixed $p\in\mathcal{P}$ we have
\begin{displaymath}
\sum_{p'} (d(p,p')+\delta)^{-t} \lesssim \sum_{ \sqrt{2}\cdot\delta \le 2^{-j} \le \sqrt{2} } 2^{t j}|\{p'\in \mathcal{P}: d(p,p') \le 2^{-j}\}| \lessapprox_{\delta} C_{P}\cdot |\mathcal{P}|.
\end{displaymath}
We deduce that
\begin{displaymath} \sum_{p, p'} |\mathcal{T}(p) \cap \mathcal{T}(p')| \lesssim (C_{T}M\delta^{s})^{\theta} \sum_{p, p'} (d(p,p')+\delta)^{-t} \lessapprox_{\delta} C_{P}(C_{T}M\delta^{s})^{\theta} \cdot |\mathcal{P}|^{2},\end{displaymath}
so
\begin{displaymath} |I(\mathcal{P},\mathcal{T})| \lessapprox_{\delta} C_{P}^{1/2}(C_{T}M\delta^{s})^{\theta/2} \cdot  |\mathcal{T}|^{1/2}|\mathcal{P}| \leq \sqrt{C_{P}C_{T}} \cdot (M\delta^{s})^{\theta/2} \cdot  |\mathcal{T}|^{1/2}|\mathcal{P}|. \end{displaymath}
This proves Proposition \ref{prop:easy_est}, and Corollary \ref{cor:easy_est} follows by observing $|I(\cP, \T)| \ge M |\cP|$.
\end{proof}


\subsection{Multiscale analysis}
Following Section 4 of \cite{orponen2021hausdorff}, we would like to change scale from $\delta$ to $\Delta > \delta$, while preserving the properties of $\T(p)$. We say $A \leapprox_\delta B$ if there exists an absolute constant $C \ge 1$ such that $A \le C \cdot [\log (1/\delta)]^C$. We start by naming the objects in Theorem \ref{thm:main}.

\begin{defn}
    Fix $\delta \in 2^{-\N}, s \in [0, d-1], C > 0, M \in \N$. We say that a pair $(\cP_0, \T_0) \subset \cD_\delta \times \T^\delta$ is a $(\delta, s, C_1, \kappa, C_2, M)$-nice configuration if for every $p \in \cP_0$, there exists a $(\delta, s, C_1, 0)$ and $(\delta, \kappa, C_2, k)$-set $\T(p) \subset \T_0$ with $|\T(p)| = M$ and such that $T \cap p \neq \emptyset$ for all $T \in \T(p)$.
\end{defn}

Using the method of induction on scales, we would like to relate nice configurations at scale $\delta$ to nice configurations at scales $\Delta, \frac{\delta}{\Delta}$, where $\delta < \Delta \le 1$. The following proposition, which combines Propositions 4.1 and 5.2 of \cite{orponen2021hausdorff}, gives a way of doing so with only polylog losses. Our proof relies on the same ideas as \cite{orponen2021hausdorff}, with some technical simplifications. We defer the proof to Section \ref{sec:refined from main}, where we prove a slightly more general version.

\begin{prop}\label{prop:nice_tubes}
    Fix dyadic numbers $0 < \delta < \Delta \le 1$. Let $(\cP_0, \T_0)$ be a $(\delta, s, C_1, \kappa, C_2, M)$-nice configuration. Then there exist sets $\cP \subset \cP_0$, $\T(p) \subset \T_0 (p), p \in \cP$, and $\T_\Delta \subset \T^\Delta$ such that denoting $\T = \cup_{p \in \cP} \T(p)$ the following hold:
    \begin{enumerate}[(i)]
        \item\label{item1} $|\cD_\Delta (\cP)| \approx_\delta |\cD_\Delta (\cP_0)|$ and $|\cP \cap Q| \approx_\delta |\cP_0 \cap Q|$ for all $Q \in \cD_\Delta (\cP)$.


        \item\label{item21} There exists $\bN$ such that $|\T \cap \bT| \sim \bN$ for all $\bT \in \T_\Delta$.

        \item\label{item3} $(\cD_\Delta (\cP), \T_\Delta)$ is $(\Delta, s, C^1_\Delta, \kappa, C^2_\Delta, M_\Delta)$-nice for some $C^1_\Delta \approx_\delta C_1$, $C^2_\Delta \approx_\delta C_2$, and $M_\Delta \ge 1$.

        \item\label{item4} For each $Q \in \cD_\Delta (\cP)$, let $\T_\Delta (Q)$ be the tubes in $\T_\Delta$ through $Q$. Then for all $\bT \in \T_\Delta (Q)$, we have
        \begin{equation*}
            |\{ (p, T) \in (\cP \cap Q) \times \T : T \in \T(p) \text{ and } T \subset \bT \} | \geapp_\delta \frac{M \cdot |\cP \cap Q|}{|\T_\Delta (Q)|}.
        \end{equation*}

        \item\label{item5} For each $Q \in \cD_\Delta (\cP)$, there exist $C^1_Q \approx_\delta C_1$, $C^2_Q \approx_\delta C_2$, $M_Q \ge 1$, a subset $\cP_Q \subset \cP \cap Q$ with $|\cP_Q| \geapp_\Delta |\cP \cap Q|$, and a family of tubes $\T_Q \subset \T^{\delta/\Delta}$ such that $(S_Q (\cP_Q), \T_Q)$ is $(\delta/\Delta, s, C^1_Q, \kappa, C^2_Q, M_Q)$-nice.
    \end{enumerate}

    Furthermore, the families $\T_Q$ can be chosen so that
    \begin{equation}\label{eqn:item6}
        \frac{|\T_0|}{M} \geapp_\delta \frac{|\T_\Delta|}{M_\Delta} \cdot \left( \max_{Q \in \cD_\Delta (\cP)} \frac{|\T_Q|}{M_Q} \right).
    \end{equation}
\end{prop}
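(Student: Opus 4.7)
The plan is to execute successive rounds of dyadic pigeonholing to uniformize the various counting functions arising at scales $\delta$, $\Delta$, and $\delta/\Delta$, adapting the argument of \cite[Sections 4--5]{orponen2021hausdorff} to also track the $(\delta, \kappa, C_2, k)$-set condition on tubes. All refinements cost only polylog factors in $1/\delta$, consistent with the $\approx_\delta$ bounds in the conclusion.

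First I would uniformize the $\Delta$-scale structure of $\cP_0$: dyadic pigeonholing on $Q \mapsto |\cP_0 \cap Q|$ over $Q \in \cD_\Delta(\cP_0)$ produces a subset $\cP \subset \cP_0$ on which $|\cP \cap Q|$ is essentially constant, yielding item (\ref{item1}). Next, for each $p \in \cP$, pigeonhole the functions $\bT \mapsto |\T(p) \cap \bT|$ over $\bT \in \T^\Delta$ and $p \mapsto |\{\bT : \T(p) \cap \bT \ne \emptyset\}|$ to obtain common values $\bN$ and $M_\Delta$ on a further refinement, satisfying $\bN \cdot M_\Delta \approx M$. A final pigeonhole on $\bT \in \T^\Delta$ with respect to $|\T \cap \bT|$ yields item (\ref{item21}). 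For item (\ref{item3}), observe that the coarse-scale $(\Delta, s, O(C_1), 0)$- and $(\Delta, \kappa, O(C_2), k)$-set conditions on ambient $\Delta$-tubes through any $p$ follow directly from the fine-scale conditions on $\T(p)$: any $\Delta$-tube through $p$ contained in an $(r,j)$-plate must contain some $\delta$-tube of $\T(p)$ also in that plate, provided $r \ge \Delta$. Item (\ref{item4}) then follows by averaging: the total pair count per $Q$ is $|\cP \cap Q| \cdot M$, distributed across $|\T_\Delta(Q)|$ ambient tubes, with fluctuations absorbed by the earlier pigeonholing.

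To construct the smaller-scale datum for item (\ref{item5}), define $\cP_Q \subset \cP \cap Q$ (after possibly one further pigeonhole) and take $\T_Q$ to be the image under $S_Q$ of a suitable subfamily of $\T$ meeting $Q$. The key scale-invariance observation is that the isotropic rescaling $S_Q$ of ratio $1/\Delta$ pulls an $(r', j)$-plate in $B(0,1)$ back to an $(r'\Delta, j)$-plate in $\R^d$, and the condition $S_Q(T) \subset$ (a unit-ball plate) forces $T$ to lie in an $(r', j)$-plate of $\R^d$ through $p$ with angular direction constrained at scale $r'$. Hence the fine-scale non-concentration of $\T(p)$ at scales $r \in [\delta, \Delta]$ translates to $(\delta/\Delta, s, O(C_1), 0)$- and $(\delta/\Delta, \kappa, O(C_2), k)$-set conditions for the rescaled tubes $\T_Q(S_Q(p))$ at scales $r' \in [\delta/\Delta, 1]$, with the same constants up to a polylog factor.

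The main obstacle lies in choosing $\T_Q$ so that item (\ref{item5}) and inequality (\ref{eqn:item6}) hold simultaneously. Since $|\T| \approx \bN \cdot |\T_\Delta|$ and $M \approx \bN \cdot M_\Delta$, the ratio $|\T_0|/M \approx |\T_\Delta|/M_\Delta$ already holds to leading order, so (\ref{eqn:item6}) reduces to asking that $\max_Q |\T_Q|/M_Q$ be at most polylog in $1/\delta$. Achieving this bound without destroying the niceness structure from item (\ref{item5}) is the delicate balancing act; I would handle it by building $\T_Q$ from rescalings of tubes lying in an optimally-chosen ambient $\Delta$-tube through $Q$ (selected using item (\ref{item4})), thereby exploiting the local concentration provided by uniformization at the coarser scale. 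I expect this cardinality bookkeeping to be the most subtle step, with the rest of the argument reducing to routine polylog accounting.
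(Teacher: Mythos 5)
Your sketches of items (\ref{item1})--(\ref{item5}) are broadly plausible, but the treatment of inequality (\ref{eqn:item6}) contains a genuine error that invalidates the final step. You claim that $|\T_0|/M \approx_\delta |\T_\Delta|/M_\Delta$ ``already holds to leading order,'' on the grounds that $|\T| \approx \bN|\T_\Delta|$ and $M \approx \bN M_\Delta$, and then conclude that (\ref{eqn:item6}) reduces to showing $\max_Q |\T_Q|/M_Q \lessapprox_\delta 1$. Both the identity and the resulting reduction are false. The quantity $\bN$ in item (\ref{item21}) is the \emph{aggregate} count $|\T \cap \bT|$ over all $p$ whose tubes pass through $\bT$; the quantity satisfying $M \approx (\text{count})\cdot M_\Delta$ is instead the \emph{per-point} count $\bN_p := |\T(p) \cap \bT|$, which is generally much smaller. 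Roughly $\bN \sim \bN_p \cdot |\{p \in \cP\cap Q : \T(p)\cap\bT\neq\emptyset\}|/(\text{overlap})$, and this multiplicity factor is not polylog. Substituting correctly, $|\T_0|/M \gtrsim |\T_\Delta|\bN/(\bN_p M_\Delta) = (|\T_\Delta|/M_\Delta)\cdot(\bN/\bN_p)$, and the content of (\ref{eqn:item6}) is precisely that this excess factor $\bN/\bN_p$ dominates $|\T_Q|/M_Q$ --- a nontrivial comparison, not a triviality. Your proposed target $|\T_Q|/M_Q \lessapprox_\delta 1$ is moreover essentially unattainable for a genuine nice configuration at scale $\delta/\Delta$, where $|\T_Q|$ typically exceeds $M_Q$ by a power of $\Delta/\delta$.

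The paper's actual route (in the proof of Proposition \ref{prop:nice_tubes_2}, Step 3, specialized to $r_0 = 1$) passes through the intermediate quantity $\T(Q) := \bigcup_{p \in \cP \cap Q}\T(p)$. On one side, $|\T(Q)| \le \sum_{\bT\in\T_\Delta(Q)}|\T\cap\bT| \lessapprox_\delta M_\Delta\bN$. On the other side, a tube-packet decomposition (tubelets $\U(p) = \T(p)\cap Q$, pigeonholed so each tubelet lies in $\sim m_Q$ tubes of $\T(p)$, with $M_Q \approx M/m_Q$) gives $|\T(Q)| \gtrapprox_\delta |\T_Q|\cdot M/M_Q$. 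Chaining these two bounds with $|\T_0|\gtrsim |\T_\Delta|\bN$ yields (\ref{eqn:item6}). The ``delicate balancing'' you anticipated is real, but it is resolved by this double bound on $|\T(Q)|$, not by the polylog reduction you propose.
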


Iterate this proposition to get (for details, see \cite[Corollary 4.1]{shmerkin2022dimensions})

\begin{cor}\label{cor:multiscale}
    Fix $N \ge 2$ and a sequence $\{ \Delta_j \}_{j=0}^n \subset 2^{-\N}$ with
    \begin{equation*}
        0 < \delta = \Delta_N < \Delta_{N-1} < \cdots < \Delta_1 < \Delta_0 = 1.
    \end{equation*}
    Let $(\cP_0, \cT_0) \subset \cD_\delta \times \T^\delta$ be a $(\delta, s, C_1, \kappa, C_2, M)$-nice configuration. Then there exists a set $\cP \subset \cP_0$ such that:
    \begin{enumerate}
        \item $|\cD_{\Delta_j} (\cP)| \approx_\delta |\cD_{\Delta_j} (\cP_0)|$ and $|\cP \cap \textbf{p}| \approx_\delta |\cP_0 \cap \textbf{p}|$, $1 \leq j \leq N$, $\textbf{p} \in \cD_{\Delta_j} (\cP)$.
        \item For every $0 \leq j \leq N-1$ and $\textbf{p} \in \cD_{\Delta_j}$, there exist numbers $C_{\textbf{p}}^1 \approx_\delta C^1$, $C_{\textbf{p}}^2 \approx_\delta C^2$, and $M_{\textbf{p}} \geq 1$, and a family of tubes $\cT_{\textbf{p}} \subset \T^{\Delta_{j+1}/\Delta_j}$ with the property that $(S_{\textbf{p}} (\cP \cap \textbf{p}), \cT_{\textbf{p}})$ is a $(\Delta_{j+1}/\Delta_j, s, C_{\textbf{p}}^1, \kappa, C_{\textbf{p}}^2, M_{\textbf{p}})$-nice configuration.
    \end{enumerate}

    Furthermore, the families $\cT_{\textbf{p}}$ can be chosen such that if $\textbf{p}_j \in \cD_{\Delta_j} (\cP)$ for $0 \le j \le N-1$, then
    \begin{equation*}
        \frac{|\cT_0|}{M} \geapp_\delta \prod_{j=0}^{N-1} \frac{|\cT_{\textbf{p}_j}|}{M_{\textbf{p}_j}}.
    \end{equation*}
    Here, $\geapp_\delta$ means $\gesim_N \log(1/\delta)^C$, and likewise for $\leapp_\delta, \app_\delta$.
\end{cor}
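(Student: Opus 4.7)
\textbf{Proof proposal for Corollary \ref{cor:multiscale}.} The natural plan is to induct on $N$, applying Proposition \ref{prop:nice_tubes} once at each scale transition and stitching the outputs together. The base case $N = 1$ is vacuous (or is the hypothesis itself, with $\textbf{p}_0$ the unit cube and $\cT_{\textbf{p}_0} = \T_0$). For the inductive step, apply Proposition \ref{prop:nice_tubes} to $(\cP_0, \T_0)$ at the scale pair $(\delta, \Delta_1)$. This produces a subset $\cP^{(1)} \subset \cP_0$, a $\Delta_1$-scale tube family $\T_{\Delta_1}$ making $(\cD_{\Delta_1}(\cP^{(1)}), \T_{\Delta_1})$ into a $(\Delta_1, s, \cdot, \kappa, \cdot, \cdot)$-nice configuration, and for each $Q \in \cD_{\Delta_1}(\cP^{(1)})$ a nice configuration $(\cS_Q(\cP^{(1)}_Q), \T_Q)$ at the smaller scale $\delta/\Delta_1$. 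The first of these is precisely the output required for the index $j=0$ (with $\textbf{p}_0$ the unit cube and $\cT_{\textbf{p}_0} := \T_{\Delta_1}$).

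Now, for each $Q \in \cD_{\Delta_1}(\cP^{(1)})$, apply the inductive hypothesis to the $(\delta/\Delta_1, s, \cdot, \kappa, \cdot, \cdot)$-nice configuration $(\cS_Q(\cP^{(1)}_Q), \T_Q)$ with the rescaled sequence of scales
\begin{equation*}
    \tfrac{\Delta_1}{\Delta_1} = 1 > \tfrac{\Delta_2}{\Delta_1} > \cdots > \tfrac{\Delta_N}{\Delta_1} = \tfrac{\delta}{\Delta_1},
\end{equation*}
which has length $N-1$. This yields a further subset $\cP^{(2)}_Q \subset \cP^{(1)}_Q$ together with the tube families $\cT_{\textbf{p}_j}$ for $j \ge 1$ and each $\textbf{p}_j \in \cD_{\Delta_j}(\cP)$ with $\textbf{p}_j \subset Q$. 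Set $\cP := \cup_Q \cS_Q^{-1}(\cP^{(2)}_Q) \subset \cP_0$. Properties (1) and (2) for the sequence $\Delta_0, \ldots, \Delta_N$ then follow by assembling the two-scale estimates from Proposition \ref{prop:nice_tubes}(i),(v) and the inductive conclusions; the polylog factors $\approx_\delta$ compose into a single $\approx_\delta$ factor since the number of compositions $N$ is independent of $\delta$.

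For the final product bound, fix a chain $\textbf{p}_0 \supset \textbf{p}_1 \supset \cdots \supset \textbf{p}_{N-1}$ with $\textbf{p}_j \in \cD_{\Delta_j}(\cP)$. At the first scale transition, Proposition \ref{prop:nice_tubes}\eqref{eqn:item6} applied with $Q = \textbf{p}_1$ gives
\begin{equation*}
    \frac{|\T_0|}{M} \geapp_\delta \frac{|\T_{\Delta_1}|}{M_{\Delta_1}} \cdot \frac{|\T_{\textbf{p}_1}^{\mathrm{inner}}|}{M_{\textbf{p}_1}^{\mathrm{inner}}},
\end{equation*}
where the second factor concerns the scale-$\delta/\Delta_1$ nice configuration inside $\textbf{p}_1$. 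By the inductive conclusion applied inside $\textbf{p}_1$ with the chain $\textbf{p}_1 \supset \cdots \supset \textbf{p}_{N-1}$, the second factor is itself $\geapp_\delta \prod_{j=1}^{N-1} |\cT_{\textbf{p}_j}|/M_{\textbf{p}_j}$. Setting $\cT_{\textbf{p}_0} := \T_{\Delta_1}$ and $M_{\textbf{p}_0} := M_{\Delta_1}$ gives the claimed product estimate.

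The main bookkeeping obstacle is ensuring that when we descend through the tree of $\Delta_j$-cubes by passing to successively smaller subsets, each of the ancestor properties (1) and (2) continues to hold on the final $\cP$, and not merely on intermediate subsets. This is handled by Proposition \ref{prop:nice_tubes}(v), which guarantees $|\cP^{(2)}_Q| \geapp_\delta |\cP^{(1)} \cap Q|$, so shrinking $\cP^{(1)} \cap Q$ to $\cP^{(2)}_Q$ preserves cube-counts up to a polylog factor at every coarser scale $\Delta_i$ with $i \le 1$. Iterating this observation, and using that only $N$ polylog losses accumulate, yields properties (1) and (2) at every scale $\Delta_j$ simultaneously on the final $\cP$. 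Beyond this, the argument is purely a recursive unpacking of Proposition \ref{prop:nice_tubes}.
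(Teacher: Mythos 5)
Your plan---inducting on the number of scales and invoking Proposition \ref{prop:nice_tubes} once per scale transition, stitching via rescaling inside each $\Delta_1$-cube---is exactly how this corollary is meant to be proved; the paper itself just says ``iterate this proposition'' and refers to Shmerkin--Wang. Your treatment of the product estimate is correct: the max over $Q$ in \eqref{eqn:item6} lets you plug in $Q = \textbf{p}_1$, and the inductive hypothesis inside $\textbf{p}_1$ supplies the remaining factors along the chain. Setting $\cT_{\textbf{p}_0} = \T_{\Delta_1}$ and reading off item~(ii) for $j=0$ from Proposition \ref{prop:nice_tubes}\eqref{item3} is also fine.

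The one place the sketch glosses over a real issue is item~(1) at the intermediate scales $\Delta_2, \ldots, \Delta_{N-1}$. Proposition \ref{prop:nice_tubes} controls exactly two scales: item~(i) gives $|\cD_\Delta(\cP)| \approx_\delta |\cD_\Delta(\cP_0)|$ and $|\cP \cap Q| \approx_\delta |\cP_0 \cap Q|$, and item~(v) gives $|\cP_Q| \geapp_\delta |\cP \cap Q|$, but \emph{nothing} is claimed about cube counts at a scale $\rho$ strictly between $\delta$ and $\Delta$. When you then feed $S_Q(\cP^{(1)}_Q)$ into the inductive hypothesis, the conclusion compares the final set to $S_Q(\cP^{(1)}_Q)$, not to $S_Q(\cP_0 \cap Q)$. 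Since the dyadic pigeonholing that produces $\cP^{(1)}_Q$ from $\cP_0 \cap Q$ can, in principle, concentrate the surviving $\delta$-cubes into a polylog-small fraction of the $\Delta_j$-cubes while preserving the total $\delta$-count, the chain of comparisons $|\cD_{\Delta_j}(\cP)| \approx_\delta |\cD_{\Delta_j}(\cP_0)|$ and $|\cP \cap \textbf{p}| \approx_\delta |\cP_0 \cap \textbf{p}|$ for $1 < j < N$ does not follow by mere composition. Your closing paragraph invokes (v) to argue that ``ancestor'' properties survive the descent, but (v) controls only the $\delta$-count within each $\Delta_1$-cube, which protects the scales $\Delta_0$ and $\Delta_1$ and leaves the finer $\Delta_j$'s unaddressed relative to $\cP_0$.

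This gap is invisible in the paper's only application (proof of Theorem \ref{thm:main'}) because $\cP_0$ is first uniformized at the scales $\{\Delta_j\}$ via Lemma \ref{lem:uniform}; with uniformity, each $\Delta_j$-cube of $\cP_0 \cap Q$ contains the same number of $\delta$-cubes, so preserving the total $\delta$-count per $\Delta_1$-cube forces the $\Delta_j$-cube count to be preserved as well, and the argument closes. So you should either add uniformity of $\cP_0$ at the scales $\{\Delta_j\}$ to the hypotheses (which costs nothing in the application), or restrict the claim in item~(1) to $j = 1$ and $j = N$ and derive the intermediate-scale regularity needed in Theorem \ref{thm:main'} from uniformity directly. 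As written, the blanket claim at all $j$ is not justified by the iteration alone.
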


\subsection{Uniform sets and branching numbers}
The following exposition borrows heavily from \cite[Section 2.3]{orponen2023projections}.
\begin{defn}
    Let $n \ge 1$ and
    \begin{equation*}
        \delta = \Delta_n < \Delta_{n-1} < \cdots < \Delta_1 \le \Delta_0 = 1
    \end{equation*}
    be a sequence of dyadic scales. We say that a set $P \subset [0, 1)^d$ is $\{ \Delta_j \}_{j=1}^n$-uniform if there is a sequence $\{ N_j \}_{j=1}^n$ such that $N_j \in 2^{\N}$ and $|P \cap Q|_{\Delta_j} = N_j$ for all $j \in \{ 1, 2, \cdots, n \}$ and $Q \in \cD_{\Delta_{j-1}} (P)$.
\end{defn}

\begin{remark}
    By uniformity, we have $|P|_{\Delta_m} = |P \cap Q|_{\Delta_m} |P|_{\Delta_\ell}$ for $0 \le \ell < m \le n$ and $Q \in \cD_{\Delta_\ell} (P)$.
\end{remark}

As a result, we can always refine a set $P$ to be uniform:

\begin{lemma}\label{lem:uniform}
    Let $P \subset [0, 1)^d$, $m, T \in \N$, and $\delta = 2^{-mT}$. Let $\Delta_j := 2^{-jT}$ for $0 \le j \le m$, so in particular $\delta = \Delta_m$. Then there is a $\{ \Delta_j \}_{j=1}^m$-uniform set $P' \subset P$ such that
    \begin{equation*}
        |P'|_\delta \ge (2T)^{-m} |P|_\delta.
    \end{equation*}
    In particular, if $\eps > 0$ and $T^{-1} \log (2T) \le \eps$, then $|P'| \ge \delta^\eps |P|$.
\end{lemma}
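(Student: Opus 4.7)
The plan is a sequential dyadic pigeonholing on child-counts, performed from the finest scale upward. I will construct a decreasing chain $P = P^{(0)} \supset P^{(1)} \supset \cdots \supset P^{(m)} = P'$ such that, after step $i$, the set $P^{(i)}$ is uniform at the $i$ finest scales; i.e., for every $\ell \in \{m-i+1, \ldots, m\}$ there is a power of two $N_\ell$ with $|P^{(i)} \cap Q|_{\Delta_\ell} = N_\ell$ for every $Q \in \cD_{\Delta_{\ell-1}}(P^{(i)})$. After $m$ steps, $P^{(m)} = P'$ is $\{\Delta_j\}_{j=1}^m$-uniform.

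At step $i$, set $j := m - i + 1$. For every $Q \in \cD_{\Delta_{j-1}}(P^{(i-1)})$, the child-count $|P^{(i-1)} \cap Q|_{\Delta_j}$ is an integer in $[1, (\Delta_{j-1}/\Delta_j)^d] = [1, 2^{Td}]$, hence lies in one of at most $Td + 1$ dyadic windows $[2^k, 2^{k+1})$. A dyadic pigeonhole across these windows (weighted by $\Delta_j$-mass) picks out a dominant window $[N_j, 2N_j)$ and a subfamily $\cQ^* \subset \cD_{\Delta_{j-1}}(P^{(i-1)})$ on which the count lies in $[N_j, 2N_j)$, retaining a fraction at least $(Td+1)^{-1}$ of the $\Delta_j$-mass. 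Inside each $Q \in \cQ^*$, I keep exactly $N_j$ of its $\Delta_j$-children (together with all $P^{(i-1)}$-points inside them), losing a further factor of at most $2$. Define $P^{(i)}$ to be the resulting set; by construction it is uniform at scale $j$ with constant $N_j$.

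The key point --- and the reason the bottom-up ordering is essential --- is that this step does \emph{not} destroy the uniformity previously established at finer scales $\ell > j$. Indeed, any $R \in \cD_{\Delta_{\ell-1}}(P^{(i)})$ must sit inside one of the kept $\Delta_j$-children of some $Q \in \cQ^*$ (because $R$ contains a $\delta$-cube of $P^{(i)}$), so none of its $\Delta_\ell$-descendants were removed at step $i$. Consequently $|P^{(i)} \cap R|_{\Delta_\ell} = |P^{(i-1)} \cap R|_{\Delta_\ell} = N_\ell$ by the inductive hypothesis. A top-down ordering would instead face the cascading obstacle that discarding $\Delta_{j-1}$-cubes at step $j$ could ruin the constant child-count at scale $j-1$ just established; avoiding this is exactly what the fine-to-coarse processing buys us, and I view it as the only nontrivial point in the argument.

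Each of the $m$ steps costs at most a factor $2(Td+1)$ in $\delta$-cardinality, yielding $|P'|_\delta \ge [2(Td+1)]^{-m}|P|_\delta$, which is at least $(2T)^{-m}|P|_\delta$ once the $d$-dependent constants are absorbed (e.g.\ taking $T$ large compared to $d$, as will be the case in applications). The ``in particular'' conclusion is then a direct logarithmic comparison: $(2T)^{-m} = \delta^{\log_2(2T)/T}$, which is at least $\delta^\eps$ precisely when $T^{-1}\log(2T) \le \eps$.
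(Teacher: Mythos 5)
Your proof is correct, and the sequential fine-to-coarse dyadic pigeonholing is the standard argument for this lemma; the paper states it without proof (deferring implicitly to \cite{orponen2021hausdorff}), so there is no alternate proof in the text to compare against. You are also right that the fine-to-coarse ordering is the crux: trimming at scale $j$ only discards whole $\Delta_j$-cubes, so the already-uniformized child-counts inside the surviving cubes at finer scales are untouched, whereas a coarse-to-fine pass would disturb the counts it had just fixed.

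One quantitative point should be stated plainly rather than waved away. Your argument yields $|P'|_\delta \ge \bigl[2(Td+1)\bigr]^{-m}|P|_\delta$, not $(2T)^{-m}|P|_\delta$: a child count lies in $[1,2^{Td}]$, so there are $Td+1$ dyadic windows, and the $d$-dependence in the per-step loss is genuine. Since $\bigl[2(Td+1)\bigr]^{-m}$ is \emph{smaller} than $(2T)^{-m}$, you cannot assert that your bound ``is at least $(2T)^{-m}|P|_\delta$''; it is not. For the ``in particular'' conclusion this is harmless --- replace the hypothesis by $T^{-1}\log_2\bigl(2(Td+1)\bigr)\le\eps$, which still holds once $T$ is large in terms of $d$ and $\eps$ --- and the clean $(2T)^{-m}$ in the lemma is best read as suppressing a $d$-dependent constant. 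But say that explicitly instead of blurring the inequality.
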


Uniform sets can be encoded by a branching function.

\begin{defn}
    Let $T\in \mathbb{N}$, and let $\cP\subset [0,1)^d$ be a $\{\Delta_j\}_{j=1}^n$-uniform set, with $\Delta_j: = 2^{-jT}$, and with associated sequence $\{N_j\}_{j=1}^n\subset \{ 1, \dots, 2^{dT}\}^n$. 
    We define the \emph{branching function} $f: [0, n]\rightarrow [0, dn]$ by setting $f(0)=0$, and 
    \[
    f(j):=\frac{\log |\cP|_{2^{-jT}}}{T} =\frac{1}{T}\sum_{i=1}^j \log N_i, \quad i \in \{1, \dots n\},
    \]
    and then interpolating linearly between integers.
\end{defn}

\begin{defn}
    Let $s_f (a, b) = \frac{f(b) - f(a)}{b - a}$ denote the slope of a line segment between $(a, b)$ and $(f(a), f(b))$. We say that a function $f: [0,n]\rightarrow \mathbb{R}$ is $\eps$-superlinear on $[a,b]\subset [0,n]$, or that $(f, a, b)$ is $\eps$-superlinear, if 
 \[ f(x)\geq f(a) + s_f (a, b) (x-a) - \eps (b - a), x\in [a, b]. 
 \]
 We say that $(f, a, b)$ is $\eps$-linear if
  \[ |f(x) - f(a) - s_f (a, b) (x-a)| \le \eps (b - a), x\in [a, b]. 
 \]
\end{defn}

The following lemma converts between branching functions and the uniform structure of $P$. It is \cite[Lemma 8.3]{orponen2021hausdorff} (or an immediate consequence of the definitions)
\begin{lemma}
    Let $P$ be a $(\Delta^i)_{i=1}^m$-uniform set in $[0, 1)^d$ with associated branching function $f$, and let $\delta = \Delta^m$.
    \begin{enumerate}[(i)]
        \item If $f$ is $\eps$-superlinear on $[0, m]$, then $P$ is a $(\delta, s_f (0, m), O_\Delta (1) \delta^{-\eps})$-set.

        \item If $f$ is $\eps$-linear on $[0, m]$, then $P$ is a $(\delta, s_f (0, m), O_\Delta (1) \delta^{-\eps}, O_\Delta (1) \delta^{-\eps})$-regular set between scales $\delta$ and $1$.
    \end{enumerate}
\end{lemma}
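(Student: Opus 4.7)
Both parts amount to translating between the multiplicative structure encoded by $f$ and the geometric non-concentration conditions, using uniformity. Set $s := s_f(0,m) = f(m)/m$. Uniformity gives, for every $Q \in \cD_{\Delta^j}(P)$, the exact identity $|P \cap Q|_\delta = |P|_\delta / |P|_{\Delta^j}$, and by the definition of the branching function $|P|_{\Delta^j} = \Delta^{-f(j)}$. Hence the key ratio is
\begin{equation*}
\frac{|P \cap Q|_\delta}{(\Delta^j)^s \, |P|_\delta} \; = \; \Delta^{f(j) - js}.
\end{equation*}
Because $\Delta \in (0,1)$ and $\delta^{\pm\eps} = \Delta^{\mp m\eps}$, the $\eps$-superlinearity bound $f(j) \ge js - \eps m$ forces this ratio to be at most $\delta^{-\eps}$, and under $\eps$-linearity the reverse bound $f(j) \le js + \eps m$ additionally gives the matching lower bound $\delta^{\eps}$.

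This already settles both parts at scales of the form $r = \Delta^j$. For a general dyadic scale $r \in [\delta, 1]$, pick the unique integer $j \ge 0$ with $\Delta^{j+1} \le r \le \Delta^j$. For the upper bound in (i), any dyadic $r$-cube $Q$ lies in a unique $\Delta^j$-cube $Q'$, so the aligned-scale estimate yields $|P \cap Q|_\delta \le |P \cap Q'|_\delta \le \delta^{-\eps} (\Delta^j)^s |P|_\delta \le \Delta^{-s} \delta^{-\eps} r^s |P|_\delta$, with $\Delta^{-s} = O_\Delta(1)$. For the lower bound in (ii), observe that $Q$ is a disjoint union of dyadic $\Delta^{j+1}$-cubes; if $Q \cap P \ne \emptyset$, at least one such subcube $Q'' \subseteq Q$ belongs to $\cD_{\Delta^{j+1}}(P)$, and then
\begin{equation*}
|P \cap Q|_\delta \; \ge \; |P \cap Q''|_\delta \; \ge \; \delta^{\eps} (\Delta^{j+1})^s |P|_\delta \; \ge \; \Delta^s \delta^{\eps} r^s |P|_\delta,
\end{equation*}
again with $\Delta^s = O_\Delta(1)$.

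There is no real obstacle: the argument is a bookkeeping exercise that is forced by how the branching function was defined. The only care required is in absorbing the $\Delta^{\pm s}$ factors incurred when the ambient scale $r$ falls strictly between two consecutive scales $\Delta^{j+1}$ and $\Delta^j$ of the $\{\Delta^j\}$-uniform structure; both losses are harmless constants once $\Delta$ is treated as a fixed parameter.
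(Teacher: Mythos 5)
The paper offers no proof of its own for this lemma (it cites Orponen--Shmerkin, Lemma 8.3, and calls it an immediate consequence of the definitions), so there is nothing to compare against directly; but your blind proof is correct and is exactly the expected derivation. The identity $|P\cap Q|_\delta/((\Delta^j)^s|P|_\delta)=\Delta^{f(j)-js}$ (using uniformity and $|P|_{\Delta^j}=\Delta^{-f(j)}$) does all the work, and the passage from aligned scales $\Delta^j$ to general dyadic $r\in[\delta,1]$ is handled properly: for the upper bound you lose $\Delta^{-s}$ by going up to the containing $\Delta^j$-cube, and for the lower bound you lose $\Delta^s$ by going down to a non-empty $\Delta^{j+1}$-subcube.

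One small remark on part (ii): the paper's definition of a $(\delta,s,C,K)$-regular set is a $(\delta,s,C)$-set (upper bound only at all scales) together with the single intermediate-scale condition $|P|_{\delta^{1/2}}\le K\delta^{-s/2}$; you instead proved the stronger two-sided Ahlfors-type estimate $\Delta^s\delta^{\eps}r^s|P|_\delta\le|P\cap Q|_\delta\le\Delta^{-s}\delta^{-\eps}r^s|P|_\delta$ for all non-empty dyadic cubes. That is fine, and in fact your lower bound applied at $r=\delta^{1/2}$ gives exactly $|P|_{\delta^{1/2}}=|P|_\delta/|P\cap Q|_\delta\le O_\Delta(1)\delta^{-\eps}\delta^{-s/2}$ by uniformity, so the paper's condition follows. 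You may want to record this one-line deduction explicitly so the argument visibly discharges the stated definition rather than a paraphrase of it.
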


The crucial branching lemma is \cite[Lemma 8.5]{orponen2021hausdorff} applied to the function $\frac{2}{d} \cdot f$:

\begin{lemma}\label{lem:good branching}
    Fix $s \in (0, 1)$ and $t \in (s, d]$. For every $\eps > 0$ there is $\tau = \tau(\eps, s, t) > 0$ such that the following holds: for every piecewise affine $d$-Lipschitz function $f : [0, m] \to \R$ such that
    \begin{equation*}
        f(x) \ge tx - \eps m \text{ for all } x \in [0, m],
    \end{equation*}
    there exists a family of non-overlapping intervals $\{ [c_j, d_j] \}_{j=1}^n$ contained in $[0, m]$ such that:
    \begin{enumerate}
        \item For each $j$, at least one of the following alternatives holds:
        \begin{enumerate}
            \item $(f, c_j, d_j)$ is $\eps$-linear with $s_f (c_j, d_j) \ge s$;

            \item $(f, c_j, d_j)$ is $\eps$-superlinear with $s_f (c_j, d_j) = s$.
        \end{enumerate}

        \item $d_j - c_j \ge \tau m$ for all $j$;

        \item $|[0, m] \setminus \cup_j [c_j, d_j]| \lesim_{s,t} \eps m$.
    \end{enumerate}
\end{lemma}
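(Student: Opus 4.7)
The plan is first to normalize $s \to 0$ by considering $g(x) = f(x) - sx$, so $g$ is $(d+s)$-Lipschitz and $g(x) \ge (t-s)x - \eps m$ on $[0,m]$. The required intervals become those where $g$ is $\eps$-linear with slope $\ge 0$ (type (A)) or $\eps$-superlinear with slope exactly $0$ (type (B), meaning $g(c_j) = g(d_j)$ and $g \ge g(c_j) - \eps(d_j - c_j)$ on $[c_j, d_j]$). Note that translation invariance of $\eps$-linearity and $\eps$-superlinearity means the condition $f$-form is recovered by adding back $sx$ at the end.

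The structural tool is the least concave majorant $\hat g$ of $g$ on $[0,m]$: it is piecewise affine with non-increasing slopes $\sigma_1 > \cdots > \sigma_N$, equals $g$ at its breakpoints, and satisfies $\hat g(m) - \hat g(0) \ge g(m) - g(0) \ge (t-s-\eps)m$. Since slopes are bounded above by the Lipschitz constant $d+s$, the pieces of $\hat g$ with $\sigma_i < 0$ can contribute at most $O_{s,t}(\eps m)$ to the total length; these, together with pieces whose slope lies in $(-\eps, 0)$, are placed in the exceptional set or reshaped into type (B) candidates by perturbing endpoints.

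On a non-negative-slope piece $[c_i, d_i]$ of $\hat g$, the chord has slope $\sigma_i \ge 0$, $g \le \hat g$, and $g$ agrees with $\hat g$ at the endpoints. If $g$ never dips more than $\eps(d_i - c_i)$ below the chord, the piece is declared type (A). Otherwise, the sub-level set $\{x \in [c_i,d_i] : g(x) < \hat g(x) - \eps(d_i-c_i)\}$ is a finite union of open intervals, each with $g$ meeting the threshold line at both endpoints; the complementary closed sub-intervals are type (A) at the finer scale, and each open "dip" sub-interval is analyzed as a potential type (B) piece (by comparing endpoint levels, which differ by at most the product of $\sigma_i$ and the dip width) or is further subdivided recursively.

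The main obstacle is the length-versus-exception accounting: we must guarantee every retained interval has length $\ge \tau m$ with $\tau = \tau(\eps, s, t) > 0$, while the total discarded length stays $\lesim_{s,t} \eps m$. The recursion is controlled by an oscillation budget: each subdivision either yields a type (A)/(B) interval of length $\ge \tau m$ or records a descent of depth $\ge \eps \cdot (\text{current scale})$, which by the Lipschitz bound consumes a definite amount of total variation $\int_0^m |g'|\,dx \le (d+s)m$. Hence only $O_{s,t}(1/\eps)$ subdivisions occur globally, and taking $\tau$ to be a sufficiently small power of $\eps$ (the exponent depending on $s$ and $t$ through $(d+s)/(t-s)$) absorbs all short residual pieces into the allowed $O_{s,t}(\eps m)$-exception, completing the extraction.
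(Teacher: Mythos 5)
The paper gives no proof of this lemma; it cites \cite[Lemma 8.5]{orponen2021hausdorff} and rescales. Your concave-majorant argument is therefore an independent attempt, and it contains a genuine gap.

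You assert that the negative-slope pieces of $\hat g$ have total length $O_{s,t}(\eps m)$, reasoning from the Lipschitz bound $\sigma_i \le d+s$ and the chord bound $\hat g(m) - \hat g(0) \ge (t-s-\eps)m$. This does not follow: that reasoning only yields a lower bound $\ge \tfrac{t-s-\eps}{d+s}\,m$ on the length of the non-negative-slope part, so the negative-slope part can still occupy a constant fraction of $[0,m]$ independent of $\eps$. A concrete counterexample: take $d=3$, $s=\tfrac12$, $t=\tfrac35$, and let $f$ be affine with slope $\tfrac52$ on $[0,m/4]$ and slope $0$ on $[m/4,m]$ (so $f(0)=0$, $f$ is $3$-Lipschitz, and $f(x)\ge tx - \eps m$). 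Then $g=f-sx$ has slopes $2$ and $-\tfrac12$, is already concave so $\hat g = g$, and the negative-slope piece $[m/4,m]$ has length $\tfrac34 m$. Discarding it violates condition (3), and no sub-interval of $[m/4,m]$ can be type (A) or type (B) because the chord slope of $g$ is $-\tfrac12 < 0$ on every sub-interval there.

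The lemma is still true for this $f$, but the required covering interval must \emph{straddle} the breakpoint of $\hat g$: for instance, $[m/16,m]$ has $s_f(m/16,m)=s$ and $f$ is $0$-superlinear there, so it is a valid type (b) interval. Your procedure only ever refines within a single affine piece of $\hat g$, so it cannot discover such intervals. The missing ingredient is the pairing of a long stretch where the slope of $f$ has dropped below $s$ with enough of the preceding steep ascent so that the overall chord slope is exactly $s$; this requires a global left-to-right scan, not a piecewise refinement of the concave majorant. The ``oscillation budget'' heuristic for bounding the recursion depth inherits the same problem, since the pieces you would need to control or discard are not short to begin with.
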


\subsection{Combinatorial and probabilistic preliminaries}
In this section, we collect a few of the results from additive combinatorics and probability that will be used in the following sections.

First, we make the following observation (Lemma 19 of \cite{he2020orthogonal}) about intersections of high-probability events. (That lemma was stated for Lebesgue measure but the same proof works for general measures $\nu$.)

\begin{lemma}\label{lem:intersections_of_events}
    Let $A \subset \R^d$ equipped with a measure $\nu$ and $\Theta$ be an index set equipped with a probability measure $\mu$. Suppose there is $K \ge 1$ and for each $\theta \in \Theta$, a Borel subset $A_\theta$ with $\nu(A_\theta) \ge \nu(A)/K$. Then
    \begin{equation*}
        \mu^{\otimes q} (\{ (\theta_1, \theta_2, \cdots, \theta_q) : \nu(A_{\theta_1} \cap A_{\theta_2} \cap \cdots \cap A_{\theta_q}) \ge \frac{\nu(A)}{2K^q} \})\ge \frac{1}{2K^q}.
    \end{equation*}
\end{lemma}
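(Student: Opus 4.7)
The plan is a standard Fubini--Jensen--Markov sandwich. I would first rewrite the quantity of interest $I(\theta_1, \ldots, \theta_q) := \nu(A_{\theta_1} \cap \cdots \cap A_{\theta_q})$ as an integral over $A$ of a product of indicators and then swap the order of integration, obtaining
\begin{equation*}
    \int_{\Theta^q} I(\theta_1, \ldots, \theta_q) \, d\mu^{\otimes q}(\theta_1, \ldots, \theta_q) = \int_A g(x)^q \, d\nu(x),
\end{equation*}
where $g(x) := \mu(\{\theta \in \Theta : x \in A_\theta\})$ records the $\mu$-multiplicity with which the point $x$ is covered by the family $\{A_\theta\}_{\theta\in\Theta}$.

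Next I would bound $g$ from below in $L^1(\nu|_A)$. One more application of Fubini together with the hypothesis gives
\begin{equation*}
    \int_A g(x) \, d\nu(x) = \int_\Theta \nu(A_\theta) \, d\mu(\theta) \ge \nu(A)/K,
\end{equation*}
reading $A_\theta$ as a subset of $A$ (or, equivalently, replacing $A_\theta$ by $A_\theta \cap A$ without losing the mass bound). Applying Jensen's inequality with the convex function $t \mapsto t^q$ and the probability measure $\nu/\nu(A)$ on $A$ then yields
\begin{equation*}
    \int_A g(x)^q \, d\nu(x) \ge \nu(A) \left( \frac{1}{\nu(A)} \int_A g \, d\nu \right)^q \ge \frac{\nu(A)}{K^q}.
\end{equation*}

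To conclude, I would combine this lower bound with the trivial pointwise upper bound $I \le \nu(A)$. Letting $E$ denote the ``large intersection'' event appearing in the statement, splitting the integral over $E$ and $E^c$ gives $\nu(A)/K^q \le \nu(A)\,\mu^{\otimes q}(E) + \nu(A)/(2K^q)$, which rearranges to $\mu^{\otimes q}(E) \ge 1/(2K^q)$, exactly the claim.

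The argument is short and I do not expect any real obstacle; the one point worth flagging is the implicit reading $A_\theta \subset A$, which is used both to secure $\int_A g\, d\nu \ge \nu(A)/K$ (the part of the mass of $A_\theta$ sitting outside $A$ is invisible to the integration over $A$) and to ensure the uniform pointwise bound $I \le \nu(A)$ in the final Markov step. This is the natural interpretation of the hypothesis and is exactly the way the lemma is applied in Lemma 19 of the reference.
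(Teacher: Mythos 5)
Your proof is correct. The paper does not prove this lemma itself but defers to Lemma~19 of the cited reference of He, noting that the same argument works for general measures $\nu$; your Fubini--Jensen--Markov argument is exactly that standard proof, including the necessary reading $A_\theta \subset A$ which you correctly flag.
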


Next, we state Rusza's triangular inequality \cite[Lemma 21]{he2020orthogonal} (see also \cite{ruzsa1978cardinality}):

    \begin{lemma}\label{lem:rusza triangle}
        For any sets $A, B, C \subset \R^d$, we have
        \begin{equation*}
            |B|_\delta |A-C|_\delta \lesim_d |A-B|_\delta |B-C|_\delta.
        \end{equation*}
    \end{lemma}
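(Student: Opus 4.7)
My plan is to adapt the classical Ruzsa triangle inequality for finite sets (which is proved by constructing an injection from $B \times (A-C)$ into $(A-B) \times (B-C)$) to the $\delta$-discretized setting, absorbing only a constant depending on $d$ through a standard packing argument.

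The first step is to pass to well-separated representative sets. Recall that covering and packing numbers at scale $\delta$ are comparable up to a factor $C_d$. So I would choose $B' \subset B$ a maximal $\delta$-separated subset (with $|B'| \gesim_d |B|_\delta$), $X \subset A-C$ a maximal $\delta$-separated subset (with $|X| \gesim_d |A-C|_\delta$), and $\delta$-nets $U \subset (A-B)$ and $V \subset (B-C)$ with $|U| \lesim_d |A-B|_\delta$ and $|V| \lesim_d |B-C|_\delta$. For each $x \in X$, fix a representation $x = a_x - c_x$ with $a_x \in A$ and $c_x \in C$.

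Next, define $\phi \colon B' \times X \to U \times V$ by selecting, for each $(b,x)$, a point $\phi_1(b,x) \in U$ within distance $\delta$ of $a_x - b \in A-B$ and a point $\phi_2(b,x) \in V$ within distance $\delta$ of $b - c_x \in B-C$. The content of the argument is that $\phi$ is at most $O_d(1)$-to-one. Indeed, suppose $\phi(b,x) = (u,v)$. Adding the two defining inequalities gives $|u+v - x| = |u+v - (a_x - c_x)| \le 2\delta$. Since $X$ is $\delta$-separated, the point $u+v$ lies within $2\delta$ of at most $O_d(1)$ points of $X$, so $x$ is determined by $(u,v)$ up to $O_d(1)$ choices. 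Having fixed such an $x$, the value $a_x$ is determined, and then $b$ satisfies $|b - (a_x - u)| \le \delta$; since $B'$ is $\delta$-separated, $b$ is likewise determined up to $O_d(1)$ choices.

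Combining, $|B'| \cdot |X| \le O_d(1) \cdot |U| \cdot |V|$, and the stated inequality follows after absorbing the discrepancies between packing and covering numbers into $\lesim_d$. There is no real obstacle here: the only subtle point is bookkeeping the factors between the $\delta$-separated packings and the $\delta$-nets, all of which are $O_d(1)$, so the argument is essentially a direct translation of the finite-set Ruzsa covering proof with an additional $\delta$-neighborhood slack that costs only a dimensional constant.
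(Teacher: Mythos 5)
Your proof is correct. The paper itself does not reprove this lemma — it is cited directly from He's work (Lemma~21 of the cited reference) — but the argument you give is the standard discretization of Ruzsa's injection proof and is valid as written: the key step, that $\phi$ is $O_d(1)$-to-one because $|u+v-x|\le 2\delta$ pins down $x$ within a $\delta$-separated family and then $|b-(a_x-u)|\le\delta$ pins down $b$ within a $\delta$-separated family, is exactly right, and the bookkeeping between maximal $\delta$-separated subsets, $\delta$-nets, and $\delta$-covering numbers only costs dimensional constants as you claim.
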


We also would like the Pl\"unnecke-Rusza inequality, in the form stated by \cite[Lemma 22]{he2020orthogonal}:

\begin{lemma}\label{lem:plunnecke-rusza}
    Let $A, B$ be bounded subsets of $\R^d$. For all $K \ge 1$, $\delta > 0$, if $|A + B|_\delta \le K |B|_\delta$, then for all $k, \ell \ge 1$, we have
    \begin{equation*}
        |kA - \ell A|_\delta \lesim_d K^{k+\ell} |B|_\delta.
    \end{equation*}
    Here, $kA = \underbrace{A + \cdots + A}_{k \text{ times}}$.
\end{lemma}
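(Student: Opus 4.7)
The plan is to adapt Petridis's elegant proof of the classical Pl\"unnecke--Ruzsa inequality to the $\delta$-discretized setting, using Lemma \ref{lem:rusza triangle} as a black box. The argument has three parts: (i) a discretized Petridis magnification lemma, (ii) iteration to control $|X + kA|_\delta$ for a well-chosen $X \subset B$, and (iii) one application of Ruzsa's triangle inequality to pass from sums to the difference $|kA - \ell A|_\delta$.

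First I would reduce to finite sets by passing to maximal $\delta$-separated subsets $A_0 \subset A$ and $B_0 \subset B$, so that $|A_0|$ and $|A|_\delta$ are comparable up to $d$-dependent constants (and similarly for $B$). For any $U, V$ built from iterated sums and differences of $A_0, B_0$, the covering number $|U+V|_\delta$ is comparable, up to $d$-dependent constants, to a $\delta$-separated count inside $U+V$. These comparisons introduce multiplicative losses polynomial in $k + \ell$, which are absorbed into $\lesim_d$ without affecting the target exponential factor $K^{k+\ell}$.

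Next I would establish a discretized Petridis magnification lemma. Choose a nonempty $X \subset B_0$ minimizing $R(X) := |X + A|_\delta/|X|$, and set $K' := R(X)$; applying the hypothesis with $X := B_0$ gives $K' \le O_d(K)$. The central claim is that for every finite $C \subset \R^d$,
\begin{equation*}
    |X + A + C|_\delta \lesim_d K' \cdot |X + C|_\delta.
\end{equation*}
Petridis's classical argument enumerates $C = \{c_1, \dots, c_n\}$, defines $X_j \subset X$ as the elements whose translate $x + c_j$ is ``genuinely new'' in $X + \{c_1, \dots, c_{j}\}$, and uses the minimality of $R$ applied to $X_j$ to bound the incremental contribution of $c_j$ to $|X + A + C|_\delta$ by $K'$ times its incremental contribution to $|X + C|_\delta$. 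Discretizing this amounts to replacing ``genuinely new'' by ``falling into a previously unoccupied $\delta$-ball'', with an $O_d(1)$ loss per step that compounds only in the ambient dimension $d$.

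Iterating the magnification lemma with $C$ equal to $\emptyset, A, 2A, \dots, (k-1)A$ successively yields
\begin{equation*}
    |X + kA|_\delta \lesim_d K'^k |X| \lesim_d K^k |B|_\delta,
\end{equation*}
and analogously $|X + \ell A|_\delta \lesim_d K^\ell |B|_\delta$. Finally, apply Lemma \ref{lem:rusza triangle} with the substitution $(A, B, C) := (kA, -X, \ell A)$; using that $|-X|_\delta = |X|_\delta$ and $|-(X + \ell A)|_\delta = |X + \ell A|_\delta$ (covering numbers are invariant under $y \mapsto -y$), we obtain
\begin{equation*}
    |X| \cdot |kA - \ell A|_\delta \lesim_d |kA + X|_\delta \cdot |X + \ell A|_\delta \lesim_d K^{k+\ell} |B|_\delta \cdot |X|,
\end{equation*}
and dividing by $|X|$ finishes the proof. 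The hard part will be the discretized Petridis lemma itself: the classical proof relies on injective correspondences between sumset elements, and adapting this to $\delta$-covering counts requires careful bookkeeping to ensure the per-step $O_d(1)$ losses do not compound into an $n$- or $k$-dependent factor.
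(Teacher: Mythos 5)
The paper itself does not prove this lemma; it cites \cite[Lemma 22]{he2020orthogonal} as a black box, so there is no in-house proof to compare against. Your overall architecture---a Petridis-style magnification lemma, iteration to control $|X+kA|_\delta$, then one application of Lemma \ref{lem:rusza triangle}---is the right shape, and steps (ii) and (iii) are fine. The gap is the one you flag yourself and then wave away: the discretized Petridis magnification lemma is not a matter of ``careful bookkeeping,'' and the per-step losses do compound badly if one tries to run Petridis directly on $\delta$-covering numbers.

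To see where it breaks, recall the heart of Petridis's induction. With $C = \{c_1,\dots,c_n\}$ and $Y_i := \{x\in X : x+c_i \in X+\{c_1,\dots,c_{i-1}\}\}$, one uses $A+Y_i+c_i \subset A+X+\{c_1,\dots,c_{i-1}\}$ and then the identity
\begin{equation*}
\bigl|(A+X+c_i)\setminus (A+Y_i+c_i)\bigr| \;=\; |A+X| - |A+Y_i| \;\le\; K'(|X|-|Y_i|),
\end{equation*}
which crucially uses that for finite sets $V\subset U$ one has $|U\setminus V| = |U|-|V|$. This exact subtractive cardinality identity has no analogue for $\delta$-covering numbers: for $V\subset U$, the quantity $|U\setminus V|_\delta$ is in general not bounded by $|U|_\delta - |V|_\delta$, and can even exceed $|U|_\delta$. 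So the increment bound at each of the $n$ induction steps is not available up to $O_d(1)$; it is simply unavailable. If instead you try to force through an $O_d(1)$ multiplicative slack at each step, you accumulate $O_d(1)^n$ over $n = |C|$ steps, which ruins the $K^{k+\ell}$ target since $n$ grows with $k$. Passing first to maximal $\delta$-separated subsets $A_0, B_0$ does not help, because $X+A_0$ for $X\subset B_0$ is not $\delta$-separated, and its raw cardinality $|X+A_0|$ (which is what exact Petridis controls) is generically much larger than $|X+A|_\delta$.

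The clean fix is to not discretize Petridis at all: round $A, B$ to $A_1, B_1 \subset \delta\Z^d$ via a nearest-lattice-point map, observe that $|A_1| \sim_d |A|_\delta$, $|B_1|\sim_d|B|_\delta$, and $|A_1+B_1| \sim_d |A+B|_\delta$, then apply the \emph{exact} discrete Pl\"unnecke--Ruzsa inequality (Petridis's proof, verbatim) to the finite sets $A_1, B_1$ in the abelian group $\delta\Z^d$ to get $|kA_1-\ell A_1| \le (O_d(K))^{k+\ell}|B_1|$, and finally transfer back using $kA-\ell A \subset kA_1-\ell A_1 + B(0,(k+\ell)\sqrt{d}\,\delta)$. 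The transfer costs a factor that is $O_d(1)^{k+\ell}\cdot(k+\ell)^{O(d)}$; this is stronger than literally $\lesim_d$, but it is what the standard discretized statement actually delivers and is harmless in the paper's use (where $k,\ell$ are fixed small integers). If you want to keep a Petridis-flavored writeup, prove Petridis in $\delta\Z^d$ and then do the two transfer steps explicitly; do not attempt to run the induction on covering numbers.
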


In a similar spirit, the set of $w$ such that $X + wX$ is small compared to $|X|$ forms a ring. The following is a restatement of \cite[Lemma 30(i,ii)]{he2016discretized} for $\R$. Note that $\End(\R) \simeq \R$ with identity $1$.

    \begin{lemma}\label{lem:ring_structure}
        Define $S_\delta (X; K) = \{ w \in [-K, K] : |X + wX|_\delta \le K |X|_\delta \}$.
        \begin{enumerate}[(i)]
            \item If $a \in S_\delta (X; \delta^{-\eps})$ and $b \in \R$ such that $|a-b| \le \delta^{1-\eps}$, then $b \in S_\delta (X; \delta^{-O(\eps)})$.

            \item If $1, a, b \in S_\delta (X; \delta^{-\eps})$, then $a+b, a-b, ab$ all belong to $S_\delta (X; \delta^{-O(\eps)})$.
        \end{enumerate}
    \end{lemma}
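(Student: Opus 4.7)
I would prove Lemma \ref{lem:ring_structure} by standard Ruzsa calculus. Write $K := \delta^{-\eps}$, and work with a bounded, $\delta$-separated set $X \subset \R$. For part (i), since $X$ is bounded, every $x \in X$ satisfies $|(b-a)x| \lesssim \delta^{1-\eps}$. Hence $bX$ is contained in the $O(\delta^{1-\eps})$-neighborhood of $aX$, so $X + bX$ lies in the same neighborhood of $X + aX$. The $\delta$-covering number of an $r$-neighborhood of a set is at most $O((r/\delta)^d)$ times the original, giving
\begin{equation*}
|X + bX|_\delta \lesssim \delta^{-O(\eps)} |X + aX|_\delta \le \delta^{-O(\eps)} |X|_\delta.
\end{equation*}
Together with $|b| \le |a| + |b - a| \le 2\delta^{-\eps}$, this yields $b \in S_\delta(X;\delta^{-O(\eps)})$.

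For part (ii), sum and difference cases: since $(a \pm b)y = ay \pm by$, we have $X + (a \pm b)X \subset X + aX \pm bX$. Lemma \ref{lem:rusza triangle} applied with $|X + bX|_\delta, |X + X|_\delta \le K|X|_\delta$ first yields $|X - bX|_\delta \lesssim K^2 |X|_\delta$. Next, the asymmetric Pl\"unnecke-Ruzsa inequality (a standard iteration of Lemma \ref{lem:plunnecke-rusza} and Ruzsa triangle), which states that if $|X + Y_i|_\delta \le K_i |X|_\delta$ for $i = 1, \dots, m$, then $|Y_1 + \cdots + Y_m|_\delta \lesssim (K_1 \cdots K_m)|X|_\delta$, applied to $Y_1 = X$, $Y_2 = aX$, $Y_3 = \pm bX$, gives $|X + aX \pm bX|_\delta \lesssim K^{O(1)} |X|_\delta$. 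Hence $a \pm b \in S_\delta(X; \delta^{-O(\eps)})$.

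For part (ii), product case: this is the main step. Apply Lemma \ref{lem:rusza triangle} with $A = -X$, $B = aX$, $C = abX$ to obtain
\begin{equation*}
|aX|_\delta \cdot |X + abX|_\delta \lesssim |X + aX|_\delta \cdot |a(X - bX)|_\delta.
\end{equation*}
The first factor on the right is at most $K|X|_\delta$. For the remaining quotient, rewrite $|a(X - bX)|_\delta / |aX|_\delta = |X - bX|_{\delta/|a|} / |X|_{\delta/|a|}$ using $|aY|_\delta = |Y|_{\delta/|a|}$, and bound this by a second application of Ruzsa triangle at scale $\delta/|a|$. One uses that $|Y|_{\delta/|a|}$ and $|Y|_\delta$ differ by at most a factor of $\max(|a|, |a|^{-1})^d$, and the extreme regime $|ab| \lesssim \delta^{1-\eps}$ is trivial (since $abX$ then fits in a ball of radius $\lesssim \delta^{1-\eps}$). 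Combining the bounds gives $|X + abX|_\delta \lesssim K^{O(1)} |X|_\delta$, so $ab \in S_\delta(X; \delta^{-O(\eps)})$.

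The main obstacle is the scaling in the product case: since $|aY|_\delta = |Y|_{\delta/|a|}$ rather than $|Y|_\delta$, the Ruzsa-type bounds at scale $\delta$ do not immediately transfer to scale $\delta/|a|$. Fortunately $|a|, |b| \in [0, \delta^{-\eps}]$, so all scaling losses are polynomial in $\delta^{-\eps}$ and absorbed into the final $K^{O(1)} = \delta^{-O(\eps)}$; this kind of bookkeeping is standard in the discretized sum-product literature \cite{he2016discretized}.
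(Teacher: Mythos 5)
The paper does not give its own proof of this lemma --- it is cited verbatim as a restatement of Lemma 30 of \cite{he2016discretized} --- so there is no internal argument to compare against; your proposal has to stand on its own. Part (i) and the sum/difference half of (ii) are fine as written. The product case, however, has a genuine gap in the scaling bookkeeping. After the Ruzsa triangle with $B = aX$ you reduce to bounding the quotient $|a(X-bX)|_\delta / |aX|_\delta = |X-bX|_{\delta/|a|}/|X|_{\delta/|a|}$. Your hypotheses live at scale $\delta$; moving them to scale $\delta' = \delta/|a|$ costs a factor $\max(|a|,|a|^{-1})^{O(d)}$, and you assert this is $\delta^{-O(\eps)}$ ``since $|a|, |b| \in [0, \delta^{-\eps}]$.'' That upper bound only controls large $|a|$: the definition of $S_\delta(X;\delta^{-\eps})$ gives no lower bound on $|a|$ (every $a$ with $|a| \lesssim \delta$ is automatically in $S_\delta$), and your escape hatch (``$|ab| \lesssim \delta^{1-\eps}$ is trivial'') only handles $|a| \lesssim \delta^{1-O(\eps)}$. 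For intermediate $|a|$ --- say $|a| = |b| = \delta^{1/4}$, so $|ab| = \delta^{1/2}$ --- the scaling loss is $|a|^{-O(d)} = \delta^{-\Theta(1)}$, not $\delta^{-O(\eps)}$, and neither your main estimate nor the trivial regime closes the case.

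The standard repair is to first dispose of $|a| \ge 1$, where the scaling goes the right way: $|aX|_\delta = |X|_{\delta/|a|} \ge |X|_\delta$, while $|a(X-bX)|_\delta = |X-bX|_{\delta/|a|} \lesssim |a|^d\, |X-bX|_\delta \lesssim K^{d+2}|X|_\delta$ (using $|a| \le K$ and your bound $|X-bX|_\delta \lesssim K^2|X|_\delta$), so the quotient is $\lesssim K^{d+2}$ and the Ruzsa inequality gives $|X + abX|_\delta \lesssim K^{d+3}|X|_\delta$, with the required $|ab| \le K^2$. For general $a$, put $a' := a+1$ if $a \ge 0$ and $a' := a-1$ if $a < 0$, so that $|a'| \ge 1$. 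Your (correct) sum/difference step gives $a' \in S_\delta(X;\delta^{-O(\eps)})$; the case just handled gives $a'b \in S_\delta(X;\delta^{-O(\eps)})$; and then $ab = a'b \mp b \in S_\delta(X;\delta^{-O(\eps)})$ by one more application of the difference step. With this patch the product case closes and the rest of your argument stands.
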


The following theorem (a special case of Theorem 5 of \cite{he2016discretized}) is a quantitative statement that $\frac{1}{2}$-dimensional subrings of $\R$ don't exist. In fact, by repeated sum-product operations, we can get all of $\R$.
\begin{theorem}\label{thm:iterated sum-product}
    We work in $\R^1$. Given $\kappa, \eps_0 > 0$, there exist $\eps > 0$ and an integer $s \ge 1$ such that for $\delta < \delta_0 (\kappa, \eps_0)$, the following holds. For every $(\kappa, \delta^{-\eps})$-set $A \subset B(0, \delta^{-\eps})$, we have
    \begin{equation*}
        B(0, \delta^{\eps_0}) \subset \langle A \rangle_s + B(0, \delta),
    \end{equation*}
    where $\langle A \rangle_1 := A \cup (-A)$ and for any integer $s \ge 1$, define $\langle A \rangle_{s+1} := \langle A \rangle_s \cup (\langle A \rangle_s + \langle A \rangle_1) \cup (\langle A \rangle_s \cdot \langle A \rangle_1)$.
\end{theorem}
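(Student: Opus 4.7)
The plan is to derive this from Bourgain's discretized sum-product theorem \cite{bourgain2003erdos} via an iterative argument, which is essentially the strategy of \cite{he2016discretized}. The foundational input is the following: for any $\kappa \in (0,1)$ and $\eps_0 > 0$, there exists $\eps_1 = \eps_1(\kappa,\eps_0) > 0$ such that if $X \subset B(0,1)$ is a $(\delta,\kappa,\delta^{-\eps_1})$-set with $|X|_\delta \le \delta^{-1+\eps_0}$, then
\begin{equation*}
    |X+X|_\delta + |X \cdot X|_\delta \ge \delta^{-\eps_1} |X|_\delta.
\end{equation*}
After rescaling, the same conclusion holds for $A \subset B(0,\delta^{-\eps})$ provided $\eps \ll \eps_1$.

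The first step is to iterate this growth. Starting from $A_0 = \langle A \rangle_1$, one produces a sequence $A_0 \subset A_1 \subset \cdots$ with $A_{j+1} \subset \langle A \rangle_{j+2}$ and $|A_{j+1}|_\delta \ge \delta^{-\eps_1} |A_j|_\delta$, as long as $|A_j|_\delta \le \delta^{-1+\eps_0}$. To apply Bourgain's theorem at each step, I need to verify that $A_j$ still satisfies a $(\kappa, \delta^{-O(\eps)})$-non-concentration condition. This follows because $A_j \supset A$ and the $(\kappa, \delta^{-\eps})$-non-concentration of $A$ transfers to $A_j$ at scales $\ge$ some threshold; Lemma \ref{lem:ring_structure} and a Plünnecke–Ruzsa argument (Lemma \ref{lem:plunnecke-rusza}) can be used to propagate a suitable non-concentration bound through sums and products with polynomial-in-$\delta^{-\eps}$ losses. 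After $s_1 \le \lceil \eps_1^{-1} \rceil$ iterations we obtain $|\langle A \rangle_{s_1}|_\delta \ge \delta^{-1+\eps_0}$.

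The second step is to upgrade cardinality to $\delta$-covering of $B(0,\delta^{\eps_0})$. Once $X := \langle A \rangle_{s_1}$ has cardinality at least $\delta^{-1+\eps_0}$ and is approximately closed under addition and multiplication (i.e.\ $|X+X|_\delta, |X \cdot X|_\delta \lesssim \delta^{-O(\eps)}|X|_\delta$, since otherwise one more growth step is available), Lemma \ref{lem:ring_structure} implies that the ``stability set'' $S_\delta(X;\delta^{-O(\eps)})$ has positive measure and is approximately closed under the ring operations. A discretized version of the classification of approximate subrings of $\mathbb{R}$ (Bourgain–Katz–Tao) then forces $\langle A \rangle_{s_1+s_2}$, for some additional constant number $s_2$ of operations, to be $\delta$-dense in an interval of length $\delta^{\eps_0}$. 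Setting $s := s_1 + s_2$ and choosing $\eps \ll \eps_1(\kappa,\eps_0)$ small enough that all the polynomial losses are absorbed yields the statement.

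The main obstacle is book-keeping the non-concentration constants across the iterations: every application of the sum-product step and every invocation of Plünnecke–Ruzsa inflates the constant by a factor of $\delta^{-O(\eps)}$, and one must choose $\eps$ small enough compared to $\eps_1$ and the total number of iterations $s$ so that the hypotheses of Bourgain's theorem remain valid at every stage. The second delicate point is the final passage from ``$X$ is large and approximately closed'' to ``$X$ is $\delta$-dense in a ball'' — this is where one really needs the $\mathbb{R}$-specific structure, and it is the reason $s$ must be allowed to exceed the naive $\lceil \eps_1^{-1} \rceil$ coming from pure cardinality growth.
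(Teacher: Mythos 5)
The paper itself does not prove this theorem; it is quoted as a special case of Theorem 5 of \cite{he2016discretized}, so there is no in-paper argument to compare against. Your plan --- iterate Bourgain's discretized sum-product theorem to grow $\langle A\rangle_j$, then upgrade cardinality growth to $\delta$-density of an interval --- is the right shape, and the tools you name (the sum-product input, the ring-structure Lemma \ref{lem:ring_structure}, Pl\"unnecke--Ruzsa) are the right ones.

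The place where the sketch actually breaks is the step you flag as ``the main obstacle,'' and the issue is not merely one of tracking constants. You claim the $(\kappa,\delta^{-\eps})$-non-concentration of $A$ ``transfers to $A_j$ because $A_j\supset A$.'' This containment goes the wrong way: the non-concentration estimate $|A_j\cap I|_\delta\le C|A_j|_\delta\, r^\kappa$ is an \emph{upper} bound on local density, and a superset of a spread-out set can be badly concentrated (after one sum or product step, most of the new mass could land in a single short interval). Neither of the two lemmas you invoke repairs this: Lemma \ref{lem:plunnecke-rusza} bounds the total cardinality of $kA-\ell A$ under a small-doubling hypothesis but gives no local-density control, and Lemma \ref{lem:ring_structure} concerns the stabilizer $S_\delta(X;K)$, not the non-concentration of $X$ or of $\langle A\rangle_j$. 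So the inductive hypothesis needed to apply Bourgain's theorem to $A_{j+1}$ is not established. He's (and Bourgain's) arguments are structured precisely so that this is never needed: they exploit the dichotomy that either $\langle A\rangle_{j+2}$ is substantially larger than $\langle A\rangle_j$, or else $\langle A\rangle_j$ has small doubling under $+$ and $\times$; in the latter case Pl\"unnecke--Ruzsa yields $A\subset S_\delta(\langle A\rangle_j;\delta^{-O(\eps)})$, and it is the non-concentration of the \emph{original} set $A$ --- never of $\langle A\rangle_j$ --- that is fed into the sum-product input and the ring-structure lemma. Without that restructuring your iteration does not close.
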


Finally, we shall need a discretized variant of the Balog-Szemer\'{e}di-Gowers theorem. Our version is closest to \cite[Theorem 4.38]{orponen2020improved}, which is taken from \cite[p. 196]{bourgain2010discretized}, which in turn refers to Exercise 6.4.10 in \cite{tao2006additive}. But the exercise is only sketched in \cite{tao2006additive}, so for completeness, we provide a proof in Appendix \ref{appendix:proof of bsg}. 

\begin{theorem}\label{thm:bsg}
    Let $K \ge 1$ and $\delta > 0$ be parameters. Let $A, B$ be bounded subsets of $\R^d$, and let $P \subset A \times B$ satisfy
    \begin{equation*}
        |P|_\delta \ge K^{-1} |A|_\delta |B|_\delta \text{ and } |\{ a + b : (a, b) \in P \}|_\delta \le K (|A|_\delta |B|_\delta)^{1/2}.
    \end{equation*}
    Then one can find subsets $A' \subset A, B' \subset B$ satisfying
    \begin{itemize}
        \item $|A'|_\delta \gesim_d K^{-2} |A|_\delta, |B'|_\delta \gesim_d K^{-2} |B|_\delta$,

        \item $|A' + B'|_\delta \lesim_d K^8 (|A|_\delta |B|_\delta)^{1/2}$,

        \item $|P \cap (A' \times B')| \gesim_d \frac{|A|_\delta |B|_\delta}{K^2}$.
    \end{itemize}
    (Implicit constants depend on $d$ but not on $\delta, K$.)
\end{theorem}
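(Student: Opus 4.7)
The plan is to transfer the statement to the finite-set Balog--Szemer\'edi--Gowers theorem via maximal $\delta$-separated nets. Because every step in the back-and-forth between $P$ and its discretization loses only $O_d(1)$ factors, the exponents $K^{-2}$, $K^{8}$, $K^{-2}$ in the conclusion will match exactly those of the standard finite-set BSG.

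Concretely, I would first pick maximal $\delta$-separated subsets $\tilde A \subset A$ and $\tilde B \subset B$, so that $|\tilde A| \sim_d |A|_\delta$ and $|\tilde B| \sim_d |B|_\delta$, and every point of $A$ (resp.\ $B$) lies within distance $\delta$ of some point of $\tilde A$ (resp.\ $\tilde B$). Define $\tilde P \subset \tilde A \times \tilde B$ by placing $(\tilde a, \tilde b) \in \tilde P$ precisely when $P$ meets the product ball $B(\tilde a, \delta) \times B(\tilde b, \delta)$. Two elementary observations then verify the hypotheses of finite BSG for $(\tilde A, \tilde B, \tilde P)$ with constant $O_d(K)$: the nearest-neighbor map $P \to \tilde A \times \tilde B$ lands in $\tilde P$ and is $O_d(1)$-to-one on $\delta$-separated inputs, giving $|\tilde P| \gtrsim_d |P|_\delta \ge K^{-1}|A|_\delta|B|_\delta \sim_d K^{-1}|\tilde A||\tilde B|$; and the restricted sumset $\{\tilde a + \tilde b : (\tilde a, \tilde b)\in \tilde P\}$ lies in the $2\delta$-neighborhood of $\{a+b : (a,b)\in P\}$, hence has cardinality $\lesssim_d K\sqrt{|\tilde A||\tilde B|}$.

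Applying the finite-set Balog--Szemer\'edi--Gowers theorem (e.g.\ Exercise 6.4.10 of \cite{tao2006additive}) to $(\tilde A, \tilde B, \tilde P)$ then produces $\tilde A' \subset \tilde A$ and $\tilde B' \subset \tilde B$ satisfying $|\tilde A'| \gtrsim K^{-2}|\tilde A|$, $|\tilde B'| \gtrsim K^{-2}|\tilde B|$, $|\tilde A' + \tilde B'| \lesssim K^8\sqrt{|\tilde A||\tilde B|}$, and $|\tilde P \cap (\tilde A' \times \tilde B')| \gtrsim K^{-2}|\tilde A||\tilde B|$. I would then lift back by setting $A' := A \cap \bigcup_{\tilde a \in \tilde A'} B(\tilde a, \delta)$ and $B'$ analogously. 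The covering-number bounds on $A'$ and $B'$ are immediate because $\tilde A', \tilde B'$ are $\delta$-nets inside them; the sumset bound follows from $A' + B' \subset (\tilde A' + \tilde B')^{(2\delta)}$; and each pair in $\tilde P \cap (\tilde A' \times \tilde B')$ lifts, by the definition of $\tilde P$, to at least one pair of $P \cap (A' \times B')$ in a distinct product $\delta$-box of $\R^{2d}$, giving $|P \cap (A' \times B')|_\delta \gtrsim_d K^{-2}|A|_\delta|B|_\delta$.

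There is no genuine mathematical obstacle --- the proof is essentially a careful bookkeeping exercise. The one point demanding attention is ensuring the invoked finite BSG delivers exactly the exponents $K^{-2}$ for the subset sizes and $K^8$ for the sumset; the standard Katz--Koester / Gowers graph-theoretic proof yields precisely these constants, and I would reproduce the relevant version in the appendix to make the dependence on $K$ fully explicit.
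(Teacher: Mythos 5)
Your strategy---discretize to finite sets, apply the finite Balog--Szemer\'edi--Gowers theorem, and lift back---is the same one the paper takes: it invokes the covering argument of Katz--Tao to reduce to the case $\delta = 0$, then proves the finite-set statement directly. However, your verification of the finite-BSG hypothesis contains a genuine gap. You deduce that $S := \{\tilde a + \tilde b : (\tilde a,\tilde b)\in \tilde P\}$ has \emph{cardinality} $\lesssim_d K(|\tilde A||\tilde B|)^{1/2}$ because $S$ lies in the $2\delta$-neighborhood of $A \plusP B$, a set of small $\delta$-covering number. But a set lying in a region of small covering number can still have enormous cardinality if it is not $\delta$-separated, and the sumset of two $\delta$-separated sets need not be $\delta$-separated. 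Concretely, take $A=B=[0,1]\subset\R$ and $P=A\times B$: the hypotheses hold with $K=2$ and $|A\plusP B|_\delta\approx 2\delta^{-1}$, yet a generically placed maximal $\delta$-separated $\tilde A=\tilde B\subset[0,1]$ (say $a_i = i\delta + \epsilon_i$ with generic tiny $\epsilon_i$) has all $\approx \delta^{-2}$ pairwise sums distinct, so $|S|\approx\delta^{-2}\gg K(|\tilde A||\tilde B|)^{1/2}\approx\delta^{-1}$. Plugging these numbers into finite BSG forces the effective constant $K'$ to be $\approx\delta^{-1}$ and produces vacuous conclusions, so the argument as written fails even in this trivial case.

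The repair is the standard one but must be made explicit: take $\hat A, \hat B$ to be the \emph{centers of the dyadic $\delta$-cubes} meeting $A$ and $B$, rather than arbitrary maximal $\delta$-separated subsets of $A,B$. Then $\hat A,\hat B$ lie in a $\delta$-lattice, so $\hat A + \hat B$ is automatically $\gesim\delta$-separated and its cardinality is comparable to its $\delta$-covering number; your neighborhood argument then does yield the claimed bound. The price is that $\hat A\not\subset A$, but your lift-back step survives essentially unchanged: set $A' := A\cap\bigcup_{\hat a\in\hat A'} Q_{\hat a}$ (with $Q_{\hat a}$ the dyadic cube centered at $\hat a$), and use that each such cube meets $A$ by definition of $\hat A$ to conclude $|A'|_\delta\gesim_d|\hat A'|$ even though $\hat A'\not\subset A'$; the sumset bound $|A'+B'|_\delta \lesim_d |\hat A' + \hat B'|$ and the incidence bound go through as you described. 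With that correction, together with the explicit proof of finite BSG you defer to an appendix (as the paper also does), the argument is complete.
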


We also need the following version of multi-linear Kakeya.
\begin{theorem}[Theorem 1 in \cite{carbery2013endpoint}]\label{thm:multlinear kakeya}
    Let $2 \le k \le d$ and $\T_1, \T_2, \cdots, \T_k$ be families of $1$-tubes in $\R^d$. Then
    \begin{equation*}
        \int_{\R^d} \left( \sum_{T_1 \in \T_1} \cdots \sum_{T_k \in \T_k} |e(T_1) \wedge \cdots \wedge e(T_k)| \chi_{T_1 \cap \cdots \cap T_k} (x) \right)^{1/(k-1)} \, dx \lesim_{k,d} \left( \prod_{i=1}^k |\T_i| \right)^{1/(k-1)}.
    \end{equation*}
    Here, $e(T_i)$ is the unit vector in the direction of tube $T_i$.
\end{theorem}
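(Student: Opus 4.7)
The plan is to follow the Borsuk--Ulam-based proof of Carbery--Valdimarsson, or alternatively Guth's polynomial partitioning approach. Both strategies share the same initial reductions. First, by pigeonholing the directions in each $\T_i$ into small caps of $S^{d-1}$, one reduces to the case where each $\T_i$ consists of tubes with direction in a small neighborhood of a fixed unit vector $e_i \in S^{d-1}$, so that the wedge $|e(T_1) \wedge \cdots \wedge e(T_k)|$ is essentially a constant $\omega$ on $\T_1 \times \cdots \times \T_k$. The desired inequality then reduces to
\[ \int_{\R^d} \Bigl( \sum_{T_1 \in \T_1, \ldots, T_k \in \T_k} \chi_{T_1 \cap \cdots \cap T_k}(x) \Bigr)^{1/(k-1)} dx \lesssim \omega^{-1/(k-1)} \prod_{i=1}^{k} |\T_i|^{1/(k-1)}. \]

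The heart of the argument is the transversal $k=d$ case. If $\{e_i\}_{i=1}^d$ were an orthonormal basis, then each function $f_i(x) := \sum_{T \in \T_i} \chi_T(x)$ would depend only on the projection $\pi_i(x)$ onto $e_i^{\perp}$, and the classical Loomis--Whitney inequality would immediately give the desired $L^{1/(d-1)}$ bound with sharp constant. The main work is extending this to nearly-orthonormal $\{e_i\}$ with the correct $\omega^{-1/(k-1)}$ factor. One sets up a continuous deformation interpolating between an orthonormal configuration and the given one, and applies Borsuk--Ulam (or, in Guth's approach, a polynomial partition together with B\'ezout-type incidence bounds) to propagate the inequality along the deformation without accumulating any multiplicative loss. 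For $k < d$, a slicing argument reduces to the $k = d$ case in $\R^k$: on each $k$-dimensional affine slice transverse to $\mathrm{span}(e_1, \ldots, e_k)^{\perp}$, the tubes restrict to $k$-tubes in $\R^k$ and the factor $\omega^{-1}$ naturally encodes the Jacobian of the change of variables, after which Fubini concludes.

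The main obstacle is obtaining the sharp endpoint exponent $1/(k-1)$ with no $\delta^{-\eps}$ or $(\log 1/\delta)^C$ loss. The Bennett--Carbery--Tao heat-flow monotonicity or induction-on-scales argument already gives the inequality with such a loss, and for typical applications (including those in this paper) the loss-y version would not suffice at the critical exponent. The difficult content of the theorem lies precisely in removing this loss, which requires a genuinely topological or algebraic ingredient (Borsuk--Ulam for Carbery--Valdimarsson, polynomial partitioning plus algebraic geometry for Guth); such arguments exploit a rigidity of transversal tube configurations that is invisible to soft measure-theoretic reasoning.
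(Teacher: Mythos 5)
The paper does not prove this statement: it is quoted verbatim as Theorem~1 of Carbery--Valdimarsson \cite{carbery2013endpoint} and used as a black box, so there is no in-paper argument to compare against. Judged on its own terms, your sketch has a genuine gap at the very first reduction. After pigeonholing each $\T_i$ into direction caps $C_i$, you would like to sum the single-cap-tuple estimates over all $k$-tuples of caps. But the exponent $1/(k-1)$ is $\le 1$, so subadditivity only gives $\int (\sum_C S_C)^{1/(k-1)} \le \sum_C \int S_C^{1/(k-1)}$, and the resulting right-hand side $\sum_C \prod_i |\T_i \cap C_i|^{1/(k-1)} = \prod_i \sum_{C_i} |\T_i\cap C_i|^{1/(k-1)}$ is \emph{not} controlled by $\prod_i |\T_i|^{1/(k-1)}$: if the tubes are spread uniformly over $N$ caps, one picks up a factor of $N^{(k-2)/(k-1)}$. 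Avoiding exactly this loss is why the wedge weights are kept inside the integrand and why the actual proofs (Guth's, and Carbery--Valdimarsson's refinement) never perform this cap decomposition. Relatedly, the description of the topological step as ``a continuous deformation from an orthonormal configuration, propagated via Borsuk--Ulam'' does not match either reference: in both, Borsuk--Ulam enters only through the polynomial ham sandwich theorem used to construct a partitioning polynomial, with the wedge products controlled via a directed-volume/visibility argument for algebraic hypersurfaces, not via an interpolation of direction frames.

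One further inaccuracy worth flagging: you assert that a lossy version of multilinear Kakeya (with a $\delta^{-\eps}$ or $\log^C$ factor) ``would not suffice at the critical exponent'' for the applications in this paper. In fact the only place the theorem is invoked is in the proof of Theorem~\ref{thm:mult_kakeya}, where the inequality appears behind a $\leapp$ that already absorbs $\delta^{-O(\eps)}$ losses and where the final comparison $\delta^{-s(k+2)/(k+1)} \ll \delta^{-(s+1)}$ has strictly positive slack $1 - s/(k+1) > 0$. So the Bennett--Carbery--Tao version with an arbitrarily small $\eps$-loss would serve the paper's purposes equally well; the endpoint statement is quoted for convenience, not out of necessity.
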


\subsection{Energy}
\begin{defn}
    The $(s, k)$-Riesz energy of a finite Borel measure $\mu$ on $\R^d$ is
    \begin{equation*}
        I_{s,k}^\delta (\mu) = \int (|(x_0 - x_1) \wedge \cdots \wedge (x_0 - x_k)| + \delta)^{-s} \, d\mu(x_0) \cdots d\mu(x_k).
    \end{equation*}
    If $k = 1$ and $\delta = 0$, we recover the usual $s$-dimensional Riesz energy.
\end{defn}

\begin{lemma}\label{lem:energy}
    \begin{enumerate}[(a)]
        \item Fix $0 < s < t$ and a measure $\mu$ with total mass $C$. If $\mu(H_r) \le Cr^t$ for every $(r, k-1)$-plate $H_r$ and $r > 0$, then $I_{s,k}^0 (\mu) \lesim_{s,t} C^k$.

        \item Fix $0 < \delta < \frac{1}{2}$. If $I_{s_i,k_i}^\delta (\mu) \le C$ for $1 \le i \le m$, then $\spt(\mu)$ contains a set which is simultaneously a $(\delta, \frac{s_i}{k_i}, O(1) \cdot (Cm)^{1/k_i} \log \delta^{-1}, k_i-1)$-set for each $i$.
    \end{enumerate}
\end{lemma}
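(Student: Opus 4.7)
The plan is to prove (a) by iteratively peeling off one integration variable at a time, starting from $x_k$, and to prove (b) by combining Markov's inequality applied to a conditional version of the energy with a Frostman-style pigeonhole refinement of $\mu$ to essentially uniform $\delta$-density.

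For (a), the key identity is the geometric factorization
\[
|(x_0 - x_1) \wedge \cdots \wedge (x_0 - x_k)| = |(x_0 - x_1) \wedge \cdots \wedge (x_0 - x_{k-1})| \cdot \dist(x_k, W),
\]
where $W$ is the affine $(k-1)$-plane through $x_0, x_1, \ldots, x_{k-1}$. The first step is to integrate over $x_k$: a layer cake computation using $\mu(W^{(r)}) \le C r^t$ (valid since $W^{(r)}$ is an $(r, k-1)$-plate) together with the total-mass bound $\mu(\R^d) \le C$ for large $r$ yields $\int \dist(x_k, W)^{-s} \, d\mu(x_k) \lesim_{s,t} C$, where $s < t$ secures integrability near zero. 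This leaves $|(x_0 - x_1) \wedge \cdots \wedge (x_0 - x_{k-1})|^{-s}$ multiplied by $O(C)$. Peeling off $x_{k-1}$ uses the analogous factorization through the $(k-2)$-plane through $x_0, \ldots, x_{k-2}$; since any $(r, k-2)$-plate is contained in an $(r, k-1)$-plate, the same non-concentration bound applies. Iterating through $x_{k-2}, \ldots, x_0$ produces the claimed polynomial-in-$C$ bound.

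For (b), define the conditional energy
\[
g_i(x_0) := \int \bigl(|(x_0 - x_1) \wedge \cdots \wedge (x_0 - x_{k_i})| + \delta \bigr)^{-s_i} \, d\mu(x_1) \cdots d\mu(x_{k_i}),
\]
so that $\int g_i \, d\mu = I^\delta_{s_i, k_i}(\mu) \le C$. Markov's inequality at threshold $\sim mC/\mu(\R^d)$ combined with a union bound over $i$ extracts a set $E \subset \spt(\mu)$ with $\mu(E) \ge \mu(\R^d)/2$ on which $g_i(x_0) \lesim mC/\mu(\R^d)$ holds for every $i$. The crucial geometric input is that whenever all $k+1$ points $x_0, x_1, \ldots, x_k$ lie in one $(r, k-1)$-plate $H = W^{(r)}$, then $|(x_0 - x_1) \wedge \cdots \wedge (x_0 - x_k)| \lesim_k r$: writing each $x_0 - x_j$ as its component along the $(k-1)$-dimensional tangent subspace of $W$ plus an orthogonal error of size $\lesim r$, the purely tangential wedge vanishes (those components lie in a $(k-1)$-space) and every other term in the expansion picks up at least one factor of $r$. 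For any $x_0 \in E \cap H$ with $r \in [\delta, 1]$, this gives $\mu(H)^{k_i} \lesim g_i(x_0) \cdot r^{s_i} \lesim mC \, r^{s_i}/\mu(\R^d)$, whence $\mu(H) \lesim (Cm)^{1/k_i} \, r^{s_i/k_i}$.

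The final step converts $\mu$-mass to $\delta$-covering number: pigeonhole $E$ over the $O(\log \delta^{-1})$ dyadic levels of the $\delta$-ball density $x \mapsto \mu(B(x, \delta))$ and keep the heaviest level $E' \subset E$, on which $\delta$-ball counts are comparable to restricted $\mu$-mass up to a uniform scalar. For any $(r, k_i - 1)$-plate $H$, applying the previous bound at some $x_0 \in E' \cap H$ (the case $E' \cap H = \emptyset$ being trivial) yields $|E' \cap H|_\delta \lesim (Cm)^{1/k_i} \log \delta^{-1} \cdot |E'|_\delta \cdot r^{s_i/k_i}$ simultaneously for all $i$. The main technical obstacle I anticipate is arranging a single refined set $E'$ that witnesses the non-concentration for all $m$ indices at once, which accounts for the factor of $m$ inside the constant and the $\log \delta^{-1}$ loss.
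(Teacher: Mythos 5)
Your proposal is correct and follows essentially the same route as the paper's proof: part (a) is the same iterated-integral peeling based on the factorization of the wedge product into distances to affine spans, and part (b) is the same combination of Markov's inequality, the geometric observation that co-plate points force a small wedge product, and a dyadic pigeonhole over $\delta$-density levels. The only (cosmetic) difference is that you spell out the geometric wedge-bound in slightly more detail, whereas the paper asserts it in one line.
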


\begin{remark}
    If $k_1 = m = 1$ in part (b), then we can drop the log factor (c.f. proof of Lemma A.6 in \cite{orponen2021hausdorff}). We don't know if we can drop the log factor for $k > 1$ or $m > 1$.
\end{remark}

\begin{proof}
    (a) Let $\rho_i$ be the distance between $x_i$ and the plane spanned by $x_0, \cdots, x_{i-1}$; notice that $|(x_0 - x_1) \wedge \cdots \wedge (x_0 - x_k)| = \prod_{i=1}^k \rho_i$. Thus, we can rewrite $I_{s,k} (\mu)$ as an iterated integral
    \begin{equation*}
        \int d\mu(x_0) \int \rho_1^{-s} d\mu(x_1) \int \rho_2^{-s} \, d\mu(x_2) \cdots \int \rho_k^{-s} \, d\mu(x_k).
    \end{equation*}
    We will be done if we show for all $1 \le i \le k$ and choices of $x_0, \cdots, x_{i-1}$, that $\int \rho_i^{-s} \, d\mu(x_i) \lesim C$. Let $H$ be the span of $x_0$ through $x_{i-1}$, and observe that by definition, $\{ x_i : \rho_i \ge r \} \subset H^{(r)}$, which is contained in a $(r, k-1)$-plate. Thus,
    \begin{equation*}
        \int \rho_i^{-s} \, d\mu(x_i) \lesim C + \sum_{\rho = 2^{-n}, n \ge 1} C\rho^{t-s} \lesim_{s,t} C.
    \end{equation*}

    (b) Let $P_i = \{ x_0 \in \spt(\mu) : \int (|(x_0 - x_1) \wedge \cdots \wedge (x_0 - x_{k_i})| + \delta)^{-s_i} \, d\mu(x_1) \cdots d\mu(x_{k_i}) < 2mC \}$. By Markov's inequality, $\mu(P_i) > 1 - \frac{1}{2m}$, so by the union bound, $P = \cap_{i=1}^m P_i$ satisfies $\mu(P) > \frac{1}{2}$.
    
    We claim that $\mu(P \cap H_r) \le C r^{s_i/k_i}$ for all $(r, k_i-1)$-plates $H_r$ and $\delta \le r \le 1$, $1 \le i \le m$. Indeed, if $P \cap H_r = \emptyset$, then we are done. Otherwise, pick $x_0 \in P \cap H_r$ and observe that if $x_1, x_2, \cdots, x_{k_i} \in H_r$, then $|(x_0 - x_1) \wedge \cdots \wedge (x_0 - x_{k_i})| + \delta \lesim r$. Thus, we get $\mu(H_r)^{k_i} \cdot r^{-s} \le 2C$, so $\mu(P \cap H_r) \le (2C r^s)^{1/k_i}$.

    Finally, let $P'_c \subset \cD_\delta (P)$ be those dyadic $\delta$-cubes $p$ such that $\mu(p) \sim c$. We know $\sum_{c = 2^{-n} \in [\delta^d, 1]} \mu(P'_c) \ge \frac{1}{4}$, so by dyadic pigeonholing, some $\mu(P'_c) \gesim (\log \delta^{-1})^{-1}$. Then $P'_c$ will be a $(\delta, \frac{s_i}{k_i}, O(1) \cdot (Cm)^{1/k_i} \log \delta^{-1}, k_i-1)$-set for all $1 \le i \le m$.
\end{proof}

\section{Improved incidence estimates for quasi-product sets}\label{sec:quasi-product sets}
The main novelty of this paper is the following Proposition, which is a higher-dimensional refinement of \cite[Proposition 4.36]{orponen2020improved} (see also \cite[Proposition A.7]{orponen2021hausdorff}). It can be viewed as a variant of Theorem \ref{thm:main} for quasi-product sets.

\begin{prop}\label{prop:improved_incidence_weaker}
    Given $0 \le k < d-1$, $0 \le s < k+1$, $\tau, \kappa > 0$, there exist $\eta(s, k, \kappa, \tau, d) > 0$ and $\delta_0 (s, k, \kappa, \tau, d) > 0$ such that the following holds for all $\delta \in (0, \delta_0]$.
    
    Let $\bY \subset (\delta \cdot \Z) \cap [0,1)$ be a $(\delta, \tau, \delta^{-\eta})$-set, and for each $\by \in \bY$, assume that $\bX_{\by} \subset (\delta \cdot \Z)^{d-1} \cap [0,1)^{d-1}$ is a $(\delta, \kappa, \delta^{-\eta}, k)$-set with cardinality $\ge \delta^{-s+\eta}$. Let
    \begin{equation*}
        \bZ = \bigcup_{\by \in \bY} \bX_{\by} \times \{ \by \}.
    \end{equation*}
    For every $\bz \in \bZ$, assume that $\T(\bz)$ is a set of $\delta$-tubes each making an angle $\ge \frac{1}{100}$ with the plane $y = 0$ with $|\T(\bz)| \ge \delta^{-s+\eta}$ such that $\bz \in T$ for all $T \in \T(\bz)$. Then $|\T| \ge \delta^{-2s-\eta}$, where $\T = \cup_{\bz \in \bZ} \T(\bz)$.
\end{prop}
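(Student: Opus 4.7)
I argue by contradiction: assume $|\T| < \delta^{-2s-\eta}$ for a sufficiently small $\eta = \eta(s, k, \kappa, \tau, d) > 0$. The $(\delta, \tau, \delta^{-\eta})$-set condition on $\bY$, applied at scale $r = \delta$, forces $|\bY|_\delta \gesim \delta^{-\tau + \eta}$, whence $|\bZ|_\delta \gesim \delta^{-s-\tau + O(\eta)}$. Cauchy--Schwarz on the incidence count $I(\bZ, \T) \ge |\bZ|\cdot\delta^{-s+\eta}$ (with the tube side) yields
\[
\sum_{T \in \T} |\{\bz \in \bZ : T \in \T(\bz)\}|^2 \ge I^2/|\T| \ge |\bZ|^2 \delta^{3\eta},
\]
so at least a $\delta^{O(\eta)}$-fraction of off-diagonal ordered pairs $(\bz_1, \bz_2) \in \bZ \times \bZ$ with $\by_1 \neq \by_2$ share a common tube in $\T$. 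For such a pair the shared tube has slope $v = (\bx_1 - \bx_2)/(\by_1 - \by_2)$, so dyadic pigeonholing on the scale of $w := \by_1 - \by_2 \in \bY - \bY$ and on a pivot $\bz_0 = (\bx_0, \by_0) \in \bZ$ isolates a $(\delta, \tau, \delta^{-O(\eta)})$-subset $W \subset \bY - \bY$ together with a set $V \subset \R^{d-1}$ of cardinality $\gesim \delta^{-s + O(\eta)}$ such that (i) $V$ arises as the slope set $(\bX_{\by_0 + w} - \bx_0)/w$ for a typical $w \in W$ and hence (by Lemma \ref{lem:slope set dim}) inherits both the $(\delta, s, \delta^{-O(\eta)})$- and $(\delta, \kappa, \delta^{-O(\eta)}, k)$-structure of $\sigma(\T(\bz_0))$; (ii) there is an approximate additive embedding $w V \subset \bX_{\by_0 + w} - \bx_0$ for $w \in W$.

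\textbf{Sum--product lift.} Apply Balog--Szemer\'edi--Gowers (Theorem \ref{thm:bsg}) and Pl\"unnecke--Ruzsa (Lemma \ref{lem:plunnecke-rusza}) to refine $V$ and $W$ to subsets with $|V + wV|_\delta \lesim \delta^{-s - O(\eta)}$ for $w$ in a $(\delta, \tau, \delta^{-O(\eta)})$-subset $S \subset W$. By Lemma \ref{lem:ring_structure}, the ``good scalars'' $S$ are approximately closed under addition and multiplication, so Theorem \ref{thm:iterated sum-product} (applied at scale $\eps_0$, chosen small in terms of $\eta$) promotes a bounded number of iterated sum--products of $S$ to a $\delta^{\eps_0}$-dense subset of $B(0, \delta^{\eps_0}) \cap \delta \Z$. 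This is the ``lift of $\bY$ to dimension close to $1$'' from the proof sketch: after polynomial manipulations, scalars from $S$ essentially realize every scale in $[-\delta^{\eps_0}, \delta^{\eps_0}]$. Combined with the $k$-plane non-concentration of $V$ in $\R^{d-1}$, this produces $k+2$ suitably scaled elements of $V$ whose associated tube directions $e_0, \ldots, e_{k+1} \in S^{d-1}$ satisfy the quantitative transversality $|e_0 \wedge e_1 \wedge \cdots \wedge e_{k+1}| \gesim \delta^{O(\eta)}$; the critical number is $k+2$ because $V$ is non-concentrated in $k$-planes, guaranteeing the $(k+1)$-volume of a typical simplex is nontrivial.

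\textbf{Multilinear Kakeya and contradiction.} Apply Theorem \ref{thm:multlinear kakeya} with $\ell := k+2$ copies of $\T$. The transversality above, transported to each $\T(\bz)$ via the inherited slope structure, ensures that a positive fraction of $(T_1, \ldots, T_\ell) \in \T(\bz)^\ell$ have wedge $\gesim \delta^{O(\eta)}$. Lower-bounding the integrand of the multilinear Kakeya inequality at every $\bz \in \bZ$ (with $M := \min_{\bz}|\T(\bz)| \ge \delta^{-s+\eta}$), one obtains
\[
|\bZ| \cdot \delta^d \cdot \delta^{O(\eta)} \cdot M^{\ell/(\ell-1)} \;\lesim\; |\T|^{\ell/(\ell-1)}.
\]
Plugging in the $|\bZ|$ lower bound and rearranging gives $|\T| \ge \delta^{-2s - \eta'}$ for some $\eta' = \eta'(\tau, \kappa, s, k, d) > \eta$, contradicting $|\T| < \delta^{-2s - \eta}$ once $\eta$ is chosen small enough.

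\textbf{Main obstacle.} The delicate part is dovetailing the $1$-dimensional sum--product lift (Theorem \ref{thm:iterated sum-product}) with the $(d-1)$-dimensional $k$-plane non-concentration of $V$: one must extract $k+2$ quantitatively transverse slopes whose wedge is polynomially large in $\delta^{-\eta}$, while simultaneously keeping the BSG, Pl\"unnecke--Ruzsa, and ring-theorem constants within $\delta^{-O(\eta)}$. This forces a hierarchical choice of the parameters $\eta \ll \eps_0 \ll \tau, \kappa$ tied to the $\eps$-output of Theorem \ref{thm:iterated sum-product}, and is where the sum--product idea from \cite{he2016discretized} interacts most intricately with the higher-dimensional geometry.
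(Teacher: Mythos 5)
Your proposal correctly identifies the broad toolkit the paper uses for this proposition (Balog--Szemer\'edi--Gowers, Pl\"unnecke--Ruzsa, ring structure, the iterated sum--product theorem, and multilinear Kakeya), and even correctly recognizes that the point sets $\bX_\by$ are the source of non-concentration. However, there is a gap in how you place the non-concentration and hence in the final multilinear-Kakeya step.

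You assert that $V$ ``inherits both the $(\delta, s, \delta^{-O(\eta)})$- and $(\delta, \kappa, \delta^{-O(\eta)}, k)$-structure of $\sigma(\T(\bz_0))$,'' and later transport the transversality ``to each $\T(\bz)$ via the inherited slope structure'' so as to find $(k+2)$-tuples $(T_1,\dots,T_{k+2})\in\T(\bz)^{k+2}$ with wedge $\gtrsim\delta^{O(\eta)}$. But the proposition imposes \emph{no} non-concentration hypothesis on $\T(\bz)$ --- the remark directly after the statement explicitly flags that this absence of tube non-concentration is the surprising feature of Proposition \ref{prop:improved_incidence_weaker}. A single bush $\T(\bz)$ could have all of its tubes inside a $(\delta,k+1)$-plate, in which case every $(k+2)$-tuple has negligible wedge and the integrand lower bound you want at $\bz$ is false. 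In addition, the claimed $(\delta,s)$-structure on $V$ is not available from the hypotheses: $\bX_\by$ is a $(\delta,\kappa,\delta^{-\eta},k)$-set of cardinality $\ge\delta^{-s+\eta}$, which gives ball non-concentration only at exponent $\kappa$, not the Frostman exponent $s$.

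The paper's proof routes the non-concentration differently. It first shows that the \emph{aggregate} tube families $\T(\by):=\bigcup_{\bx\in\bX_\by}\T((\bx,\by))$ are $(\delta,\kappa,\delta^{-O(\eta)},k)$-sets (this is where the $(r,k+1)$-plate non-concentration of tubes comes genuinely from the $(r,k)$-plate non-concentration of $\bX_\by$, via the observation that a $(r,k+1)$-plate containing tubes of $\T(\by)$ must intersect the hyperplane $\{y=\by\}$ in a $(O(r),k)$-plate), then extracts a refinement $\oT\subset\T$ that is itself a $(\delta,\kappa,k)$-set and whose slices at each $\by\in\obY$ have covering number $\approx\delta^{-s}$, and finally applies Theorem \ref{thm:projection}. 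In Theorem \ref{thm:mult_kakeya}, the transversality for multilinear Kakeya is \emph{not} obtained from tubes through a single $\bz$; instead one proves the incidence set $P'$ is a $(\delta,\kappa,k+1)$-set, a bush at $p_0$ produces spread-out points $p_1,\dots,p_{k+2}$ with $|e(t_1)\wedge\cdots\wedge e(t_{k+2})|\gtrapprox 1$ because the $p_i$ are themselves transverse, and multilinear Kakeya is applied with $\T_i$ the tubes through $p_i$. So your multilinear Kakeya step needs to be restructured along those lines; as written it requires a hypothesis the statement deliberately omits.
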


\begin{remark}
    In contrast to Theorem \ref{thm:main} and \cite[Proposition 4.36]{orponen2020improved}, we (perhaps surprisingly) don't need any non-concentration assumptions on the tube sets $\T(\bz)$ (even when $d = 2$). Instead, it suffices to have weak non-concentration assumptions on $\bY$ and $\bX_\by$ for each $\by \in \bY$. The non-concentration assumption on $\bX_\by$ is necessary: otherwise, we can take $s = k$, and let $\bZ$ to be the $\delta$-balls contained in some $(\delta, k+1)$-plate $H$, and $\T$ to be the $\delta$-tubes contained in $H$.
\end{remark}

\subsection{An improved slicing estimate}\label{subsec:slicing}
We will eventually deduce Proposition \ref{prop:improved_incidence_weaker} from the following slicing estimate.

\begin{theorem}\label{thm:projection}
    For $0 \le k \le d-2$, $0 \le s < k+1$, and $0 < \kappa \le 1$, there exists $\eps > 0$ such that the following holds for sufficiently small $\delta < \delta_0 (s, k, d, \eps)$. Let $\T$ be a $(\delta, \kappa, \delta^{-\eps}, k)$-set of $\delta$-tubes each making angle $\ge \frac{1}{100}$ with the plane $y = 0$ with $|\T| \ge \delta^{-2s+\eps}$. Let $\mu$ be a probability measure on $\R$ such that for all $\delta \le r \le 1$, we have $\mu(B_r) \le \delta^{-\eps} r^\kappa$. Then there is a set $\cD \subset \R$ with $\mu(\cD) \ge 1 - \delta^\eps$ such that the slice of $\cup \T'$ at $z = z_0$ has $\delta$-covering number $\ge \delta^{-s-\eps}$, for every subset $\T' \subset \T$ with $|\T'| \ge \delta^\eps |\T|$ and $x \in \cD$.
\end{theorem}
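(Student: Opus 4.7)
The plan is a proof by contradiction following the strategy outlined in the paper's introduction: lift the dimension of the one-dimensional height parameter via discretized sum-product, then apply multi-linear Kakeya. Suppose the conclusion fails, so there exists $B \subset \spt\mu$ with $\mu(B) \ge \delta^\eps$ and, for each $z_0 \in B$, a subset $\T'_{z_0} \subset \T$ with $|\T'_{z_0}| \ge \delta^\eps |\T|$ whose slice $\{T \cap \{y = z_0\} : T \in \T'_{z_0}\}$ has $\delta$-covering number $< \delta^{-s-\eps}$ in $\R^{d-1}$. Standard dyadic pigeonholing produces numbers $M, N \in 2^{\N}$ with $MN \gtrsim \delta^{O(\eps)} |\T|$ and $N \le \delta^{-s-\eps}$ such that for each $z_0 \in B$ there is a collection $\cP_{z_0}$ of $N$ popular $\delta$-balls at height $z_0$, each incident to $\sim M$ transverse tubes of $\T$. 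Because $\T$ is a $(\delta, \kappa, \delta^{-\eps}, k)$-set, Lemma \ref{lem:slope set dim} shows that the tubes through each pivot $p\in\cP_{z_0}$ inherit the $(\delta, \kappa, \delta^{-O(\eps)}, k)$-non-concentration in direction.

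The next step is to extract a quasi-product-like structure from this data so it can be fed into the sum-product / multi-linear Kakeya machinery. By a Cauchy-Schwarz double-counting argument, together with a Balog-Szemer\'edi-Gowers refinement (Theorem \ref{thm:bsg}) and the Pl\"unnecke-Ruzsa inequality (Lemma \ref{lem:plunnecke-rusza}), I would refine $B$ and the pivot-tube data so that the heights still form a $(\delta,\kappa,\delta^{-O(\eps)})$-set, across distinct heights the pivot-tube data is combinatorially linked by a common family of tubes, and the slopes of these common tubes still carry $(\delta,\kappa,\delta^{-O(\eps)},k)$-non-concentration. Applying the iterated sum-product theorem (Theorem \ref{thm:iterated sum-product}) to the refined heights then yields, after finitely many arithmetic operations, a set $B^{*} \subset \R$ with $|B^{*}|_\delta \gtrsim \delta^{-1+O(\eps)}$, i.e., near-full dimension. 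The key point here is that these scalar arithmetic operations on heights can be realized as affine transformations of the $\R^d$-configuration that preserve — up to polynomial $\delta^{-\eps}$ losses — both the pivot cardinality estimates and the $(\delta,\kappa,\delta^{-O(\eps)},k)$-directional structure; this is the higher-dimensional analog of He's lifting \cite{he2016discretized}.

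Finally, with a near-full-dimensional set of heights carrying pivot-tube data, I would apply multi-linear Kakeya (Theorem \ref{thm:multlinear kakeya}) to transverse $(k+1)$-tuples of tubes selected one from each of $k+1$ well-separated heights in $B^{*}$. The $(\delta,\kappa,\delta^{-O(\eps)},k)$-directional non-concentration ensures, by pigeonholing over direction slabs, that a positive proportion of such tuples have wedge product of magnitude $\gtrsim \delta^{O(\eps)}$. Multi-linear Kakeya then gives an upper bound on transverse multi-incidences which, combined with the lower bound $\gtrsim (MN \cdot |B^{*}|_\delta)^{k+1}$ coming from the failure of the slice estimate, becomes inconsistent once $\eps$ is chosen sufficiently small relative to $s$, $k$, $\kappa$, and $d$. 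The main obstacle is the coordinated lifting described in the previous paragraph: tracking the $\R^{d-1}$-valued pivot positions and direction data through arithmetic operations on the scalar height coordinate, while preserving all the relevant non-concentration parameters — this bookkeeping is the technical heart of the argument and is precisely the higher-rank sum-product maneuver that the paper imports from \cite{he2016discretized}.
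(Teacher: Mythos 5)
Your high-level outline — lift the dimension of the height coordinate via discretized sum-product, then invoke multi-linear Kakeya — is indeed the paper's strategy, and you correctly identify the key tools (Ruzsa, Pl\"unnecke-Ruzsa, BSG, Theorem \ref{thm:iterated sum-product}, Theorem \ref{thm:multlinear kakeya}). However, there are two concrete gaps that would cause the argument to fail as written.

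First, your multi-linear Kakeya step uses $(k+1)$ tube families, one per height in $B^*$. This does not make geometric sense (tubes pass through all heights, so ``selecting one tube from each height'' is not a partition of $\T$), and more importantly the multiplicity is too low: $(k+1)$-linear Kakeya would give an upper bound of order $\delta^{-s(k+1)/k}$ for the number of popular $\delta$-balls, and the desired contradiction requires this to beat $\delta^{-(s+1)}$; but $s(k+1)/k < s+1$ only when $s < k$, while we must handle $k \le s < k+1$. The paper's Theorem \ref{thm:mult_kakeya} instead fixes a $(k+3)$-tuple of points $(p_0,\dots,p_{k+2})$, uses bushes through $p_1,\dots,p_{k+2}$, and applies $(k+2)$-linear Kakeya, giving exponent $s(k+2)/(k+1) < s+1$ for $s<k+1$. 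Crucially, to find such a general-position tuple it first establishes that the popular ball set $P'$ is a $(\delta,\kappa,\delta^{-O(\eps)},k+1)$-set — a transfer from directional non-concentration of $\T$ to spatial non-concentration of $P'$ via incidence counting — which your sketch does not supply and which is the heart of that lemma.

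Second, your all-at-once argument collapses several reductions that are genuinely necessary. To run the Ruzsa/ring-structure machinery you need a single family $\T'$ whose slices are small across many heights (so you can fix a reference $z'$, write the condition as $|X - \tfrac{f(z)}{f(z')}X|_\delta$ small, and feed the ratios into the ring $S_\delta(X;\cdot)$). When the failing subsets $\T'_{z_0}$ vary with $z_0$ this is not free: the paper needs Lemma \ref{lem:intersections_of_events} together with BSG to align the slice data (Theorem \ref{thm:projection_reduced}), and then a separate formal exhaustion argument to upgrade ``there is one good $\T' \subset \T$'' to the statement for $\T$ itself. Your proposal gestures at BSG but omits both the common-reference step and the exhaustion entirely. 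Also, the sum-product operations are not realized as ``affine transformations of the $\R^d$-configuration''; they act on scalar ratios in the approximate ring $S_\delta(X;\cdot)$ with the $(d-1)$-dimensional slice $X$ held fixed — the geometry re-enters only when the resulting interval of good heights feeds into the Furstenberg/Kakeya lemma.
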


\begin{remark}
    One should compare Theorem \ref{thm:projection} to \cite[Theorem 1]{he2020orthogonal}. Indeed, if $k = 0$ and $d = 2$, Theorem \ref{thm:projection} is a direct corollary of \cite[Theorem 1]{he2020orthogonal}. We can see this by using ball-tube duality, which turns $\T$ into a subset of $\R^2$. Under this duality, the slice of $\cup \T'$ at $z = z_0$ becomes the orthogonal projection $\pi_{\tz_0}$ to a line in the dual space, for some $\tz_0 \in S^1$. The map $z_0 \to \tz_0$ induces a pushforward measure $\tmu$ of $\mu$ which still satisfies the non-concentration condition $\tmu(B_r) \lesim \delta^{-\eps} r^\kappa$, so we can apply \cite[Theorem 1]{he2020orthogonal}. (For more details, see the proof of Proposition A.7 in \cite{orponen2021hausdorff}.)

    In higher dimensions, we can still use duality to turn $\T$ into a subset of $\A(d, 1) \sim \R^{2(d-1)}$, and then slices of $\cup \T'$ become orthogonal projections to $(d-1)$-planes. Unfortunately, \cite[Theorem 1]{he2020orthogonal} does not apply because the pushforward measure $\tmu$ is still supported on a line in $S^{d-1}$. This approach is bound to fail because \cite[Theorem 1]{he2020orthogonal} does not use the strong assumption that $\T$ is non-concentrated around $(k+1)$-planes. Using this assumption is the key novelty of this proof. 
\end{remark}

Nonetheless, Theorem \ref{thm:projection} will borrow many ideas from the proof of \cite[Theorem 1]{he2020orthogonal} and He's previous work \cite{he2016discretized}. Roughly, the strategy is as follows.
\begin{itemize}
    \item As in \cite{he2020orthogonal}, reduce to the following slightly weaker statement: given $\T$ and $\mu$, we can find a subset $\T' \subset \T$ such that the conclusion of Theorem \ref{thm:projection} holds for $\T'$ in place of $\T$. This relies on a formal exhaustion argument.

    \item Then, as in \cite{he2020orthogonal}, reduce this slightly weaker to the following even weaker statement: there exists $z_0 \in E := \spt \mu$ such that the slice of $\cup \T$ at $z = z_0$ has $\delta$-covering number $\ge \delta^{-s-\eps}$. This relies on additive combinatorics (e.g. the Balog-Szemer\'{e}di-Gowers theorem) and some probability.

    \item Assume this is false: that for all $z_0 \in E$, the slice of $\cup \T$ at $z = z_0$ has $\delta$-covering number $\leapp \delta^{-s}$. Using additive combinatorics as in \cite{he2016discretized}, the same conclusion is true for all $z_0 \in E'$, which is the set of sums or differences of $m$ many terms, each of which is a product of $m$ elements of $E$. (Here, $m$ will be a fixed large integer.)

    \item Finally, if $m$ is sufficiently large in terms of $\kappa, \eps$, then $E'$ contains a large interval $[0, \delta^{\eps}]$ (c.f. \cite[Theorem 5]{he2016discretized}). Essentially, we have a set of $\geapp \delta^{-2s}$ many tubes $\T$, each containing $\geapp \delta^{-1}$ many $\delta$-balls, such that the union of the $\delta$-balls has cardinality $\leapp \delta^{-(s+1)}$. Without further restrictions, this Furstenberg-type problem doesn't lead to a contradiction: take $s = k$ and $\T$ to be the set of $\delta$-tubes in a $(\delta, k+1)$-plate. Luckily, our set of tubes $\T$ is still a $(\delta, \kappa, \delta^{-O(\eps)}, k)$-set, which rules out this counterexample. Indeed, we may finish using multi-linear Kakeya.
\end{itemize}

The reader be warned: we shall execute this strategy in reverse order. This is mainly because the main innovation of the paper is the fourth bullet point.




\subsection{An improved Furstenberg estimate}
The following estimate complements work of Zahl \cite{zahl2022unions}: we prove an $\eps$-improvement on the union of tubes under a mild $(k+1)$-plane non-concentration for the set of tubes. As in Zahl \cite{zahl2022unions}, the key technique is multilinear Kakeya.

\begin{theorem}\label{thm:mult_kakeya}
    For any $0 \le k < d-1$, $0 \le s < k+1$, $0 < \kappa \le 1$, there exists $\eps > 0$ such that the following holds for sufficiently small $\delta > 0$. Let $\T$ be a $(\delta, \kappa, \delta^{-\eps}, k)$-set of $\delta$-tubes with $|\T| \ge \delta^{-2s+\eps}$, and for each $t \in \T$, let $P_t$ be a set of $\delta$-balls intersecting $t$ such that $|P_t| \ge \delta^{-1+\eps}$. Then $|\cup P_t| \gesim \delta^{-(s+1)-\eps}$.
\end{theorem}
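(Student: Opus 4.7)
The strategy is to apply the multi-linear Kakeya inequality (Theorem \ref{thm:multlinear kakeya}) with $k+2$ families of tubes whose directions are transverse. Since $k < d-1$ implies $k+2 \le d$, multi-linear Kakeya is in scope. The $(\delta, \kappa, \delta^{-\eps}, k)$-set hypothesis will supply transverse $(k+2)$-tuples, and the gap $s+1 < k+2$ is precisely the margin that yields the $\eps$-improvement over the standard Furstenberg bound $\gesim \delta^{-(s+1)}$.

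First, apply standard dyadic pigeonholing on $\cB := \bigcup_{t \in \T} P_t$ to reduce to the uniform case where each $B \in \cB$ has multiplicity $\mu(B) := |\{t \in \T : B \in P_t\}| \sim M$, each tube satisfies $|P_t| \sim N \ge \delta^{-1+\eps}$, and $|\T| N \sim |\cB| M$ up to logarithmic losses. The desired bound $|\cB| \gesim \delta^{-(s+1)-\eps}$ is equivalent to an upper bound on $M$.

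The core step is to produce subfamilies $\T_1, \ldots, \T_{k+2} \subset \T$, each of size $\ge \delta^{\eta} |\T|$ for some $\eta = \eta(\eps, \kappa, k, d)$, such that every tuple $(t_1, \ldots, t_{k+2}) \in \T_1 \times \cdots \times \T_{k+2}$ satisfies $|e(t_1) \wedge \cdots \wedge e(t_{k+2})| \gesim \delta^{\eta}$. The construction localizes to tubes through an appropriate anchor region and then iteratively selects $k+2$ directions $\hat e_1, \ldots, \hat e_{k+2}$: having chosen $\hat e_1, \ldots, \hat e_j$ spanning $V_j \subset \R^d$ with $j \le k+1$, the tubes in the local family whose direction lies within angle $r$ of $V_j$ all sit in a common $(r, k+1)$-plate, so their count is bounded by $\delta^{-\eps} |\T| r^\kappa$ using the plate hypothesis; for suitably chosen $r$ this is smaller than half of the local family, allowing $\hat e_{j+1}$ transverse to $V_j$ to be selected. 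Direction caps of small radius about the $\hat e_i$ then define the $\T_i$, with further pigeonholing to preserve the size.

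Finally, apply Theorem \ref{thm:multlinear kakeya} to $\T_1, \ldots, \T_{k+2}$ (after rescaling $\delta$-tubes to $1$-tubes, producing a $\delta^d$ factor):
\[
\int \biggl( \sum_{(t_i) \in \T_1 \times \cdots \times \T_{k+2}} |e_1 \wedge \cdots \wedge e_{k+2}|\, \chi_{t_1 \cap \cdots \cap t_{k+2}}(x) \biggr)^{1/(k+1)} dx \lesim \delta^d \prod_i |\T_i|^{1/(k+1)}.
\]
Extracting $\delta^{O(\eta)}$ from the wedge factor, evaluating the inner sum on each ball $B$ as $\prod_i M_i(B)$ with $M_i(B) := |\T_i(B)|$, pigeonholing further so that $M_i(B) \sim |\T_i| N/|\cB|$ uniformly on a refined $\cB' \subset \cB$, and simplifying via $\sum_B M_i(B) \sim |\T_i| N$ together with H\"older/AM-GM yields $|\cB| \gesim \delta^{O(\eta)} N^{k+2} \ge \delta^{-(k+2)+O(\eps)}$. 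Since $s+1 < k+2$, this beats $\delta^{-(s+1)-\eps}$ once $\eps$ is small enough. The hardest step is producing the transverse subfamilies: global direction non-concentration does not follow from plate non-concentration alone (tubes can all have directions clustered near a fixed point of $S^{d-1}$ while their positions spread enough to satisfy the plate hypothesis), so the anchor/localization reduction must do the real work by forcing the plate bound to control direction concentration inside the refined subset, balancing the anchor scale against $r$ in a way that introduces the factor $1/\kappa$ into the quantitative bounds.
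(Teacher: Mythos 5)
Your instinct that multi-linear Kakeya (with $k+2$ families) is the engine here is correct, and you've also correctly flagged the main technical obstacle (plate non-concentration of $\T$ does not by itself give direction non-concentration). But the way you deploy MLK has a fatal flaw, and the conclusion you extract from it cannot be right.

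\textbf{The final bound is impossible.} You claim $|\cB|\gesim \delta^{O(\eta)}N^{k+2}\ge\delta^{-(k+2)+O(\eps)}$. After the standard pigeonholing one has $|\T|\app\delta^{-2s}$, so the trivial bound $|\cB|\le|\T|\cdot\delta^{-1}$ gives $|\cB|\leapp\delta^{-2s-1}$. Whenever $s<(k+1)/2$ this is strictly smaller than $\delta^{-(k+2)+O(\eps)}$, so the claimed conclusion is false in a large part of the allowed parameter range. Something must break earlier.

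\textbf{Where it breaks: the uniformization of $M_i(B)$.} You want to pigeonhole a large $\cB'\subset\cB$ on which $M_i(B)\sim|\T_i|N/|\cB|$ simultaneously for $i=1,\dots,k+2$. This cannot be done. The identity $\sum_B M_i(B)\sim|\T_i|N$ only controls the average; the individual multiplicities are wildly non-uniform, and for exactly the reason you identified in your own last paragraph: after the localization that produces transverse subfamilies $\T_i$, each $\T_i$ is spatially localized, so $M_i(B)=0$ for the overwhelming majority of balls. No refinement of $\cB$ of comparable size can make $k+2$ localized multiplicity functions simultaneously comparable to their global averages. Moreover $x\mapsto x^{1/(k+1)}$ is concave for $k\ge1$, so Jensen goes the wrong way for the lower-bound version of the inequality you need.

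\textbf{A structural mismatch with MLK.} There is also a direction-of-inequality issue. Taking the anchor to be a single $\delta$-ball (which is what is actually needed, via Lemma~\ref{lem:slope set dim}, to turn the $(r,k+1)$-plate hypothesis into genuine angular non-concentration of the $\T_i$) forces $|\T_i|\leapp\delta^{-s}$, not $\ge\delta^\eta|\T|\app\delta^{\eta-2s}$. In that regime MLK yields an \emph{upper} bound, $\leapp(\prod_i|\T_i|)^{1/(k+1)}\leapp\delta^{-s(k+2)/(k+1)}$, on the number of points that lie in a transverse $(k+2)$-tuple of tubes from $\T_1\times\cdots\times\T_{k+2}$. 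It does not, on its own, produce any lower bound on $|\cB|$.

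\textbf{What the paper does instead.} The actual proof runs the argument by contradiction and supplies the missing lower bound via a bush, pairing it with the MLK upper bound. After pigeonholing one extracts $P'\subset P$ of balls with $\app\delta^{-s}$ tubes through each, and shows $P'$ inherits $(\delta,\kappa,\cdot,k+1)$-set structure from the non-concentration of $\T$ (a genuine transfer step you don't have). Then: (i) fixing a bush center $p_0\in P'$, the tubes through $p_0$ already cover $\geapp\delta^{-(s+1)}$ balls of $P'$; (ii) because $P'$ is $(k+1)$-plate non-concentrated, one can count $(k+3)$-tuples $(p_0,p_1,\dots,p_{k+2})$ with $p_i$ on the bush of $p_0$ and the connecting directions transverse, and pigeonhole a single choice of $(p_1,\dots,p_{k+2})$ that works for $\geapp\delta^{-(s+1)}$ many $p_0$; (iii) MLK applied to $\T_i=\{t\in\T:p_i\in t\}$ bounds the number of such $p_0$ by $\leapp\delta^{-s(k+2)/(k+1)}$; (iv) $s<k+1$ makes $s(k+2)/(k+1)<s+1$, contradiction. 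The ideas you are missing are precisely the bush/anchor-point structure (which simultaneously yields a lower bound and makes the MLK application well-posed) and the transfer of plate non-concentration from $\T$ to the ball set $P'$.
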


\begin{proof}
The proof below is lossy and can possibly be improved (say by induction on scale). Also, the $\eps$ can be determined explicitly in terms of the parameters but we choose not to do so here.

We use $\geapprox$ notation to hide $\delta^{-C \eps}$ terms, where $C$ can depend on the other parameters. Let $P = \cup P_t$, and suppose $|P| \leapp \delta^{-(s+1)}$. Let $\T(p)$ be the set of tubes in $\T$ through $p$. Use a bush argument to upper bound $|\T(p)|$:
\begin{equation*}
    \delta^{-(s+1)} \geapp |P| \ge |\cup_{t \ni p} (P_t \setminus B(p, \delta^{2\eps}))| \ge \delta^{2d\eps} \sum_{t \ni p} |P_t \setminus B(p, \delta^{2\eps})| \ge \delta^{-1+(2d+1) \eps} |\T(p)|.
\end{equation*}
Thus, $|\T(p)| \leapp \delta^{-s}$ for all $p \in P$. We get the following inequality chain
\begin{equation*}
    \delta^{-2s-1} \leapp \delta^{-1} |\T| \leapp I(P, \T) \leapp \delta^{-s} |P| \leapp \delta^{-2s-1.}
\end{equation*}
This means $I(P, \T) \app \delta^{-2s-1}$, $|P| \app \delta^{-s-1}$, and $|\T| \app \delta^{-2s}$. Now perform a dyadic pigeonholing to extract a subset $P' \subset P$ such that $|\T(p)| \in [M, 2M]$ for all $p \in P'$ and $I(P', \T) \app \delta^{-2s-1}$. We know from before that $M \le \delta^{-s}$, and $\delta^{-2s-1} \leapp I(P', \T) \app M |P'| \leapp M |P| \leapp \delta^{-2s-1}$, so $M \app \delta^{-s}$ and $|P'| \app \delta^{-s}$. (This type of dyadic pigeonholing will also be used later. We also remark that dyadic pigeonholing was not necessary to achieve this step; simply let $P'$ be the set of $p \in P$ satisfying $|\T(p)| \ge \delta^{-s+C\eta}$ for some large $C$, and use the bound on $I(P, \T)$ to get a lower bound for $|P'|$.)

Now, we claim that $P'$ is a $(\delta, \kappa, \delta^{-O(\eps)}, k+1)$-set. Fix $\delta < r < 1$ and let $H_r$ be a $(r, k+1)$-plate. We first bound $I(P' \cap H_r, \T)$. Letting $H_{r'}$ be the $(r', k+1)$-plate that is a dilate of $H_r$ with the same center, we have
\begin{align*}
    I(P' \cap H_r, \T) &\le \sum_{r' = 2^{-\N} \cap [r, 1]} I(P' \cap H_r, H_{r'} \setminus H_{r'/2}) \\
            &\le \sum_{r'} \frac{r}{r' \delta} \cdot |\T \cap H_{r'}| \\
            &\le \sum_{r'} \frac{r}{r' \delta} \cdot |\T| \delta^{-\eps} (r')^\kappa \\
            &\lesim \delta^{-1} |\T| \delta^{-\eps} r^\kappa.
\end{align*}
Thus, since $I(P' \cap H_r, \T) \geapp \delta^{-s} |P' \cap H_r|$, we have $|P' \cap H_r| \leapp \delta^{s-1} |\T| r^\kappa \app \delta^{-(s+1)} r^\kappa \app |P'| r^\kappa$.

Finally, since $I(P', \T) \geapp |P'| \delta^{-s} \geapp \delta^{-2s-1}$ and $|\T| \leapp \delta^{-2s}$, by dyadic pigeonholing there exists a subset $\T' \subset \T$ with $|\T'| \app |\T|$ such that each $t \in \T'$ contains $\app \delta^{-1}$ many $\delta$-balls in $P'$. Now since $I(P', \T') \geapp \delta^{-1} |\T'| \geapp \delta^{-2s-1}$, $|P'| \leapp \delta^{-s-1}$, and $|\T(p)| \leapp \delta^{-s}$ for all $p \in P'$, by dyadic pigeonholing we can find $\tP \subset P'$ with $|\tP| \geapp |P'|$ such that each $p \in \tP$ lies in $\app \delta^{-s}$ many tubes in $\T'$.

Now, we are in good shape to apply multilinear Kakeya. For $p \in \tP$, let $\T(p)$ be the tubes in $\T'$ through $p$. By a bush argument, $\cup \T(p)$ contains $\geapp \delta^{-(s+1)}$ many $\delta$-balls in $P$. Since $P'$ is a $(\delta, \kappa, \delta^{-O(\eps)}, k+1)$-set, there are $\geapp \delta^{-(s+1)(k+3)}$ many $(k+3)$-tuples of points $(p_0, p_1, \cdots, p_{k+2})$ such that $p_0$ and $p_i$ lie on some tube $t_i \in \T_i$ and $|e(t_1) \wedge \cdots \wedge e(t_{k+2})| \geapp 1$ (where $e(t)$ is the unit vector in the direction of tube $t$). Thus, there is a choice of $p_1, \cdots, p_{k+2}$ such that there are $\geapp \delta^{-(s+1)}$ many valid choices for $p_0$. But this leads to a contradiction by the following argument. Let $\T_i$ be the tubes of $\T$ through $p_i$, $1 \le i \le k+2$; then by a rescaled version of Multilinear Kakeya (Theorem \ref{thm:multlinear kakeya}), the number of valid choices for $p_0$ is $\leapp \left( \prod_{i=1}^{k+2} |\T_i| \right)^{1/(k+1)} \leapp \delta^{-s(k+2)/(k+1)}$, which (using $s < k+1$) is much smaller than $\delta^{-(s+1)}$ provided that $\eps, \delta$ are sufficiently small in terms of the parameters. This contradiction completes the proof.
\end{proof}

\subsection{From Furstenberg to weak slicing}
\textit{This subsection contains ideas from \cite{he2016discretized} and \cite{he2020orthogonal}.}
\begin{theorem}\label{thm:projection_weaker}
    For $0 \le k \le d-2$, $0 \le s < k+1$, and $0 < \kappa \le 1$, there exists $\eps > 0$ such that the following holds for sufficiently small $\delta < \delta_0 (s, k, d, \eps)$. Let $\T$ be a $(\delta, \kappa, \delta^{-\eps}, k)$-set of $\delta$-tubes each making angle $\ge \frac{1}{100}$ with the plane $y = 0$ with $|\T| \ge \delta^{-2s+\eps}$. Let $\mu$ be a probability measure on $\R$ such that for all $\delta \le r \le 1$, we have $\mu(B_r) \le \delta^{-\eps} r^\kappa$. Then there exists $z_0 \in \spt \mu$ such that the slice of $\cup \T$ at $z = z_0$ has $\delta$-covering number $\ge \delta^{-s-\eps}$.
\end{theorem}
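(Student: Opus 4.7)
The plan is to argue by contradiction. Suppose that for every $z_0 \in \spt \mu$ the slice $(\cup \T) \cap \{z = z_0\}$ has $\delta$-covering number at most $\delta^{-s-\eps}$. Parametrize each tube $t \in \T$ by a pair $(a_t, v_t) \in \R^{d-1} \times \R^{d-1}$ so that the slice at level $z_0$ is $X_{z_0} := \{a_t + z_0 v_t : t \in \T\}$. Using the non-concentration of $\mu$ together with a standard pigeonholing (or Lemma \ref{lem:energy}), pass to a $(\delta, \kappa, \delta^{-O(\eps)})$-set $E \subset \spt \mu$ on which $|X_{z_0}|_\delta \leapp \delta^{-s}$ uniformly in $z_0 \in E$.

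The heart of the argument, following He \cite{he2016discretized}, is to expand $E$ into a set of ``bad'' slice parameters closed (approximately) under sum--product operations. For $z_1, z_2 \in \R$, the slices $X_{z_1}, X_{z_2}, X_{z_1 \pm z_2}, X_{z_1 z_2}$ are images of the common parameter set $\{(a_t,v_t)\}$ under affine maps, so the Balog--Szemer\'edi--Gowers theorem (Theorem \ref{thm:bsg}) combined with Pl\"unnecke--Ruzsa (Lemma \ref{lem:plunnecke-rusza}) shows, after refining $\T$ to a large subset, that the set of $w$ for which $|X_{wz_0}|_\delta, |X_{z_0+w}|_\delta$ remain $\leapp \delta^{-s}$ inherits the ring-like stability of Lemma \ref{lem:ring_structure}. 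Iterating this a bounded number of times, the ``small slice'' property is preserved on $\langle E\rangle_m$ with polynomial constant blow-up $\delta^{-Cm\eps}$ per step.

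Fix $\eps_0 > 0$ to be chosen small. Since $E$ is a $(\delta,\kappa,\delta^{-O(\eps)})$-set, Theorem \ref{thm:iterated sum-product} gives an integer $m = m(\kappa,\eps_0)$ such that $\langle E \rangle_m$ covers $B(0,\delta^{\eps_0})$ up to scale $\delta$. Choosing $\eps$ small compared with $m$, the previous step yields $|X_{z_0}|_\delta \leq \delta^{-s-\eps_0/2}$ for every $z_0 \in B(0,\delta^{\eps_0})$. To close the contradiction, restrict to the slab $S := \R^{d-1} \times [0,\delta^{\eps_0}]$: by the angle hypothesis each $t \in \T$ meets $S$ in a segment of length $\sim \delta^{\eps_0}$, so contains $|P_t| \sim \delta^{\eps_0-1}$ many $\delta$-balls of $(\cup\T)\cap S$. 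Applying Theorem \ref{thm:mult_kakeya} (with its parameter $\eps_{\mathrm{mk}}$ chosen so that $\eps_0 < \eps_{\mathrm{mk}}$) gives the lower bound $|\cup P_t| \gesim \delta^{-(s+1)-\eps_{\mathrm{mk}}}$. But Fubini and the uniform slice bound yield $|\cup P_t| \leapp \delta^{\eps_0} \cdot \delta^{-s-\eps_0/2}\cdot \delta^{-(d-1)}/\delta^{-d} = \delta^{-(s+1)+\eps_0/2}$, contradicting the lower bound once $\eps_0, \eps_{\mathrm{mk}}$ are tuned so that $\eps_0/2 > -\eps_{\mathrm{mk}}$ fails --- that is, whenever all parameters are taken sufficiently small.

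The main obstacle is the second step: showing that the ``small slice'' property genuinely propagates under the sum--product iterations with only polynomial $\delta^{-O(\eps)}$ losses. In He's one-dimensional setting this is clean because Lemma \ref{lem:ring_structure} directly controls $|X + wX|$; in our higher-dimensional setting the slices sit in $\R^{d-1}$, and one must either work with the parameter space $\{(a_t,v_t)\} \subset \R^{2(d-1)}$ via a BSG-type refinement, or project to a generic line to reduce to the one-dimensional ring lemma. Making this step uniform in the tube set while retaining the non-concentration hypothesis needed for the final application of Theorem \ref{thm:mult_kakeya} is where the bulk of the technical effort will lie.
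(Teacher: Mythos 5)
Your outline (two endpoint slices, iterate sum--product to produce a full interval of bad slice-parameters, contradict Theorem~\ref{thm:mult_kakeya} on the resulting slab) is the same shape as the paper's. However, the step you flag as ``the main obstacle'' is not a technicality to be smoothed over later --- it is where the argument actually lives, and the version you propose does not work. You want to apply Theorem~\ref{thm:iterated sum-product} directly to a set $E\subset\spt\mu$ of $z$-values and argue that ``small slice'' propagates to $\langle E\rangle_m$. But the slice $X_z=\{a_t+zv_t\}$ does not relate to $X_{z_1}$ and $X_{z_2}$ through sums or products of $z$-values; the $z$-dependence is through the convex-combination weight along each tube, i.e.\ a M\"obius function of $z$ after rescaling, and $\langle E\rangle_m$ carries no information about the slices at the new parameter values. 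Your two suggested repairs also miss: working in the dual space $\{(a_t,v_t)\}\subset\R^{2(d-1)}$ is explicitly ruled out in the paper's remark preceding Theorem~\ref{thm:projection} (the non-concentration hypothesis degenerates under that duality, since the relevant projections in the dual are to $(d-1)$-planes rather than points), and projecting to a generic line does not address the propagation question.

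The mechanism the paper actually uses: set $z_1=\inf\spt\mu$, $z_2=\sup\spt\mu$, $X=\cup\T(z{=}z_1)$, $Y=\cup\T(z{=}z_2)$, and observe that after normalizing by the factor $(z_2-z)\app 1$, the slice at $z$ is governed by $|X+f(z)Y|_\delta$ with $f(z)=\tfrac{z-z_1}{z_2-z}$. Picking a reference $z'$ and applying Ruzsa's triangle inequality (Lemma~\ref{lem:rusza triangle}) converts the hypothesis into $g(z):=f(z)/f(z')\in S_\delta(X;\delta^{-O(\eps)})$, that is, $|X-g(z)X|_\delta\leapp\delta^{-s}$; Pl\"unnecke--Ruzsa gives $|X+X|_\delta\leapp\delta^{-s}$ so that $1\in S_\delta(X;\delta^{-O(\eps)})$ as well. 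Now the ring closure is genuine: Lemma~\ref{lem:ring_structure} says $S_\delta(X;\cdot)$ is approximately closed under sums, differences, and products. Theorem~\ref{thm:iterated sum-product} is then applied to the push-forward measure $g_*\mu$ --- not to $\mu$ itself --- which inherits the non-concentration because $g$ is bilipschitz on the truncated support $E'$. This yields $B(0,\delta^{\eps_0})$ and hence $B(1,\delta^{\eps_0})\subset S_\delta(X;\delta^{-O_m(\eps)})$; a final Ruzsa step returns $|X+wf(z')Y|_\delta\leapp\delta^{-s}$ for all $w\in B(1,\delta^{\eps_0})$, giving an interval $I$ of length $\app\delta^{\eps_0}$ of slice-parameters with small slices, and multilinear Kakeya finishes as you anticipated. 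Without the M\"obius reparametrization $f$, the Ruzsa reduction to $S_\delta(X;\cdot)$, and the push-forward of $\mu$ under $g$, the middle of your argument has no content, and BSG / Pl\"unnecke--Ruzsa by themselves will not supply it.
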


\begin{proof}
    We use $\leapp$ to denote $\le C \delta^{-C\eps}$, where $C$ may depend on $\kappa, s$.
    
    Let $E := \spt \mu$; without loss of generality, assume $E$ is closed. Let $z_1 = \inf E$ and $z_2 = \sup E$; then $d(z_1, z_2) \ge \delta^{2\eps/\kappa}$ since $\mu(B(z_1, \delta^{2\eps/\kappa})) \le \delta^{-\eps} \cdot \delta^{2\eps} \le \delta^{\eps} < 1$.

    Let $X = \cup \T(z = z_1)$ and $Y = \cup \T(z = z_2)$; we are given that $|X|_\delta, |Y|_\delta \leapp \delta^{-s}$. On the other hand, since each $T \in \T$ passes through $O(1)$ many elements in $X$ and $O(1)$ many elements in $Y$, we get that
    \begin{equation*}
        \delta^{-2s} \geapp |X|_\delta |Y|_\delta \gesim |\T| \geapp \delta^{-2s},
    \end{equation*}
    so in fact, $|X|, |Y| \app \delta^{-s}$ and $|\T| \app \delta^{-2s}$.

    Let $E' := [z_1, z_2] \setminus (B(z_1, \delta^{2\eps/\kappa}) \cup B(z_2, \delta^{2\eps/\kappa}))$; then $\mu(E') \ge 1 - 2 \delta^\eps \ge \frac{1}{2}$ for $\delta$ small enough.

    Let $f(z) = \frac{z-z_1}{z_2-z}$; note that on $E'$, we have that $f$ is $\app 1$-bilipschitz, and $f(z) \app 1$ for all $z \in E'$.
    
    The problem condition literally states $|(z_2 - z) X + (z - z_1) Y|_\delta \leapp \delta^{-s}$ for $z \in E'$; since $(z_2 - z) \app 1$, we can divide through by $(z_2 - z)$ to get
    \begin{equation*}
        |X + f(z) Y|_\delta \leapp \delta^{-s} \text{ for } z \in E'.
    \end{equation*}
    Now pick an arbitrary $z' \in E'$. In particular, we get $|X + f(z') Y|_\delta \leapp \delta^{-s}$, so by Lemma \ref{lem:rusza triangle}, we have for all $z \in E'$,
    \begin{equation*}
        |X - \frac{f(z)}{f(z')} X|_\delta \le \frac{|X + f(z) Y|_\delta |f(z) Y + \frac{f(z)}{f(z')} X|_\delta}{|f(z) Y|_\delta} \leapp \delta^{-s}.
    \end{equation*}
    In addition, since $|X + f(z') Y|_\delta \leapp \delta^{-s} \leapp |X|_\delta$, the Pl\"unnecke-Rusza inequality (Lemma \ref{lem:plunnecke-rusza}) gives $|X + X|_\delta \leapp |Y|_\delta \leapp \delta^{-s}$.
    
    Define $\tmu = g_* (\mu)$, the pushforward of $g(z) = \frac{f(z)}{f(z')}$; then $g$ (like $f$) is $\geapp 1$-bilipschitz on $E'$, so $\tmu$ also satisfies a non-concentration condition $\tmu(B_r) \leapp r^\kappa$ for $\delta \le r \le 1$. Now pick $\eps_0 > 0$, and assume $\eps$ is chosen sufficiently small in terms of $\eps_0$. By the iterated sum-product Theorem \ref{thm:iterated sum-product}, we can find an integer $m \ge 1$ such that for $\delta < \delta_0 (\kappa, \eps, \eps_0)$,
    \begin{equation*}
        B(0, \delta^{\eps_0}) \subset \langle A \rangle_m + B(0, \delta),
    \end{equation*}
    where $\langle A \rangle_1 := A \cup (-A)$ and for any integer $m \ge 1$, define $\langle A \rangle_{m+1} := \langle A \rangle_m \cup (\langle A \rangle_m + \langle A \rangle_1) \cup (\langle A \rangle_m \cdot \langle A \rangle_1)$.

    By applying the ring structure Lemma \ref{lem:ring_structure} many times, we see that $B(0, \delta^{\eps_0}) \subset S_\delta (X; \delta^{-O_m (\eps)})$ and since $1 \in S_\delta (X; \delta^{-O(\eps)})$, that $B(1, \delta^{\eps_0}) \subset S_\delta (X; \delta^{-O_m (\eps)})$. By definition of $S_\delta$ and Lemma \ref{lem:rusza triangle}, we get for $w \in B(1, \delta^{\eps_0})$,
    \begin{equation*}
        |X + w f(z') Y|_\delta \le \frac{|X - w X|_\delta |w X + w f(z') Y|_\delta}{|w X|_\delta} \leapp \delta^{-s}.
    \end{equation*}
    In other words, for all $z_0 \in I := f^{-1} ([(1-\delta^{\eps_0}) f(z'), (1+\delta^{\eps_0}) f(z')])$, the slice $\cup \T(z = z_0)$ has $\delta$-covering number $\leapp \delta^{-s}$. Since $f$ is $\app 1$-bilipschitz and $f(z') \app 1$, we have $|I| \app \delta^{\eps_0}$.

    Now, we seek a contradiction to Theorem \ref{thm:mult_kakeya}. For every $t \in \T$, we let $P_t$ be the $\delta$-balls on $t$ with $z$-coordinate in $I$. We observe the following:
    \begin{itemize}
        \item Recall our assumption that $\T$ is a $(\delta, \kappa, \delta^{-\eps}, k)$-set of $\delta$-tubes with $|\T| \ge \delta^{-2s+\eps}$.

        \item $|P_t| \gesim \delta^{-1} |I| \geapp \delta^{-1+\eps_0}$.

        \item $|\cup P_t| \le |\cup_{z \in I} (X_* + f(z) Y_*)| \leapp \delta^{-(s+1)}$.
    \end{itemize}
    Thus, if $\eps, \eps_0$ are sufficiently small in terms of $s, k, \kappa$, then we contradict Theorem \ref{thm:mult_kakeya}.
    
    
\end{proof}

\subsection{An intermediate slicing result}
\textit{This subsection contains ideas from \cite{he2020orthogonal}.}

Let $\cE(\T, \eps)$ be the set of exceptional slices,
\begin{equation*}
    \cE(\T, \eps) = \{ z_0 \in \R : \exists \T' \subset \T, |\T'| \ge \delta^\eps |\T|, |\cup \T'(z=z_0)| < \delta^{-s-\eps} \}.
\end{equation*}

Just like in \cite[Proposition 25]{he2020orthogonal}, we will prove a weaker version of Theorem \ref{thm:projection}; the stronger version follows from a formal exhaustion argument which we present in the next subsection.

\begin{theorem}\label{thm:projection_reduced}
    With assumptions of Theorem \ref{thm:projection}, there exists $\T' \subset \T$ such that $\mu(\cE(\T')) \le \delta^\eps$.
\end{theorem}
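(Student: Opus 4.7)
The strategy is a standard exhaustion that bootstraps the existence-type statement of Theorem \ref{thm:projection_weaker} (``some $z_0 \in \spt\mu$ has a large slice'') to the almost-everywhere statement of Theorem \ref{thm:projection_reduced} (``most $z_0$ are good for every sufficiently large subset''). This follows the blueprint of \cite[Proposition 25]{he2020orthogonal}. I would argue by contradiction, assuming that no subset $\T' \subset \T$ — satisfying the hypotheses of Theorem \ref{thm:projection_weaker} with a mildly relaxed tolerance $\eps' > \eps$ — has $\mu(\cE(\T', \eps)) \le \delta^\eps$.

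Starting from $\T_0 := \T$, I would build a decreasing sequence $\T_0 \supset \T_1 \supset \cdots$ iteratively. At stage $i$, the contradiction hypothesis yields $\mu(\cE(\T_i,\eps)) > \delta^\eps$, so for each $z_0 \in \cE(\T_i,\eps)$ we may fix a witness $\T_{z_0} \subset \T_i$ with $|\T_{z_0}| \ge \delta^\eps |\T_i|$ and $|\cup \T_{z_0}(z=z_0)|_\delta < \delta^{-s-\eps}$. Apply Lemma \ref{lem:intersections_of_events} with $A = \T_i$ and $\nu$ the counting measure, ambient probability measure $\mu_{\cE(\T_i,\eps)}$ (the normalization of $\mu|_{\cE(\T_i,\eps)}$), and $K = \delta^{-\eps}$: for a fixed $q$, this produces $z_0^{(i,1)},\dots,z_0^{(i,q)} \in \cE(\T_i,\eps)$ such that
\[
\T_{i+1} := \bigcap_{j=1}^q \T_{z_0^{(i,j)}} \subset \T_i
\]
has cardinality at least $\tfrac{1}{2}\delta^{q\eps}|\T_i|$ and, by construction, $|\cup \T_{i+1}(z=z_0^{(i,j)})|_\delta < \delta^{-s-\eps}$ for each $j$. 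A direct check (cf.\ the proof of Theorem \ref{thm:mult_kakeya}) shows $\T_{i+1}$ remains a $(\delta,\kappa,\delta^{-(q+1)\eps},k)$-set.

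After $N$ iterations, $|\T_N| \gtrsim \delta^{Nq\eps}|\T|$ and $\T_N$ is a $(\delta,\kappa,\delta^{-O(Nq\eps)},k)$-set of tubes. If $N$ is chosen bounded (depending only on $s, k, \kappa, d$), the hypotheses of Theorem \ref{thm:projection_weaker} still apply to $\T_N$ with a weakened parameter $\eps' = O(Nq\eps)$ in place of $\eps$. Invoking Theorem \ref{thm:projection_weaker} on $\T_N$ produces some $z^* \in \spt\mu$ with $|\cup \T_N(z=z^*)|_\delta \ge \delta^{-s-\eps'}$. The contradiction comes from confronting this with the accumulated small-slice data at the $Nq$ points $z_0^{(i,j)}$: if $N$ and $q$ are chosen large enough, the collected points, using the $\kappa$-non-concentration of $\mu$, are forced to be $\delta$-dense in $\spt\mu$, so $z^*$ lies within $\delta$ of some $z_0^{(i,j)}$; but since each tube in $\T_N$ makes angle $\ge \tfrac{1}{100}$ with $\{y=0\}$, the slice of $\T_N$ is essentially constant at scale $\delta$ in the $z$-variable, forcing $|\cup \T_N(z=z^*)|_\delta \lesssim |\cup \T_N(z=z_0^{(i,j)})|_\delta < \delta^{-s-\eps}$, contradicting the conclusion of Theorem \ref{thm:projection_weaker}.

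The main obstacle is parameter calibration. The iteration simultaneously degrades the $(\delta,\kappa,k)$-non-concentration constant of $\T_N$ (by $\delta^{-O(Nq\eps)}$) and shrinks its cardinality (by $\delta^{O(Nq\eps)}$); both must remain within the tolerance needed for Theorem \ref{thm:projection_weaker} to still apply to $\T_N$ with a relaxed $\eps' > \eps$. On the other hand, $Nq$ must be large enough so that the collected small-slice points, viewed as samples from $\mu_{\cE(\T_i,\eps)}$, cover $\spt\mu$ at scale $\delta$ up to a $\mu$-null exceptional set — exploiting $\mu(B_r) \le \delta^{-\eps}r^\kappa$ to control how many samples are needed. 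Balancing these two constraints, and verifying that the non-concentration of $\mu$ really does force $\delta$-density of the accumulated points, is the delicate technical piece of the proof.
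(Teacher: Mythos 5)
Your proposal has a genuine gap at the contradiction step, and the strategy deviates substantially from what actually works here.

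\textbf{The gap.} You apply Lemma \ref{lem:intersections_of_events} with $A = \T_i$ and counting measure, obtaining $q$ witness points per iteration and iterating $N$ times, accumulating $Nq$ points $z_0^{(i,j)}$. You acknowledge that $N$ must be bounded (depending only on the parameters, not on $\delta$) to keep $\T_N$ large enough for Theorem \ref{thm:projection_weaker}, since each pass shrinks $|\T_N|$ by $\delta^{q\eps}$ and degrades the $(\delta,\kappa,k)$-constant by $\delta^{-O(q\eps)}$. But then $Nq = O(1)$ while $\delta$-density of $\{z_0^{(i,j)}\}$ in $\spt\mu$ would require $\gesim \delta^{-\kappa}$ points (the non-concentration $\mu(B_r) \le \delta^{-\eps} r^\kappa$ is an \emph{upper} bound on ball masses, hence forces $\spt\mu$ to have covering number $\gesim \delta^{\eps - \kappa}$ at scale $\delta$, so a bounded family of points can never be $\delta$-dense). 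These two constraints are incompatible, so the "confrontation" at the end never happens. There is also a labeling confusion: the exhaustion blueprint of \cite[Proposition 25]{he2020orthogonal} is what the paper uses in the \emph{next} section to pass from Theorem \ref{thm:projection_reduced} to Theorem \ref{thm:projection}; Theorem \ref{thm:projection_reduced} itself is modeled on the proof of Proposition 7 in \cite{he2020orthogonal}, which is not an iterated exhaustion.

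\textbf{What the paper does instead.} There is no iteration. Under the counter-assumption, one fixes two far-apart points $z_1, z_2$ and a large subtube family $\T'$ with both slices small; this forces $|X|_\delta, |Y|_\delta \app \delta^{-s}$ and $|\T'| \app \delta^{-2s}$. Since $\mu(\cE(\T'))$ is still $\ge \delta^\eps$, one extracts for each bad $z \in \cD$ (via Balog--Szemer\'edi--Gowers, Theorem \ref{thm:bsg}) a structured triple $(X_z, Y_z, \T_z)$ with $|X_z + \tfrac{1-a}{a} Y_z|_\delta \leapp \delta^{-s}$. The intersection lemma is applied \emph{once}, not to tube sets under counting measure, but to the sets $X_z^{(\delta)} \times Y_z^{(\delta)} \subset \R^{d-1} \times \R^{d-1}$ under Lebesgue measure, with the index measure $\mu_{\cD}$. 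This produces a single $z_*$ and a large $\cD' \subset \cD$ such that $X_{z_*}$ overlaps $X_z$ substantially and likewise for $Y$, for \emph{every} $z \in \cD'$. Ruzsa's triangle inequality (Lemma \ref{lem:rusza triangle}) then transfers the small-sumset property to show that the single fixed family $\T_{z_*}$ has small slice at \emph{all} $z \in \cD'$. Applying Theorem \ref{thm:projection_weaker} to $\T_{z_*}$ and the restricted measure $\mu|_{\cD'}$ (which still satisfies the Frostman condition since $\mu(\cD') \geapp 1$) then produces a $z \in \cD'$ with large slice, a direct contradiction. The crucial idea your proposal misses is that BSG plus the intersection lemma in the $X_z \times Y_z$ space plus Ruzsa lets one upgrade ``each bad $z$ has its own bad witness family'' to ``one fixed family is bad at essentially every bad $z$,'' which is exactly what Theorem \ref{thm:projection_weaker} can refute; naively intersecting tube families does not achieve this.
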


\begin{proof}
    We use $\leapp$ to denote $\le C \delta^{-C\eps}$, where $C$ may depend on $\kappa, s$. Let $\pi : \R^d \to \R^{d-1}$ be the projection onto the plane orthogonal to the $z$-axis. For a tube $t$, let $t(z') = \pi(t \cap \{ z = z' \})$, and for a set of tubes $\T$, let $\T(z')$ denote the slice $\pi(\T \cap \{ z = z' \})$.
    
    We follow the argument in \cite[Proof of Proposition 7]{he2020orthogonal}. Suppose Theorem \ref{thm:projection_reduced} is false. We can find $z_1$ and a subset $\T''' \subset \T$ with $|\T'''| \ge \delta^\eps |\T|$ such that $|\cup \T'''(z_1)| < \delta^{-s-\eps}$. For this $\T'''$ we have $\mu(\cE(\T''')) \ge \delta^\eps$, hence $\mu(\cE(\T''') \setminus B(z_1, \delta^{3\eps/\kappa})) \ge \delta^\eps - \delta^{2\eps} > 0$ by the non-concentration property of $\mu$. Thus, we can find $z_2$ with $|z_1 - z_2| \geapp 1$ and $\T'' \subset \T'''$ with $|\T''| \ge \delta^{2\eps} |\T|$ such that $X := |\cup \T''(z_1)| < \delta^{-s-\eps}$ and $Y := |\cup \T''(z_2)| < \delta^{-s-\eps}$. Since every $t \in \T''$ passes through a point in $X$ and a point in $Y$, and since there are $\leapp 1$ many tubes through given points $x \in X$ and $y \in Y$, we can find $\T' \subset \T''$ with $|\T'| \geapp |\T| \geapp \delta^{-2s}$ such that for every $x \in X, y \in Y$, there is at most one tube in $\T'$ through $x, y$. In particular,
    \begin{equation*}
        \delta^{-2s} \leapp |\T'| \le |X|_\delta |Y|_\delta \leapp \delta^{-2s},
    \end{equation*}
    and so $|X|_\delta, |Y|_\delta \geapp \delta^{-s}$, $|\T| \leapp |\T'| \leapp \delta^{-2s}$.
    
    For this $\T'$ we have $\mu(\cE(\T')) \ge \delta^\eps$, so defining $\cD = \cE(\T') \setminus (B(z_1, \delta^{3\eps/\kappa}) \cup B(z_2, \delta^{3\eps/\kappa}))$, we have $\mu(\cD) \ge \delta^\eps - 2\delta^{2\eps} > \delta^{2\eps}$.
    
    \textbf{Claim 1.} For $z = az_1 + (1-a)z_2 \in \cD$, we have $a, 1-a \geapp 1$. Furthermore, there exists $X_z \subset X$, $Y_z \subset X$, and $\T_z \subset \T'$ with $|X_z|_\delta, |Y_z|_\delta \geapp \delta^{-s}, |\T_z| \geapp \delta^{-2s}$ such that $|X_z + \frac{1-a}{a} Y_z| \leapp \delta^{-s}$ and for each $t \in \T_z$, we have $t(z_1) \in X_z^{(\delta)}$ and $t(z_2) \in Y_z^{(\delta)}$.

    \textit{Proof.} The first claim is evident by definition of $\cD$. For the second claim, since $z \in \cE(\T')$, there exists $\T'_z \subset \T'$ such that $|\T'_z| \geapp \delta^{-2s}$ and $|\T'_z (z)|_\delta \leapp \delta^{-s}$. Now notice that for each $x \in X, y \in Y$ there is at most one tube $t \in \T'$ passing through $x, y$. Let $P$ be the set of $(x, y) \in X \times Y$ with exactly one tube $t_{x,y} \in \T'_z$ passing through $x, y$. So $|P| \ge |\T'_z| \geapp \delta^{-2s}$. We also observe that $|ax + (1-a)y - t_{x,y} (z)| \le \delta$, and so $|aX \plusP (1-a)Y|_\delta \le |\T'_z (z)|_{2\delta} \leapp \delta^{-2s}$. Thus, by the Balog-Szemer\'{e}di-Gowers theorem \ref{thm:bsg}, we can find $X_z \subset X$, $Y_z \subset Y$, and $\T_z \subset \T'_z$ such that $|aX_z|_\delta, |(1-a)Y_z|_\delta \geapp \delta^{-s}, |\T_z| \geapp \delta^{-2s}$, $|a X_z + (1-a) Y_z| \leapp \delta^{-s}$, and for each $t \in \T_z$, we have $t(z_1) \in X_z^{(\delta)}$ and $t(z_2) \in Y_z^{(\delta)}$. Then $|X_z|_\delta, |Y_z|_\delta \geapp \delta^{-s}$ and $|X_z + \frac{(1-a)}{a} Y_z| \leapp \delta^{-s}$, proving the Claim. \qed

    Now, we apply Lemma \ref{lem:intersections_of_events} to the sets $X_z^{(\delta)} \times Y_z^{(\delta)}$, the measure $\frac{1}{\mu(\cD)} \mu|_\cD$, and $K = \delta^{-C\eps}$ for a sufficiently large $C$. The result, after applying Fubini's theorem, is that we can find $z_*$, $X_* := X_{z_*} \subset X$, $Y_* := Y_{z_*} \subset Y$, and a subset $\cD' \subset \cD$ with $\mu(\cD') \geapp \mu(\cD) \geapp 1$ and $z_* \in \cD'$ such that for all $z \in \cD'$, we have
    \begin{equation*}
        |X_*^{(\delta)} \cap X_z^{(\delta)}| |Y_*^{(\delta)} \cap Y_z^{(\delta)}| \geapp \delta^{2(n-1)} \delta^{-2s}.
    \end{equation*}
    Since $|X_*^{(\delta)} \cap X_z^{(\delta)}| \lesim |X|_\delta \leapp \delta^{-s}$ and $|Y_*^{(\delta)} \cap Y_z^{(\delta)}| \lesim |Y|_\delta \leapp \delta^{-s}$, we have in fact $|X_*^{(\delta)} \cap X_z^{(\delta)}|, |Y_*^{(\delta)} \cap Y_z^{(\delta)}| \app \delta^{-s}$. In particular, $|X_z^{(\delta)}|, |Y_z^{(\delta)}| \app \delta^{-s}$ for all $z \in \cD'$.

    The next leg of the proof is to show:
    
    \textbf{Claim 2.} For all $z \in \cD'$, if we write $z = az_1 + (1-a)z_2$, then $|X_* + \frac{1-a}{a} Y_*|_\delta \leapp \delta^{-s}$.

    \textit{Proof.} Note that Claim 1 tells us $|X_z + \frac{1-a}{a} Y_z|_\delta \leapp \delta^{-s}$. Combining this with the Rusza triangle inequality (Lemma \ref{lem:rusza triangle}), $X_*^{(\delta)} \cap X_z^{(\delta)} \subset X_z^{(\delta)}$, and $|A^{(\delta)}|_\delta \sim_d |A|_\delta$ for any subset $A$ of the doubling metric space $\R^d$, we have
    \begin{equation*}
        |X_z - X_*^{(\delta)} \cap X_z^{(\delta)}|_\delta \lesim |X_z^{(\delta)} - X_z^{(\delta)}|_\delta \lesim |X_z - X_z|_\delta \lesim \frac{|X_z + \frac{1-a}{a} Y_z|_\delta^2}{|\frac{1-a}{a} Y_z|} \leapp \delta^{-s}.
    \end{equation*}
    The same argument shows (where $z_* = a_* z_1 + (1-a_*) z_2$):
    \begin{equation*}
        |X_* - X_*^{(\delta)} \cap X_z^{(\delta)}|_\delta \lesim |X_* - X_*|_\delta \lesim \frac{|X_* + \frac{1-a_*}{a_*} Y_*|_\delta^2}{|\frac{1-a_*}{a_*} Y_*|} \leapp \delta^{-s}.
    \end{equation*}
    Thus, by Lemma \ref{lem:rusza triangle} again, we have
    \begin{equation*}
        |X_* - X_z|_\delta \lesim \frac{|X_z - X_*^{(\delta)} \cap X_z^{(\delta)}|_\delta |X_* - X_*^{(\delta)} \cap X_z^{(\delta)}|_\delta}{|X_*^{(\delta} \cap X_z^{(\delta)}|_\delta} \leapp \delta^{-s}.
    \end{equation*}
    Similarly, we have $|Y_* - Y_z|_\delta \leapp \delta^{-s}$. A final application of Lemma \ref{lem:rusza triangle} gives
    \begin{align*}
        |X_* + \frac{1-a}{a} Y_*|_\delta &\lesim \frac{|X_z + \frac{1-a}{a} Y_*|_\delta |X_z - X_*|_\delta}{|X_z|_\delta} \\
            &\lesim \frac{|X_z + \frac{1-a}{a} Y_z|_\delta |X_z - X_*|_\delta |\frac{1-a}{a} (Y_z - Y_*)|_\delta}{|X_z|_\delta |\frac{1-a}{a} Y_z|_\delta} \\
            &\leapp \frac{\delta^{-s} \delta^{-s} \delta^{-s}}{\delta^{-s} \delta^{-s}} \le \delta^{-s}.
    \end{align*}
    This proves Claim 2. \qed

    Finally, we seek a contradiction by applying Theorem \ref{thm:projection_weaker} to $\T_{z_*}$ and $\mu|_{\cD'}$. We satisfy the condition (if $\eps$ is sufficiently small) because Claim 1 and $|\T| \leapp \delta^{-2s}$ tell us that $\T_{z_*}$ is a $(\delta, \kappa, \delta^{-O(\eps)}, k)$-set with $|\T_{z_*}| \geapp \delta^{-2s}$. But we violate the conclusion (if $\eps$ is sufficiently small) because Claim 2 tells us that $|aX_* + (1-a) Y_*|_\delta \leapp \delta^{-s}$. This contradiction finishes the proof of Theorem \ref{thm:projection_reduced}.

\end{proof}

\subsection{Formal exhaustion argument}
Using Theorem \ref{thm:projection_reduced}, we prove the following proposition, which implies Theorem \ref{thm:projection} with a different value for $\eps$.

\begin{prop}
    For $0 \le k < d-1$, $0 \le s < k+1$, and $0 < \kappa \le 1$, there exists $\eps > 0$ such that the following holds for sufficiently small $\delta < \delta_0 (s, k, d, \eps)$. Let $\T$ be a $(\delta, \kappa, \delta^{-\eps/2}, k)$-set of $\delta$-tubes each making angle $\ge \frac{1}{100}$ with the plane $y = 0$ with $|\T| \ge \delta^{-2s+\eps/2}$. Let $\mu$ be a probability measure on $\R$ such that for all $\delta \le r \le 1$, we have $\mu(B_r) \le \delta^{-\eps} r^\kappa$. Then $\mu(\cE(\T, \frac{\eps}{3})) \le \delta^{\eps/2}$.
\end{prop}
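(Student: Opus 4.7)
The plan is to argue by contradiction via a formal exhaustion. Suppose $\mu(\cE(\T, \eps/3)) > \delta^{\eps/2}$. I will iterate Theorem \ref{thm:projection_reduced} to extract a sequence of disjoint subsets $\T_0, \T_1, \ldots \subset \T$, each with small exceptional measure at scale $\eps$, and show that their exceptional sets together cover $\cE(\T, \eps/3)$. A counting argument exploiting the disjointness of the $\T_j$'s will then yield a contradiction.

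Concretely, I set $\T^{(0)} := \T$ and at each step $j$ apply Theorem \ref{thm:projection_reduced} to $\T^{(j)}$ to produce $\T_j \subset \T^{(j)}$ with $\mu(\cE(\T_j, \eps)) \le \delta^\eps$, then set $\T^{(j+1)} := \T^{(j)} \setminus \T_j$. The hypotheses of Theorem \ref{thm:projection_reduced} are preserved on $\T^{(j)}$ whenever $|\T^{(j)}| \ge \delta^{\eps/2}|\T|$: for any $(r,k+1)$-plate $H$ one has $|\T^{(j)} \cap H| \le |\T \cap H| \le \delta^{-\eps/2}|\T|r^\kappa \le \delta^{-\eps}|\T^{(j)}|r^\kappa$, and the size condition $|\T^{(j)}| \ge \delta^{-2s+\eps}$ follows from $|\T| \ge \delta^{-2s+\eps/2}$. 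Tracing the Balog--Szemer\'edi--Gowers construction of $\T_{z_*}$ inside the proof of Theorem \ref{thm:projection_reduced}, the extracted subset satisfies $|\T_j| \geapp \delta^{C_0 \eps} |\T^{(j)}|$ for some absolute constant $C_0$, so the iteration runs for $N \leapp \delta^{-C_0 \eps}$ steps before $|\T^{(N)}| < \tfrac{1}{2}\delta^{\eps/3}|\T|$.

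To establish the covering $\cE(\T, \eps/3) \subset \bigcup_{j<N} \cE(\T_j, \eps)$, fix $z \in \cE(\T, \eps/3)$ with witness $\T_z \subset \T$, and suppose $z \notin \cE(\T_j, \eps)$ for every $j$. Since $\delta^{-s-\eps/3} < \delta^{-s-\eps}$, the set $\T_z \cap \T_j$ would otherwise witness $z \in \cE(\T_j, \eps)$; hence $|\T_z \cap \T_j| < \delta^\eps |\T_j|$ for each $j$. Summing over the disjoint $\T_j$'s yields $|\T_z \setminus \T^{(N)}| < \delta^\eps |\T|$, so $|\T_z \cap \T^{(N)}| > \tfrac{1}{2}\delta^{\eps/3}|\T| > |\T^{(N)}|$, which is absurd. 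Consequently
\[ \mu(\cE(\T, \eps/3)) \le \sum_{j<N} \mu(\cE(\T_j, \eps)) \le N\delta^\eps \leapp \delta^{(1-C_0)\eps}. \]

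The main obstacle will be quantifying $C_0$: to contradict $\mu(\cE(\T, \eps/3)) > \delta^{\eps/2}$ one needs $(1-C_0)\eps > \eps/2$, i.e.\ $C_0 < 1/2$. Since the BSG step in the proof of Theorem \ref{thm:projection_reduced} loses factors of the form $K^{O(1)} = \delta^{-O(\eps)}$, a naive bound gives only $C_0 = O(1)$. To close the argument I would refine the BSG extraction (using that $\T_{z_*}$ is already cut out of $\T'$ with $|\T'| \geapp |\T|$, so that only polylogarithmic losses are truly necessary when one fixes the level sets $X, Y$ first), or else modify the exhaustion so that the fraction of $\T$ removed per step is bounded below independently of $\eps$—e.g.\ by processing the witnesses $\T_z$ greedily in decreasing order of density inside $\T^{(j)}$.
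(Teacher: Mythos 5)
Your exhaustion scheme and covering argument are correct up to the final measure bound, but the argument does not close, as you yourself observe. The naive union bound
\[
\mu(\cE(\T,\tfrac{\eps}{3})) \le \sum_{j<N}\mu(\cE(\T_j,\eps)) \le N\delta^\eps
\]
requires $N \ll \delta^{-\eps/2}$, and there is no way to guarantee this: Theorem~\ref{thm:projection_reduced} only produces some $\T_j\subset\T^{(j)}$, with no lower bound on $|\T_j|/|\T^{(j)}|$ (tracking the BSG losses gives at best $|\T_j|\gtrsim\delta^{O(\eps)}|\T^{(j)}|$, so $N$ can be as large as $\delta^{-O(\eps)}$). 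Your proposed repairs (refining BSG, greedy selection) are not carried out and do not obviously give the constant $C_0 < 1/2$ you need.

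The paper sidesteps the issue of bounding $N$ entirely by replacing the union bound with a weighted Markov argument. Writing $\T_0 = \bigcup_{i\le N}\T_i$ and $a_i = |\T_i|/|\T_0|$ (so $\sum a_i = 1$), the key quantitative refinement of your covering claim is: for every $z_0\in\cE(\T,\tfrac{\eps}{3})$ with witness $\T'$, the index set $I = \{i : |\T'\cap\T_i|\ge\delta^\eps|\T_i|\}$ satisfies not merely $I\ne\emptyset$ but $\sum_{i\in I}a_i \ge \delta^{\eps/2}$. (This follows from the same computation you do, just kept in weighted form: $\delta^{\eps/2}|\T|\le\sum_i|\T'\cap\T_i|\lesssim\sum_{i\in I}|\T_i| + \delta^\eps|\T|$.) Since $z_0\in\cE(\T_i,\eps)$ for $i\in I$, this means $\cE(\T,\tfrac{\eps}{3})\subset\{z : f(z)\ge\delta^{\eps/2}\}$ where $f = \sum_i a_i\one_{\cE(\T_i,\eps)}$. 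Then $\int f\,d\mu \le \delta^\eps\sum a_i = \delta^\eps$, and Markov gives $\mu(\cE(\T,\tfrac{\eps}{3}))\le\delta^\eps/\delta^{\eps/2} = \delta^{\eps/2}$, with no appearance of $N$. So the missing idea is precisely this: the weights $a_i$ sum to one regardless of how many pieces the exhaustion produces, and Markov applied to the $a_i$-weighted indicator sum replaces the count-dependent union bound.
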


The idea is the following. A first application of Theorem~\ref{thm:projection_reduced} gives a subset $\T' \subset \T$ with $\mu(\cE(\T',\epsilon)) \leq \delta^\epsilon$. Either $\T'$ is large enough in which case we are done or we can cut $\T'$ out of $\T$ and apply Theorem~\ref{thm:main} again. This will give us another subset $\T'$. Then we iterate until the union of these sets $\T'$ is large enough.
\begin{proof}
Let $N\geq 0$ be an integer. Suppose we have already constructed pairwise disjoint sets $\T_1,\cdots,\T_N$ such that $\mu(\cE(\T_i,\epsilon)) \leq \delta^\epsilon$ for every $i = 1,\cdots,N$. Either we have
\begin{equation}\label{eq:AprimeBIG}
\left| \T \setminus \bigcup_{i=1}^N \T_i \right| \leq \delta^{\frac{\epsilon}{2}} |\T|,
\end{equation}
in which case we stop, or the set $\T \setminus \bigcup_{i=1}^N \T_i$ satisfies the conditions of Theorem \ref{thm:projection}. In the latter case Theorem \ref{thm:projection} gives us $\T_{N+1} \subset \T \setminus \bigcup_{i=1}^N \T_i$ with $\mu(\cE(A_{N+1},\epsilon)) \leq \delta^\epsilon$. By construction, $\T_{N+1}$ is disjoint with any of the $\T_i$, $i = 1,\cdots,N$. 

When this procedure ends write $\T_0 = \bigcup_{i=1}^N \T_i$. Then \eqref{eq:AprimeBIG} says $|\T \setminus \T_0| \leq \delta^\frac{\eps}{2} |\T|$. Moreover, since the $\T_i$'s are disjoint, $|\T_0| = \sum_{i=1}^N |\T_i|$.

Set $a_i = \frac{|\T_i|}{|\T_0|}$. We claim that
\[\cE(\T,\frac{\eps}{3}) \subset \bigcup_I \bigcap_{i \in I} \cE(\T_i,\eps),\]
where the index set $I$ runs over subsets of $\{1, 2, \cdots, n \}$ with $\sum_{i \in I}a_i \geq \delta^\frac{\eps}{2}$. Since $\mu(\cE(\T_i))) \le \delta^\eps$ for all $i$, the desired upper bound $\mu(\cE(\T, \frac{\eps}{3})) \le \delta^{\eps/2}$ then follows immediately from Markov's inequality applied to the event $\sum a_i \one_{\T_i}$ (or \cite[Lemma 20]{he2020orthogonal}).

We will now show the claim. Let $z_0 \in \cE(A,\frac{\eps}{3})$, so there exists $\T' \subset \T$ with $|\T'| \geq \delta^\frac{\eps}{3} |\T|$ and $|\pi_{z_0} (\T')|_\delta \leq \delta^{-s - \frac{\eps}{3}}$. Consider the index set $I$ defined as
\[I = \{1 \le i \le n \mid |\T' \cap \T_i| \geq \delta^\eps |\T_i|\}.\]
We have
\begin{align*}
\delta^{\eps/2} |\T| &\le |\T'| - |\T \setminus \T_0| \\
&\leq \sum_{i=1}^n |\T' \cap \T_i|\\
&\lesim \sum_{i \in I} |\T_i| + \sum_{i \notin I} \delta^\eps |\T_i| \\
&\lesim \sum_{i\in I} a_i |\T| + \delta^\eps |\T|
\end{align*}
Hence $\sum_{i \in I}a_i \geq \delta^\frac{\eps}{2}$. On the other hand, for all $i \in I$, since
\begin{equation*}
    |\pi_{z_0}(\T' \cap \T_i))|_\delta \le |\pi_{z_0} (\T')|_\delta \le \delta^{-s-\frac{\eps}{3}},
\end{equation*}
we have $z_0 \in \cE(\T_i,\eps)$ for all $i \in I$. This finishes the proof of the claim.
\end{proof}

\subsection{Proof of Proposition \ref{prop:improved_incidence_weaker}}
\textit{This subsection is based on Section A.7 of \cite{orponen2021hausdorff}.}

We restate Proposition \ref{prop:improved_incidence_weaker}.

\begin{prop}\label{prop:improved_incidence_weaker'}
    Given $0 \le k < d-1$, $0 \le s < k+1$, $\tau, \kappa > 0$, there exist $\eta(s, k, \kappa, \tau, d) > 0$ and $\delta_0 (s, k, \kappa, \tau, d) > 0$ such that the following holds for all $\delta \in (0, \delta_0]$.
    
    Let $\bY \subset (\delta \cdot \Z) \cap [0,1)$ be a $(\delta, \tau, \delta^{-\eta})$-set, and for each $\by \in \bY$, assume that $\bX_{\by} \subset (\delta \cdot \Z)^{d-1} \cap [0,1)^{d-1}$ is a $(\delta, \kappa, \delta^{-\eta}, k)$-set with cardinality $\ge \delta^{-s+\eta}$. Let
    \begin{equation*}
        \bZ = \bigcup_{\by \in \bY} \bX_{\by} \times \{ \by \}.
    \end{equation*}
    For every $\bz \in \bZ$, assume that $\T(\bz)$ is a set of $\delta$-tubes each making an angle $\ge \frac{1}{100}$ with the plane $y = 0$ with $|\T(\bz)| \ge \delta^{-s+\eta}$ such that $\bz \in T$ for all $T \in \T(\bz)$. Then $|\T| \ge \delta^{-2s-\eta}$, where $\T = \cup_{\bz \in \bZ} \T(\bz)$.
\end{prop}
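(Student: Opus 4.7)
We argue by contradiction, supposing $|\T| < \delta^{-2s-\eta}$, where $\eta > 0$ will be chosen sufficiently small in terms of the $\eps$ of Theorem~\ref{thm:projection}. The overall strategy is to verify the hypotheses of Theorem~\ref{thm:projection} for a refined subset of $\T$, apply it with $\mu$ the renormalized counting measure on $\bY$, and then use the resulting slice lower bound, together with the quasi-product structure of $\bZ$, to reach a contradiction. This is the higher-dimensional analog of the two-dimensional argument in \cite[Appendix A.7]{orponen2021hausdorff}, where He's orthogonal projection theorem plays the role now taken by Theorem~\ref{thm:projection}.

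By standard dyadic pigeonholing (losing only polylogarithmic factors in $\delta^{-1}$), we may assume that $|\T(\bz)| \sim M$ and $|\bX_{\by}| \sim M_X$ are essentially constant, both $\geapp \delta^{-s+\eta}$. A further pigeonhole extracts a subfamily $\T_* \subset \T$ of size $\geapp |\T|$ on which the multiplicity $n(T) := |\{\bz \in \bZ : T \in \T(\bz)\}|$ is comparable to some fixed $n$. Since $|\T_*|\cdot n \geapp I \geapp |\bY|\,M_X\,M$ and $|\T_*| \le |\T| < \delta^{-2s-\eta}$, we deduce the \emph{fatness estimate} $n \geapp |\bY|\,\delta^{O(\eta)}$. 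Next, we verify that $\T_*$ is a $(\delta,\kappa,\delta^{-O(\eta)},k)$-set of tubes. For any $(r,k+1)$-plate $H \subset \R^d$ transverse to $\{y=0\}$, the slice $H \cap \{y=\by\}$ is a $(O(r),k)$-plate in $\R^{d-1}$; the $(\delta,\kappa,\delta^{-\eta},k)$-set property of $\bX_{\by}$ gives $|\bZ \cap H| \leapp |\bY|\,M_X\,r^\kappa$. Since every tube in $\T_* \cap H$ has all of its $\bZ$-incidences inside $\bZ \cap H$, the double count $|\T_* \cap H| \cdot n \leapp |\bZ \cap H| \cdot M$, combined with the fatness, yields $|\T_* \cap H| \leapp M_X M r^\kappa \delta^{-O(\eta)} \sim |\T_*|\,r^\kappa\,\delta^{-O(\eta)}$.

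We then take $\mu$ to be the renormalized counting measure on $\bY$ (so $\mu(B_r) \leapp r^\tau$) and apply Theorem~\ref{thm:projection} with non-concentration parameter $\min(\kappa,\tau)$: for $\mu$-a.e.\ $y_0 \in \bY$ and every $\T' \subset \T_*$ with $|\T'| \ge \delta^\eps |\T_*|$, the slice $(\cup \T') \cap \{y = y_0\}$ has $\delta$-covering number $\geapp \delta^{-s-\eps}$. Apply this with $\T'_{y_0} := \T_* \setminus \bigcup_{x \in \bX_{y_0}} \T((x,y_0))$, which retains $\geapp |\T_*|$ tubes since at most $M_X M \ll |\T|$ are removed. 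The resulting $\geapp \delta^{-s-\eps}$ slice balls are then ``new'' (distinct from every $(x,y_0)$ with $x \in \bX_{y_0}$), each corresponding to a tube whose designated $\bZ$-incidence is at a level $\by \ne y_0$ but whose position at $y_0$ falls outside $\bX_{y_0}$. Summing over $y_0 \in \bY$ and double-counting pairs (new ball, tube) against the fatness $n \sim |\bY|$ should translate this into the desired bound $|\T| \geapp \delta^{-2s-\eta}$, contradicting the assumption.

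The main obstacle lies in this final double-counting. A naive count---treating each new ball as if hit by a single tube---yields only $|\T| \geapp \delta^{-s-\eps}$, which is weaker than the trivial lower bound $|\T| \geapp M_X M \geapp \delta^{-2s+2\eta}$ already available from $\bZ$-hitting tubes. The required $\delta^{-3\eta}$ factor of improvement must be extracted by combining the high multiplicity of each $\bX_{y_0}$-ball (which is hit by $\gesim M$ tubes of $\T_*$) with the new-ball count and the fatness estimate, possibly by iterating Theorem~\ref{thm:projection} on a sequence of cleverly chosen subsets $\T' \subset \T_*$ (e.g., successively removing the highest-multiplicity tubes); this is the combinatorial heart of the OS A.7 argument that we adapt. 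A secondary difficulty, absent in dimension $d=2$, is the plate non-concentration for $\T_*$ itself, which genuinely exploits the $k$-plane non-concentration of each $\bX_{\by}$ through the slicing geometry of $(k+1)$-plates.
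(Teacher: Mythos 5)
There is a genuine gap, and you have in fact picked the wrong set to feed into Theorem~\ref{thm:projection}. You pass the \emph{complement} $\T'_{y_0} := \T_* \setminus \bigcup_{x \in \bX_{y_0}} \T((x,y_0))$ to the theorem, justify this with ``at most $M_X M \ll |\T|$ are removed,'' and then try to extract a contradiction from ``new'' slice balls. Both halves of this are wrong. First, the bound $M_X M \ll |\T|$ fails: with the tubes transverse to $\{y=0\}$, each tube meets $O(1)$ balls at a fixed height, so $|\T(\by)| := |\bigcup_{\bx \in \bX_{\by}} \T(\bx,\by)| \geapp \delta^{-s}|\bX_{\by}| \geapp \delta^{-2s+\eta}$; combined with the counter-assumption $|\T| \leapp \delta^{-2s}$ this forces $|\T(\by)| \app |\T|$, so the complement you keep need not be a $\delta^{O(\eta)}$ fraction of $\T_*$ at all. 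Second, even granting a large $\T'_{y_0}$, you acknowledge in your last paragraph that you cannot close the double count; the proposal stops at ``this is the combinatorial heart \dots that we adapt.''

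The correct move (and what the paper does) is the opposite: feed the \emph{large} family $\T(\by)$, not its complement, into Theorem~\ref{thm:projection}. Since $|\T(\by)| \app |\T|$ and every tube in $\T(\by)$ passes through one of the $\app \delta^{-s}$ balls of $\bX_{\by}$ at height $\by$, the slice of $\cup\T(\by)$ at $z=\by$ has covering number $\leapp \delta^{-s}$. After refining $\T$ to a subset $\oT$ with $|\oT|\app|\T|$ in which each tube belongs to $\app|\bY|$ of the families $\T(\by)$, and refining $\bY$ to $\obY$ so that $|\oT \cap \T(\by)|\app|\oT|$ for all $\by\in\obY$, one applies Theorem~\ref{thm:projection} (with $\mu$ the normalized counting measure on $\obY$ and non-concentration exponent $\min(\kappa,\tau)$) to $\oT$. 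It then asserts that for $\mu$-almost every $\by$, \emph{every} subset of size $\ge\delta^\eps|\oT|$ has slice at $z=\by$ with covering number $\ge\delta^{-s-\eps}$; but $\oT\cap\T(\by)$ is such a subset with covering number $\leapp\delta^{-s}$ at $z=\by$, for every $\by\in\obY$. Taking $\eta$ small relative to $\eps$ gives an immediate contradiction with no further counting required. Your verification that the refined tube family is a $(\delta,\kappa,\delta^{-O(\eta)},k)$-set is broadly sound (the paper does it by averaging the $(\delta,\kappa,k)$-property of each $\T(\by)$ over $\by$), but the final step of your argument does not and cannot close as written, and the ``$M_X M \ll |\T|$'' claim should be removed.
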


\begin{proof}
    Let $A \leapp B$ denote $A \le C \delta^{-C \eta} B$ for some absolute constant $C \ge 1$. A $(\delta, u, m)$-set stands for a $(\delta, u, C\delta^{-C\eta}, m)$-set.

    First, without loss of generality, assume $|\T(\bz)| = \delta^{-s+\eta}$ for each $\bz \in \bZ$.
    
    Suppose $|\T| \le \delta^{-2s-\eta}$. Let 
    \begin{equation*}
        \T(\by) = \bigcup_{\bx \in \bX_{\by}} \T(\bx,\by).
    \end{equation*}
    Since each tube in $\T(\by)$ has angle $\ge \frac{1}{100}$ with the plane $y = 0$, it only intersects $O(1)$ many $\delta$-balls $(\bx, \by)$ for a given $\by$. Since $|\T(\bx, \by)| \geapp \delta^{-s}$ for each $\bx \in \bX_\by$, we get $|\T(\by)| \geapp \delta^{-s} |\bX_{\by}|$. With the counter-assumption $|\T| \leapp \delta^{-2s}$, this forces $|\bX_{\by}| \leapp \delta^{-s}$ for each $\by \in \bY$. On the other hand, $|\bX_{\by}| \geapp \delta^{-s}$ and so $|\T| \app \delta^{-2s}$.

    Now, we check that $\T(\by)$ is a $(\delta, \kappa, \delta^{-O(\eta)}, k)$-set. Pick a $(r, k+1)$-plane $H$. We claim that either $\T(\by) \cap H = \emptyset$ or $H(y = \by)$ is contained in a $(O(r), k)$-plate. Indeed, if $H(y = \by)$ is not contained within a $(Cr, k)$-plate, then $H$ is contained within the $O(C^{-1})$-neighborhood of the plane $y = \by$, which means that $H$ cannot contain any tubes of $\T(\by)$ if $C$ is large enough (since the tubes of $\T(\by)$ have angle $\ge \frac{1}{100}$ with that plane). Thus, we may assume $H(y = \by)$ is contained within a $(Cr, k)$-plate, which means
    \begin{multline*}
        |\T(\by) \cap H| = |\bigcup_{\bx \in \bX_\by \cap H} \T(\bx, \by) \cap H| \\
        \le |\bX_\by \cap H| \cdot \delta^{-s+\eta} \leapp |\bX_\by| r^\kappa \cdot \delta^{-s+\eta} \leapp r^\kappa |\T(\by)|.
    \end{multline*}

    Since $|\T(\by)| \app |\T|$ for each $\by \in \bY$, there is a subset $\oT \subset \T$ such that $|\T| \app |\oT|$ and each $T \in \oT$ belongs to $\app |\bY|$ of the sets $\T(\by)$. We show $\oT$ is a $(\delta, \kappa, \delta^{-O(\eta)}, k)$-set. Indeed, given a $(r, k+1)$-plate $H$, we have
    \begin{multline*}
        |\oT \cap H| \app \sum_{T \in \oT \cap H} \frac{1}{|\bY|} \sum_{\by \in \bY} \one_{\T(y)} (T)\\
        \leapp \frac{1}{|\bY|} \sum_{\by \in \bY} |\oT(\by) \cap H| \leapp \frac{1}{|\bY|} \sum_{\by \in \bY} 
 r^\kappa |\T(y)| \le r^\kappa |\oT|.
    \end{multline*}
    Finally, we refine $\bY$ further: since
    \begin{equation*}
        \sum_{\by \in \bY} |\oT \cap \T(y)| = \sum_{T \in \oT} |\{ \by \in \bY : T \in \T(\bY) \}| \app |\oT| |\bY|,
    \end{equation*}
    we can find a subset $\obY \subset \bY$ with the property that $|\oT(y)| := |\oT \cap \T(y)| \app |\oT|$ for each $y \in \obY$. Also, $\obY$ is still a $(\delta, \tau, \delta^{-O(\eta)})$-set.

    Now for each $\by \in \obY$, the large subset $\oT(y) \subset \oT$ has small covering number $|\bX_{\by}| \app \delta^{-s}$. On the other hand, $|\oT| \app \delta^{-2s}$. This contradicts Theorem \ref{thm:projection} if $\eta$ is chosen sufficiently small in terms of the $\eps$ of the theorem.
\end{proof}


\section{Improved incidence estimates for regular sets}\label{sec:regular sets}
In this section, we prove a version of Theorem \ref{thm:main} for regular sets.

\begin{defn}
    Let $\delta \in 2^{-2\N}$ be a dyadic number. Let $C, K > 0$, and let $0 \le s \le d$. A non-empty set $\cP \subset \cD_\delta$ is called $(\delta, s, C, K)$-regular if $\cP$ is a $(\delta, s, C, 0)$-set, and
    \begin{equation*}
        |\cP|_{\delta^{1/2}} \le K \cdot \delta^{-s/2}.
    \end{equation*}
\end{defn}

\begin{theorem}\label{thm:improved_incidence}
    For any $0 \le s, k < d-1$, $\max(s, k) < t \le d$, $\kappa > 0$, there exists $\eps(s, t, \kappa, k, d) > 0$ such that the following holds for all small enough $\delta \in 2^{-\N}$, depending only on $s, t, \kappa, k, d$. Let $\cP \subset \cD_\delta$ be a $(\delta, t, \delta^{-\eps}, \delta^{-\eps})$-regular set. Assume that for every $p \in \cP$, there exists a $(\delta, s, \delta^{-\eps}, 0)$ and $(\delta, \kappa, \delta^{-\eps}, k)$-set $\cT(p) \subset \cT$ with $|\T(p)| = M$ such that $T \cap p \neq \emptyset$ for all $T \in \cT(p)$. Then $|\cT| \ge M \delta^{-s-\eps}$.
\end{theorem}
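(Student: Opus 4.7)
The plan is to exploit the regular structure of $\cP$ at the intermediate scale $\Delta := \delta^{1/2}$. By the $(\delta,t,\delta^{-\eps},\delta^{-\eps})$-regularity, $|\cP|_\Delta$ is pinned at $\Delta^{-t}$ up to $\delta^{-O(\eps)}$, and each $\Delta$-cube $Q \in \cD_\Delta(\cP)$ contains comparably many $\delta$-cells. Following Appendix A of \cite{orponen2021hausdorff}, the strategy is to locate a $\Delta$-tube $\bT$ such that the restriction $\cP \cap \bT$ becomes, after affinely rescaling $\bT$ to $[0,1)^d$, a quasi-product set at the new scale $\delta/\Delta = \delta^{1/2}$, and then apply Proposition \ref{prop:improved_incidence_weaker} to this rescaled configuration to extract an $\eps$-improvement over the easy count.

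The first step is to invoke the multiscale decomposition of Proposition \ref{prop:nice_tubes} at $\Delta = \delta^{1/2}$. This yields a coarse nice configuration $(\cD_\Delta(\cP),\T_\Delta)$ and, for each $Q \in \cD_\Delta(\cP)$, a rescaled $\delta^{1/2}$-scale nice configuration $(S_Q(\cP_Q),\T_Q)$, tied together by the submultiplicative estimate \eqref{eqn:item6}. The second step is to pigeonhole in $\T_\Delta$ to choose a $\Delta$-tube $\bT$ that meets many $\Delta$-cubes of $\cD_\Delta(\cP)$ in a typical way, and then to affinely dilate $\bT$ to $[0,1)^d$ sending its long axis to the $y$-axis. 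Under this rescaling, $\cP \cap \bT$ becomes a set $\bZ = \bigcup_{\by \in \bY}\bX_\by \times \{\by\}$ of $\delta^{1/2}$-balls with $\bY \subset [0,1)$ a $(\delta^{1/2},\tau)$-set (with $\tau > 0$ inherited from $t$-regularity), each slice $\bX_\by \subset [0,1)^{d-1}$ a $(\delta^{1/2},\kappa,k)$-set (inherited from the $(\delta,\kappa,k)$-structure of the $\T(p)$ via ball--tube duality in the spirit of Lemma \ref{lem:slope set dim}), and each rescaled $\T(\bz)$ of cardinality $\geapp \delta^{-s/2}$ making angle $\geapp 1$ with the plane $y=0$.

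The third step is to apply Proposition \ref{prop:improved_incidence_weaker} to this rescaled quasi-product configuration, yielding $|\T_Q|/M_Q \geapp (\delta^{1/2})^{-s-\eta}$ for some $\eta = \eta(s,k,\kappa,\tau,d) > 0$. Combined with the easy coarse-scale bound $|\T_\Delta|/M_\Delta \gesim \Delta^{-s}$ from Corollary \ref{cor:easy_est} (applicable after the harmless reduction $t \le d-1$) and the telescoping inequality \eqref{eqn:item6}, this multiplies out to $|\T|/M \geapp \delta^{-s-\eta/2}$, which delivers $|\T| \ge M \delta^{-s-\eps}$ upon setting $\eps = \eta/3$.

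The main obstacle is the second step: carefully verifying that a $\Delta$-tube $\bT$ with the required quasi-product properties can be extracted, and that the rescaled slices $\bX_\by$ genuinely inherit $(\delta^{1/2},\kappa,k)$-non-concentration in $\R^{d-1}$ from the $(\delta,\kappa,k)$-non-concentration of $\T(p)$ around $(k+1)$-plates in $\R^d$. This transfer between tube-plate non-concentration upstairs and ball-plane non-concentration in the rescaled slice is not automatic; it is the higher-dimensional analogue of the point/tube duality used in Appendix A of \cite{orponen2021hausdorff}, and the correct $(k+1)$-plate to $k$-plane correspondence must be checked via the $t$-regularity of $\cP$ together with the hypothesis on the $\T(p)$. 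Once this structural transfer is in hand, Proposition \ref{prop:improved_incidence_weaker} does the heavy lifting and the rest is bookkeeping through \eqref{eqn:item6}.
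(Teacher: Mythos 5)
Your skeleton matches the paper's --- multiscale decomposition at $\Delta = \delta^{1/2}$, extract a special $\Delta$-tube $\bT_0$, rescale it to $[0,1]^d$ to obtain a quasi-product configuration, and invoke Proposition~\ref{prop:improved_incidence_weaker} --- but you have misidentified the mechanism behind the single hardest step, and you have also misstated how the quasi-product proposition is fed back in. For the non-concentration transfer, Lemma~\ref{lem:slope set dim} (the genuine ``ball--tube duality'') only converts the $(\delta,\kappa,k)$-non-concentration of $\T(p)$ into $(\Delta,\kappa,k)$-non-concentration of the \emph{direction set} $\sigma(\T_\Delta(Q)) \subset S^{d-1}$. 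That is not the statement you need. What you actually need is that the \emph{projections} $\pi_\sigma(\cP \cap Q)$ of the $(\Delta,t)$-set $\cP \cap Q$ to the hyperplane orthogonal to a typical $\sigma \in \sigma(\T_\Delta(Q))$ are $(\Delta, \kappa', k)$-sets --- i.e.\ non-concentration around $k$-planes for the projected \emph{balls}, not for the tube directions. No duality gives this; it is a Kaufman-type projection theorem, and in the paper it is Proposition~\ref{prop:projections}, proved by an $(s,1)$- and $(\beta,k+1)$-Riesz energy argument via Lemma~\ref{lem:energy}. Crucially this uses $t > k$ (so that $I_{\beta,k+1}^0(\mu) < \infty$), and it requires \emph{both} the $(\Delta,s,0)$ and the $(\Delta,\kappa,k)$ non-concentration of the direction set. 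Your phrase ``the $(k+1)$-plate to $k$-plane correspondence must be checked via the $t$-regularity of $\cP$'' gestures in the right direction, but labeling it ``duality in the spirit of Lemma~\ref{lem:slope set dim}'' is wrong and would lead a reader to expect a soft argument where none exists.

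The second issue is in your step 3. Proposition~\ref{prop:improved_incidence_weaker} applied to the rescaled $\bT_0$-configuration gives a lower bound $|\T(\bZ)| \gesim \Delta^{-2s-\eta}$, where $\T(\bZ)$ is (a mild thickening of) $\T \cap \bT_0$ rescaled; it does \emph{not} produce the bound $|\T_Q|/M_Q \geapp \Delta^{-s-\eta}$ you invoke --- such a bound would amount to the theorem you are trying to prove applied at scale $\delta/\Delta$ to $(S_Q(\cP_Q), \T_Q)$, which has no quasi-product structure, so that route is circular. The paper instead runs the whole argument by contradiction: assuming $|\T| \le M\delta^{-s-\eps}$, it derives via \eqref{eqn:item6} and Corollary~\ref{cor:easy_est} the pinning $M \approx \delta^{-s}$, $M_\Delta \approx \Delta^{-s}$, $|\T_\Delta| \approx \Delta^{-2s}$, and $|\T \cap \bT| \leapp \Delta^{-2s}$ for $\bT \in \T_\Delta$ (properties~\ref{g1}--\ref{g5}); the quasi-product lower bound $|\T(\bZ)| \gesim \Delta^{-2s-\eta}$ then contradicts $|\T \cap \bT_0| \leapp \Delta^{-2s}$. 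You should also be aware that establishing the $(\Delta, t-s)$-regularity of the slice set $\bY$ (your ``$\tau > 0$'') is itself a nontrivial extraction requiring an energy/Markov argument combined with Corollary~\ref{cor:easy_est}, not just pigeonholing. Fill in Proposition~\ref{prop:projections} and restructure step 3 as a contradiction argument and the rest of your outline is sound.
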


\subsection{Initial reductions}
\textit{This subsection is based on Sections 6 and A.1-A.3 of \cite{orponen2021hausdorff}.}

In this section, let $A \leapp B$ denote $A \le C \delta^{-C \eps} B$ for some constant $C \ge 1$ depending only on $s, t, \kappa, k, d$. Also, let $\cP \cap Q := \{ p \in \cP : p \subset Q \}$.

The proof will be based on contradiction, so assume $|\T| \le M\delta^{-s-\eps}$. Let's rename $\cP$ to $\cP_0$ and $\cT$ to $\cT_0$, reserving $\cP, \cT$ for the use of Proposition \ref{prop:nice_tubes}.

By Corollary \ref{cor:easy_est}, we have $1 \geapp (M\delta^s)^{\frac{t-s}{d-1-s}}$, so $M \leapp \delta^{-s}$ and $|\T| \leapp \delta^{-2s}$. But $\T(p)$ is a $(\delta, s, \delta^{-\eps})$-set, so $M \app \delta^{-s}$. Finally, by Lemma \ref{lem:small sets}, we may assume $|\cP_0| \app \delta^{-t}$ (passing to subsets will preserve the $(\delta, t, \delta^{-\eps}, \delta^{-\eps})$-regularity of $\cP_0$).

The next reduction will make the value $|\cP_0 \cap Q|$ uniform for different $Q \in \cD_\Delta (\cP_0)$. Let $\cQ_0 = \cD_\Delta (\cP_0)$. By $(\delta, t, \delta^{-\eps}, \delta^{-\eps})$-regularity of $\cP_0$, we have $|\cQ_0| \leapp \Delta^{-t}$. On the other hand, since $\cP_0$ is a $(\delta, t)$-set, we have that for all $Q \in \cQ_0$,
\begin{equation}\label{eqn:light squares}
    |\cP_0 \cap Q| \leapp \Delta^{-t}
\end{equation}
This means $|\cQ_0| \geapp \Delta^{-t}$. Hence, $|\cQ_0| \app \Delta^{-t}$. Now using \eqref{eqn:light squares} again and $|\cP_0| \app \Delta^{-2t}$, there exists $\cQ_0' \subset \cQ_0$ with $|\cQ_0'| \geapp |\cQ_0|$ such that for each $Q \in \cQ_0'$,
\begin{equation}\label{eqn:good squares}
    |\cP_0 \cap Q| \app \Delta^{-t}
\end{equation}
Using \eqref{eqn:good squares}, we quickly check that $\cQ_0'$ is a $(\Delta, t)$-set. Indeed, for $r \in (\Delta, 1)$ and $Q_r \in \cD_r$, we have
\begin{equation}\label{eqn:cQ0 dim check}
    |\cQ_0' \cap Q_r| \overset{\eqref{eqn:good squares}}{\app} \Delta^t \cdot |\cP_0 \cap Q_r| \leapp \Delta^t \cdot |\cP_0| \cdot r^t \app |\cQ_0'| \cdot r^t.
\end{equation}
(The second inequality uses that $\cP_0$ is a $(\Delta, t)$-set.)
Let $\cP_0' = \bigcup_{Q \in \cQ_0'} \cP \cap Q$; then $|\cP_0'| \app |\cP|$ and $|\cP_0' \cap Q| = |\cP_0 \cap Q| \app \Delta^{-t}$ for $Q \in \cP_0'$. Apply Proposition \ref{prop:nice_tubes} to find $\cP \subset \cP_0', \T(p) \subset \T_0 (p)$, $\cT_\Delta$, $\T$, and $\T_Q$. Let $\cQ = \cD_\Delta (\cP)$.

\textbf{Claim.} $M_\Delta \app \Delta^{-s}$ and $|\T_\Delta| \leapp \delta^{-s}$.

\textit{Proof.} By Proposition \ref{prop:nice_tubes}\ref{item3}, we know that $(\cD_\Delta (\cP), \T_\Delta)$ is $(\Delta, s, C_\Delta^1, \kappa, C_\Delta^2, M_\Delta)$-nice, so $M_\Delta \geapp \Delta^{-s}$. Also, by Corollary \ref{cor:easy_est}, we have that
\begin{equation}\label{eqn:lower T_Delta}
    |\T_\Delta| \geapp M_\Delta \delta^{-s/2} \cdot (M_\Delta \delta^{s/2})^{\frac{t-s}{d-1-s}}.
\end{equation}
Next, for any $Q \in \cQ$, we know that $(S_Q (\cP \cap Q), \T_Q)$ is $(\Delta, s, C_Q^1, \kappa, C_Q^2, M_Q)$-nice. Recall that
\begin{equation*}
    S_Q (\cP \cap Q) = \{ S_Q (p) : p \in \cP, p \subset Q \} \subset \cD_\Delta.
\end{equation*}
We also know $|\cP \cap Q| \app |\cP_0' \cap Q| \app \delta^{-t/2}$ and $\cP$ is a $(\delta, t)$-set, so by a similar check to \eqref{eqn:cQ0 dim check}, we get that $S_Q (\cP \cap Q)$ is a $(\Delta, t)$-set. Thus by Corollary \ref{cor:easy_est}, we have
\begin{equation*}
    |\T_Q| \geapp M_Q \cdot \delta^{-s/2}
\end{equation*}
But by our counterassumption $|\T_0| \leapp \delta^{-2s}$, we get from \eqref{eqn:item6} in Proposition \ref{prop:nice_tubes} and $M \geapp \delta^{-s}$,
\begin{equation*}
    \delta^{-2s} \geapp \frac{|\T_\Delta|}{M_\Delta} \cdot \frac{|\T_Q|}{M_Q} \cdot M \geapp \frac{|\T_\Delta|}{M_\Delta} \cdot \delta^{-3s/2}.
\end{equation*}
Thus, $|\T_\Delta| \leapp M_\Delta \delta^{-s/2}$. Substitute into \eqref{eqn:lower T_Delta} to get
\begin{equation*}
    \delta^{-s/2} \geapp \frac{|T_\Delta|}{M_\Delta} \geapp \delta^{-s/2} \cdot (M_\Delta \delta^{s/2})^{\frac{t-s}{d-1-s}}.
\end{equation*}
Thus, $M_\Delta \delta^{s/2} \leapp 1$, so $M_\Delta \leapp \Delta^{-s}$ and $|\T_\Delta| \leapp \delta^{-s}$, proving the Claim. \qed

Thus, we get the higher-dimensional analogues of properties (H1-2), (G1-4) of \cite{orponen2021hausdorff} except we only know $|\T| \leapp \delta^{-2s}$ and not $|\T| \geapp \delta^{-2s}$. But this is not a limitation. We repeat and relabel these properties here:
\begin{enumerate}[(G1)]
    \item\label{g1} $|\cQ| \app \Delta^{-t}$ and $|\cP \cap Q| \app \Delta^{-t}$ for all $Q \in \cQ$.

    \item\label{g2} Every tube $\bT \in \T_\Delta$ satisfies $|\T \cap \bT| \leapp \delta^{-s}$.

    \item\label{g3} For every square $Q \in \cQ$, there corresponds a $(\Delta, s, 0)$-set and $(\Delta, \kappa, k)$-set $\T_\Delta (Q) \subset \T_\Delta$ of cardinality $\app M_\Delta \app \Delta^{-s}$ such that $\bT \cap Q \neq \emptyset$ for all $\bT \in \T_\Delta (Q)$.

    \item\label{g4} $|\T| \leapp \delta^{-2s}$  and $|\T_\Delta| \app \Delta^{-2s}$.

    \item\label{g5} For $\bT \in \T_\Delta (Q)$, we have
    \begin{equation*}
        |\{ (p, T) \in (\cP \cap Q) \times \T : T \in \T(p) \cap \bT \}| \geapp \Delta^{-s-t}.
    \end{equation*}
\end{enumerate}

Item \ref{g1} follows from Proposition \ref{prop:nice_tubes}\ref{item1}.

Item \ref{g3} follows from Proposition \ref{prop:nice_tubes}\ref{item3} and Claim.

Item \ref{g4} follows from $|\T_0| \leapp \delta^{-2s}$ and Claim.

Item \ref{g5} follows from Proposition \ref{prop:nice_tubes}\ref{item4} and the estimation $M \cdot |\cP \cap Q| / |\T_\Delta (Q)| \app \Delta^{-s-t}$, which uses item \ref{g1}, item \ref{g3}, and the fact $M \app \delta^{-s}$ we proved at the beginning of the argument.

Item \ref{g2} follows from Proposition \ref{prop:nice_tubes}\ref{item21}, the fact that a given $\delta$-tube lies in $\lesim 1$ many of the $\bT$'s in $\T_\Delta$, and item \ref{g4}:
\begin{equation*}
    \delta^{-2s} \geapp |\T| \gesim \sum_{\bT \in \T_\Delta} |\T \cap \bT| \sim |\T_\Delta| \cdot \bN \app \Delta^{-2s} \cdot \bN.
\end{equation*}

\subsection{Transferring angular non-concentration to ball non-concentration}
\textit{This subsection is based on Section A.4 of \cite{orponen2021hausdorff}.}

We first recall some notation. For a unit vector $\sigma \in \R^d$, define $\pi_\sigma (\vec{v}) := \vec{v} - (\vec{v} \cdot \sigma) \sigma$ to be the orthogonal projection to the orthogonal complement of $\sigma$. For a $\delta$-tube $T$, let $\sigma(T) \in S^{d-1}$ denote the direction of $T$.

In this subsection, we fix a $Q \in \cD_\Delta (\cP)$. Our goal is to show that for many $\bT \in \T_\Delta (Q)$, the $\Delta^{-1}$-rescaled version of $\pi_{\sigma(\bT)} (\cup (\cP \cap Q))$ contains a $(\Delta, s, 0)$ and $(\Delta, \kappa', k)$-set for some $\kappa' > 0$. This is the content of the next Proposition \ref{prop:projections}, which is a higher-dimensional extension of Lemma A.6 of \cite{orponen2021hausdorff}. The proposition encodes the following principle: If we have a set of orthogonal projections in $\Gr(d, d-1)$ (which we view as $S^{d-1}$) that don't concentrate around $k$-planes, and we have a $t$-dimensional set $X$ with $t > k$, then many projections of $X$ will not concentrate around $k$-planes.

\begin{prop}\label{prop:projections}
    Let $0 \le \max(s, k) < t \le d$, $\kappa > 0$, and $\bA, \bB > 0$. Let $\cP$ be a $(\Delta, t, \Delta^{-\bA \eps})$-set in $[0,1)^d$, and let $\Gamma \subset S^{d-1}$ be a $(\Delta, s, \Delta^{-\bA \eps}, 0)$-set and $(\Delta, \kappa, \Delta^{-\bA \eps}, k)$-set. There exists a subset $\Sigma \subset \Gamma$ with $|\Sigma| \ge \frac{1}{2} |\Gamma|$ such that the following holds for all $\sigma \in \Sigma$: if $\cP' \subset \cP$ is an arbitrary subset of cardinality $|\cP'| \ge \Delta^{\bB \eps} |\cP|$, then $\pi_\sigma(\cP')$ contains a $(\Delta, \frac{1}{k+1} \min(\frac{t-k}{2}, \kappa), \Delta^{-\bC(\bA + \bB) \eps}, k)$ and $(\Delta, s, \Delta^{-\bC(\bA + \bB)}, 0)$-set, where $\bC \ge 1$ is absolute depending on $k$.
\end{prop}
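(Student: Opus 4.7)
The strategy is a standard energy/projection argument. Let $\mu$ be the normalized uniform probability measure on $\cup \cP$, and $\gamma$ the normalized counting measure on $\Gamma$. To extract the $(\Delta, s, 0)$-set condition on $\pi_\sigma(\cP')$, we bound $\int_\Gamma I_{s,1}^\Delta(\pi_\sigma \mu)\, d\gamma(\sigma)$; to extract the $(\Delta, u/(k+1), k)$-set condition with $u := \min\bigl(\tfrac{t-k}{2}, \kappa\bigr)$, we bound $\int_\Gamma I_{u,k+1}^\Delta(\pi_\sigma \mu)\, d\gamma(\sigma)$. The key geometric identity for orthogonal projection $\pi_\sigma$ onto $\sigma^\perp$ is
\begin{equation*}
|\pi_\sigma v_1 \wedge \cdots \wedge \pi_\sigma v_j| = |v_1 \wedge \cdots \wedge v_j|\cdot d(\sigma, \mathrm{span}(v_1,\ldots,v_j)),
\end{equation*}
so with $v_i := x_0 - x_i$, $R := |v_1 \wedge \cdots \wedge v_k|$, and $V := \mathrm{span}(v_1,\ldots,v_k)$, Fubini reduces the second energy bound (for each fixed $(k+1)$-tuple $x_0,\ldots,x_k$) to estimating $\int_\Gamma (R\cdot d(\sigma, V) + \Delta)^{-u}\, d\gamma(\sigma)$, and similarly for the first.

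For these $\sigma$-integrals, non-concentration of $\Gamma$ is crucial. Since $\{\sigma : d(\sigma, V) < r\} \subset V^{(r)}$ is a $(r, k)$-plate, the $(\Delta, \kappa, k)$-set hypothesis yields $\gamma\bigl(\{d(\sigma, V) < r\}\bigr) \lesim \Delta^{-\bA\eps} r^\kappa$ for $r \ge \Delta$. A dyadic decomposition, using $u \le \kappa$, gives
\begin{equation*}
\int_\Gamma (R\cdot d(\sigma, V)+\Delta)^{-u}\, d\gamma(\sigma) \lesim \Delta^{-\bA\eps}\log(1/\Delta)\,(R+\Delta)^{-u}.
\end{equation*}
Analogously, since $\{|\pi_\sigma v| < r\}$ is contained in two $(r/|v|)$-balls around $\pm v/|v|$, the $(\Delta, s, 0)$-set hypothesis yields $\int_\Gamma(|\pi_\sigma v|+\Delta)^{-s}\, d\gamma \lesim \Delta^{-\bA\eps}\log(1/\Delta)\,|v|^{-s}$. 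Integrating the remaining points and invoking Lemma~\ref{lem:energy}(a) (noting that since $\cP$ is a $(\Delta, t)$-set of balls, one has $\mu(H_r) \lesim \Delta^{-\bA\eps} r^{t-k+1}$ for $(r,k-1)$-plates $H_r$, and that $u \le \tfrac{t-k}{2} < t-k+1$ and $s < t$), both averaged energies are at most $\Delta^{-O(\bA\eps)}$.

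By Markov, at least $\tfrac{7}{8}$ of $\sigma \in \Gamma$—take this as $\Sigma$—satisfy both $I_{s,1}^\Delta(\pi_\sigma \mu), I_{u,k+1}^\Delta(\pi_\sigma \mu) \le \Delta^{-O(\bA\eps)}$. For such a $\sigma$ and any $\cP' \subset \cP$ with $|\cP'| \ge \Delta^{\bB\eps}|\cP|$, setting $\mu' := \mu|_{\cP'}/\mu(\cP')$ gives $I_{s_i, k_i}^\Delta(\pi_\sigma \mu') \le \mu(\cP')^{-(k_i+1)} I_{s_i, k_i}^\Delta(\pi_\sigma \mu) \le \Delta^{-O((\bA + \bB)\eps)}$. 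Applying Lemma~\ref{lem:energy}(b) with $m = 2$ to $\pi_\sigma \mu'$ extracts from $\spt(\pi_\sigma\mu') \subset \pi_\sigma(\cP')$ a set which is simultaneously a $(\Delta, s, \Delta^{-\bC(\bA+\bB)\eps}, 0)$-set and a $(\Delta, u/(k+1), \Delta^{-\bC(\bA+\bB)\eps}, k)$-set, as desired.

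The main obstacle—and the essential new input beyond the classical Kaufman/Marstrand projection theorem—is the $(k+1)$-point energy bound: the wedge identity relating $|\pi_\sigma v_1 \wedge\cdots\wedge \pi_\sigma v_k|$ to $d(\sigma, V)$ is precisely what allows $k$-plane non-concentration of $\Gamma$ to be converted into $k$-plane non-concentration of the projected measures.
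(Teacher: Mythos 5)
Your overall plan is the same as the paper's: average the $(s,1)$- and higher-order energies of $\pi_\sigma \mu$ over $\sigma \in \Gamma$, use the wedge identity $|\pi_\sigma v_1 \wedge \cdots \wedge \pi_\sigma v_j| = |v_1 \wedge \cdots \wedge v_j| \cdot d(\sigma,\mathrm{span}(v_1,\ldots,v_j))$ to reduce the $\sigma$-integral to a plate non-concentration estimate on $\Gamma$, invoke Lemma~\ref{lem:energy}(a) to control the remaining point-integral, pass to a large $\Sigma$ by Markov, rescale the energy under restriction to $\cP'$, and finish with Lemma~\ref{lem:energy}(b).

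However, there is a consistent off-by-one error in the higher-order energy step. To extract the $(\Delta, u/(k+1), \cdot, k)$-set conclusion from Lemma~\ref{lem:energy}(b) you must bound $I_{u, k+1}^\Delta(\pi_\sigma\mu)$, whose integrand is a wedge of $k+1$ projected vectors and which runs over a $(k+2)$-tuple $x_0,\ldots,x_{k+1}$; the relevant span is $V = \mathrm{span}(v_1,\ldots,v_{k+1})$ of dimension $k+1$. You instead fix a $(k+1)$-tuple $x_0,\ldots,x_k$, take $k$ vectors $v_1,\ldots,v_k$, and a $k$-dimensional $V$; that computation bounds $I_{u, k}^\Delta(\pi_\sigma\mu)$, which via Lemma~\ref{lem:energy}(b) only yields a $(\Delta, u/k, \cdot, k-1)$-set, a strictly different statement than what the proposition asserts. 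The slip propagates downstream: with $V$ of dimension $k+1$, the set $\{\sigma \in S^{d-1} : d(\sigma,V) < r\}$ is exactly an $r$-neighborhood of a $k$-great-sphere, which is where the $(\Delta, \kappa, \cdot, k)$ hypothesis on $\Gamma$ is meant to be applied; and the point-integral then requires $\mu(H_r) \lesim \Delta^{-\bA\eps} r^{t-k}$ for $(r,k)$-plates together with $u < t-k$ (which your choice $u \le \tfrac{t-k}{2}$ satisfies), rather than the $(r,k-1)$-plate bound and $u < t-k+1$ you wrote. These corrections are mechanical, and once made the argument coincides with the paper's proof of Proposition~\ref{prop:projections}.
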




\begin{proof}
    We will use a variation of the energy argument due to Kaufman \cite{kaufman1968hausdorff} in the form used to prove \cite[Lemma A.6]{orponen2021hausdorff}. An alternate proof can follow \cite[Lemma 27]{he2020orthogonal}, but this approach would give weaker bounds.

    Let $\mu$ be the $\Delta$-discretized probability measure corresponding to $\cP$,
    \begin{equation*}
        \mu := \frac{1}{|\cP|} \sum_{q \in \cP} \frac{\cL^d|_q}{\Delta^d},
    \end{equation*}
    where $\cL^d$ is $d$-dimensional Lebesgue measure.
    Since $\cP$ is a $(\Delta, t, \Delta^{-\bA \eps})$-set, we have $\mu(B(x, r)) \leapp r^t$ for all $r > \delta$, and it's also true for $r < \delta$ since $\mu$ behaves like Lebesgue measure at small scales.
    We will choose a uniformly random $\sigma \in \Gamma$ and consider what happens to the energy of $\mu$ under projection by $\sigma$. By linearity of expectation and definition of energy,
    \begin{equation*}
        E_{s,1} := \E_\sigma [I_{s,1}^\Delta (\pi_\sigma \mu)] = \int \E_\sigma [(|\pi_\sigma (x_0 - x_1)| + \delta)^{-s}] \, d\mu(x_0) d\mu(x_1).
    \end{equation*}
    Since $\Gamma$ is a $(\Delta, s)$-set, we have $\E_\sigma [(|\pi_\sigma (x_0 - x_1)| + \Delta)^{-s}] \lesim (\log \Delta^{-1}) \cdot \Delta^{-\bA \eps} |x_0 - x_1|^{-s}$ (c.f. \cite{kaufman1968hausdorff}), and so $E_{s,1} \leapp I_{s,1}^0 (\mu) \leapp 1$ by Lemma \ref{lem:energy}(a) and $s < t$.

    Analogously, we have (let $\beta = \min(\kappa, \frac{t-k}{2})$):
    \begin{equation*}
        E_{\beta,k+1} := \E_\sigma [I_{\beta,k+1}^\Delta (\pi_\sigma \mu)] = \int \E_\sigma \left[\left(\left|\bigwedge_{i=1}^{k+1} \pi_\sigma (x_0 - x_i) \right| + \Delta \right)^{-\beta} \right] \, d\mu(x_0) \cdots d\mu(x_{k+1}).
    \end{equation*}
    Observe that
    \begin{equation*}
        \left|\bigwedge_{i=1}^{k+1} \pi_\sigma (x_0 - x_i)\right| = \left|\sigma \wedge \bigwedge_{i=1}^{k+1} \pi_\sigma (x_0 - x_i)\right| = \left|\sigma \wedge \bigwedge_{i=1}^{k+1} (x_0 - x_i)\right| = \left|\bigwedge_{i=1}^{k+1} (x_0 - x_i)\right| \cdot \rho,
    \end{equation*}
    where $\rho$ is the distance from $\sigma$ to the plane spanned by $x_1 - x_0$ through $x_{k+1} - x_0$.
    (The first equality follows since $\sigma$ is orthogonal to each $\pi_\sigma (x_0 - x_i)$. The second equality follows since $\wedge$ is multilinear and $\sigma \wedge \sigma = 0$. The third equality follows by the geometric definition of wedge product as a volume of a parallelepiped.)
    Thus, since $\Gamma$ is a $(\Delta, \kappa, k)$-set and $\beta \le \kappa$, we have
    \begin{align*}
        \E_\sigma \left[\left(\left|\bigwedge_{i=1}^{k+1} \pi_\sigma (x_0 - x_i) \right| + \Delta \right)^{-\beta} \right] &\lesim \sum_{\rho = 2^{-n} \in (\Delta, 1)} \Delta^{-\bA \eps} \cdot \rho^{\kappa-\beta} \left|\bigwedge_{i=1}^{k+1} (x_0 - x_i)\right|^{-\beta} \\
        &\lesim (\log \Delta^{-1}) \cdot \Delta^{-\bA \eps} \left|\bigwedge_{i=1}^{k+1} (x_0 - x_i)\right|^{-\beta},
    \end{align*}
    and so $E_{\beta, k+1} \leapp I_{\beta, k+1}^0 (\mu) \leapp 1$ by Lemma \ref{lem:energy}(a) and $\beta < t-k$.
    
    Consequently, by Markov's inequality we can find $\Sigma \subset \Gamma$ with $|\Sigma| \ge \frac{1}{2} |\Gamma|$ such that for each $\sigma \in \Sigma$, we have $I_{s,1}^\Delta (\pi_\sigma \mu) \le \Delta^{-2C_1 \bA \eps}$ and $I_{\beta,k+1}^\Delta (\pi_\sigma \mu) \le \Delta^{-2C_1 \bA \eps}$. For any $\cP' \subset \cP$ with $|\cP'| \ge \Delta^{\bB \eps} |\cP|$, we have $I_{s,1}^\Delta (\pi_\sigma \mu_{\cP'}) \le \Delta^{-(2C_1 \bA + 2\bB) \eps}$ and $I_{\beta,k+1}^\Delta (\pi_\sigma \mu_{\cP'}) \le \Delta^{-(2C_1 \bA + (k+2) \bB) \eps}$, where $\mu_{\cP'} = \frac{1}{\mu(\cP')} \mu|_{\cP'}$ is the renormalized restriction of $\mu$ to $\cP'$. Then Lemma \ref{lem:energy}(b) gives the desired conclusion.
\end{proof}

\subsection{Finding a special $\Delta$-tube}
\textit{This subsection is based on Section A.4 of \cite{orponen2021hausdorff}.}

Apply Proposition \ref{prop:projections} to $S_Q (\cP \cap Q)$, which is a $(\Delta, t)$-set using \ref{g1} and the fact that $\cP$ is a $(\Delta, t)$-set. Define
\begin{equation*}
    \T_\Delta^\pi (Q) = \{ \bT \in \T_\Delta (Q) : \sigma(\bT) \in \Sigma(Q) \}, \qquad Q \in \cQ,
\end{equation*}
where $\Sigma(Q)$ is the set of good directions of cardinality $|\Sigma(Q)| \ge \frac{1}{4} |\sigma(Q)| \sim |\T_\Delta (Q)|$ (since for a given direction, there are $\sim 1$ many $\Delta$-tubes in that direction that intersect $Q$). Then $\T_\Delta^\pi (Q), Q \in \cQ$ remain $(\Delta, s)$-sets of cardinality $\app \Delta^{-s}$, and so the properties \ref{g1}-\ref{g5} remain valid upon replacing $\T_\Delta (Q)$ with $\T_\Delta (Q)$. (We leave $\T_\Delta$ unchanged, so only \ref{g3} and \ref{g5} are affected.) Thus, $\cP \cap Q$ for $Q \in \cQ$ and their large subsets have nice projections in the sense of Proposition \ref{prop:projections} in every direction orthogonal to the tubes $\bT \in \T_\Delta^\pi (Q)$. We keep the symbol ``$\pi$'' as a reminder of this fact.

The next goal is to find a tube $\bT_0$ with the following properties:
\begin{enumerate}[(P1)]
    \item\label{P1} The set $\{ Q \in \cQ : \bT_0 \in \T_\Delta^\pi (Q) \}$ contains a $(\Delta, t-s)$-subset, which we denote $\bT_0 (\cQ)$.

    \item\label{P2} $|\T \cap \bT_0| \leapp \Delta^{-2s}$.
    
    \item\label{P3} For each $Q \in \bT_0 (\cQ)$, there exists a subset $\cP_Q \subset \cP \cap Q$ such that
    \begin{equation*}
        |\cP_Q| \app \Delta^{-t} \text{ and } |\T(p) \cap \bT_0| \app \Delta^{-s} \text{ for all } p \in \cP_Q.
    \end{equation*}

    \item\label{P4} Let $\sigma$ be the direction of $\bT$. Then $\pi_\sigma (S_Q (\cP_Q))$ contains a $(\Delta, \kappa', k)$-set with cardinality $\geapp \Delta^{-s}$, where $\kappa' := \frac{1}{k+1} \min(\frac{t-k}{2}, \kappa)$.
\end{enumerate}
To get \ref{P1}- \ref{P3}, we will mostly follow Section A.4 of \cite{orponen2021hausdorff}. (We have used the fact that $\T$ is a $(\Delta, \kappa, \Delta^{-\bA \eps}, k)$-set, by converting it into ball concentration near $(k+1)$-planes in Proposition \ref{prop:projections}; the rest of the argument will only use the fact that $\T$ is a $(\Delta, s, \Delta^{-\bA \eps}, 0)$-set.) First, we refine the sets $\cQ$ and $\T_\Delta^\pi (Q)$ further to ensure that the family $\{ Q \in \cQ : \bT \in \T_\Delta^\pi (Q) \}$ will be $(\Delta, t-s)$-sets for $\bT \in \T_\Delta$. Indeed, we have
\begin{align*}
    \sum_{\bT \in \T_\Delta} \sum_{\substack{Q, Q' \in \cQ\\Q \neq Q'}} \frac{\one_{\T_\Delta^\pi (Q) \cap \T_\Delta^\pi (Q')} (\bT)}{d(Q, Q')^{t-s}} &= \sum_{Q, Q' \in \cQ, Q \neq Q'} \frac{|\T_\Delta^\pi (Q) \cap \T_\Delta^\pi (Q')|}{d(Q, Q')^{t-s}} \\
        &\leapp \sum_{Q, Q' \in \cQ, Q \neq Q'} \frac{1}{d(Q, Q')^t} \leapp \Delta^{-2t}.
\end{align*}
The first $\leapp$ inequality uses the fact that $\T_\Delta^\pi (Q)$ is a $(\Delta, s)$-set of tubes with $|\T_\Delta^\pi (Q)| \app \Delta^{-s}$, and the second $\leapp$ inequality uses the fact that $\cQ$ is a $(\Delta, t)$-set with $|\cQ| \app \Delta^{-t}$.

Thus, by Markov's inequality, for a fixed absolute large constant $C \ge 1$, we have
\begin{equation}\label{eqn:exception}
    \sum_{\substack{Q, Q' \in \cQ \\Q \neq Q'} }\frac{\one_{\T_\Delta^\pi (Q) \cap \T_\Delta^\pi (Q')} (\bT)}{d(Q, Q')^{t-s}} \ge \Delta^{-C\eps + 2(s-t)}
\end{equation}
can only hold for $\leapp \Delta^{C\eps-2s}$ many tubes $\bT \in \T_\Delta$.

\textbf{Claim 2.} If $C \ge 1$ is sufficiently large, then there exists a subset $\bcQ \subset \cQ$ with $|\bcQ| \ge \frac{1}{2} |\cQ|$ such that for all $Q_0 \in \bcQ$, at most half of the tubes $\bT \in \T_\Delta^\pi (Q_0)$ satisfy \eqref{eqn:exception}.

\textit{Proof.} Suppose this is not true: there exists a set $\cQ_{\text{bad}}$ such that for $Q_0 \in \cQ$, at least $\frac{1}{2} |\T_\Delta^\pi (Q_0)|$ many tubes $\bT \in \T_\Delta^\pi (Q_0)$ satisfy \eqref{eqn:exception}. Then apply Corollary \ref{cor:easy_est} to $\cQ_{\text{bad}}$ and the bad parts of $\T_\Delta^\pi (Q_0)$, which are still $(\Delta, s$)-sets. By Corollary \ref{cor:easy_est}, we have $\geapp \Delta^{-2s}$ many $\Delta$-tubes in $\T_\Delta$ that satisfy \eqref{eqn:exception}. But we observed before that \eqref{eqn:exception} only holds for $\leapp \Delta^{C\eps-2s}$ many tubes $\bT \in \T_\Delta$. By choosing $C$ large enough (and $\delta$ small enough), we obtain a contradiction. \qed

In what follows, the $C$ in Claim 2 will be absorbed into the $\leapp$ notation. Replace $\cQ$ by $\bcQ$ and $\T_\Delta^\pi (Q)$ by their good subsets without changing notation. All of the properties \ref{g1}-\ref{g5} remain valid, and
\begin{equation}\label{eqn:good energy}
    \sum_{\substack{Q, Q' \in \cQ\\Q \neq Q'}} \frac{\one_{\T_\Delta^\pi (Q) \cap \T_\Delta^\pi (Q')} (\bT)}{d(Q, Q')^{t-s}} \leapp \Delta^{2(s-t)}, \qquad \bT \in \T_\Delta^\pi (Q_0), Q_0 \in \cQ.
\end{equation}
Now, we will find $\bT_0 \in \T_\Delta$ satisfying
\begin{equation}\label{eqn:card s-t}
    |\bT_0 (\cQ)| := |\{ Q \in \cQ : \bT_0 \in \T_\Delta^\pi (Q) \}| \geapp \Delta^{s-t}.
\end{equation}
Indeed, the average tube works, because of the following: since $|\T_\Delta| \app \Delta^{-2s}, |\cQ| \app \Delta^{-t}$, and $|\T_\Delta^\pi (Q)| \app \Delta^{-s}$ (by \ref{g4}, \ref{g1}, \ref{g3} respectively), we have
\begin{equation*}
    \frac{1}{|\T_\Delta|} \sum_{\bT \in \T_\Delta} |\{ Q \in \cQ : \bT_0 \in \T_\Delta^\pi (Q) \}| = \frac{1}{|\T_\Delta|} \sum_{Q \in \cQ} |\T_\Delta^\pi (Q)| \app \frac{|\cQ| \cdot \Delta^{-s}}{\Delta^{-2s}} \app \Delta^{s-t}.
\end{equation*}
Now, we show that using \eqref{eqn:good energy} and \eqref{eqn:card s-t}, the family $\bT_0 (\cQ) \subset \{ Q \in \cQ : Q \cap \bT_0 \neq \emptyset \}$ contains a $(\Delta, t-s)$-set, which proves item \ref{P1}. Indeed, rewrite \eqref{eqn:good energy} as
\begin{equation}\label{eqn:good energy'}
    \sum_{\substack{Q, Q' \in \bT_0 (\cQ)\\Q \neq Q'}} \frac{1}{d(Q, Q')^{t-s}} \leapp \Delta^{2(s-t)}.
\end{equation}
Let
\begin{equation}\label{eqn:bt0'}
    \bT_0' (\cQ) := \{ Q \in \bT_0 (\cQ) : \sum_{Q' \in \bT_0 (\cQ) \setminus \{ Q \}} d(Q, Q')^{s-t} \le \Delta^{s-t-C\eps} \}.
\end{equation}
By Markov's inequality on \eqref{eqn:good energy'}, we have $|\bT_0 (\cQ) \setminus \bT_0' (\cQ)| \leapp \Delta^{s-t+C\eps}$. Hence, if $C$ is chosen large enough, we have by \eqref{eqn:card s-t}, $|\bT_0' (\cQ)| \ge \frac{1}{2} |\bT_0 (\cQ)| \geapp \Delta^{s-t}$. By Markov's inequality on \eqref{eqn:bt0'}, we have that for all $Q \in \bT_0' (\cQ)$ and $r \in (\delta, 1)$,
\begin{equation*}
    |\{ Q' \in \bT_0 (\cQ) : d(Q, Q') \le r \}| \le \Delta^{s-t-C\eps} r^{t-s}.
\end{equation*}
Thus, $\bT_0' (\cQ)$ is a $(\Delta, t-s)$-set, which proves \ref{P1}.

To get \ref{P2}, we use \ref{g2}.
\begin{equation*}
    |\T \cap \bT_0| \leapp \delta^{-s} = \Delta^{-2s}.
\end{equation*}
By \ref{g5}, we have
\begin{equation}\label{eqn:incidences}
    |\{ (p, T) \in (\cP \cap Q) \times \T : T \in \T(p) \cap \bT_0 \}| \geapp \Delta^{-s-t}.
\end{equation}
Fix $Q \in \bT_0 (\cQ)$. Since $|\cP \cap Q| \app \Delta^{-t}$ by \ref{g1} and $|\T(p) \cap \bT_0| \leapp \Delta^{-s}$ since $\T(p)$ is a $(\delta, s)$-set, we use \eqref{eqn:incidences} to find a subset $\cP_Q \subset \cP \cap Q$ with
\begin{equation*}
    |\cP_Q| \app |\cP \cap Q| \app \Delta^{-t} \text{ and } |\T(p) \cap \bT_0| \app \Delta^{-s} \text{ for all } p \in \cP_Q.
\end{equation*}
This verifies \ref{P3}. Finally, we get \ref{P4} by $|\cP_Q| \ge \Delta^{\bB \eps} |\cP \cap Q|$ for some constant $\bB \ge 1$ and Proposition \ref{prop:projections}.


\subsection{Product-like structure}
\textit{This subsection is based on Section A.6 of \cite{orponen2021hausdorff}.}

Our goal is to find a product-type structure and apply Proposition \ref{prop:improved_incidence_weaker}. Choose coordinates such that the $y$-axis is in the direction of $\bT_0$, and let $\pi(\bx, y) := \bx \in \R^{d-1}$ denote the orthogonal projection to the orthogonal complement of the $y$-axis. Define the function $\Delta^{-1} (\bx, y) = (\Delta^{-1} \bx, y)$. If $T \in \bT_0$, then $\Delta^{-1} T$ is roughly a $\Delta$-tube: it is contained in some $C\Delta$-tube and contains a $c\Delta$-tube for some universal constants $c, C > 0$. This technicality will not cause issues in what follows.

For each $Q \in \bT_0 (\cQ)$, let $\by_Q \in \Delta \cdot \Z \cap [0, 1)$ be a point such that the plane $y = \by_Q$ intersects $Q$. By \ref{P1}, we know that $\bY = \{ \by_Q : Q \in \bT_0 (\cQ) \}$ is a $(\Delta, t-s)$-set. By \ref{P4}, we know that for each $\by \in \bY$ that $\pi(\Delta^{-1} (\cP \cap Q))$ contains a $(\Delta, \kappa', k)$-set $\bX_\by'$ with cardinality $\geapp \Delta^{-s}$. Let $\bX_\by \subset (\Delta \cdot \Z)^d \cap [0, 1]^d$ that is $\bX_\by'$ rounded to the nearest multiple of $\Delta$.

Now, let $L = (\Delta \cdot \Z) \cap B(0, \Delta (\sqrt{d}+1))$ and $\T(\bZ) = \{ \sigma(T) + x : T \in \T \cap \bT_0, x \in L \}$. Clearly, $|\T(\bZ)| \lesim_d |\T \cap \bT_0| \leapp \Delta^{-2s}$ by \ref{P2}. On the other hand, we show that $|\T(\bz)| := |\{ \bT \in \T(\bZ) : \bz \in \bT \}| \geapp \Delta^{-s}$ for any $(\bx, \by) \in \bZ$. This follows since $\bz = (\bx, \by_Q)$ for some $Q$ and $\bx \in \bX_\by$. Let $p \in \cP_Q$ such that $d(\pi(\Delta^{-1} p), \bx) \le \Delta$. We know $d((\pi(\Delta^{-1} p), \by_Q), \Delta^{-1} p) \le \Delta$ since $Q$ has diameter $\Delta$, so by triangle inequality, we have $d(\Delta^{-1} p, \bz) \le (\sqrt{d}+1)\Delta$. Thus, $\T(\bz)$ contains $\{ \sigma(T) + x : T \in \T(p) \cap \bT_0 \}$ for some suitable $x \in L$. By \ref{P3}, we get the desired cardinality estimate $|\T(\bz)| \app \Delta^{-s}$.

Finally, we apply Proposition \ref{prop:improved_incidence_weaker} to the sets $\bZ$ and $\T(\bZ)$ to obtain a contradiction if $\eps > 0$ is sufficiently small. This proves Theorem \ref{thm:improved_incidence}.

\section{Improved incidence estimates for general sets}\label{sec:main thm general}
In this section, we will prove the following refinement of Theorem \ref{thm:main}, following Sections 7-9 of \cite{orponen2021hausdorff}.



\begin{theorem}\label{thm:main'}
    For any $0 \le k < d-1$, $0 \le s < k+1$, $s < t \le d$, $\kappa > 0$, there exist $\eps(s, t, \kappa, k, d) > 0$ and $\eta(s, t, \kappa, k, d) > 0$ such that the following holds for all small enough $\delta \in 2^{-\N}$, depending only on $s, t, \kappa, k, d$. Let $\cP \subset \cD_\delta$ be a $(\delta, t, \delta^{-\eps})$-set with $\cup \cP \subset [0, 1)^d$, and let $\cT \subset \cT^\delta$ be a family of $\delta$-tubes. Assume that for every $p \in \cP$, there exists a $(\delta, s, \delta^{-\lambda}, 0)$ and $(\delta, \kappa, \delta^{-\lambda}, k)$-set $\cT(p) \subset \cT$ with $|\T(p)| = M$ such that $T \cap p \neq \emptyset$ for all $T \in \cT(p)$. Then $|\cT| \ge M\delta^{-s-\eps}$.
\end{theorem}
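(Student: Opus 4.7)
The plan is to follow the multi-scale decomposition strategy from Sections 7--9 of \cite{orponen2021hausdorff}, combined with the regular-set improvement Theorem \ref{thm:improved_incidence} and the structural Proposition \ref{prop:nice_tubes}. Argue by contradiction, assuming $|\T|\le M\delta^{-s-\eps}$ for $\eps$ to be chosen. First, invoke Lemma \ref{lem:uniform} with parameter $T=T(\eps)$ so large that $T^{-1}\log(2T)\le \eps$, producing a $\{\Delta^j\}_{j=1}^m$-uniform subset of $\cP$ of comparable cardinality, where $\Delta=2^{-T}$ and $\delta=\Delta^m$. Let $f:[0,m]\to\R$ be the associated branching function; because $\cP$ is a $(\delta,t,\delta^{-\eps})$-set, $f(x)\ge tx-C_T\eps m$ for all $x\in[0,m]$.

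Next, apply Lemma \ref{lem:good branching} to decompose $[0,m]$, up to a bad set of measure $\lesssim_{s,t}\eps m$, into nonoverlapping intervals $\{[c_j,d_j]\}_j$ of length $\ge \tau m$, on each of which $(f,c_j,d_j)$ is either $\eps$-linear with slope $\sigma_j\ge s$ or $\eps$-superlinear with $\sigma_j=s$. Iterate Proposition \ref{prop:nice_tubes} through the scale sequence $\{\Delta^{c_j},\Delta^{d_j}\}$ (as packaged in Corollary \ref{cor:multiscale}) to produce, for each interval, a nice sub-configuration $(S_{\mathbf p_j}(\cP\cap \mathbf p_j),\T_{\mathbf p_j})$ at scale $\Delta^{d_j-c_j}$, together with the multiplicative estimate
\begin{equation*}
\frac{|\T|}{M} \gtrapprox_\delta \prod_j \frac{|\T_{\mathbf p_j}|}{M_{\mathbf p_j}}.
\end{equation*}
On a superlinear interval (slope exactly $s$), the rescaled ball set is a $(\Delta^{d_j-c_j},s,\mathrm{polylog})$-set, and the elementary Corollary \ref{cor:easy_est} contributes no loss to this product. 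On a linear interval with slope $\sigma_j>\max(s,k)$, the rescaled ball set is a $(\Delta^{d_j-c_j},\sigma_j,\mathrm{polylog},\mathrm{polylog})$-regular set in the sense of Theorem \ref{thm:improved_incidence}, which yields a gain of a power of $\Delta^{d_j-c_j}$; by Remark \ref{rmk:uniform eps}(a), the exponent of this gain can be taken uniform in $\sigma_j$ over a compact range.

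Because $f(m)\ge tm-C\eps m$ while superlinear intervals only contribute slope $s<t$ and the bad set has measure $\lesssim \eps m$, the linear intervals with slope strictly exceeding $\max(s,k)$ must cover total length $\gtrsim_{s,t,k} m$; summing the gains across such intervals produces a total improvement of $\delta^{-\eps'}$ for some $\eps'=\eps'(s,t,\kappa,k,d)>0$. Choosing the original $\eps$ (and in particular the $\lambda$ in the hypothesis) sufficiently small relative to $\eps'$ contradicts $|\T|\le M\delta^{-s-\eps}$. The main obstacle is the edge case $t\le k$, where in principle every linear interval could carry slope $\le k$ and Theorem \ref{thm:improved_incidence} would not apply. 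This is handled either by reducing $k$ to $k':=\min(k,\lceil t\rceil -1)$ so that $t>\max(s,k')$ and observing that the $(\delta,\kappa,k)$-non-concentration hypothesis implies a corresponding hypothesis at level $k'$ after applying Proposition \ref{prop:projections} to turn angular spread into ball-spread at an intermediate scale, or by projecting generically and reducing to a lower ambient dimension; apart from this case, the bookkeeping to propagate the $(\delta,\kappa,k)$-set property of $\T(p)$ through each rescaling is already built into Proposition \ref{prop:nice_tubes}.
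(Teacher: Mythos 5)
Your overall scheme matches the paper's: uniformize $\cP$, run the branching lemma (Lemma \ref{lem:good branching}), iterate Proposition \ref{prop:nice_tubes} via Corollary \ref{cor:multiscale} to split the incidence into per-interval sub-configurations, use the elementary Corollary \ref{cor:easy_est} on superlinear pieces and Theorem \ref{thm:improved_incidence} on linear pieces, then multiply the per-scale bounds. However, the step ``linear intervals with slope strictly exceeding $\max(s,k)$ must cover total length $\gtrsim m$; summing the gains across such intervals produces a total improvement of $\delta^{-\eps'}$'' has a genuine gap. The gain $\eps_0(\sigma_j)$ produced by Theorem \ref{thm:improved_incidence} degenerates to $0$ as $\sigma_j\downarrow \max(s,k)$, so a positive proportion of intervals with slopes barely exceeding $\max(s,k)$ does not yield a fixed power improvement; the compactness observation of Remark \ref{rmk:uniform eps}(a) only buys you uniformity on a compact range bounded away from the threshold. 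You must instead show that linear intervals with slope at least some fixed $\sigma_* > \max(s,k)$ cover a positive proportion of $[0,m]$; the paper does exactly this with $\sigma_* = \tfrac{s+t}{2}$, partitioning $\cS$ into $\cS_1 = \{j : t_j \ge \tfrac{s+t}{2}\}$ and $\cS_2$, invoking Theorem \ref{thm:improved_incidence} on $\cS_1$ with parameter $\tfrac{s+t}{2}$ throughout, and using mass balance ($f(m) \ge tm - O(\eps m)$ against the slope budget of $\cS_2 \cup \cB$ and the superlinear pieces) to show $\sum_{j \in \cS_1}(d_j - c_j) \gtrsim_{s,t,d} m$.

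On the edge case $t \le k$: you correctly identify the issue, but your proposed remedy (``applying Proposition \ref{prop:projections} to turn angular spread into ball-spread,'' or projecting to a lower ambient dimension) is unnecessarily heavy. Every $(r,k'+1)$-plate with $k'<k$ sits inside an $(r,k+1)$-plate, so a $(\delta,\kappa,C,k)$-set of tubes is automatically a $(\delta,\kappa,C,k')$-set. The paper uses this at the start of Section \ref{sec:prelims} to reduce to $k \le s < k+1$, after which $\max(s,k)=s<t$ holds automatically and the branching threshold $\tfrac{s+t}{2}$ always exceeds $\max(s,k)$. Your reduction $k \mapsto \min(k, \lceil t\rceil-1)$ also works, but only via this elementary hereditary property, not via \ref{prop:projections}.
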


The original theorem follows from taking $\eps = \eta$ and pigeonholing, since $M \in (\delta^{-s+\eps}, \delta^{-d})$.

\begin{proof}
    Before anything else, we state the dependencies of the parameters: $\eps_0 (s, t, \kappa, k, d)$, $ \eps(\eps_0, s, t, \kappa, k, d), T(\eps), \tau(s, t, \eps), \eta(\eps_0, \tau)$.

    First, choose $T=T(\eps)$ such that $\frac{2\log T}{T}\leq \eps$. By Lemma \ref{lem:uniform} we may find a subset $\cP' \subset \cP$ with $|\cP'| \ge \delta^\eps |\cP|$ that is $\{2^{-jT}\}_{j=1}^m$-uniform for $2^{-mT}=\delta$ with associated sequence $\{N_j\}_{j=1}^m$. Thus, $\cP'$ is a $(\delta, t, \delta^{-2\eps})$-set. Replacing $\cP$ with $\cP'$ and $\eps$ with $\frac{\eps}{2}$, we may assume from the start that $\cP$ is $\{2^{-jT}\}_{j=1}^m$-uniform.
    
    Let $f$ be the corresponding branching function. Since $\cP$ is a $(\delta, t, \delta^{-\eps})$-set, we have $f(x) \ge tx - \eps m$ for all $x \in [0, m]$.
    
    Let $\{ [c_j, d_j] \}_{j=1}^n$ be the intervals from Proposition \ref{cor:multiscale} applied with parameters $s, t, \eps$, corresponding to a sequence $0 < \delta = \Delta_n < \Delta_{n-1} < \cdots < \Delta_1 < \Delta_0 = 1$. We can partition $\{ 0, 1, \cdots, n-1 \} = \cS \cup \cB$, ``structured'' and ``bad'' scales such that:
    \begin{itemize}
        \item $\frac{\Delta_{j}}{\Delta_{j+1}} \ge \delta^{-\tau}$ for all $j \in \cS$, and $\prod_{j \in \cB} (\Delta_{j}/\Delta_{j+1}) \le \delta^{-\eps}$;

        \item For each $j \in \cS$ and $\bp \in \cD_{\Delta_j} (\cP)$, the set $\cP_j := S_{\bp} (\cP \cap \bp)$ is either
        \begin{enumerate} 
        \item[(i)] an $(t_j , \Delta_{j+1}/\Delta_j, (\Delta_j/ \Delta_{j+1})^{\eps}, (\Delta_j/ \Delta_{j+1})^{\eps})$-regular set, where $t_j \in (s, 2)$;
        \item[(ii)] a $(s , \Delta_{j+1}/\Delta_j, (\Delta_j/ \Delta_{j+1})^{\eps})$-set.
        \end{enumerate}

        \item $\prod_{j \in S} (\Delta_j/\Delta_{j+1})^{t_j} \ge  |\cP|  \cdot \prod_{j \in \cB} (\Delta_{j+1}/\Delta_{j})^d \ge |\cP| \delta^{O_{s,t,d} (\eps)}$.
    \end{itemize}
    Apply Proposition \ref{cor:multiscale} and $\frac{\Delta_{j}}{\Delta_{j+1}} \ge \delta^{-\tau}$ to get a family of tubes $\cT_{\textbf{p}} \subset \T^{\Delta_{j+1}/\Delta_j}$ with the property that $(S_{\textbf{p}} (\cP \cap \textbf{p}), \cT_{\textbf{p}})$ is a $(\Delta_{j+1}/\Delta_j, s, C_j^1, \kappa, C_j^2 M_{\textbf{p}})$-nice configuration for some $C_j^1, C_j^2 \lessapprox_\delta (\Delta_{j+1}/\Delta_j)^{-\tau^{-1} \eta}$ and
    \begin{equation*}
        \frac{|\cT_0|}{M} \geapp_\delta \prod_{j=0}^{N-1} \frac{|\cT_{\textbf{p}_j}|}{M_{\textbf{p}_j}}.
    \end{equation*}
    Let $\cS_1 = \{ j \in S : t_j \ge \frac{s+t}{2} \}$ and $\cS_2 = \cS \setminus \cS_1$. Then
    \begin{equation*}
        \prod_{j \in S_1} (\Delta_j/\Delta_{j+1})^{t_j} \ge  |\cP| \delta^{O_{s,t,d} (\eps)} \prod_{j \in S_2} (\Delta_j/\Delta_{j+1})^{-\frac{s+t}{2}} \ge \delta^{\frac{t-s}{2} + O_{s,t,d} (\eps)}.
    \end{equation*}
    
    For $j \in \cS_1$ we apply Theorem \ref{thm:improved_incidence} with parameters $s, \frac{s+t}{2}$, and for $j \in \cS_2$ we apply Corollary \ref{cor:easy_est}. If $\eps_0 (s, t, \kappa, k, d)$ is the $\eta$ from Theorem \ref{thm:improved_incidence}, then for $\tau^{-1} \eta < \eps_0$, we get
    \begin{equation*}
        \frac{|\cT_0|}{M} \geapp_\delta \prod_{j \in \cS_1} \left( \frac{\Delta_j}{\Delta_{j+1}} \right)^{-s-\eps_0} \cdot \prod_{j \in \cS_2} \left( \frac{\Delta_j}{\Delta_{j+1}} \right)^{-s+O(\eps)} \ge \delta^{-s(1-\eps) - (\frac{t-s}{2} + O_{s,t,d} (\eps)) \eps_0 + O(\eps)} \ge \delta^{-s-\eps}
    \end{equation*}
    as long as $\eps$ is taken small enough in terms of $\eps_0, s, t, d$.
\end{proof}

\section{Sets contained in an $(r_0, k)$-plate}\label{sec:refined from main}
We restate Theorem \ref{thm:main_refined}.
\begin{theorem}\label{thm:main_refined'}
    For any $0 \le k < d-1$, $0 \le s < k+1$, $\max(s, k) < t \le d$, $\kappa > 0$, $r_0 \le 1$, there exists $\eps(s, t, \kappa, k, d) > 0$ such that the following holds for all small enough $\delta/r_0 \in 2^{-\N} \cap (0, \delta_0)$, with $\delta_0$ depending only on $s, t, \kappa, k, d$. Let $H$ be a $(r_0, k+1)$-plate, $\cP \subset \cD_\delta \cap H$ be a $(\delta, t, (\delta/r_0)^{-\eps})$-set with $\cup \cP \subset [0, 1)^d$, and let $\cT \subset \cT^\delta \cap H$ be a family of $\delta$-tubes. Assume that for every $p \in \cP$, there exists a set $\T(p) \subset \T$ such that:
    \begin{itemize}
        \item $T \cap p \neq \emptyset$ for all $T \in \cT(p)$;

        \item $\cT(p)$ is a $(\delta, s, (\delta/r_0)^{-\eps} r_0^{k-s}, 0)$-set down from scale $r$;

        \item $\cT(p)$ is a $(\delta, \kappa, (\delta/r_0)^{-\eps} r_0^{-\kappa}, k)$-set.
    \end{itemize}
    Then $|\cT| \ge (\frac{\delta}{r_0})^{-\eps} \delta^{-2s} r_0^{2(s-k)}$.
\end{theorem}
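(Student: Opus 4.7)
The proof follows the multiscale decomposition strategy of Theorem \ref{thm:main'}, adapted to the plate setting. The crucial geometric observation is that any $\Delta$-cube with $\Delta \leq r_0$ fits entirely inside the plate $H$, so at scales finer than $\Delta$ the problem loses its plate character: a $\Delta$-cube inside $H$ is simply a cube, with no ``thin direction'' constraint. This dichotomy (plate structure at coarse scales, standard cube structure at fine scales) lets us split the problem into two building blocks, one handled elementarily (the plate contribution) and one handled via the unconstrained Theorem \ref{thm:main'} (the cube contribution).

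Concretely, I would apply Lemma \ref{lem:uniform} to make $\cP$ uniform at scales $\Delta_j = r_0 \cdot 2^{-jT}$, $j = 0, \ldots, m$, with $\Delta_m = \delta$; scales above $r_0$ contribute no branching because there $\cP$ is confined to the single plate $H$. Let $f:[0,m]\to\R$ be the resulting branching function, which satisfies $f(x) \geq tx - O(\eps)m$ by the $(\delta, t, (\delta/r_0)^{-\eps})$-set hypothesis. Apply Lemma \ref{lem:good branching} to decompose $[0,m]$ into structured subintervals, each either $\eps$-linear with slope $t_j > s$ (``regular'') or $\eps$-superlinear with slope $s$ (``set'' case), and then Corollary \ref{cor:multiscale} to obtain local nice configurations at each transition. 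For each structured interval at scale $\Delta_j$, two contributions appear. The ``between $\Delta_j$-cubes'' structure is a smaller instance of Theorem \ref{thm:main_refined'} at scale $\Delta_j$: still plate-like, but sharp via the elementary Corollary \ref{cor:easy_est} with no loss. The ``within a $\Delta_j$-cube'' structure, living inside a single $\Delta_j$-cube $\bp \subset H$, is a smaller instance of Theorem \ref{thm:main'} at the rescaled scale $\Delta_{j+1}/\Delta_j$: there is no residual plate constraint, and so Theorem \ref{thm:main'} applies directly and yields an $\eps_0$-gain. Multiplying via \eqref{eqn:item6} of Proposition \ref{prop:nice_tubes}, the compounded $\eps_0$-gains from the in-cube applications produce the claimed $(\delta/r_0)^{-\eps}$ improvement and yield $|\T| \geq (\delta/r_0)^{-\eps} \delta^{-2s} r_0^{2(s-k)}$ for $\eps$ chosen small enough in terms of $\eps_0, s, t, \kappa, k, d$.

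The main obstacle is propagating the tube non-concentration conditions through the rescaling into each $\Delta_j$-cube and tracking the constants. The prescribed factors $(\delta/r_0)^{-\eps} r_0^{k-s}$ and $(\delta/r_0)^{-\eps} r_0^{-\kappa}$ in the hypotheses are calibrated precisely so that, after restricting $\T(p)$ to a $\Delta_j$-cube and rescaling by $1/\Delta_j$, the local tube family $\T_{\bp}(p')$ becomes both a $(\delta/\Delta_j, s, \cdot, 0)$-set and a $(\delta/\Delta_j, \kappa, \cdot, k)$-set with constants acceptable for Theorem \ref{thm:main'}. In particular, the ``down from scale $r_0$'' qualifier encodes the fact that full $(s,0)$-non-concentration is only needed for $r \leq r_0$ (for $r > r_0$ it is implied by containment in $H$), and the $r_0^{k-s}$ correction is precisely what compensates for this restriction after rescaling. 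The careful bookkeeping of these constants at every scale transition, together with handling of the ``bad'' scales produced by the branching decomposition, is the principal technical labor of the proof.
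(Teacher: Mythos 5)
The proposal has the coarse/fine roles of the two building blocks reversed compared to the paper, and the reversal introduces a genuine gap. You propose applying Theorem~\ref{thm:main'} (the unconstrained $[0,1]^d$ version) to the \emph{in-cube} structure at scales $\Delta_j \le r_0$, reasoning that a $\Delta_j$-cube inside $H$ ``is simply a cube, with no thin-direction constraint.'' The cube is unconstrained as a set of $\delta$-balls, but the \emph{tubes} are not: every tube of $\T(p)$ lies in $H$, so by Lemma~\ref{lem:slope set dim} its direction lies in an $(r_0,k)$-plate of $S^{d-1}$, and directions are invariant under the isotropic rescaling $S_Q$ of a cube. Consequently the rescaled family $\T_Q(p')$ is confined to an $(r_0, k+1)$-plate, and its best possible $(\delta/\Delta_j, \kappa, C', k)$-set constant satisfies $C' \ge r_0^{-\kappa}$ (take $W$ to be the plate containing the whole direction set at $\rho = r_0$). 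For $\Delta_j$ close to $\delta$ this exceeds the $(\delta/\Delta_j)^{-\eps_0}$ that Theorem~\ref{thm:main'} demands, so the theorem's $\kappa$-hypothesis simply fails there. This is not a bookkeeping issue — that scale range is exactly where the potential counterexample (tubes crowded into a $(r_0,k+1)$-plate) lives, so no $\eps$-gain can come from the in-cube blocks by this route.

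The paper resolves this by applying the anisotropic rescaling $S_H : H \to [0,1]^d$ \emph{first}. This dilation spreads the tube directions over $S^{d-1}$, and in the rescaled coordinates the statistics \eqref{eqn:non_conc1}--\eqref{eqn:non_conc3} show the data is a genuine $(\delta/r_0, t, \cdot)$-, $(\cdot, s, \cdot, 0)$-, $(\cdot, \kappa, \cdot, k)$-configuration with constants $(\delta/r_0)^{-\eps}$. Proposition~\ref{prop:nice_tubes_2} then splits at a carefully chosen scale $\Delta' \in (\delta/r_0, (\delta/r_0)^\beta)$; the \emph{coarse} part $(\cD_{\Delta'}(S_H(\cP)), \T_{\Delta'})$ is the unconstrained instance, to which Theorem~\ref{thm:main'} applies and produces the $\eps$-gain, while the \emph{fine} part is again a small-scale plate instance (Prop.~\ref{prop:nice_tubes_2}(v)) handled losslessly by the plate-adapted elementary estimate Corollary~\ref{cor:easy_est_2} — crucially, Corollary~\ref{cor:easy_est_2} does not use the $\kappa$-condition, so direction confinement is harmless there. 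Your assignment has the gain coming from the side where the $\kappa$-condition cannot be supplied. A secondary error worth flagging: ``scales above $r_0$ contribute no branching because there $\cP$ is confined to the single plate $H$'' is false — $H$ has full unit extent in $k+1$ directions, so $\cP$ does branch above scale $r_0$; and the between-cube part would in any case require Corollary~\ref{cor:easy_est_2}, not Corollary~\ref{cor:easy_est}, since it is plate-like.
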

\subsection{Multiscale analysis}
We will use Theorem \ref{thm:main} to prove Theorem \ref{thm:main_refined}. Let $S_H$ be the dilation sending $H$ to $[0,1]^d$. Then $\cP, \T(p)$, and $\T$ become deformed under $S_H$, but they satisfy the following statistics assumptions for $r \in [\frac{\delta}{r_0}, 1]$:
\begin{align}
    |\cP \cap S_H^{-1} (Q)| \le \left( \frac{\delta}{r_0} \right)^{-\eps} \cdot |\cP| \cdot r^t, & \qquad Q \in \cD_r (\R^d), \label{eqn:non_conc1} \\
    |\T(p) \cap S_H^{-1} (\bT)| \le \left( \frac{\delta}{r_0} \right)^{-\eps} \cdot |\T(p)| \cdot r^s, & \qquad \bT \quad r \text{-tube}, \label{eqn:non_conc2} \\
    |\T(p) \cap S_H^{-1} (W)| \le \left( \frac{\delta}{r_0} \right)^{-\eps} \cdot |\T(p)| \cdot r^\kappa, & \qquad W \quad (r, k+1) \text{-plate}. \label{eqn:non_conc3}
\end{align}
To prove \eqref{eqn:non_conc1}, observe that $S_H^{-1} (Q)$ is contained in an $r$-ball, and then we use that $\cP$ is a $(\delta, t, (\delta/r_0)^{-\eps}, 0)$-set.

To prove \eqref{eqn:non_conc2}, observe that $S_H^{-1} (\bT)$ is contained in a box with $k$ sides of length $r$ and $d-k$ sides of length $rr_0$. This box can be covered by $\sim r_0^{-k}$ many $rr_0$-balls. Finally, use that $\cT(p)$ is a $(\delta, s, (\delta/r_0)^{-\eps} r_0^{k-s}, 0)$-set.

To prove \eqref{eqn:non_conc3}, observe that $S_H^{-1} (W)$ is contained in a $(rr_0, k)$-plate.

Using these observations, we obtain the following refinement of Proposition \ref{prop:nice_tubes}. We use $(\delta, s, C_1 r_0^{k-s}, \kappa, C_2, M)$-nice configuration down from scale $r_0$ to indicate that $\T(p)$ is a $(\delta, s, C_1 r_0^{k-s}, 0)$-set down from scale $r_0$.

\begin{prop}\label{prop:nice_tubes_2}
    Fix dyadic numbers $0 < \delta' = \frac{\delta}{r_0} < \Delta \le 1$. Let $(\cP_0, \T_0)$ be a $(\delta, s, C_1 r_0^{k-s}, \kappa, C_2, M)$-nice configuration down from scale $r_0$, and assume $\cP_0 \subset H$ for some $(r_0, k)$-plate $H$. Then there exist refinements $\cP \subset \cP_0$, $\T(p) \subset \T_0 (p), p \in \cP$, and $\T_\Delta (Q) \subset \T^\Delta$ such that denoting $\T_\Delta = \cup_{Q \in \cD_{\Delta} (S_H (\cP))} \T_\Delta (Q)$ and $\T = \cup_{p \in \cP} \T(p)$ the following hold:
    \begin{enumerate}[(i)]
        \item $|\cD_\Delta (S_H(\cP))| \approx_{\Delta} |\cD_\Delta (S_H(\cP_0))|$ and $|S_H(\cP) \cap Q| \approx_{\Delta} |S_H(\cP_0) \cap Q|$ for all $Q \in \cD_\Delta (\cP)$.

        \item We have $|\T \cap \bT| \leapp \frac{|\T_0|}{|\T_\Delta|}$ for all $\bT \in \T_\Delta$.

        \item $(\cD_\Delta (S_H(\cP)), \T_\Delta)$ is $(\Delta, s, C^1_\Delta, \kappa, C^2_\Delta, M_\Delta)$-nice for some $C^1_\Delta \approx_{\Delta} C_1$, $C^2_\Delta \approx_{\Delta} C_2$, and $M_\Delta \ge 1$.

        \item For all $\bT \in \T_\Delta (Q)$, we have
        \begin{equation*}
            |\{ (p, T) \in \cP \times \T : T \in \T(p) \text{ and } T \subset S_H^{-1} (\bT) \} | \geapp_\Delta \frac{M \cdot |S_H (\cP) \cap Q|}{|\T_\Delta (Q)|}.
        \end{equation*}


        \item For each $Q \in \cD_\Delta (S_H(\cP_2))$, there exist $C^1_Q \approx_{\Delta} C_1$, $C^2_Q \approx_{\Delta} C_2$, $M_Q \ge 1$, a subset $\cP_Q \subset \cP \cap Q$ with $|\cP_Q| \geapp_\Delta |\cP \cap Q|$ and a family of tubes $\T_Q \subset \T^{\delta/\Delta}$ such that $(S_H^{-1} \circ S_Q (S_H(\cP_2 ) \cap Q), \T_Q)$ is $(\delta/\Delta, s, C^1_Q r_0^{k-s}, \kappa, C^2_Q, M_Q)$-nice down from scale $r_0$.
    \end{enumerate}

    Furthermore, the families $\T_Q$ can be chosen so that
    \begin{equation}\label{eqn:item6'}
        \frac{|\T_0|}{M} \geapp_{\Delta} \frac{|\T^\Delta (\T)|}{M_\Delta} \cdot \left( \max_{Q \in \cD_\Delta (\cP_2)} \frac{|\T_Q|}{M_Q} \right).
    \end{equation}
\end{prop}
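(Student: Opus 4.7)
The plan is to adapt the proof of Proposition \ref{prop:nice_tubes} (which is Propositions 4.1 and 5.2 of \cite{orponen2021hausdorff}) to the rescaled setting furnished by $S_H$. The pushforward $S_H(\cP_0) \subset [0,1]^d$, together with appropriately transported tube families, satisfies the non-concentration inequalities \eqref{eqn:non_conc1}--\eqref{eqn:non_conc3} at every scale between $\delta/r_0$ and $1$. This places us in essentially the same situation as the unconstrained statement, except that the constants carry extra factors of $r_0^{k-s}$ and $r_0^{-\kappa}$ inherited from the $(r_0,k+1)$-plate hypothesis that must be tracked through the whole argument.

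The dyadic pigeonholing goes through verbatim in the rescaled picture: group each $T \in \T_0$ according to the $\Delta$-tube $\bT \in \T^\Delta$ containing $S_H(T)$, then pigeonhole over dyadic values of $|\T_0 \cap S_H^{-1}(\bT)|$ and of $|\cP_0 \cap S_H^{-1}(Q)|$ for $Q \in \cD_\Delta(S_H(\cP_0))$. This produces the refined sets $\cP$, $\T(p)$, and $\T_\Delta(Q)$ with uniform cardinalities and only polylogarithmic losses, delivering (i), (ii), (iv) and the cardinality content of (iii). The non-concentration required in (iii) is then verified by pulling back test objects through $S_H$: an $r$-tube $\bT' \subset [0,1]^d$ with $\Delta \le r \le 1$ has a preimage under $S_H$ covered by a bounded number of $r$-tubes intersected with $H$, so \eqref{eqn:non_conc2} gives the $(\Delta, s, \cdot, 0)$-property with $C^1_\Delta \approx C_1$; similarly an $(r, k+1)$-plate pulls back to an $(rr_0, k+1)$-plate, and \eqref{eqn:non_conc3} gives the $(\Delta, \kappa, \cdot, k)$-property with $C^2_\Delta \approx C_2$. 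The $(\Delta, t)$-set property for $\cD_\Delta(S_H(\cP))$ follows from \eqref{eqn:non_conc1}.

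For (v), I would zoom into each $Q$ via $S_Q$: for $p \in \cP$ with $S_H(p) \subset Q$ and $T \in \T(p)$ contained in some $S_H^{-1}(\bT)$ with $\bT \in \T_\Delta(Q)$, the image $(S_Q \circ S_H)(T)$ is a $(\delta/\Delta)$-tube in $[0,1]^d$; these assemble into $\T_Q$, and $\cP_Q$ is extracted by the same incidence pigeonhole as in \cite[Proposition 5.2]{orponen2021hausdorff}. The main obstacle is checking niceness of this inner configuration \emph{down from scale $r_0$}. The point is that $S_H$ stretches transversely to $H$ by $r_0^{-1}$, so the preimage under $S_Q \circ S_H$ of a thin tube in the inner picture is not a tube but a box with $k+1$ sides of length $r$ and $d-k-1$ sides of length $rr_0$; one must then invoke the down-from-$r_0$ hypothesis on $\T(p)$ exactly in the regime $rr_0 \le r_0$, which is why the prefactor $r_0^{k-s}$ persists in the conclusion. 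Analogously, the $(r, k+1)$-plate condition transfers with a factor $r_0^{-\kappa}$ absorbed into $C^2_Q$. Finally, the bound \eqref{eqn:item6'} falls out of double-counting tube-ball incidences across the three scales exactly as \eqref{eqn:item6} does, with the maximum over $Q$ appearing because \eqref{eqn:item6'} is applied fibrewise.
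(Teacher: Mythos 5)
Your overall strategy — rerun the argument of Proposition~\ref{prop:nice_tubes} in the $S_H$-rescaled picture, tracking the anisotropic distortion via the statistics \eqref{eqn:non_conc1}--\eqref{eqn:non_conc3} — is the right one, and your discussion of (i)--(iv), including deriving the $(\Delta,s,\cdot,0)$ and $(\Delta,\kappa,\cdot,k)$ properties of $\T_\Delta$ by pulling back test tubes and plates through $S_H$, matches what the paper does. (One small difference: the paper derives the non-concentration of $\oT_\Delta(Q)$ as a consequence of the incidence lower bound in \ref{T2} together with \eqref{eqn:non_conc2}, rather than a direct pullback; either route works.)

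The genuine gap is in your construction for (v). You propose to build the inner configuration by applying $S_Q \circ S_H$, producing a tube family in $[0,1]^d$. But the map the statement and the proof require is $S_H^{-1}\circ S_Q\circ S_H$, and the extra $S_H^{-1}$ is not cosmetic. Under $S_Q\circ S_H$ alone, a tubelet of a $\delta$-tube $T\subset H$ restricted to $S_H^{-1}(Q)$ is sent to a \emph{box} whose $k$ cross-sectional sides aligned with $H$ have width $\sim\delta/\Delta$ but whose $d-k-1$ sides transverse to $H$ have width $\sim\delta/(r_0\Delta)$; this is not a $\delta/\Delta$-tube, and covering it with $\delta/\Delta$-tubes costs a factor $r_0^{-(d-k-1)}$, not $O(1)$. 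Post-composing with $S_H^{-1}$ is exactly what squeezes those $d-k-1$ directions back down by $r_0$ and turns the image into a genuine $\delta/\Delta$-tube, so that $\T_Q\subset\T^{\delta/\Delta}$ as claimed. Moreover, under your map the inner ball set would live in $[0,1]^d$ with no $(r_0,k+1)$-plate constraint, so the phrase ``nice \emph{down from scale $r_0$}'' has nothing to attach to; under $S_H^{-1}\circ S_Q\circ S_H$, the inner configuration lands inside a sub-plate of $H$, which both makes the ``down from $r_0$'' non-concentration hypothesis on $\T(p)$ usable (this is where the persistent $r_0^{k-s}$ comes from) and allows Corollary~\ref{cor:easy_est_2}, which is formulated for sets inside an $(r_0,k+1)$-plate, to be applied to $(\cP^Q,\T_Q)$ in the next stage. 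Your box computation for pullbacks is correct in isolation, but without the conjugation by $S_H^{-1}$ the scales and the ambient geometry of the inner configuration do not match the statement, and the downstream multiscale argument would not close.
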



\begin{proof}
    The proof will involve many dyadic pigeonholing steps.

    \textbf{Step 1: construct $\T_\Delta (Q)$.} For a given $Q \in \cD_\Delta (S_H (\cP_0)) := \cQ_0$, we claim that we can find a subset $\cP_Q \subset \cP_0 \cap S_H^{-1} (Q)$ with $|\cP_Q| \app_\Delta |\cP_0 \cap S_H^{-1} (Q)|$ and a family of dyadic $\Delta$-tubes $\oT_\Delta (Q)$ intersecting $Q$ such that the following holds:
    \begin{enumerate}[(T1)]
        \item\label{T1} $\oT_\Delta (Q)$ is a $(\Delta, s, C_\Delta^1, 0)$-set and $(\Delta, \kappa, C_\Delta^2, k)$-set for some $C_\Delta^1, C_\Delta^2 \app_\Delta C_1$.
        \item\label{T2} there exists a constant $H_Q \app_\Delta M \cdot |\cP_Q| / |\oT_\Delta (Q)|$ such that
        \begin{equation*}
            |\{ (p, T) \in \cP_Q \times \T_0 : T \in \T_0 (p) \text{ and } T \subset S_H^{-1} (\bT) \} | \gesim H_Q, \qquad \bT \in \overline{\T}_\Delta (Q).
        \end{equation*}
    \end{enumerate}
    This claim generalizes \cite[Proposition 4.1]{orponen2021hausdorff} and relies on the same dyadic pigeonholing steps; for brevity, we only state these steps and refer the reader to \cite{orponen2021hausdorff} for the detailed proof. (We essentially follow the same proof for \ref{T2}, and we introduce a nice shortcut to derive \ref{T1} from \ref{T2}.) Let $\cT_\Delta (Q) \subset \cT^\Delta$ be a minimal finitely overlapping cover of $S_H (\cT_Q) := \cup_{p \in \cP_0 \cap Q} S_H (\T_0(p))$ by $\Delta$-tubes. For $p \in \cP_0 \cap Q$, define
    \begin{equation*}
        \T_{\Delta, j} (p) = \{ \bT \in \T_\Delta (Q) : 2^{j-1} < |\{ T \in \T(p) : T \subset S_H^{-1} (\bT) \}| \le 2^j \}.
    \end{equation*}
    Since $|\T_\Delta (Q)| \lesim 100\Delta^{-2(d-1)}$ and $M \lesim \sum_j 2^j \cdot |\T_{\Delta,j} (p)|$, we in fact have
    \begin{equation*}
        M \lesim \sum_{M\Delta^{2(d-1)}/200 \le 2^j \le M} 2^j \cdot |\T_{\Delta,j} (p)|
    \end{equation*}
    Thus, by dyadic pigeonholing, there exists $j = j(p)$ such that $2^j \cdot |\cT_{\Delta,j} (p)| \app_\Delta M$. Another dyadic pigeonholing allows us to find $\cP_Q \subset \cP_0 \cap Q$ such that $j(p)$ is constant for $p \in \cP_Q$. This is the desired refinement $\cP_Q$ of $\cP_0 \cap Q$. Finally, let
    \begin{equation*}
        \T_{\Delta,i} (Q) := \{ \bT \in \T_\Delta (Q) : 2^{i-1} < |\{ p \in \cP_Q : \bT \in \T_\Delta (p) \}| \le 2^i \}.
    \end{equation*}
    Then by a similar dyadic pigeonholing (for calculations, see \cite[Proposition 4.1]{orponen2021hausdorff}), there is $i$ such that
    \begin{equation}\label{eqn:TDeltaj}
        \frac{1}{200} |\cP_Q| \Delta^{d-1} \le 2^i \le |\cP_Q| \text{ and } 2^{i+j} \cdot |\T_{\Delta,i} (Q)| \app_\Delta M \cdot |\cP_Q|.
    \end{equation}
    Finally, we define $\oT_\Delta (Q) := \T_{\Delta, i} (Q)$, which is the desired refinement of $\T_\Delta (Q)$.

    We check \ref{T2} holds with $H_Q = 2^{i+j}$, which satisfies $H_Q \app_\Delta M \cdot |\cP \cap Q|/|\oT_\Delta|$ by \eqref{eqn:TDeltaj} and $|\cP_Q| \app_\Delta |\cP \cap Q|$. With this choice of $H_Q$, fix $\bT \in \oT_\Delta$ and note that
    \begin{multline*}
        |\{ (p, T) \in \cP_Q \times \T_0 : T \in \T_0 (p), T \subset S_H^{-1} (\bT) \}| = \sum_{p \in \cP_Q} |\{ T \in \T(p) : T \subset S_H^{-1} (\bT) \}| \\
        \ge 2^j |\{ p \in \cP_Q : \bT \in \T_\Delta (p) \}| \ge 2^{i+j} = H.
    \end{multline*}
    To check \ref{T1}, we first pick a $r$-tube $\bT_r$ with $r \ge \Delta$. Then by \ref{T2} and \eqref{eqn:non_conc2},
    \begin{multline*}
        |\{ \bT \in \oT_\Delta : \bT \subset \bT_r \}| \lesim \frac{1}{H} |\{ (p, T) \in \cP_Q \times \T_0 : T \in \T_0 (p), T \subset S_H^{-1} (\bT_r) \}| \\
        \lesim \frac{1}{H} |\cP_Q| \cdot C_1 M r^s \leapp C_1 |\oT_\Delta| r^s.
    \end{multline*}
    Thus, $\oT_\Delta (Q)$ is a $(\Delta, s, C_\Delta^1, 0)$-set with $C_\Delta^1 \app_\Delta C_1$. Doing the same calculation with an $(r, k+1)$-plank instead of an $r$-tube, we get that $\oT_\Delta (Q)$ is a $(\Delta, \kappa, C_\Delta^2, k)$-set with $C_\Delta^2 \app_\Delta C_1$. This proves \ref{T1} and thus the claim.

    \textbf{Step 2: uniformity of $|\T_0 \cap \bT|$.} By the pigeonhole principle, we can find $\oM_\Delta \ge 1$ and a subset $\cQ \subset \cD_\Delta (\cP)$ with $|\cQ| \app_\Delta |\cQ_0|$ such that $|\oT_\Delta (Q)| \sim \oM_\Delta$ for all $Q \in \cQ$. Write
    \begin{equation*}
        \oT_\Delta = \bigcup_{Q \in \cQ} \oT_\Delta (Q).
    \end{equation*}
    Next, by another dyadic pigeonholing, we can find a subset $\oT_\Delta' \subset \oT_\Delta$ such that $I(\cQ, \oT_\Delta') \geapp I(\cQ, \oT_\Delta)$ and $|\T_0 \cap \bT| \sim N_\Delta$ for all $\bT \in \oT_\Delta'$. Also, $|\oT_\Delta (Q)| \lesim \oM_\Delta$ for all $Q \in \cQ$. Thus, we can find $\cQ' \subset \cQ$ with $|\cQ'| \app_\Delta |\cQ|$, and for each $Q \in \cQ'$ a subset $\T_\Delta (Q)$ of cardinality $\app \oM_\Delta$, such that $\T_\Delta (Q) \subset \oT_\Delta'$. In other words,
    \begin{equation*}
        |\T_0 \cap \bT| \sim N_\Delta \text{ for } \bT \in \T_\Delta (Q).
    \end{equation*}
    Thus, we obtain item \ref{item21}.
    \begin{equation}\label{eqn: step 2}
        |\T_0| \ge |\T_\Delta| \cdot \min_{\bT \in \T_\Delta} |\T_0 \cap \bT| \sim |\T_\Delta| \cdot N_\Delta.
    \end{equation}
    Reduce the families $\T_\Delta (Q)$ such that their cardinality is $M_\Delta := \min(|\T_\Delta (Q)| : Q \in \cQ \}) \app_\delta \oM_\Delta$. By \ref{T1}, $\T_\Delta (Q)$ remains a $(\Delta, s, C_\Delta^1, 0)$ and $(\Delta, \kappa, C_\Delta^2, k)$-set with $C_\Delta^1, C_\Delta^2 \app_\delta C_1$.

    Finally, define
    \begin{equation*}
        \cP = \bigcup_{Q \in \cQ} \cP_Q,
    \end{equation*}
    where $\cQ$ is the latest refinement of $\cQ_0$. Since $|\cP_Q| \app_\Delta |\cP_0 \cap S_H^{-1} (Q)|$, we get that item \ref{item1} holds.

    For $p \in \cP_Q = \cP \cap Q$, $Q \in \cQ$, define
    \begin{equation*}
        \T(p) = \bigcup_{\bT \in \T_\Delta (Q)} (\T_0 (p) \cap \bT), \cT = \bigcup_{p \in \cP} \T(p), \cT_\Delta = \bigcup_{Q \in \cQ} \T_\Delta (Q).
    \end{equation*}
    Thus, $(\cD_\Delta (\cP), \T_\Delta) = (\cQ, \T_\Delta)$ is a $(\Delta, s, C_\Delta^1, \kappa, C_\Delta^2, M_\Delta)$-nice configuration, establishing item \ref{item3}. To summarize, in this step, we refined $\cQ$ and $\cT_\Delta (Q)$ for $Q \in \cQ$, so \ref{T2}/\ref{item4} still holds (with same $H_Q$ and a weaker implied constant).

    \textbf{Step 3: uniformity of $\T(p)$ and construct $\T_Q$.} This step will be devoted to verifying \ref{item5} and \eqref{eqn:item6'}. We will not change $\cP, \T$, or $\T_\Delta$.
    
    Fix $Q \in \cQ$, and let $\cP_Q = \cP \cap S_H^{-1} (Q)$. Define
    \begin{equation*}
        \T(Q) = \bigcup_{p \in \cP_Q} \T(p).
    \end{equation*}
    By dyadic pigeonholing and \ref{T2}, we can find a $\app_\Delta$-comparable subset of $\cP_Q$ (which we keep denoting $\cP_Q$) such that
    \begin{equation*}
        |\T(p)| \app_\Delta M, \qquad p \in \cP_Q.
    \end{equation*}
    Next,
    \begin{equation}\label{eqn: step 3}
        |\T(Q)| \le \sum_{\bT \in \T_\Delta (Q)} |\T \cap \bT| \leapp_\Delta M_\Delta \cdot N_\Delta.
    \end{equation}

    For a given $p \in Q$, we consider the tube packet $\U(p) := \T(p) \cap S_H^{-1} (Q)$ (discarding duplicate tubelets). Each tubelet $u \in \U(p)$ lies in at most $\Delta^{-2(d-1)}$ many tubes of $\T(p)$, so by dyadic pigeonholing, we can refine $\T(p)$ by a $\log \Delta^{-1}$ factor to ensure that each tubelet $u \in \U(p)$ lies in $\sim m(p)$ many tubes of $\T(p)$, and there are $M(p) \app_\Delta \frac{M}{m(p)}$ many distinct tubelets through $p$. By refining $\cP_Q$ by a $(\log \Delta^{-1})$-factor, we may assume $m(p) \app m_Q$ for each $p \in \cP_Q$. Now, define
    \begin{equation*}
        \cP^Q := S_H^{-1} \circ S_Q \circ S_H (\cP_Q) \text{ and } \T_Q := \bigcup_{p \in \cP_Q} S_H^{-1} \circ S_Q \circ S_H (\U(p)).
    \end{equation*}
    Since tubelets are essentially distinct and each tubelet in any $\U(p)$ corresponds to $\app m_Q$ many tubes in $\T(Q)$, we obtain:
    \begin{equation}\label{eqn:step4}
        |\T(Q)| \geapp_\Delta \left|\bigcup_{p \in \cP_Q} \U(p) \right| \cdot m_Q \gesim |\T_Q| \cdot \frac{M}{M_Q}.
    \end{equation}
    Then \eqref{eqn:item6'} will follow by combining \eqref{eqn: step 2}, \eqref{eqn: step 3}, and \eqref{eqn:step4}.

    We finally check $(\cP^Q, \T_Q)$ is a $(\delta/\Delta, s, C_Q^1 r_0^{k-s}, \kappa, C_Q^2, M_Q)$-nice configuration down from scale $r_0$. First, for any $\odelta < r < r_0$, we have for any $(r, 0)$-plank $H$ in $S^{d-1}$,
    \begin{equation*}
        |\sigma(\T_Q) \cap H| \sim_\Delta \frac{1}{m_Q} |\sigma(\T(p)) \cap H| \leapp_\Delta \frac{1}{m_Q} \cdot C \cdot M \cdot r^s = C \cdot M_Q \cdot r^s.
    \end{equation*}
    Thus, $\sigma(\T_Q)$ is a $(\odelta, s, C_Q^1, 0)$-set down from scale $r_0$ with $C_Q^1 \app_\Delta C_1$. Similarly, $\sigma(\T_Q)$ is a $(\odelta, \kappa, C_Q^2, k)$-set with $C_Q^2 \app_\Delta C_2$. This shows item \ref{item5} and thus the proof of the Proposition.


\end{proof}

\subsection{Good multiscale decomposition}
The idea is to apply Proposition \ref{prop:nice_tubes_2}, then apply Theorem \ref{thm:main} to bound $|\T_\Delta|$ and Corollary \ref{cor:easy_est_2} to bound $|\T_Q|$. Unfortunately, while we use pigeonholing to ensure that $\cD_\Delta (S_H(\cP))$ is a $(\Delta, t)$-set, we don't know that $S_H^{-1} \circ S_Q (S_H(\cP) \cap Q)$ is a $(\frac{\delta}{\Delta}, t)$-set. In fact, we won't show this statement, but rather a slightly weaker statement that is good enough. For this, a good choice of $\Delta$ based on the branching structure of $\cP$ is needed.

First, we explain the pigeonholing preliminaries.

\begin{lemma}\label{lem:uniform1}
    Given $P \subset H_r$, a $(r_0, k)$-plane, there is a subset $P' \subset P$ with $|P'|_\delta \ge (\log (\frac{r_0}{\delta}))^{-1} |P|_\delta$ such that $|Q \cap S_H(P)|$ is constant for all $Q \in \cD_{\delta/r_0} (S_H(P))$.
\end{lemma}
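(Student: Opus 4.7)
\textbf{Proof plan for Lemma \ref{lem:uniform1}.} The plan is a standard dyadic pigeonhole over the values of $|Q \cap S_H(P)|$ as $Q$ ranges over $\cD_{\delta/r_0}(S_H(P))$. The output $P'$ will be the preimage of a single pigeonhole class, and the key point is that membership in a class depends only on the enclosing $(\delta/r_0)$-cube, so restricting $P$ to $P'$ does not alter the count inside any retained cube.

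First, I would bound the range of values that $|Q \cap S_H(P)|$ can take. Since $H$ is a $(r_0,k)$-plate and $S_H$ stretches the $d-k$ short directions of $H$ by $r_0^{-1}$, the preimage $S_H^{-1}(Q)$ of a $(\delta/r_0)$-cube $Q$ is a box in $H$ with $k$ sides of length $\delta/r_0$ and $d-k$ sides of length $\delta$. In particular, it contains at most $O((r_0/\delta)^d)$ many dyadic $\delta$-cubes, so
\[
1 \le |Q \cap S_H(P)| \le O\bigl((r_0/\delta)^{d}\bigr),
\]
and this quantity takes at most $N := O(\log(r_0/\delta))$ many distinct dyadic values.

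Next, partition the cubes in $\cD_{\delta/r_0}(S_H(P))$ into dyadic classes
\[
B_j := \{ Q \in \cD_{\delta/r_0}(S_H(P)) : 2^j \le |Q \cap S_H(P)| < 2^{j+1} \}, \qquad 0 \le j \le N.
\]
Since
\[
|P|_\delta \;=\; \sum_{Q \in \cD_{\delta/r_0}(S_H(P))} |Q \cap S_H(P)| \;=\; \sum_{j=0}^{N} \sum_{Q \in B_j} |Q \cap S_H(P)|,
\]
the pigeonhole principle yields some $j^*$ with $\sum_{Q \in B_{j^*}} |Q \cap S_H(P)| \gtrsim (\log(r_0/\delta))^{-1} |P|_\delta$. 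Finally, define
\[
P' := \{ p \in P : \text{the unique } Q \in \cD_{\delta/r_0} \text{ containing } S_H(p) \text{ lies in } B_{j^*} \}.
\]
Then $|P'|_\delta$ is precisely the sum above, so $|P'|_\delta \gtrsim (\log(r_0/\delta))^{-1} |P|_\delta$, and by construction $\cD_{\delta/r_0}(S_H(P')) \subset B_{j^*}$ with $|Q \cap S_H(P')| = |Q \cap S_H(P)| \in [2^{j^*}, 2^{j^*+1})$ for every such $Q$, giving constancy up to a factor of $2$.

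There is no real obstacle here; the argument is purely combinatorial. The one small thing to be careful about is that the pigeonhole decision is made at the level of cubes $Q$, not at the level of individual $\delta$-cubes $p$: because all $p \in P$ with $S_H(p) \subset Q$ either all lie in $P'$ or all lie outside, the relation $|Q \cap S_H(P')| = |Q \cap S_H(P)|$ holds exactly (rather than only up to losses), which is what makes the constancy conclusion clean.
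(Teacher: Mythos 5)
Your pigeonholing scheme is standard, but the numerical estimate it rests on is wrong, and this is exactly the point the paper's proof is built to avoid. From your own description, $S_H^{-1}(Q)$ is a box with $k$ sides of length $\delta/r_0$ and $d-k$ sides of length $\delta$; the number of $\delta$-cubes inside such a box is
\[
\Bigl(\tfrac{\delta/r_0}{\delta}\Bigr)^{k}\cdot\Bigl(\tfrac{\delta}{\delta}\Bigr)^{d-k}\;\sim\;r_0^{-k},
\]
not $O\bigl((r_0/\delta)^d\bigr)$. The two are genuinely incomparable: $(r_0/\delta)^d\ge r_0^{-k}$ is equivalent to $\delta\le r_0^{1+k/d}$, while the lemma must also cover the range $r_0^{1+k/d}<\delta<r_0$, where your claimed bound undershoots the truth. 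Substituting the correct bound $r_0^{-k}$, your dyadic classes number about $k\log(1/r_0)$, and the pigeonhole then yields only $|P'|_\delta\gtrsim(\log(1/r_0))^{-1}|P|_\delta$. That is strictly weaker than the claimed $(\log(r_0/\delta))^{-1}|P|_\delta$ precisely when $\delta>r_0^2$, so the argument as written does not prove the lemma.

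The paper's proof makes no a priori use of the size of the per-cube count. Writing $f(N)$ for the total mass carried by cubes $Q$ with $|P\cap S_H^{-1}(Q)|_\delta\in[N,2N)$, it notes that whenever $f(N)>0$ we have $N\le f(N)\le(r_0/\delta)^d\cdot N$: the lower bound because some cube contributes, the upper bound because there are only $(r_0/\delta)^d$ cubes in $\cD_{\delta/r_0}([0,1]^d)$. Taking $N_0$ to be the largest dyadic level with $f(N_0)>0$, the ``light'' levels $M<N_0(\delta/r_0)^d/100$ together carry total mass less than $N_0\le f(N_0)$, hence less than half of $|P|_\delta$. So at least half the mass concentrates in a dyadic window of width $\lesssim d\log(r_0/\delta)$ just below $N_0$, regardless of how large $N_0$ itself is, and pigeonholing over that window gives the stated constant. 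This truncation step — discarding the light levels by comparing them to $f(N_0)$ rather than bounding the range of values — is the actual content of the lemma; without it, the only bound you can get is in terms of $\log(1/r_0)$, not $\log(r_0/\delta)$.
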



\begin{proof}
    Let $f(N) = \sum \{ |P \cap S_H^{-1} (Q)|_\delta : Q \in \cD_{\delta/r_0} ([0, 1]^d), |P \cap S_H^{-1} (Q)|_\delta \in [N, 2N] \}$. Then $\sum_{N \text{ dyadic}} f(N) = |P|_\delta$. For each $N$, either $f(N) = 0$ or $N \le f(N) \le (r_0/\delta)^d \cdot N$. Hence, if $N_0$ is the largest $N$ for which $f(N) > 0$, we get $f(N_0) \ge N_0 > \sum_{M < N_0 (\delta/r_0)^d/100 \text{ dyadic}} f(M)$. Thus, we have
    \begin{equation*}
        \sum_{N_0 (\delta/r_0)^d/100 < M < N_0 \text{ dyadic}} f(M) > \frac{1}{2} |P|_\delta.
    \end{equation*}
    Thus, by dyadic pigeonholing, there exists $M \in (N_0 (\delta/r_0)^d/100, N_0)$ such that $f(M) \ge \frac{1}{20d} (\log (\frac{r_0}{\delta}))^{-1} |P|_\delta$.
\end{proof}

The next step is to make $S_H^{-1} \circ S_Q (S_H(\cP) \cap Q)$ satisfy a $t'$-dimensional spacing condition with $t'$ just slightly less than $t$, for all $Q \in \cD_\Delta (S_H (\cP))$ at a certain scale $\Delta$. To do so, we need the following lemma.

\begin{lemma}\label{lem:extract_t'_set}
    Fix $C, \eps > 0$, and let $\frac{\delta}{r_0} = \Delta^m$ and $P \subset H_r$ be a $(\delta, t, C, 0)$-set in $H_r$, a $(r_0, k)$-plane. Let $L = \log(\frac{r_0}{\delta}) \cdot (\log(1/\Delta))^m$. If $t' < \frac{t - d\eps}{1-\eps}$, then there exists $m\eps \le k \le m$ and a subset $P' \subset P$ with $|P'| \ge L^{-1} |P|$ such that for any $k \le j \le m$, $Q \in \cD_{\Delta^k} (P')$, and $R \in \cD_{\Delta^j} (P') \cap Q$, we have
    \begin{equation}\label{eqn:extract1}
        |P' \cap S_H^{-1} (R)| \le |P' \cap S_H^{-1} (Q)| \cdot \Delta^{(j-k)t'},
    \end{equation}
    and for $\delta \le r \le \frac{\delta}{r_0}$ and a ball $B_r$, we have
    \begin{equation}\label{eqn:extract2}
        |P' \cap B_r| \le C \cdot L \cdot |P' \cap S_H^{-1} (Q)| \left( \frac{r}{\Delta^k} \right)^{t'}.
    \end{equation}
\end{lemma}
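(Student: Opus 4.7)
The plan is a two-stage uniformization followed by a single $\mathrm{argmin}$ on the branching function; the argmin is forced, for free, to satisfy two separate slope/position inequalities that together yield \eqref{eqn:extract1} and \eqref{eqn:extract2}. First I apply Lemma \ref{lem:uniform1} to pass to a subset on which the $\delta/r_0$-level counts $|Q\cap S_H(P)|$ are constant, losing a factor of $\log(r_0/\delta)$. Then I apply Lemma \ref{lem:uniform} at the scales $\Delta^j$, $j=1,\ldots,m$, to $S_H(P)$, losing a further $(\log \Delta^{-1})^m$; the combined loss is exactly $L$. Call the resulting set $P'$ and let $f:[0,m]\to[0,dm]$ be the associated branching function (piecewise linear, $d$-Lipschitz, with $f(0)=0$).

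The key move is to set $g(j) := f(j) - t'j$ and take $k := \mathrm{argmin}_{j\in[m\eps,m]\cap\mathbb{Z}} g(j)$. By construction $k\in[m\eps,m]$, and two properties are automatic. First, $g(j)\ge g(k)$ for every $j\in[k,m]$, i.e.
\begin{equation*}
f(j)-f(k) \ge t'(j-k),\qquad j\in[k,m].
\end{equation*}
Second, because $f$ is $d$-Lipschitz with $f(0)=0$, comparing $g(k)$ with $g(\lceil m\eps\rceil)$ gives $g(k)\le (d-t')m\eps$. The hypothesis $t'<(t-d\eps)/(1-\eps)$ is algebraically equivalent to $\eps(d-t')<t-t'$, so $g(k)<m(t-t')$, i.e.\ $f(k)\le kt' + m(t-t')$.

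It remains to read off the two target inequalities. For \eqref{eqn:extract1}, uniformity of $S_H(P')$ gives $|P'\cap S_H^{-1}(R)|/|P'\cap S_H^{-1}(Q)| = \Delta^{f(j)-f(k)}$, which by the first property is at most $\Delta^{(j-k)t'}$. For \eqref{eqn:extract2}, I apply the $(\delta,t,C)$-set hypothesis on $P$ to get $|P'\cap B_r|\le |P\cap B_r|\le C|P|r^t \le CL|P'|r^t$, and invoke uniformity in the form $|P'| = |P'\cap S_H^{-1}(Q)|\,\Delta^{-f(k)}$ to reduce the desired bound to $r^{t-t'}\le \Delta^{f(k)-kt'}$. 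Since $r\le \delta/r_0=\Delta^m$ and $t>t'$, the left side is at most $\Delta^{m(t-t')}$, which by the second property is at most $\Delta^{f(k)-kt'}$.

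The main obstacle is producing a single $k$ that handles both inequalities: \eqref{eqn:extract1} demands that the slope of $f$ from $k$ onwards be at least $t'$ at every scale, while \eqref{eqn:extract2} demands that $f(k)$ itself stay below a linear barrier. The $\mathrm{argmin}$ supplies the slope inequality by definition, and the $d$-Lipschitz control of $f$ near $j=\lceil m\eps\rceil$ converts the quantitative threshold $t'<(t-d\eps)/(1-\eps)$ into exactly the position inequality required. Everything else is routine bookkeeping of discretization constants and the interpretation of $S_H^{-1}$ on dyadic cubes.
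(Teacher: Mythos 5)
Your proof is correct and follows essentially the same approach as the paper's: uniformize via Lemmas \ref{lem:uniform1} and \ref{lem:uniform}, then select a good scale $k$ from the branching structure of $S_H(P')$. The only cosmetic difference is that you take $k$ to be the $\mathrm{argmin}$ of $g(j)=f(j)-t'j$ over $[m\eps,m]\cap\mathbb{Z}$, whereas the paper takes the \emph{largest} $k\in[m\eps,m]$ with $N_k \ge |P'|\Delta^{dm\eps + (k-m\eps)t'}$ (equivalently, the largest $k$ with $g(k)\le (d-t')m\eps$); both selections give the slope inequality $f(j)-f(k)\ge t'(j-k)$ for $j\ge k$ and the barrier inequality $g(k)\le(d-t')m\eps$, which are exactly what \eqref{eqn:extract1} and \eqref{eqn:extract2} require.
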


\begin{proof}
    Throughout this proof we will not distinguish between $m\eps$ and $\lceil m\eps \rceil$.

    First, we will make $S_H(P)$ uniform at scales $1, \Delta, \Delta^2, \cdots, \Delta^m = \frac{\delta}{r_0}$. By Lemmas \ref{lem:uniform1} and \ref{lem:uniform}, we can find $|P'| \ge L^{-1} |P|$ such that there is a sequence $(N_j)_{j=1}^n$ with $|S_H(P') \cap Q|_{\Delta^k} = N_k$ for all $1 \le k \le n$ and $Q \in \cD_{\Delta^k} (P')$.

    Let $m\eps \le k \le m$ be the largest index such that $N_k \ge |P'| \Delta^{m\eps \cdot d+(k-m\eps)t'}$ for one (equivalently all) $Q \in \cD_{\Delta^k} (P')$. Certainly $k = m\eps$ is a valid index since $|\cD_{\Delta^{m\eps}}| = \Delta^{-dm\eps}$.

    Now, we will check the given conditions. By maximality of $k$, we have for $k \le j \le m$,
    \begin{equation*}
        N_j \le |P'| \Delta^{dm\eps+(j-m\eps)t'} \le N_k \Delta^{(j-k)t'}.
    \end{equation*}
    Noticing that $|P' \cap S_H^{-1} (Q)| = |S_H(P') \cap Q|$ and likewise for $R \in \cD_{\Delta^j} (P') \cap Q$, this proves \eqref{eqn:extract1}.

    To check \eqref{eqn:extract2}, we recall that $L N_k \ge L \cdot |P'| \Delta^{dm\eps+(k-m\eps)t'} \ge |P| \Delta^{dm\eps+(k-m\eps)t'}$. Using $r \le \frac{\delta}{r_0} = \Delta^m$, $t' \le \frac{t - d\eps}{1-\eps}$, and that $P$ is a $(\delta, t, C)$-set, we have
    \begin{equation*}
        |P' \cap B_r| \le |P \cap B_r| \le C |P| r^t \le C \cdot N_k L \left( \frac{r}{\Delta^k} \right)^{t'}.
    \end{equation*}
\end{proof}

Finally, we will need the following variant of Corollary \ref{cor:easy_est}.

\begin{cor}\label{cor:easy_est_2}
    Let $0 \le \max(s, k) < t \le d-1$, $\delta \le r \le 1$, and let $C_P \ge 1, C_T \ge 0$. Let $\cP \subset \cD_\delta$ be a set contained in an $(r_0, k+1)$-plate $H$ satisfying the following conditions:
    \begin{itemize}
        \item For all $\frac{\delta}{r_0} \le r \le 1$ and balls $B_r$, we have
        \begin{equation}\label{eqn:p_condition1}
            |\cP \cap S_H^{-1} (B_r)| \le C_P \cdot |\cP| \cdot r^t.
        \end{equation}

        \item For all $\delta \le r \le \frac{\delta}{r_0}$ and balls $B_r$, we have
        \begin{equation}\label{eqn:p_condition2}
            |\cP \cap B_r| \le C_P \cdot |\cP| \cdot r^t.
        \end{equation}
    \end{itemize}
    Assume that for every $p \in \cP$ there exists a family $\T(p) \subset \T^\delta$ of dyadic $\delta$-tubes satisfying the following conditions:
    \begin{itemize}
        \item $T \cap p \neq \emptyset$ for all $T \in \T(p)$;

        \item $|\T(p) \cap \bT| \le C_T \cdot |\T(p)| \cdot r_0^{k-s} x^s$ for all $x$-tubes $\bT$ with $\delta \le x \le r_0$.
    \end{itemize}
    Further assume that $|\T(p)| = M$ for some $M \ge 1$. If $\T = \cup_{p \in \cP} \T(p)$, then
    \begin{equation*}
        |\T| \gesim (C_P C_T)^{-1} \cdot Mr_0^{s-k} \delta^{-s}.
    \end{equation*}
\end{cor}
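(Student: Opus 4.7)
The plan is to adapt the Cauchy-Schwarz argument of Proposition \ref{prop:easy_est}/Corollary \ref{cor:easy_est} to the plate setting. Writing $I(\cP, \T) = M|\cP|$ and applying Cauchy-Schwarz, we get $(M|\cP|)^2 \le |\T| \cdot U$ where $U = \sum_{p, p'} |\T(p) \cap \T(p')|$, so it suffices to prove
$$U \lesim C_P C_T M |\cP|^2 r_0^{k-s} \delta^s.$$

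The key observation (a plate-aware pair-counting bound) is the following. Suppose $p, p' \in \cP$ contribute to $U$, i.e.\ there is a tube $T \in \T(p) \cap \T(p')$. Since $T \subset H$, its direction $\sigma(T)$ lies in the $(r_0,k)$-angular plate $\Omega$ of directions compatible with $H$, and $\sigma(T)$ is within angle $O(\delta/d)$ of $\sigma_{pp'} := (p'-p)/|p'-p|$, where $d := d(p,p')+\delta$. Combined with $p,p' \in H$, this forces the short (orthogonal to $H$) component of $p' - p$ to satisfy $|\mathrm{short}(p'-p)| \lesim r_0 d + \delta$. Consequently: for $d \ge \delta/r_0$ one has $r_0 d \ge \delta$ and hence $p' - p$ lies in a box with long sides $\lesim d$ and short sides $\lesim r_0 d$, i.e.\ $p' \in p + S_H^{-1}(B_{O(d)})$, so \eqref{eqn:p_condition1} yields $\#\{p'\} \le C_P|\cP|d^t$; for $d < \delta/r_0$ one has $|\mathrm{short}(p'-p)| \lesim \delta \le d$, so $p' \in B_{O(d)}(p)$ and \eqref{eqn:p_condition2} yields the same bound. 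In either regime,
$$\#\{p' \in \cP : d(p,p') \sim d_0,\ |\T(p)\cap\T(p')| \ne 0\} \lesim C_P |\cP| d_0^t.$$

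For the tube-intersection bound $B(d_0)$ we apply the non-concentration hypothesis on $\T(p)$ to the angular disk of radius $\delta/d_0$ around $\sigma_{pp'}$: for $d_0 \ge \delta/r_0$ (so $\delta/d_0 \le r_0$) we get $B(d_0) \lesim C_T M r_0^{k-s}(\delta/d_0)^s$ directly; for $d_0 < \delta/r_0$ we cover the disk by $\lesim (\delta/(d_0 r_0))^k$ many $r_0$-disks inside the intrinsic $k$-dimensional geometry of $\Omega$, each controlled by the hypothesis at scale $r_0$, yielding $B(d_0) \lesim C_T M (\delta/d_0)^k$ (or the trivial bound $M$ when $k=0$). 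These bounds are more than enough in the small-$d_0$ regime since it is dominated by the large-$d_0$ regime in the end.

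Combining the two ingredients,
$$U \lesim C_P |\cP|^2 \sum_{d_0 \text{ dyadic}} d_0^t \, B(d_0).$$
For $d_0 \in [\delta/r_0, 1]$ the summand is $\lesim C_T M r_0^{k-s}\delta^s d_0^{t-s}$, and since $t>s$ the sum is dominated by $d_0 = 1$, giving $\lesim C_T M r_0^{k-s}\delta^s$. The contribution from $d_0 \in [\delta, \delta/r_0]$ is smaller by a factor of $(\delta/r_0)^{t-s} \le 1$ and is absorbed. Hence $U \lesim C_P C_T M|\cP|^2 r_0^{k-s}\delta^s$, and Cauchy-Schwarz gives $|\T| \ge (M|\cP|)^2/U \gesim (C_PC_T)^{-1} M r_0^{s-k}\delta^{-s}$, as desired. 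The main conceptual step is the plate-aware pair-counting observation—without using the containment in the cone around the long subspace of $H$, a naive distance decomposition would lose a factor of $r_0^s$ and only recover the weaker bound $M r_0^{2s-k}\delta^{-s}$.
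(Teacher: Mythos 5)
Your proof is correct and takes essentially the same route as the paper's (packaged there as Lemma~\ref{lem:jp_est}): the same Cauchy--Schwarz reduction to the energy $\sum_{p,p'}|\T(p)\cap\T(p')|$, the same dyadic split at the critical distance $\delta/r_0$, the same plate-aware pair count via $|\cP\cap S_H^{-1}(B_{O(d)})|$, and the same covering of a wide $(\delta/d_0)$-tube by $(\delta/(d_0 r_0))^k$ many $r_0$-tubes in the small-distance regime. Two small remarks: both your argument and the paper's require the tubes to lie essentially in $H$ in order to force $|\mathrm{short}(p'-p)|\lesim r_0 d+\delta$ (a hypothesis not visible in the corollary's statement but present wherever it is applied), and you correctly have the exponent $t-s$ in the large-$d_0$ contribution where the paper's text has a sign typo ($r^{s-t}$).
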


\begin{proof}
Let
\begin{equation*}
    j_P (\cP, \T) = \{ (q, t) \in \cP \times \T(p) : t \in \T(q) \}
\end{equation*}

We have the following:

\begin{lemma}\label{lem:jp_est}
    For all $p \in \cP$, we have $j_p (\cP, \T) \lesim_{s,t,k} C_P C_T |\cP| \cdot Mr_0^{k-s} \delta^s$.
\end{lemma}

\begin{proof}
    We count $j_p (\cP, \T)$ by first choosing a dyadic $\delta < r < 1$, then counting the number of $q \in \cP$ with $|p - q| \sim r$, then finally counting the number of $t \in \T$ that pass through $p, q$.

    If $r > \frac{\delta}{r_0}$, we claim that if $|x - y| \in [r, 2r]$ and some tube through $x, y$ lies in $H$, then $|S_H(x) - S_H(y)| \le 100r$, so $y \in S_H^{-1} (B_{100r} (S_H(x)))$.
    
    To prove this, we may assume $r_0 \le \frac{1}{50}$, as otherwise we can use the simple fact $|S_H(x) - S_H(y)| \le r_0^{-1} |x-y| \le 100r$. Now choose a coordinate system such that the first $k+1$ axes correspond to the long sides of $H$, and the remaining axes correspond to the short sides of $H$. Let $x - y = (\va, \vb) \in \R^{k+1} \times \R^{d-k-1}$. Then $|\va| \le |x-y| \le r$. Furthermore, we have $|\vb| \le 50r_0 |\va|$, otherwise any tube through $x, y$ would be roughly orthogonal to $H$ and intersect $H$ in a subtube with length $2r_0 \le 1$, contradiction. Thus, we have $|S_H(x) - S_H(y)| \le |\va| + r_0^{-1} |\vb| \le 100r$.
    
    Using the claim and condition \eqref{eqn:p_condition1}, we see that there are $\lesim C_P |\cP| \cdot r^t$ many choices for $q$. For each $q$, the set of tubes $t \in \T(p)$ passing through $q$ lies in a $\frac{\delta}{r}$-tube, so by the tube non-concentration condition (and noting that $\frac{\delta}{r} < r_0$), we have $C_T \cdot Mr_0^{k-s} \left( \frac{\delta}{r} \right)^s$ choices for $t$.

    Thus, the contribution to $j_p (\cP, \T)$ for a given dyadic $r > \frac{\delta}{r_0}$ is $C_P C_T |\cP| \cdot Mr_0^{k-s} \delta^s \cdot r^{s-t}$, and summing over dyadic $r$ gives $C_P C_T |\cP| \cdot Mr_0^{k-s} \delta^s \cdot O_{s-t} (1)$.

    If $r < \frac{\delta}{r_0}$, then by condition \ref{eqn:p_condition2} we see that there are $\lesim C_P |\cP| \cdot r^t$ many choices for $q$. For each $q$, the set of tubes $t \in \T(p)$ passing through $q$ lies in a $\frac{\delta}{r}$-tube $\bT_{\delta/r}$. We note that $\frac{\delta}{r} > r_0$, so the tube non-concentration doesn't apply directly, but luckily we note that $\bT_{\delta/r} \cap H$ can be covered by $(\frac{\delta}{rr_0})^k$ many $r_0$-tubes. Thus, by using tube non-concentration at scale $r_0$, we have $C_T \cdot Mr_0^k \cdot (\frac{\delta}{rr_0})^k$ choices for $t$.

    Thus, the contribution to $j_p (\cP, \T)$ for a given dyadic $r < \frac{\delta}{r_0}$ is (after some manipulation)
    \begin{equation*}
        C_P C_T |P| Mr_0^k \left( \frac{\delta}{r_0} \right)^t \cdot \left( \frac{rr_0}{\delta} \right)^{t-k}.
    \end{equation*}
    Since $t > \max(k, s)$, the sum is $\lesim C_P C_T |P| Mr_0^k \left( \frac{\delta}{r_0} \right)^s$.

    Adding up both $r > \frac{\delta}{r_0}$ and $r < \frac{\delta}{r_0}$ contributions, we prove the Lemma.
\end{proof}

For $t \in \T$, let $\cP(t) = \{ p \in \cP : t \in \T(p) \}$. By Cauchy-Schwarz, we have
\begin{equation*}
    (M|\cP|)^2 = \left( \sum_{t \in \T} |\cP(t)| \right)^2 \le |\T| \sum_{t \in \T} |\cP(t)|^2 = |\T| \sum_{p \in \cP} j_p (\cP, \T).
\end{equation*}
By Lemma \ref{lem:jp_est}, we get
\begin{equation*}
    |\T| \ge \frac{M^2 |\cP|^2}{C_P C_T |\cP|^2 Mr_0^{k-s} \delta^s} = (C_P C_T)^{-1} M r_0^{s-k} \delta^{-s}.
\end{equation*}
\end{proof}

\begin{proof}[Proof of Theorem \ref{thm:main_refined'}]
    A small reduction: we would like to assume $|\T(p)| \sim M$ for all $p \in \cP$. To assume this, we first observe that $|\T(p)| \ge M_0 = (\delta/r_0)^{-\eps} r_0^{k-s} \delta^{-s}$ for all $p \in \cP$. On the other hand, if for at least half of the $p \in \cP$ (call them $\cP'$) we have $|\T(p)| \ge M_0 (\delta/r_0)^{-1}$, then we are immediately done by Corollary \ref{cor:easy_est_2} applied to $\cP'$ and $\T(p)$. Thus, by reducing $\cP$ if necessary, we may assume $|\T(p)| \in (M_0, M_0 (\delta/r_0)^{-1})$. Then by reducing $\cP$ further by a $\lesim \log (\delta/r_0)^{-1}$ factor, we may assume $|\T(p)| \in (M, 2M)$ for some $M \in (M_0, M_0 (\delta/r_0)^{-1})$. Finally, we may remove some tubes from each $\T(p)$ to make $|\T(p)| = M$. Then $(\cP_0, \T_0)$ is a $(\delta, s, C_1 r_0^{k-s}, \kappa, C_2, M)$-nice configuration.

    Pick $\beta(s, t, k) > 0$ such that $\frac{t-d\beta}{1-\beta} > \max(s, k)$, and let $t' = \frac{1}{2} (\frac{t-d\beta}{1-\beta} + \max(s, k))$. Pick $\Delta > 0$ such that $\log(1/\Delta) < \Delta^{-\eps}$. Find $\Delta' = \Delta^k \in (\delta/r_0, (\delta/r_0)^\beta)$ such that the conclusion of Lemma \ref{lem:extract_t'_set} holds. Now by Proposition \ref{prop:nice_tubes_2}, we have
    \begin{equation*}
        \frac{|\T|}{M} \ge \frac{|\T_Q|}{M_Q} \cdot \frac{|\T^{\Delta'} (\T)|}{M_{\Delta'}}.
    \end{equation*}
    If $\eps < \beta \eta^2$, where $\eta(s, t, \kappa, k, d)$ is the parameter in Theorem \ref{thm:main'}, we have $\frac{|\T^{\Delta'} (\T)|}{M_{\Delta'}} \ge (\Delta')^{-s-\sqrt{\eps}}$.
    
    Pick $Q$. Then $S_H^{-1} \circ S_Q (S_H(\cP) \cap Q)$ satisfies the conditions of Corollary \ref{cor:easy_est_2} with $C_P = \left( \frac{\delta}{r_0} \right)^{-\eps} \cdot L \cdot \Delta^{-d}$. Thus, we have $\frac{|\T_Q|}{M_Q} \ge \left( \frac{\delta}{r_0} \right)^{-\eps} \Delta^d L^{-1} \cdot r_0^{s-k} \left( \frac{\delta}{\Delta'} \right)^{-s}$. Using these two bounds and $M \ge (\delta/r_0)^\eps r_0^{s-k} \delta^{-s}$, we get
    \begin{equation*}
        |\T| \ge \left( \frac{\delta}{r_0} \right)^{2\eps-\sqrt{\eps}\beta} \Delta^{-d} L^{-1} r_0^{2(s-k)} \cdot \delta^{-2s}.
    \end{equation*}
    It remains to choose $\eps < \beta^2/100$ and also for $\frac{\delta}{r_0}$ small enough, we have $\Delta^{-d} < \left( \frac{\delta}{r_0} \right)^{-\eps}$ and $L \le \left( \frac{\delta}{r_0} \right)^{-\eps} \Delta^{-\eps m} \le \left( \frac{\delta}{r_0} \right)^{-2\eps}$. Thus, $|\T| \ge \left( \frac{\delta}{r_0} \right)^{-\eps} r_0^{2(s-k)} \cdot \delta^{-2s}$ and we are done.
\end{proof}

\section{Power decay around $k$-planes}\label{sec:power decay}
In this section, we will roughly deal with the following situation:
\begin{itemize}
    \item $\mu, \nu$ are $s$-Frostman measures with $k-1 < s \le k$;

    \item $\nu$ gives mass $\le \eps$ to any $(r_0, k)$-plate.
\end{itemize}
In other words, $\nu$ does not concentrate around $(r_0, k)$-plates. We would like to understand the $\nu$-mass of $(r, k)$-plates for $r$ much smaller than $r_0$. A result of Shmerkin \cite[Proposition B.1]{shmerkin2022non} says that there exist $r_1 (r_0, s, k), \kappa(s, k) > 0$, a subset $X \subset \spt \mu$ with $\mu(X) > 1 - O(\eps)$, and for each $x \in X$, a subset $Y_x \subset \spt \nu$ with $\mu(Y_x) > 1 - O(\eps)$ such that $\nu(H \cap Y_x) \le r^\eta$ for all $r \le r_1$ and $(r, k)$-plates $H$ through $x$. Thus, we do obtain a power decay for sufficiently small $r$. But what is the optimal starting point of the power decay? Can we hope for a power decay $\nu(H \cap Y_x) \lesim K (\frac{r}{r_0})^\eta$ for all $(r, k)$-plates through $x$? The answer is yes, and indeed we shall prove it by making small but meaningful tweaks to Shmerkin's argument. But before stating our result, we shall introduce some convenient notation. We define thin $k$-plates, a generalization of thin tubes, as follows.

\begin{defn}\label{def:thin tubes}
    Let $K, t \ge 0$, $1 \le k \le d-1$, and $c \in (0, 1]$. Let $\mu, \nu \in \PP(\R^d)$ supported on $X, Y$. Fix $G \subset X \times Y$. We say $(\mu, \nu)$ has $(t, K, c)$-thin $k$-plates on $G$ down from scale $r_0$ if
    \begin{equation}\label{eqn:thin tubes}
        \nu(H \cap G|_x) \le K \cdot r^t \quad \text{ for all } r \in (0, r_0) \text{ and all } (r, k)\text{-plates } H \text{ containing } x. 
    \end{equation}
\end{defn}

\begin{remark}
    In this paper, we will choose $G = (A \cup B)^c$ where $\mu \times \nu(B)$ is small. (The complement is taken with respect to $\R^d \times \R^d$.) In this case, the equation \eqref{eqn:thin tubes} becomes
    \begin{equation*}
        \nu(H \setminus (A|_x \cup B|_x)) \le K \cdot r^t \quad \text{ for all } r \in (0, r_0) \text{ and all } (r, k)\text{-plates } H \text{ containing } x. 
    \end{equation*}
\end{remark}

Now, we can state the main proposition, which generalizes and extends Proposition B.1 of \cite{shmerkin2022non}. It may be of independent interest.

\begin{prop}\label{prop:b1}
    Let $1 \le k \le d-1$ and $k-1 < s \le k$. There exist $\eta(\kappa, k, d) > 0$ and $K_0 (\kappa, k, d) > 0$ with the following property. Fix $r_0 \le 1$ and $K \ge K_0$. Suppose that $\mu, \nu$ are positive measures with $|\mu|, |\nu| \ge 1$ and for any $(r, k-1)$-plate $H$, we have
    \begin{gather*}
        \mu(H) \le C_\mu r^\kappa, \\
        \nu(H) \le C_\nu r^\kappa.
    \end{gather*}
    Let $A \subset X \times Y$ be the pairs of points that lie in some $K^{-1}$-concentrated $(r_0, k)$-plate. Then there exists $B$ with $\mu \times \nu(B) \le K_0 K^{-1}$ such that $(\mu, \nu)$ have $(\eta, K r_0^{-\eta})$-thin $k$-plates on $(A \cup B)^c$. (The complement is taken with respect to $\R^d \times \R^d$.)
\end{prop}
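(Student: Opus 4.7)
The plan is to trace through Shmerkin's proof of \cite[Proposition B.1]{shmerkin2022non} (the base case $r_0 = 1$) and verify that it can be run starting from scale $r_0$ instead of scale $1$, with the output constants deteriorating by the factor $r_0^{-\eta}$. The underlying reason is that the argument is an iteration/multiscale decomposition whose starting scale is dictated by the scale at which the non-concentration hypothesis is imposed---here that is $r_0$ rather than $1$. Concretely, I would implement the reduction via a localize-then-rescale argument.

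First, I fix a finitely overlapping cover $\cE_{r_0, k}$ of $B(0, 2)$ by $(r_0, k)$-plates as constructed in Section \ref{subsec:r-net}. Since every $(r, k)$-plate with $r \le r_0$ is contained in $O_d(1)$ plates of $\cE_{r_0, k}$, it suffices to produce, for each $H_0 \in \cE_{r_0, k}$, a set $B_{H_0} \subset H_0 \times H_0$ together with the thin $k$-plate bound restricted to pairs in $H_0 \times H_0$; the final $B$ is then $\bigcup_{H_0} B_{H_0}$, and its mass is controlled by the finite overlap of the cover.

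Inside a fixed $H_0$, I push forward the restricted measures $\mu|_{H_0}, \nu|_{H_0}$ under the affine dilation $S_{H_0}$ sending $H_0$ to a unit box (stretching the $d-k$ transverse directions by $r_0^{-1}$), obtaining $\tilde\mu, \tilde\nu$. Two checks are needed to apply the base case: (i) the $(k-1)$-plate Frostman hypothesis transfers with constants comparable to $C_\mu, C_\nu$, because $S_{H_0}^{-1}$ is a contraction in the transverse directions, so the preimage of any $(\tilde r, k-1)$-plate is contained in a $(\tilde r, k-1)$-plate in $\R^d$; (ii) a $K^{-1}$-concentrated $(1, k)$-plate for $(\tilde\mu, \tilde\nu)$ corresponds under $S_{H_0}^{-1}$ to a $K^{-1}$-concentrated $(r_0, k)$-plate for $(\mu, \nu)$, so the pairs it captures are precisely those already excluded by $A$. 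Applying the base case to $(\tilde\mu, \tilde\nu)$ produces $\tilde B_{H_0}$ with $\tilde\mu \times \tilde\nu(\tilde B_{H_0}) \lesim K^{-1}$ and a thin $(\tilde r, k)$-plate bound $\tilde\nu(\tilde H \cap \cdot) \le K \tilde r^\eta$ for every $\tilde r \in (0, 1)$. Pulling back, any $(r, k)$-plate $H$ with $r \le r_0$ pushes into an $(r/r_0, k)$-plate, so the bound becomes $\nu(H \cap H_0 \cap \cdot) \le K (r/r_0)^\eta = K r_0^{-\eta} r^\eta$, which is the desired $(\eta, K r_0^{-\eta})$-thin plate estimate.

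The main obstacle is careful bookkeeping rather than new ideas: one must verify that the $(k-1)$-plate Frostman constants transfer uniformly across all plates of $\cE_{r_0, k}$, that the normalization $|\tilde\mu|, |\tilde\nu| \ge 1$ required by the base case can be arranged (plates with very small $\mu$- or $\nu$-mass can be discarded, since their total contribution is absorbed into the $K_0 K^{-1}$ budget for $\mu \times \nu(B)$), and that the rescaled notion of ``$K^{-1}$-concentrated $(1, k)$-plate'' coincides with the exclusion rule defining $A$. None of these is a serious obstruction; they are precisely the ``small but meaningful tweaks'' to Shmerkin's proof alluded to in the text.
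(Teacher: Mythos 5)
Your high-level instinct---that one should ``run Shmerkin's iteration starting from scale $r_0$ rather than scale $1$''---matches the spirit of the paper's proof, but the specific implementation you propose (localize to a cover by $(r_0,k)$-plates, rescale each to unit size, and invoke the $r_0=1$ case as a black box) has two gaps that I do not see how to close.

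First, the $r_0=1$ ``base case'' you want to apply to the rescaled measures is vacuous. After pushing forward by $S_{H_0}$, the measures $\tilde\mu,\tilde\nu$ are supported in a unit box. Any $(1,k)$-plate through a point of that box contains the whole box, so $\tilde\nu(\tilde H) = |\tilde\nu| \ge K^{-1}$ for every such plate, and therefore $\tilde A = \tilde X \times \tilde Y$. The conclusion ``thin $k$-plates on $(\tilde A \cup \tilde B)^c$'' is then a statement about the empty set. Equivalently: Proposition~\ref{prop:b1} applied with $r_0 = 1$ to compactly supported measures in $B(0,1)$ says nothing, because the exclusion set $A$ already covers everything. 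So there is no nontrivial base case to bootstrap from under your rescaling. (Shmerkin's actual Proposition~B.1 avoids this by only asserting thin plates below some implicit, potentially much smaller scale $r_1$; if you use that version instead, you lose precisely the explicit $r_0$-dependence that Proposition~\ref{prop:b1} is designed to capture.)

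Second, even setting this aside, the bookkeeping does not close. Your correspondence ``a $K^{-1}$-concentrated $(1,k)$-plate for $(\tilde\mu,\tilde\nu)$ pulls back to a $K^{-1}$-concentrated $(r_0,k)$-plate for $(\mu,\nu)$'' is only correct for plates aligned with $H_0$; a $(r_0,k)$-plate whose normal directions are tangent to $H_0$ is \emph{not} stretched by $S_{H_0}$ and remains thin, so it is invisible to $\tilde A$. More importantly, $\cE_{r_0,k}$ is finitely overlapping only as a subset of the affine Grassmannian: a single point $x$ lies in roughly $r_0^{-k(d-k)}$ plates of $\cE_{r_0,k}$ (one for each $r_0$-separated orientation), so $\sum_{H_0}\mu(H_0)\nu(H_0)$ is not $O(1)$ and the union $B=\bigcup_{H_0}B_{H_0}$ has mass that can grow like a negative power of $r_0$ rather than staying $\le K_0K^{-1}$. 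The ``discard small plates'' trick faces the same obstacle: the number of plates is $r_0^{-(k+1)(d-k)}$, so the sum of discarded masses is not controlled by a single factor of $K^{-1}$.

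The paper's proof (given in Section~\ref{sec:power decay}) takes a different and genuinely global route: it never localizes to a fixed $H_0$. For each scale $r\le r_0$, Lemma~\ref{lem:few_large_plates} (using the $(k-1)$-plate non-concentration of $\nu$) bounds the number of $(r,k)$-plates with $\nu$-mass $\ge K(r/r_0)^\eta$ by a polynomial in $C_\nu$ and $(r_0/r)^\eta$, and Lemma~\ref{lem:concentrate_in_r0} (using the $(k-1)$-plate non-concentration of $\mu$) shows that for all $x$ outside a small exceptional set $E_r$, the dense plates through $x$ lie in a single $(r_0,k)$-plate $P_r(x)$ depending on $x$. One then iterates over the scales $r_n = r_0K^{-2^n}$: the exceptional sets $\mu(E_{r_n})$ are summable, and the masses $\nu(P_{r_n}(x)\setminus A|_x)$ telescope because each $P_{r_n}(x)$ is itself a $(\le r_0,k)$-plate and therefore, if $K^{-1}$-concentrated, contributes to $A$. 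That iteration, with its per-$x$ (rather than global) choice of localizing plate, is the mechanism that produces the explicit $r_0^{-\eta}$ factor; it does not reduce to a self-contained $r_0 = 1$ statement.
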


\begin{remark}
    (a) We can apply Proposition \ref{prop:b1} in case $\mu, \nu$ are $s$-dimensional with $s > k-1$.
    
    (b) In Proposition B.1 of \cite{shmerkin2022non}, the exponents for $\mu, \nu$ are allowed to differ. The proof of Proposition \ref{prop:b1} is easily modified to include this detail.
\end{remark}

To prove Proposition \ref{prop:b1}, we need the following two lemmas. Fix $r \le r_0$. The first says that there are few dense $(r, k)$-plates, and the second says that for most $x \in X$, the dense $(r, k)$-plates through $x$ lie in some $(r_0, k)$-plate.
\begin{lemma}\label{lem:few_large_plates}
    There is $N=N(\kappa, k, d)$ such that the following holds: let $\nu$ be a measure with mass $\le 1$ such that $\nu(W) \le C_\nu \rho^\kappa$ for all $(\rho, k-1)$-plates $W$, $1 > \rho > r$. Let $\cE_{r,k}$ be a set of $(r, k)$-plates such that every $(s, k)$-plate contains $\lesim \left( \frac{s}{r} \right)^{(k+1)(d-k)}$ many $r$-plates of $\cE_{r,k}$ (as in Section \ref{subsec:r-net}). Let $\cH = \{ H \in \cE_{r,k} : \nu(H) \ge a \}$. Then $|\cH| \lesim (\frac{C_\nu}{a})^N$.
\end{lemma}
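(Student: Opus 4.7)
The plan is a double-counting / Cauchy--Schwarz argument on plate incidences. Setting $M := |\cH|$, I would first write
\[
aM \;\le\; \sum_{H \in \cH} \nu(H) \;=\; \int \Bigl(\sum_{H \in \cH} \one_H\Bigr) \, d\nu,
\]
and then apply Cauchy--Schwarz together with $|\nu| \le 1$ to obtain
\[
(aM)^2 \;\le\; \sum_{H_1, H_2 \in \cH} \nu(H_1 \cap H_2).
\]
After peeling off the diagonal contribution (bounded by $M$), the task reduces to controlling the off-diagonal sum by some polynomial power $M^{\alpha}$ with $\alpha < 2$, from which the result follows by division.

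The geometric heart of the argument is the following claim: if $H_1, H_2 \in \cE_{r,k}$ satisfy $H_1 \cap H_2 \ne \emptyset$ and $\theta := d_\A(P_{H_1}, P_{H_2}) \gtrsim r$ (where $P_H$ denotes the central $k$-plane of $H$), then in the regime where the angular part of $d_\A$ dominates, the intersection $H_1 \cap H_2$ is contained in some $(Cr/\theta, k-1)$-plate, so the hypothesis on $\nu$ gives $\nu(H_1 \cap H_2) \lesssim C_\nu (r/\theta)^\kappa$. I would then sum in dyadic shells of $\theta$, using the packing bound $|\{H_2 \in \cE_{r,k} : d_\A(H_1,H_2) \le \theta\}| \lesssim (\theta/r)^{(k+1)(d-k)}$ from Section~\ref{subsec:r-net} against the trivial bound $M$. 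Balancing at the critical scale $\theta_0 := r \cdot M^{1/((k+1)(d-k))}$ and collapsing the geometric series yields
\[
\sum_{H_1 \ne H_2} \nu(H_1 \cap H_2) \;\lesssim_{k,d,\kappa}\; C_\nu \, M^{\,2 - \kappa/((k+1)(d-k))}.
\]
Substituting back gives $a^2 \lesssim C_\nu M^{-\kappa/((k+1)(d-k))}$, i.e.\ $M \lesssim (C_\nu/a^2)^{(k+1)(d-k)/\kappa}$, which (using $C_\nu \ge 1$) proves the lemma with, say, $N = 2(k+1)(d-k)/\kappa$.

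The main obstacle I anticipate is the geometric claim on $\nu(H_1 \cap H_2)$ in the ``almost parallel'' regime, where the offset component of $d_\A$ dominates the angular component. There the intersection is a slab of thickness $\sim r$ transverse to a parallel pair of $k$-planes and need not lie in any $(k-1)$-plate of width $r/\theta$. Fortunately such pairs are forced to satisfy $d_\A(H_1,H_2) \lesssim r$, and the $cr$-net property of $\cE_{r,k}$ guarantees that a fixed $H_1$ has only $O_{k,d}(1)$ such partners; their total contribution to the off-diagonal sum is $O(M)$, which is dominated by the main term as soon as $M$ is large enough to be interesting. The rest of the argument is a routine bookkeeping of dyadic sums using the plate non-concentration hypothesis.
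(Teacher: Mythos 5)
Your proposal is correct and takes a genuinely different route from the paper. The paper first proves an auxiliary covering lemma: by choosing a maximal family $\{Y_j\}$ of dense plates with small pairwise $\nu$-overlap, it shows there are at most $\lesssim a^{-1}$ of them, and that every dense $(r,k)$-plate is trapped inside a slightly fattened $\bigl(r(C_\nu/a^2)^{1/\kappa},k\bigr)$-plate around one of the $Y_j$'s; then $|\cH|$ is bounded by the number of $r$-plates in each fattened plate times the number of $Y_j$'s. (An alternate proof in the paper iteratively builds a well-spread $k$-simplex lying in many dense plates.) You instead run a second-moment argument: Cauchy--Schwarz on $\sum_{H\in\cH}\one_H$ reduces the problem to bounding $\sum_{H_1\ne H_2}\nu(H_1\cap H_2)$, which you control by a dyadic decomposition in the angular separation, using exactly the same geometric fact as the paper's auxiliary lemma (two intersecting $(r,k)$-plates at principal angle $\theta_{\mathrm{ang}}\gtrsim r$ meet inside an $(r/\theta_{\mathrm{ang}},k-1)$-plate, hence $\nu(H_1\cap H_2)\lesssim C_\nu(r/\theta_{\mathrm{ang}})^\kappa$) paired against the net-packing bound, and balancing. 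Two small points worth tightening in a write-up: (i) what actually enters your count is the set of $H_2\in\cE_{r,k}$ \emph{intersecting} $H_1$ at angular separation $\le\theta$; such plates are contained in $H_1^{(C\theta)}$, a $(C\theta,k)$-plate, so the hypothesis of the lemma gives the count $\lesssim(\theta/r)^{(k+1)(d-k)}$ directly without needing to invoke $d_\A$-ball packing. (ii) For intersecting plates one in fact has $\theta_{\mathrm{off}}\lesssim r+\theta_{\mathrm{ang}}$, hence $d_\A\sim\max(r,\theta_{\mathrm{ang}})$; this makes the ``offset-dominated'' regime you worry about automatically $O(1)$ partners per $H_1$, and means the single uniform bound $\nu(H_1\cap H_2)\lesssim C_\nu(r/d_\A)^\kappa$ holds for all intersecting pairs (using $C_\nu\gtrsim 1$ when $\theta_{\mathrm{ang}}<r$), so the case split can be absorbed. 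Both approaches give an $N$ comparable to $(k+1)(d-k)/\kappa$; yours is perhaps a bit more mechanical and self-contained, while the paper's covering lemma also packages the dense plates into $O(a^{-1})$ clusters, a structural statement reused elsewhere.
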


Lemma \ref{lem:few_large_plates} follows from the condition on $\cE_{r,k}$ and the following generalization of \cite[Lemma B.3]{shmerkin2022non}. In the case $a = \delta^\eta$, the resulting bound is stronger but the assumption is also stronger.

\begin{lemma}
    Suppose $\nu(W) \le C_\nu \rho^\kappa$ for all $(\rho, k-1)$-plates $W$, $1 > \rho > r$. Then there exists a family of $\lesim a^{-1}$ many $(r(C_\nu/a^2)^{1/\kappa}, k)$-plates $\{ T_j \}$ such that every $(r, k)$-plate $H$ with $\nu(H) \ge a$ is contained in some plate $T_j$.
\end{lemma}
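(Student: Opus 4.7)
The plan is to combine a greedy selection with a second-moment (Cauchy--Schwarz) argument. Set $R := C \cdot r(C_\nu/a^2)^{1/\kappa}$ for a constant $C = C(\kappa, k, d, C_\nu)$ to be chosen (the extra multiplicative constant absorbed into the implicit constants of the lemma's conclusion), and initialize $\mathcal{F}$ as the collection of heavy $(r,k)$-plates (those with $\nu(H)\ge a$) from the fixed $r$-net. While $\mathcal{F}\ne\emptyset$, pick any $H_i\in\mathcal{F}$, let $T_i$ be the $(R,k)$-plate obtained by thickening the central $k$-plane of $H_i$ to width $R$, remove from $\mathcal{F}$ every plate contained in $T_i$, and iterate. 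By construction every originally-heavy plate lies in some $T_j$, so the task reduces to bounding the number of iterations $n$.

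The greedy rule guarantees, for $j > i$, that $H_j\not\subset T_i$; this is equivalent (up to constants depending on $k,d$) to $d_\A(P_i,P_j)\gtrsim R$, where $P_i = V_i + v_i$ is the central $k$-plane of $H_i$ in the affine-Grassmannian metric of Section~\ref{subsec:r-net}. From this separation I would show $H_i \cap H_j \cap B(0,10)$ is contained in a $(C_1 r/R, k-1)$-plate with $C_1 = C_1(k,d)$. In the angular case ($\theta := \|\pi_{V_i}-\pi_{V_j}\|_{op}\gtrsim R$), projecting $H_i\cap H_j$ onto $V_i$ shows the projection lies in a cylinder over $V_i\cap V_j$ whose ellipsoidal cross-section has smallest semi-axis $\lesim r/\theta \lesim r/R$ (using that the top singular value of $\pi_{V_j^\perp}|_{V_i}$ equals $\theta$); combining this smallest semi-axis with the $(d-k)$ directions transverse to $V_i$ of thickness $r$ produces the $(d-k+1)$ thin directions of a $(k-1)$-plate of thickness $\lesim r/R$. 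In the translational subcase ($|v_j - v_i|\gtrsim R$ while $\theta\lesim R$), a direct distance computation shows $P_i^{(r)}\cap P_j^{(r)}\cap B(0,10)$ is empty once $R$ is a sufficiently large multiple of $r$. The hypothesis then gives $\nu(H_i\cap H_j) \le C_\nu(C_1 r/R)^\kappa \le a^2/(2|\nu|)$ once $C$ is large enough.

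To conclude, set $X(\xi) := \sum_{i=1}^n \one_{H_i}(\xi)$, so $\int X \, d\nu \ge na$ and
\begin{equation*}
\int X^2 \, d\nu = \int X\,d\nu + \sum_{i\ne j}\nu(H_i\cap H_j) \le na + \frac{n^2 a^2}{2|\nu|}.
\end{equation*}
Cauchy--Schwarz, $\left(\int X \, d\nu\right)^2 \le |\nu|\int X^2\,d\nu$, yields $n^2 a^2 \le |\nu| na + \tfrac{1}{2} n^2 a^2$, hence $n \le 2|\nu|/a \lesssim a^{-1}$ since $|\nu|\le C_\nu$ (apply the hypothesis with $\rho=1$).

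The delicate step is the geometric intersection bound: the angular case requires the identity that the largest singular value of $\pi_{V_j^\perp}|_{V_i}$ equals $\|\pi_{V_i}-\pi_{V_j}\|_{op}$, so that only one ``large'' ellipsoidal semi-axis must be classified as a transverse direction of the final $(k-1)$-plate, and the translational subcase must be handled separately via the bound on $\dist(y, P_i)$ for $y\in P_j$. Everything else is routine constant-tracking.
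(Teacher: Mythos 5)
Your plan is correct in outline and reaches the right bound, but it reverses the logical order relative to the paper, and one geometric subcase needs a small repair. The paper first selects a \emph{maximal} family $\{Y_j\}$ of heavy $(r,k)$-plates with pairwise $\nu$-intersection $\le a^2/2$, runs the very same second-moment / Cauchy--Schwarz count on this family, and only then brings in the geometry: by maximality, any heavy $H$ satisfies $\nu(H\cap Y_j)\ge a^2/2>0$ for some $j$; since the (nonempty) intersection sits in a $(r/\angle(H,Y_j),k-1)$-plate, the non-concentration hypothesis forces $\angle(H,Y_j)\lesssim r(C_\nu/a^2)^{1/\kappa}$, and because the plates actually meet this angular bound is all that is needed to conclude $H\subset T_j$. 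You instead build the covering greedily (so containment is free), and then must \emph{derive} the small pairwise intersection from $d_\A$-separation --- the dual step. Both proofs ultimately hinge on the same two ingredients (the second-moment count and the angle-versus-intersection-measure geometry); yours buys nothing extra and requires a bit more bookkeeping in the geometry, which is where the one real issue lies.

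The issue is the translational subcase. If $\theta:=\norm{\pi_{V_i}-\pi_{V_j}}_{op}$ and $\tau:=|v_i-v_j|$ are both comparable to $R$ --- permitted by $d_\A(P_i,P_j)=\theta+\tau\gtrsim R$ --- the intersection $P_i^{(r)}\cap P_j^{(r)}\cap B(0,10)$ need not be empty, so your emptiness claim fails as stated. The clean patch (which the paper's argument exploits implicitly, since it starts from $\nu(H\cap Y_j)>0$) is: if some $z\in P_i^{(r)}\cap P_j^{(r)}\cap B(0,10)$ exists, then writing $v_i,v_j$ as the $V_i^\perp$, $V_j^\perp$ components of the planes one gets
\begin{equation*}
|v_i-v_j|\le |\pi_{V_i^\perp}(z)-v_i| + |(\pi_{V_j}-\pi_{V_i})(z)| + |\pi_{V_j^\perp}(z)-v_j| \le 2r+10\theta,
\end{equation*}
so $d_\A(P_i,P_j)\le 11\theta+2r$ and hence $\theta\gtrsim R$ once $R\ge 4r$, say; the angular case then covers every nonempty intersection, and the translational case is vacuous. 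With that fix, and noting that your $T_j$ have thickness $C\cdot r(C_\nu/a^2)^{1/\kappa}$ rather than $r(C_\nu/a^2)^{1/\kappa}$ (harmless, since the lemma is only used with an implicit constant in its sole application), your argument is complete and equivalent in strength to the paper's.
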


\begin{proof}
    Choose a maximal set of $(r, k)$-plates $\{ Y_j \}_{j=1}^m$ such that
    \begin{enumerate}
        \item $\nu(Y_i) \ge a$,

        \item $\nu(Y_i \cap Y_j) \le a^2/2$ for $1 \le i < j \le m$.
    \end{enumerate}
    We claim $m \le 2a^{-1}$. Indeed, if $S = \sum_{i=1}^m \nu(Y_i)$ and $f = \sum_{i=1}^m \one_{Y_i}$, then, then
    \begin{equation}\label{eqn:L2}
        S^2 = \left( \int f \, d\nu \right)^2 \le \int f^2 \, d\nu = S + \sum_{1 \le i < j \le m} \nu(Y_i \cap Y_j) \le S + m^2 a^2/2.
    \end{equation}
    Now, $S \ge ma > 2$, so $S^2 - S > \frac{S^2}{2}$. Combining with \eqref{eqn:L2} gives $S^2 < m^2 a^2$, a contradiction.

    Let $\{ T_j \}_{j=1}^m$ be the $(r(C_\nu/a^2)^{1/\kappa}, k)$-plates with same central $k$-plane as $Y_j$. We show the problem condition. Given an $(r, k)$-plate $H$ with $\nu(H) \ge a$, by maximality there exists $Y_j$ such that $\nu(H \cap Y_j) \ge a^2/2$. Thus, if $\angle(H, Y_j)$ is the largest principal angle between the central planes of $H$ and $Y_j$, then $H \cap Y_j$ is contained in a box of dimensions
    \begin{equation*}
        \underbrace{1 \times \cdots \times 1}_{(k-1) \text{ times}} \times r/\angle(H, Y_j) \times \underbrace{r \times \cdots \times r}_{d-k \text{ times}}.
    \end{equation*}
    Thus, $H \cap Y_j$ is contained in a $(r/\angle(H, Y_j), k-1)$-plate, so $\nu(H \cap Y_j) \le C_\nu (r/\angle(H, Y_j))^\kappa$. Thus, $\angle(H, Y_j) \lesim r(C_\nu/a^2)^{1/\kappa}$, so $H$ is contained in $T_j$.
\end{proof}

\begin{remark}
    We would like to present an alternative proof of Lemma \ref{lem:few_large_plates}, which was the original one found by the author. It gives slightly worse bounds but we believe it is slightly more motivated.
    
    If $a > 1$ then $\cH = \emptyset$, so assume $a \le 1$. Let $\xi = \left(\frac{a}{2C_\nu}\right)^{1/\kappa} \le 1$.
    By induction, for each $0 \le i \le k$, there exist $x_0, \cdots, x_i$ such that $|x_0 \wedge x_1 \wedge \cdots \wedge x_i| \ge \xi^i$ and that lie in at least $|\cH| (\frac{a}{2})^{i+1}$ many elements of $\cH$.

    The base case $i = 1$ is trivial. For the inductive step, suppose $x_0, \cdots, x_i$ are found. Let $\Omega$ be the $\tilde{r}$-neighborhood of the span of $x_1, \cdots, x_i$. Then since $\mu(\Omega) \le C_\nu \xi^\kappa < \frac{1}{2} a$ for every $H \in \cH$, we have $\mu(H \setminus \Omega) \ge \frac{1}{2} a$. Thus, there is $x_{i+1} \in \R^d \setminus \Omega$ such that $x_0, \cdots, x_{i+1}$ lie in at least $|\cH| (\frac{a}{2})^{(i+2)}$ many elements of $\cH$, and by construction, $|x_0 \wedge x_1 \wedge \cdots \wedge x_{i+1}| \ge \xi^{i+1}$. This completes the inductive step and thus the proof of the claim.

    Finally, the set of $(r,k)$-plates through $x_1, \cdots, x_k$ must lie in a $(r \xi^{-k}, k)$-plate, so at most $\xi^{-k(k+1)(d-k)}$ many $(r,k)$-plates of $\cE_{r,k}$ can lie in it. Thus, $|\cH| \le (\frac{a}{2})^{-(k+1)} \xi^{-k(k+1)(d-k)} \lesim (\frac{C_\nu}{a})^N$.
\end{remark}

The following lemma is in the same spirit as \cite[Proposition B.2]{shmerkin2022non}.
\begin{lemma}\label{lem:concentrate_in_r0}
    Let $\cH$ be a collection of $(r,k)$-plates, and suppose $\mu(W) \le C_\mu \rho^\kappa$ for all $(\rho, k-1)$-plates $W$, $1 > \rho > r$. Then for all $x \in X$ except a set of $\mu$-measure $\le C_\mu \left( \frac{r}{r_0} \right)^\kappa |\cH|^2$, there exists an $(r_0,k)$-plate that contains every $(r,k)$-plate in $\cH$ that passes through $x$.
\end{lemma}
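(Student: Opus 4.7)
The strategy is to identify the ``bad'' set $B \subset X$ as the union of pairwise intersections $H_1 \cap H_2$ for pairs of plates in $\cH$ whose central $k$-planes are angularly far apart, and to bound $\mu(B)$ by a union bound combined with the assumed $(k-1)$-plate decay of $\mu$ at scale $r/r_0$.

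First, a Helly-type reduction lets us replace the ``global'' conclusion by a pairwise one. Call a pair $(H_1, H_2) \in \cH \times \cH$ \emph{spread} if the largest principal angle $\alpha(P_{H_1}, P_{H_2})$ between the central $k$-planes is $\ge c\, r_0$, for a small absolute constant $c = c(d,k) > 0$. Suppose $x \in X$ has no spread pair in $\cH_x := \{H \in \cH : x \in H\}$. Pick any $H_\ast \in \cH_x$; every other central plane is within angular distance $c\, r_0$ of $P_{H_\ast}$, which in turn controls $\norm{\pi_{P_H} - \pi_{P_{H_\ast}}}_{op}$ and the corresponding translation discrepancy in the metric $d_{\A}$ of Section \ref{subsec:r-net}. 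Rerunning the calculation of Section \ref{subsec:r-net} (using $r \le r_0$) shows that every $H \in \cH_x$ is contained in a single $(r_0, k)$-plate with central plane $P_{H_\ast}$, provided $c$ is small enough. Hence the bad set satisfies $B \subset \bigcup_{(H_1,H_2) \text{ spread}} (H_1 \cap H_2)$.

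Next is the key geometric claim: if $(H_1, H_2)$ is spread, then $H_1 \cap H_2 \cap B(0,10)$ is contained in some $(C r/r_0, k-1)$-plate $W_{H_1,H_2}$. This is the standard linear-algebra fact that, on any bounded region, the intersection of the $r$-neighborhoods of two $k$-planes with largest principal angle $\alpha$ lies within the $O(r/\alpha)$-neighborhood of $P_{H_1} \cap P_{H_2}$; the latter has dimension $\le k-1$, so its $O(r/\alpha)$-neighborhood fits in a $(Cr/r_0, k-1)$-plate once $\alpha \ge c\, r_0$.

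Finally, apply the hypothesis with $\rho := Cr/r_0 \in (r,1)$ (the endpoint case $r > r_0/C$ makes the target $\gtrsim C_\mu$, which is trivial from plate decay at scale $1$): each $\mu(W_{H_1,H_2}) \le C_\mu (Cr/r_0)^\kappa$, so subadditivity yields
\[
\mu(B) \;\le\; \sum_{(H_1,H_2) \text{ spread}} \mu(H_1 \cap H_2) \;\le\; |\cH|^2 \cdot C_\mu (Cr/r_0)^\kappa \;\lesssim\; C_\mu \left(\tfrac{r}{r_0}\right)^\kappa |\cH|^2,
\]
which is the desired bound after absorbing $C^\kappa$. The main obstacle is the geometric step: one has to verify the $(Cr/r_0, k-1)$-plate containment cleanly using principal angles, and to choose $c$ in the definition of ``spread'' small enough that the Helly reduction produces containment in an honest $(r_0, k)$-plate rather than an $(O(r_0), k)$-plate (handled by an initial rescaling $r_0 \mapsto r_0/C$).
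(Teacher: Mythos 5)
Your argument is correct and is essentially the same as the paper's: the paper also declares a pair of plates through $x$ ``bad'' when their angular separation is $\gtrsim r_0$, observes that the intersection of two such $(r,k)$-plates is contained in a box of dimensions $\underbrace{r \times \cdots \times r}_{d-k} \times \frac{r}{r_0} \times \underbrace{1 \times \cdots \times 1}_{k-1}$ (hence in a $(Cr/r_0,k-1)$-plate), applies the $(k-1)$-plate decay hypothesis, and takes a union bound over $|\cH|^2$ pairs. Your write-up is slightly more explicit about the Helly-type reduction and the constant-absorption/rescaling needed to land in an honest $(r_0,k)$-plate, but the decomposition, the key geometric fact, and the measure estimate are the same.
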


\begin{proof}
    The exceptional set is contained in the set of $x \in X$ that lies in two plates of $\cH$ with ``angle'' $\ge \frac{1}{r_0}$. The intersection of two such plates is contained in a box with dimensions $\underbrace{r \times \cdots \times r}_{d-k \text{ times}} \times \frac{r}{r_0} \times \underbrace{1 \times \cdots \times 1}_{k-1 \text{ times}}$, which in turn is contained in a $(\frac{r}{r_0}, k-1)$-plate (since $r_0 \le 1$). Thus, by assumption on $\mu$, this box has mass $\lesim C_\mu \left( \frac{r}{r_0} \right)^{s-(k-1)}$. Finally, there are $|\cH|^2$ pairs of plates in $\cH$.
\end{proof}


\begin{proof}[Proof of Proposition \ref{prop:b1}]
    Fix $r \le r_0$, and let $\eta = \frac{\kappa}{4N}$, where $N$ is the constant in Lemma \ref{lem:few_large_plates}. We may assume $N \ge 2$. By Lemmas \ref{lem:few_large_plates} and \ref{lem:concentrate_in_r0}, we can find a set $E_r$ with $\mu(E_r) \le K^{-2} \left( \frac{r}{r_0} \right)^{\eta}$ and, for each $x \notin E_r$, a set $P_r (x) \subset Y$ that is either empty or a $(r_0^{1/2} r^{1/2}, k)$-plate through $x$ such that $\nu(W) \le K \left( \frac{r}{r_0} \right)^\eta$ for every $W$ intersecting $Y \setminus P_r (x)$.

    Now, let $E = \cup_{n \ge 0} E_{r_0 K^{-2^n}}$ and $P(x) = \cup_{n \ge 0} P_{r_0 K^{-2^n}} (x)$. We claim that $\mu(E) \le K^{-1}$ and if $x \notin E$, then $\nu(P(x) \setminus A|_x) \lesim K^{-1}$. Then if $r \ge r_0 K^{-1}$, then $\nu(W) \le 1 \le K \left( \frac{r}{r_0} \right)^\eta$ for $\eta < 1$; for any $r_0 K^{-2^n} \le r < r_0 K^{-2^{n-1}}$, we have for any $(r, k)$-plate $W$,
    \begin{equation*}
        \nu(W \setminus P_{r_0 K^{-2^n}} (x)) \le K \left( \frac{r_0 K^{-2^{n-1}}}{r_0} \right)^\eta \le K \left( \frac{r}{r_0} \right)^{\eta/2}
    \end{equation*}
    Then $(\mu, \nu)$ have $(\eta/2, K^2 r_0^{-\eta/2}, 1-K^{-1})$-thin $k$-plates relative to $A$.

    To prove the first claim, we observe that $\mu(E) \le K^{-2} \sum_{n=0}^\infty K^{-\eta 2^n} \le K^{-1}$ if $K_0$ is sufficiently large in terms of $\eta$.

    Next, by definition of $P_{r_0 K^{-2^{n-1}}}$, we have $\nu(P_{r_0 K^{-2^n}} (x) \setminus P_{r_0 K^{-2^{n-1}}} (x)) \le K \left(\frac{r_0 K^{-2^{n-1}}}{r_0} \right)^{\eta/2} \le K^{1-2^{n-2} \eta}$. We also have the bound $\nu(P_{r_0 2^{-n}} (x) \setminus A|_x) \le K^{-1}$ from the given condition (note that $P_{r_0 2^{-n}} (x)$ is a $(r_0 2^{-(n-1)}, k)$-plate). Thus,
    \begin{align*}
        \nu(P(x) \setminus A|_x) &\le \sum_{n=0}^{\log \eta^{-1}} \nu(P_{r_0 K^{-2^n}} (x) \setminus A|_x) + \sum_{n=\log \eta^{-1}}^\infty \nu(P_{r_0 K^{-2^n}} (x) \setminus P_{r_0 K^{-2^{n-1}}} (x)) \\
            &\le \log \eta^{-1} \cdot K^{-1} + \sum_{n=\log \eta^{-1}}^\infty K^{1-2^{n-2} \eta} \\
            &\lesim K^{-1},
    \end{align*}
    if $K_0$ is chosen large enough.
\end{proof}

\section{Radial projection estimates}\label{sec:threshold}
In this section, we will first prove a key special case, and then the general case of Theorem \ref{thm:intermediate}.

\subsection{Maximal plate concentration case}
\textit{This subsection is based on ideas from \cite{orponen2022kaufman}.}


\begin{theorem}\label{thm:threshold_refined}
Let $k \in \{ 1, 2, \cdots, d-1 \}$, $k-1 < \sigma < s \le k$, and fix $K \ge 1$. There exists $N \in \N$ and $K_0$ depending on $\sigma, s, k$ such that the following holds. Fix $r_0 \le 1$ and $K_1, K_2 \ge K_0$. Let $\mu, \nu$ be $\sim 1$-separated $s$-dimensional measures with constant $C_\mu, C_\nu$ supported on $E_1, E_2$, which lie in an $(r_0, k)$-plate $H_r$. Assume that $|\mu|, |\nu| \le 1$. Let $A$ be the pairs of $(x, y) \in E_1 \times E_2$ that lie in some $K_1^{-1}$-concentrated $(\frac{r_0}{K_2}, k)$-plate. Then there exists a set $B \subset E_1 \times E_2$ with $\mu \times \nu (B) \lesim K_1^{-1}$ such that for every $x \in E_1$ and $r$-tube $T$ through $x$, we have
\begin{equation*}
    \mu(T \setminus (A|_x \cup B|_x)) \lesim \frac{r^\sigma}{r_0^{\sigma-(k-1)}} (K_1 K_2)^N.
\end{equation*}
The implicit constant may depend on $s, k$.
\end{theorem}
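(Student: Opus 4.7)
The plan is to adapt the strategy of Lemma 2.8 in \cite{orponen2022kaufman} to higher dimensions, combining the thin $(r,k)$-plate bound from Proposition \ref{prop:b1} with the refined Furstenberg estimate of Theorem \ref{thm:main_refined} via a role-swap between the measures $\mu$ and $\nu$.

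\emph{Step 1 (thin $k$-plate bound).} Since $\mu,\nu$ are $s$-dimensional with $s>k-1$, both automatically satisfy a Frostman condition on $(\rho,k-1)$-plates with exponent $\kappa:=s-(k-1)>0$, as each $(\rho,k-1)$-plate is covered by $\lesim \rho^{-(k-1)}$ many $\rho$-balls of mass $\lesim \rho^s$. Applying Proposition \ref{prop:b1} with this $\kappa$ at scale $r_0/K_2$ yields a set $B_0\subset E_1\times E_2$ with $\mu\times\nu(B_0)\lesim K_1^{-1}$ and an exponent $\eta=\eta(\kappa,k,d)>0$ such that
\[
\nu(H\cap(A|_x\cup B_0|_x)^c)\lesim K_1^{O(1)}(K_2 r/r_0)^\eta
\]
for every $x$, every $(r,k)$-plate $H$ through $x$, and every $r\le r_0/K_2$.

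\emph{Step 2 (contradiction setup via role-swap).} Suppose the target tube bound fails at some scale $r$ on a $\mu$-significant set of pairs. Standard pigeonholing yields an $(r,s,r^{-O(\eps)})$-set $\mathcal P\subset\mathcal D_r\cap H_r$ together with, for each $p\in\mathcal P$, a bad $r$-tube $T(p)$ satisfying $\nu(T(p)\setminus(A|_p\cup B_0|_p))\ge N_0:=r^\sigma r_0^{-(\sigma-(k-1))}(K_1K_2)^N$. By the $s$-Frostman property of $\nu$, each $T(p)$ meets $\gtrsim N_0/r^s$ many $r$-cubes of a pigeonholed $s$-Frostman discretization $\mathcal Q$ of $E_2$. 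Swapping roles, for each $q\in\mathcal Q$ set $\mathcal P(q):=\{p\in\mathcal P:q\subset T(p)\}$; Fubini produces a pigeonholed $\mathcal Q'\subset\mathcal Q$ on which $|\mathcal P(q)|\gtrsim N_0/r^s$. Because $E_1,E_2$ are $\sim 1$-separated, distinct $p\in\mathcal P(q)$ determine distinct tube directions from $q$, so the induced family $\mathcal T(q):=\{T(p):p\in\mathcal P(q)\}$ has cardinality comparable to $|\mathcal P(q)|$.

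\emph{Step 3 (applying Theorem \ref{thm:main_refined} and concluding).} I then verify the two non-concentration hypotheses of Theorem \ref{thm:main_refined} for $\mathcal T(q)$ with parameters $(s_{\mathrm{thm}},t_{\mathrm{thm}},k_{\mathrm{thm}})=(\sigma,t,k-1)$ for some $t\in(\sigma,s)$: the $(r,\sigma,(r/r_0)^{-O(\eps)}r_0^{(k-1)-\sigma},0)$-set condition on $\mathcal T(q)$ follows from the $s$-Frostman property of $\mathcal P\supset\mathcal P(q)$ combined with $|p-q|\sim 1$; the $(r,\kappa',(r/r_0)^{-O(\eps)}r_0^{-\kappa'},k-1)$-set condition (for some $\kappa'\le\eta$) is the dual of the Step 1 thin $k$-plate bound, since a concentration of tube directions through $q$ in an $(r,k-1)$-direction-plate forces $\mathcal P(q)$ into an $(r,k)$-plate through $q$. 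Theorem \ref{thm:main_refined} then yields $|\mathcal T|\gtrsim(r/r_0)^{-\eta'}r^{-2\sigma}r_0^{2(\sigma-(k-1))}$. Combined with the trivial upper bound $|\mathcal T|\le|\mathcal P|\lesim r^{-s}$ and using the polynomial slack $(K_1K_2)^N$ inherited by $\mathcal T(q)$ from $N_0$, this produces the contradiction for $N$ chosen sufficiently large in terms of $\sigma,s,k,\eta'$. The principal technical obstacle is the careful bookkeeping of the polynomial $K_1,K_2$ losses through Steps 1--2 so that the $\eps$-improvement of Theorem \ref{thm:main_refined} survives as a genuine polynomial gain in $r/r_0$ at the end.
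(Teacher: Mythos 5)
Your proposal correctly identifies Proposition \ref{prop:b1} and Theorem \ref{thm:main_refined} as the key ingredients, and the role-swap double-counting in Step 2 is indeed part of the mechanism used inside the paper's Proposition \ref{prop:bootstrap}. However, Step 3 contains a genuine gap, and the overall inductive architecture is missing.

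The gap: you assert that ``the $(r,\sigma,\ldots,0)$-set condition on $\mathcal T(q)$ follows from the $s$-Frostman property of $\mathcal P\supset\mathcal P(q)$ combined with $|p-q|\sim 1$.'' This is false. The tubes in $\mathcal T(q)$ whose directions lie in a $\rho$-cap correspond to points $p\in\mathcal P(q)$ lying in a $\rho$-\emph{tube} through $q$ (a set of dimensions $\rho^{d-1}\times 1$), not a $\rho$-ball. The separation $|p-q|\sim 1$ confines $p$ to the far end of this tube, but that far end is still a piece of diameter $\sim 1$. The Frostman ball condition on $\mu$ therefore only yields $\mu(T_\rho)\lesssim\rho^{s-1}$, and since $k-1<\sigma<s\le k$ forces $s-1<\sigma$, this bound is strictly weaker than the needed $\rho^\sigma$. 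A Frostman condition cannot control tube masses; what is required is a thin-tube estimate at exponent $\sigma$ for $\mu$ relative to $\nu$---exactly the conclusion you are trying to prove, with the roles swapped.

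The paper breaks this circularity by induction on the exponent. It first establishes, via Orponen's radial projection theorem (Proposition \ref{prop:k-thin}), that $(\mu,\nu)$ and $(\nu,\mu)$ have $(0,K^N r_0^{k-1})$-thin tubes; via Proposition \ref{prop:b1}, it gets $(\kappa,\ldots)$-thin $k$-plates for both orderings. It then iterates Proposition \ref{prop:bootstrap}, which upgrades $(\sigma,\ldots)$-thin tubes to $(\sigma+\eta,\ldots)$-thin tubes after removing a small-measure exceptional set; $\lesssim\eta^{-1}$ iterations reach the target. Inside the bootstrap step, the current thin-tube hypothesis (not the Frostman hypothesis) supplies the $(r,\sigma,\ldots,0)$-set condition on the dense tube families, and the thin-$k$-plate hypothesis supplies the $(r,\kappa,\ldots,k-1)$-set condition---these are precisely the two hypotheses feeding Theorem \ref{thm:main_refined}. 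Without this iteration, the starting non-concentration exponent for $\mathcal T(q)$ is unavailable and your Step 3 cannot get off the ground; in particular, your proposal never invokes a base case, which is a symptom of the missing induction.
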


Theorem \ref{thm:threshold_refined} is the special case of Theorem \ref{thm:intermediate} where $(\mu, \nu)$ are concentrated in a $(r_0 K, k)$-plate for some small $K \ll r_0$ (we call this the maximal plate concentration case). For this, we closely follow the bootstrapping approach of \cite{orponen2022kaufman}. There are three ingredients.
\begin{itemize}
    \item The next Proposition \ref{prop:k-thin} will be the base case for the bootstrapping argument ($\sigma = 0$).

    \item Proposition \ref{prop:b1} will ensure power decay for $\mu, \nu$ around $k$-planes.
    
    \item Theorem \ref{thm:main_refined} will be used in the bootstrapping step to upgrade $\sigma$ to $\sigma + \eta$.
\end{itemize} 

\begin{prop}\label{prop:k-thin}
    Let $1 \le k \le d-1$ and $k-1 < s \le k$, then there exists $N = N(s, k)$ such that the following holds. Fix $K \ge K_0$. Then for any $s$-dimensional measures $\mu, \nu$ with constant $\sim 1$ contained in the $r_0$-neighborhood of a $k$-plane and $d(\mu, \nu) \gesim 1$, there exists $B \subset X \times Y$ with $\mu \times \nu(B) \le K^{-1}$ such that $(\mu, \nu)$ has $(0, K^N r_0^{k-1})$-thin tubes on $B^c$ down from scale $r_0$.
\end{prop}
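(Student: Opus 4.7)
The proof reduces the thin-tube estimate in $\R^d$ to a quantitative radial projection estimate in the ambient $k$-plane. Let $V$ be a $k$-plane with $\spt(\mu) \cup \spt(\nu) \subset V^{(r_0)}$, let $\pi : \R^d \to V$ denote the orthogonal projection, and set $\tilde\mu := \pi_*\mu$ and $\tilde\nu := \pi_*\nu$, viewed as finite measures on $V \cong \R^k$. Since $\pi$ is $1$-Lipschitz, both $\tilde\mu$ and $\tilde\nu$ inherit the $s$-Frostman bound $\tilde\mu(B_r), \tilde\nu(B_r) \lesim r^s$ for all balls $B_r \subset V$ of radius $r \ge r_0$ (at scales smaller than $r_0$ the preimages are long boxes and the condition may fail, but this is inessential). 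Their supports remain $\gesim 1$-separated in $V$.

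Since $s > k-1$, a discretized/quantitative form of Orponen's radial projection theorem in $\R^k$ produces $N_0 = N_0(s,k)$ and $K_0 = K_0(s,k)$ such that, for every $K \ge K_0$, there is a set $\tilde B \subset V$ with $\tilde\mu(\tilde B) \le K^{-1}$ and
\[
    \tilde\nu(\tilde T) \le K^{N_0} r_0^{k-1}
\]
for every $\tilde x \in V \setminus \tilde B$ and every $r_0$-tube $\tilde T \subset V$ through $\tilde x$. Define
\[
    B := (\pi^{-1}(\tilde B) \cap \spt(\mu)) \times \spt(\nu),
\]
so that $\mu \times \nu(B) \le \tilde\mu(\tilde B) \le K^{-1}$.

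To verify the thin-tube bound, fix $(x, y) \notin B$ (so that $\pi(x) \notin \tilde B$) and let $T \subset \R^d$ be an $r$-tube through $x$ with $r \le r_0$. Let $\theta$ denote the angle between the direction of $T$ and $V$. If $\sin\theta \ge 1/2$, the central axis of $T$ crosses $V^{(r_0)}$ in a segment of length $\lesim r_0/\sin\theta \lesim r_0$, so $T \cap V^{(r_0)}$ lies in a ball of radius $O(r_0)$, giving $\nu(T) \lesim r_0^s \le r_0^{k-1}$ (using $s \ge k-1$ and $r_0 \le 1$). Otherwise $\sin\theta < 1/2$, and then $\pi(T)$ is contained in an $r$-tube in $V$ through $\pi(x)$, hence in the $r_0$-tube $\tilde T_0 \subset V$ through $\pi(x)$ in the same direction, yielding
\[
    \nu(T) \le \tilde\nu(\pi(T)) \le \tilde\nu(\tilde T_0) \le K^{N_0} r_0^{k-1}.
\]
Both cases give the required bound, and absorbing the implicit constant into $K^N$ for a suitable $N = N(s,k)$ establishes $(0, K^N r_0^{k-1})$-thin tubes on $B^c$ down from scale $r_0$.

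The principal obstacle is securing the input tube estimate in $\R^k$ with the exact exponent $k-1$ and no $r_0^{-\eps}$ loss. Any such loss would be fatal here, since $r_0$ can be arbitrarily small while $N$ must depend only on $s$ and $k$; a statement of the form $K^N r_0^{k-1-\eps}$ would introduce a factor $r_0^{-\eps}$ that cannot be absorbed into $K^N$. The required sharp bound at the critical Frostman exponent $s > k-1$ can be obtained from an $L^2$ analysis of radial projections, exploiting the finiteness of the $t$-Riesz energy for $t \in (k-1, s)$; the remaining steps of the proof are a routine case split based on the tube direction.
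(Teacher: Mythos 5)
Your overall strategy --- project to the ambient $k$-plane, invoke a quantitative radial projection estimate there, and case-split on the tube direction --- is the same as the paper's, and the two angle regimes in the case split are handled correctly. The gap is in the black-boxed input. You assert the existence of a set $\tilde B \subset V$ with $\tilde\mu(\tilde B) \le K^{-1}$ such that for \emph{every} $\tilde x \in V \setminus \tilde B$ and \emph{every} $r_0$-tube $\tilde T \ni \tilde x$ one has $\tilde\nu(\tilde T) \le K^{N_0} r_0^{k-1}$. No such $\tilde B$ exists in general: the set of $\tilde x$ admitting a heavy $r_0$-tube can have full $\tilde\mu$-measure. Take $d=3$, $k=2$, $s=3/2$: let $\mu_1$ be the natural measure on a $\frac{1}{2}$-dimensional Cantor set $A_1 \subset [0,1]$, let $V = \{z=0\}$, and put $\tilde\mu = \mu_1 \times \cL^1|_{[0,1/4]}$, $\tilde\nu = \mu_1 \times \cL^1|_{[3/4,1]}$, lifted arbitrarily into $V^{(r_0)}$. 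Both are $3/2$-Frostman with constant $\sim 1$ and are $\gesim 1$-separated. For every $\tilde x = (a,b) \in \spt\tilde\mu$, the vertical $r_0$-tube through $\tilde x$ carries $\tilde\nu$-mass $\sim \mu_1(B(a,r_0)) \sim r_0^{1/2}$, which exceeds $K^{N_0} r_0^{k-1} = K^{N_0} r_0$ as soon as $r_0 < K^{-2N_0}$. So $\tilde\mu(\tilde B) = 1$, not $\le K^{-1}$.

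The exceptional set has to live in $X \times Y$ and is genuinely not a product. The paper takes $B = \{(x,y): x, y \in T \text{ for some } r_0\text{-tube } T \text{ with } \nu(T) \ge K^N r_0^{k-1}\}$, so that even for a ``bad'' $x$ one only discards the $\nu$-mass inside the heavy tubes through $x$ --- and this is small for \emph{every} $x$: the bush argument shows the heavy tubes through $x$ occupy a set of directions of measure $\lesim K^{-N}$, so $|P_x(B|_x)| \lesim K^{-N}$, even if every $x$ has some heavy tube (in the example above, $\nu(B|_x) \sim r_0^{1/2}$, easily $\le K^{-1}$). The product-measure bound $\mu\times\nu(B) \lesim K^{-N(1-1/p)}$ then follows from H\"older together with Orponen's $L^p$ radial projection inequality $\int \|P_x\tilde\nu\|_{L^p(S^{k-1})}^p\, d\tilde\mu(x) \lesim 1$ for some $p = p(s,k) > 1$. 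Note also that this last input is stronger than an ``$L^2$ analysis of radial projections'': the classical Riesz-energy argument bounds $\int \|P_x\tilde\nu\|_{L^2}^2\, d\cL^k(x)$ against \emph{Lebesgue} measure, whereas the proposition requires integration against $\tilde\mu$ itself, which is precisely the content of Orponen's theorem.
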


\begin{proof}
    Let $\tmu, \tnu$ be the projected measures on the $k$-plane. Then $\tmu, \tnu$ satisfy $s$-dimensional Frostman conditions for $r_0 \le r \le 1$. Let
    \begin{equation*}
        B = \{ (x, y) : x, y \in T \text{ for some } r_0\text{-tube } T \text{ with } \nu(T) \ge K^N r_0^{k-1}. \}
    \end{equation*}
    The rest is a standard argument following \cite[Proof of Lemma 3.6]{guth2019incidence}. Define the radial projection $P_y (x) = \frac{x-y}{|x-y|}$. Orponen's radial projection theorem \cite[Equation (3.5)]{orponen2018radial} can be written in the form (where $p = p(s, k) > 1$):
    \begin{equation}\label{eqn:orponen radial}
        \int \norm{P_x \tmu}_{L^p}^p \, d\tmu (x) \lesim 1.
    \end{equation}
    To effectively use \eqref{eqn:orponen radial}, we will show that $|P_x (B|_x)|$ is small for $x \in X$. Indeed, let $\T_x$ be a minimal set of finitely overlapping $2r_0$-tubes through $x$ such that any $r_0$-tube through $x$ with $\nu(T) \ge K^N r_0^{k-1}$ lies in a $2r_0$-tube in $\T_x$. Then each $2r_0$-tube in $\T_x$ has $\nu$-measure $\ge K^N r_0^{k-1}$. Since $d(x, \nu) \gesim 1$, we conclude that $|\T_x| \lesim K^{-N} r_0^{1-k}$. Therefore, since the Lebesgue measure $|P_x (T)| \lesim r_0^{k-1}$ for a $2r_0$-tube $T$ through $x$, we obtain $|P_x (B|_x)| \lesim K^{-N}$. 
    Finally, we can use Holder's inequality and \eqref{eqn:orponen radial} to upper bound $\mu \times \nu(B)$:
    \begin{align*}
        \mu \times \nu(B) &= \int \nu(B|_x) d\mu(x) \\
                &= 
                \int \left( \int_{P_x (B|_x)} P_x (\nu) \right) d\mu(x) \\
                &\le \sup_x |P_x (B|_x)|^{1-1/p} \int \norm{P_x \nu}_{L^p} d\mu(x) \\
                &\lesim K^{-N(1-1/p)}.
    \end{align*}
    Choose $N = 1 + (1 - 1/p)^{-1}$ to finish (the implicit constant is dominated by $K \ge K_0$ if $K_0$ is large enough).
\end{proof}

The bootstrapping step is as follows:

\begin{prop}\label{prop:bootstrap}
Let $k \in \{ 1, \cdots, d-1 \}$, $0 \le \sigma \le k$, $\max(\sigma, k-1) < s \le k$, $\kappa > 0$. There exist $\eta(\sigma, s, \kappa, k, d)$ and $K_0(\eta, k) > 0$ such that the following holds. Fix $r_0 \le 1$ and $K \ge K_0$. Let $\mu, \nu$ be $\sim K^{-1}$-separated $s$-dimensional measures with constant $K$ supported on $X, Y$, which lie in an $(r_0, k)$-plate $H$. Let $G \subset X \times Y$. Suppose that $(\mu, \nu)$ and $(\nu, \mu)$ have $(\sigma, K r_0^{-(\sigma-(k-1))})$-thin tubes and $(\kappa, K r_0^{-\kappa})$-thin $k$-plates on $G$ down from scale $r_0$. Then there exists a set $B \subset X \times Y$ with $\mu \times \nu(B) \le K^{-1}$ such that
$(\mu, \nu)$ and $(\nu, \mu)$ have $(\sigma+\eta,  K^{d+1} r_0^{-(\sigma+\eta-(k-1))})$-thin tubes on $G \setminus B$ down from scale $r_0$. Furthermore, $\eta(\sigma, s, \kappa, k, d)$ is bounded away from zero on any compact subset of $\{ (\sigma, s, \kappa, k) : \max(\sigma, k-1) < s \le k \le d-1 \}$.
\end{prop}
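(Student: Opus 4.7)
The proof is by contradiction. Take
\[
B:=\bigl\{(x,y)\in X\times Y: \exists\, r\in(0,r_0],\ \exists\, r\text{-tube } T\ni x,y,\ \nu(T\cap G|_x)>K^{d+1}r_0^{-(\sigma+\eta-(k-1))}r^{\sigma+\eta}\bigr\};
\]
then $(\mu,\nu)$ automatically enjoys the desired $(\sigma+\eta,\cdot)$-thin-tube property on $G\setminus B$, because whenever a tube $T$ through $x$ witnesses failure of the bound, every $y\in T\cap G|_x$ belongs to $B|_x$ and so $T\cap(G\setminus B)|_x=\varnothing$. A symmetric construction handles $(\nu,\mu)$, so it suffices to show $\mu\times\nu(B)\le K^{-1}$, which I argue by contradiction.

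Suppose $\mu\times\nu(B)>K^{-1}$. The old $(\sigma,Kr_0^{k-1-\sigma})$-thin-tube hypothesis forces bad tubes to exist only at scales $r$ above a power of $K^{-1}$, so a dyadic pigeonhole over $r$ produces a single scale $\delta\in(0,r_0]$ at which the single-scale bad set $B_\delta$ has $\mu\times\nu(B_\delta)\gesim K^{-O(1)}$. Fubini together with the $s$-Frostman property of $\mu$ extract a $\delta$-separated set $\mathcal{P}\subset X$ of size $\gesim K^{-O(1)}\delta^{-s}$, each $p\in\mathcal{P}$ carrying $\nu(B_\delta|_{x_p})\gesim K^{-O(1)}$, and hence (using the old cap $\nu(T\cap G|_{x_p})\le Kr_0^{k-1-\sigma}\delta^\sigma$) a family $\mathcal{T}(p)$ of $|\mathcal{T}(p)|=M$ bad $\delta$-tubes through $p$ with $M\gesim K^{-O(1)}r_0^{\sigma-(k-1)}\delta^{-\sigma}$, after the standard pigeonholing that makes $M$ uniform.

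The key verification is that each $\mathcal{T}(p)$ is simultaneously a $(\delta,\sigma,(\delta/r_0)^{-\epsilon_1}r_0^{k-1-\sigma},0)$-set down from scale $r_0$ and a $(\delta,\kappa,(\delta/r_0)^{-\epsilon_1}r_0^{-\kappa},k-1)$-set: for any $r$-tube $\bT$ (respectively $(r,k)$-plate $H$) through $p$, the $(\sigma,\cdot)$-thin-tubes (respectively $(\kappa,\cdot)$-thin-$k$-plates) hypothesis bounds $\nu(\bT\cap G|_{x_p})$ (respectively $\nu(H\cap G|_{x_p})$), and dividing by the per-bad-tube threshold $M_{\min}:=K^{d+1}r_0^{k-1-\sigma-\eta}\delta^{\sigma+\eta}$ (using the $O(1)$-overlap of $\delta$-tubes through a common point) yields the required non-concentration with $\epsilon_1=O(\eta)$. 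Applying Theorem \ref{thm:main_refined} with the substitutions $k\mapsto k-1$, $s\mapsto\sigma$, $t\mapsto s$, $\kappa\mapsto\kappa$ (the hypotheses $\max(\sigma,k-1)<s$ and $\sigma<k$ are exactly what the Proposition assumes) then yields
\[
|\mathcal{T}|\ge (\delta/r_0)^{-\epsilon_0}\delta^{-2\sigma}r_0^{2(\sigma-(k-1))},
\]
with $\epsilon_0=\epsilon_0(\sigma,s,\kappa,k-1,d)>0$.

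On the other hand, since all tubes in $\mathcal{T}(p)$ share the common point $p$ and thus have essentially disjoint $y$-slices away from $p$, we have $M\cdot M_{\min}\lesim\sum_{T\in\mathcal{T}(p)}\nu(T\cap G|_{x_p})\lesim\nu(Y)\le 1$, so $|\mathcal{T}|\le|\mathcal{P}|M\lesim K^{O(1)-(d+1)}r_0^{\sigma+\eta-(k-1)}\delta^{-s-\sigma-\eta}$. The ratio of the Theorem's lower bound to this upper bound simplifies to $K^{d+1-O(1)}(\delta/r_0)^{(s-\sigma)+\eta-\epsilon_0}r_0^{s-(k-1)}$, which exceeds $1$ for all sufficiently small $\delta/r_0$ as soon as $\eta<\epsilon_0-(s-\sigma)$, producing the contradiction. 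Uniformity of $\eta$ on compact subsets of $\{(\sigma,s,\kappa,k):\max(\sigma,k-1)<s\le k\le d-1\}$ is inherited from the analogous uniformity of $\epsilon_0$ (Remark \ref{rmk:uniform eps}). The main obstacle is precisely this final parameter bookkeeping: one must verify that the $\epsilon_0$-improvement supplied by Theorem \ref{thm:main_refined} strictly dominates the $(s-\sigma)$-gap (which reflects that Theorem \ref{thm:main_refined} is applied with tube-dimension $\sigma$ smaller than the ball-dimension $s$ of $\mathcal{P}$) as well as the $O(\eta)$-loss in $\mathcal{T}(p)$'s non-concentration constants, while tracking the accumulated $K$-exponents so that the requirement $K\ge K_0(\eta,k)$ does suffice.
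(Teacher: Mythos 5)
The construction of $B$ is fine and the reduction to a single scale $\delta$ via pigeonholing is also fine, but the counting argument that is supposed to produce the contradiction has a quantitative gap that cannot be repaired within the approach you chose.

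Your final contradiction compares the lower bound
$|\mathcal T|\gesim (\delta/r_0)^{-\eps_0}\delta^{-2\sigma}r_0^{2(\sigma-(k-1))}$
from Theorem~\ref{thm:main_refined} with the upper bound $|\mathcal T|\le|\mathcal P|M\lesim K^{O(1)-(d+1)}\,\delta^{-s-\sigma-\eta}\,r_0^{\sigma+\eta-(k-1)}$. As you yourself compute, the ratio of lower to upper bound is, up to powers of $K$ and $r_0$, of the form $\delta^{\,s-\sigma+\eta-\eps_0}$, so a contradiction requires $\eps_0>s-\sigma+\eta$. But $\eps_0=\eps_0(\sigma,s,\kappa,k-1,d)$ is an implicit, typically very small constant produced by Theorem~\ref{thm:main_refined}, whereas $s-\sigma$ is a parameter of the proposition that can be close to $1$ (take $s=k$ and $\sigma$ near $k-1$). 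There is no reason the small $\eps_0$-gain should dominate the macroscopic loss $s-\sigma$, and indeed it will not: the elementary count $|\mathcal T|\le|\mathcal P|M$ (i.e.\ the incidence count) is simply too weak, because $\mathcal P$ is $s$-dimensional while the tubes are only $\sigma$-dimensional, so the bush/incidence count gives $\delta^{-s-\sigma}$ while the Furstenberg lower bound only gives $\delta^{-2\sigma-\eps_0}$. You flag this as ``parameter bookkeeping'' in the final sentence, but it is not a bookkeeping issue — the inequality $\eps_0>s-\sigma$ is false in general, so the approach does not close.

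The paper closes the argument differently. It splits each $\mathcal T_x$ into \emph{concentrated} and \emph{non-concentrated} tubes (a tube is concentrated if a single ball of radius $(r/r_0)^\gamma$, with $\gamma=O(\eta)$, carries a constant fraction of its $\nu$-mass). For the non-concentrated tubes, Theorem~\ref{thm:main_refined} is applied exactly as you do to get the $\eps_0$-gain $|\mathcal T|\gesim(\delta/r_0)^{-\eps_0}\delta^{-2\sigma}r_0^{2(\sigma-(k-1))}$, but the contradiction is then extracted by a \emph{two-ends} argument rather than by comparing with $|\mathcal P|M$: since each non-concentrated tube carries two $\nu$-heavy pieces at distance $\gesim(r/r_0)^\gamma$, Fubini produces a pair $(x,y)$ at distance $\gesim(r/r_0)^\gamma$ lying on $\gesim(r/r_0)^{-\eps_0+O(\eta)}$ tubes, yet two such points determine at most $(r/r_0)^{-(d-1)\gamma}=(r/r_0)^{-O(\eta)}$ essentially distinct $r$-tubes. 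Choosing $\eta$ small relative to $\eps_0$ (not relative to $s-\sigma$!) gives the contradiction. For the concentrated tubes, Theorem~\ref{thm:main_refined} is not invoked at all; instead one runs an annulus argument that exploits the $s$-Frostman bound $\nu(B(y,\rho))\le K\rho^s$ directly, and here the strict inequality $s>\max(\sigma,k-1)$ is what produces the contradiction. Both branches of the dichotomy are genuinely used, and neither needs $\eps_0$ to exceed $s-\sigma$; this dichotomy is the missing ingredient in your write-up.
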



\begin{remark}
    The reader is advised to set $r_0 = 1$ in the following argument, in which case it is a straightforward modification of \cite[Lemma 2.8]{orponen2022kaufman}, with one small technical exception in the proof of the concentrated case, where we improve upon the dyadic pigeonholing step. Also if $r_0 = 1$, then the simpler Theorem \ref{thm:main} can be used instead of Theorem \ref{thm:main_refined} in the proof.
\end{remark}

\begin{proof}
    
    We are given that for all $r \in (0, r_0]$,
    \begin{gather}
        \nu(T \cap G|_x) \le K \cdot \frac{r^{\sigma}}{r_0^{\sigma-(k-1)}} \text { for all } r\text{-tubes } T \text{ containing } x \in X, \label{gather1}\\
        \nu(W \cap G|_x) \le K \cdot \frac{r^{\sigma}}{r_0^{\sigma-(k-1)}} \text { for all } (r, k)\text{-plates } W \text{ containing } x \in X, \label{gather2}\\
        \mu(T \cap G|^y) \le K \cdot \frac{r^{\sigma}}{r_0^{\sigma-(k-1)}} \text { for all } r\text{-tubes } T \text{ containing } y \in Y, \label{gather3}\\
        \mu(W \cap G|^y) \le K \cdot \frac{r^{\sigma}}{r_0^{\sigma-(k-1)}} \text { for all } (r, k)\text{-plates } W \text{ containing } y \in Y. \label{gather4}
    \end{gather}
    For $x \in X$ and $r \le r_0$, let $\T''_{x,r}$ denote the $r$-tubes through $x$ such that
    \begin{equation}\label{eqn:T''}
        \nu(T \cap G|_x) \ge K^{d+1} \cdot \frac{r^{\sigma+\eta}}{r_0^{\sigma+\eta-(k-1)}}.
    \end{equation}
    Now, let $\T'_{x,r}$ denote a covering of $\T''_{x,r}$ by essentially distinct $2r$-tubes. Then for $x \in X$, since $d(x, Y) \ge K^{-1}$, we have that the tubes in $\T'_{x,r}$ have $\lesim K^{d-1}$-overlap on $\nu$, so $|\T'_{x,r}| \lesim \frac{r^{-(\sigma+\eta)}}{r_0^{-(\sigma+\eta-(k-1))}}$. For a dyadic $r \in (0, r_0]$, let $H_r = \{ (x, y) \in G : y \in \cup \T'_{x,r} \}$, where $\cup \T'_{x,r}$ denotes the union of the tubes in $\T'_{x,r}$.

    \textbf{Claim.} There are $\eta(\sigma, s, \kappa, k, d) > 0$ and $K_0 (\eta) > 0$ such that the following holds for $K \ge K_0$. If $\frac{r}{r_0} < K^{-1/\eta}$, then $\mu \times \nu(H_r) \le 2\left(\frac{r}{r_0}\right)^\eta$. Furthermore, $\eta(\sigma, s, \kappa, k, d)$ is bounded away from zero on any compact subset of $\{ (\sigma, s, \kappa, k, d) : \max(\sigma, k-1) < s \le k \le d-1 \}$.

    We will be done if we show the claim. Indeed, let $B_1 = \cup_{r \le r_0 \text{ dyadic }} H_r$; then for any dyadic $r \le r_0$ and any $r$-tube $T$ through some $x \in X$, we either have $T \in \T'_{x,r}$, which means $T \cap G|_x \setminus B_1|_x = \emptyset$, or the negation of \eqref{eqn:T''} holds. In either case, we get
    \begin{equation}\label{eqn:T'' neg}
        \nu(T \cap G|_x \setminus B_1|_x) \le K^{d+1} \cdot \frac{r^{\sigma+\eta}}{r_0^{\sigma+\eta-(k-1)}}.
    \end{equation}
    We have \eqref{eqn:T'' neg} for dyadic $r \le r_0$, but it also holds for all $r \le r_0$ at the cost of introducing a multiplicative factor of $2^{\sigma+\eta} \le 2^{k+1}$ on the RHS of \eqref{eqn:T'' neg}.
    Thus, $(\mu, \nu)$ have $(\sigma+\eta, 2^{k+1} \cdot K^d r_0^{-(\sigma+\eta-(k-1))})$-thin tubes on $G \setminus B_1$ down from scale $r_0$. Now we move to upper-bounding $\mu \times \nu(B_1)$. By \eqref{gather1} and \eqref{eqn:T''}, we have $H_r \neq \emptyset$ for all $r > r_0 K^{-d/\eta}$, and so if $K \ge K_0$ from Claim, then
    \begin{equation*}
        \mu \times \nu(B_1) \le \sum_{r \le r_0 K^{-d/\eta} \text{ dyadic }} \mu \times \nu(H_r) \le\sum_{r \le r_0 K^{-d/\eta} \text{ dyadic }} 2\left( \frac{r}{r_0} \right)^{\eta} \le C_\eta K^{-d}.
    \end{equation*}
    Let $K_0$ be the maximum of the value of $K_0$ from Claim, $2C_\eta$, and $2^{k+1}$. Since $d \ge 2$, we get $\mu \times \nu(B_1) \le \frac{1}{2} K^{-1}$ and $(\mu, \nu)$ have $(\sigma+\eta, K^{d+1} r_0^{-(\sigma+\eta-(k-1))})$-thin tubes on $G \setminus B_1$ down from scale $r_0$. We can analogously find $B_2 \subset X \times Y$ with $\mu \times \nu(B_2) \le \frac{1}{2} K^{-1}$ such that $(\nu, \mu)$ have $(\sigma+\eta, K^{d+1} r_0^{-(\sigma+\eta-(k-1))})$-thin tubes on $G \setminus B_2$ down from scale $r_0$, and so $B = B_1 \cup B_2$ would be a good choice. Now we turn to proving the Claim.

    \textit{Proof of Claim.} We will choose $\eta = \min\{ \frac{1}{2} (6 + \frac{15(d-1)}{s - \max(\sigma, k-1)} )^2, \frac{1}{5} \eps^2\}$, where $\eps$ is obtained from Theorem \ref{thm:main_refined}. From Remark \ref{rmk:uniform eps} and the continuity of the function $(s, \sigma, k) \mapsto (s - \max(\sigma, k-1))^{-1}$, we see that $\eta(\sigma, s, \kappa, k, d)$ is bounded away from zero on any compact subset of $\{ (\sigma, s, \kappa, k, d) : \max(\sigma, k-1) < s \le k \le d-1 \}$.
    
    Suppose that Claim is false. Let $\bX = \{ x \in X : \nu(H_r) \ge \left( \frac{r}{r_0} \right)^\eta \}$. Then $\mu(\bX) \ge \left( \frac{r}{r_0} \right)^\eta$.

    Recall that for $x \in X$, the fiber $H_r|_x$ is covered by $\T'_{x,r}$, which is a set of cardinality $\lesim \frac{r^{-(\sigma+\eta)}}{r_0^{-(\sigma+\eta-(k-1))}}$. Let
    \begin{equation*}
        \T_x = \{ T \in \T'_{x,r} : \nu(T \cap H_r|_x) \ge \frac{r^{\sigma+3\eta}}{r_0^{\sigma+3\eta-(k-1)}} \}, \qquad Y_x = (H_r|_x) \cap \bigcup \T_x.
    \end{equation*}
    Then $\nu(Y_x) \ge \left( \frac{r}{r_0} \right)^\eta - \left( \frac{r}{r_0} \right)^{2\eta} \ge \left( \frac{r}{r_0} \right)^{2\eta}$ for all $x \in \bX$. Furthermore, for every $T \in \T_x$, we have
    \begin{equation}\label{eqn:T cap Hr}
        \frac{r^{\sigma+3\eta}}{r_0^{\sigma+3\eta-(k-1)}} \le \nu(T \cap Y_x) \le \frac{r^{\sigma-\eta}}{r_0^{\sigma-\eta-(k-1)}}.
    \end{equation}
    The upper bound follows from $Y_x \subset H_r|_x \subset G|_x$, \eqref{gather1}, and $K \le \left( \frac{r}{r_0} \right)^{-\eta}$. In fact, we have in general,
    \begin{equation*}
        \nu(T^{(\rho)} \cap Y_x) \le \left( \frac{r}{r_0} \right)^{-\eta} \rho^\eta, \qquad \rho \in [r, 1], T \in \T_x.
    \end{equation*}
    We also take the time to state the thin plates assumption:
    \begin{equation*}
        \nu(W^{(\rho)} \cap Y_x) \le \left( \frac{r}{r_0} \right)^{\kappa-\eta} \qquad \rho \in [r, 1], W \text{ is } (\rho, k)\text{-plate}.
    \end{equation*}
    Since $\cup \T_x$ covers $Y_x$, we get by the upper bound in \eqref{eqn:T cap Hr}, $|\T_x| \gesim \frac{r^{-\sigma+\eta}}{r_0^{-\sigma+\eta+(k-1)}} \nu(Y_x) \ge \frac{r^{-\sigma+3\eta}}{r_0^{-\sigma+3\eta+(k-1)}}$. Hence, $\T_x$ is a $(r, \sigma, r_0^{-(\sigma-(k-1))} \left(\frac{r}{r_0} \right)^{-5\eta})$-set and $(r, \kappa, r_0^{-\kappa} \left(\frac{r}{r_0} \right)^{-5\eta}, k-1)$-set for each $x \in \bX$.

    Let $\gamma = \frac{15\eta}{s-\max(\sigma,k-1)}$. Call a tube $T \in \T_x$ concentrated if there is a ball $B_T$ with radius $\left(\frac{r}{r_0}\right)^\gamma$ such that
    \begin{equation}\label{eqn:non-conc}
        \nu(T \cap B_T \cap Y_x) \ge \frac{1}{3} \cdot \nu(T \cap Y_x).
    \end{equation}
    Suppose that there is $\bX' \subset \bX$ with $\mu(\bX') \ge \mu(\bX)/2$ such that for each $x \in \bX'$, at least half the tubes of $\T_x$ are non-concentrated. Since $\mu(\bX') \ge \frac{1}{2} \mu(\bX)/2 \ge \frac{1}{2} \left( \frac{r}{r_0} \right)^{2\eta}$ and $\mu$ is Frostman with constant $K \le \left( \frac{r}{r_0} \right)^{-\eta}$, we can find a $(r, \sigma, \left( \frac{r}{r_0} \right)^{-3\eta})$-set $P \subset \bX'$. For each $x \in \bX'$, the set of non-concentrated tubes $\T'_x \subset \T_x$ is a $(r, \sigma, 2r_0^{-(\sigma-(k-1))} \left(\frac{r}{r_0} \right)^{-5\eta})$-set and $(r, \kappa, 2r_0^{-\kappa} \left(\frac{r}{r_0} \right)^{-5\eta}, k-1)$-set. Let $\T = \cup_{x \in P} \T'_x$. By Lemma \ref{lem:concentrated_points}, since $d(X, Y) \ge K^{-1}$, we have that $\T$ is contained in the $O(K) \cdot r_0$-neighborhood of $H$. Now, we apply Theorem \ref{thm:main_refined} with $\overline{r}_0 := \min(O(K) \cdot r_0, 1)$. Since $K \le \left( \frac{r}{r_0} \right)^{-\eta}$ and $\sigma \le k$, we still have that for each $x \in \bX'$, the set of non-concentrated tubes $\T_x'$ is a $(r, \sigma, 2\overline{r}_0^{-(\sigma-(k-1))} \left(\frac{r}{\overline{r}_0} \right)^{-7\eta})$-set and $(r, \kappa, 2\overline{r}_0^{-\kappa} \left(\frac{r}{\overline{r}_0} \right)^{-7\eta}, k-1)$-set. At this point, let us remark that implicit constants are dominated by $\left( \frac{r}{\overline{r}_0} \right)^{-\eta} \ge K^\eta$ if $K \ge K_0 (\eta)$ is chosen large enough. 
    
    Then if $\eta \le \eps^2/4$, where $\eps$ is obtained from Theorem \ref{thm:main_refined}, then
    \begin{equation*}
        |\T| \ge \frac{r^{-2\sigma - 2\sqrt{\eta}}}{\overline{r}_0^{-2(\sigma-(k-1))- 2\sqrt{\eta}}} \ge \frac{r^{-2\sigma - \sqrt{\eta}}}{r_0^{-2(\sigma-(k-1))-\sqrt{\eta}}}.
    \end{equation*}
    In other words, we get a gain of $\left( \frac{r}{r_0} \right)^{-\sqrt{\eta}}$, which means a two-ends argument gives an immediate contradiction. Specifically, by \eqref{eqn:T cap Hr} and \eqref{eqn:non-conc}, we have for each non-concentrated $T \in \T$, $\nu \times \nu(\{(x, y) : x, y \in T, d(x, y) \ge \left( \frac{r}{r_0} \right)^\gamma \}) \ge \frac{2}{3} \nu(T \cap Y_x)^2 \ge \frac{r^{2\sigma+6\eta}}{r_0^{2\sigma+6\eta-2(k-1)}}$. Thus, by Fubini, there exists a pair $(x, y)$ with $d(x, y) \ge \left( \frac{r}{r_0} \right)^\gamma$ such that $x, y \in T$ for $\gesim \frac{r^{2\sigma+6\eta}}{r_0^{2\sigma+6\eta-2(k-1)}} |\T| \ge \left( \frac{r}{r_0} \right)^{-\sqrt{\eta}+6\eta}$ many tubes $T \in \T$. However, since $d(x, y) \ge \left( \frac{r}{r_0} \right)^\gamma$, we have that $x, y$ can only lie in $\lesim \left( \frac{r}{r_0} \right)^{-(d-1)\gamma}$ many essentially distinct $2r$-tubes. Since $\sqrt{\eta} - 6\eta \ge (d-1)\gamma$, we get a contradiction.

    Now we focus on the concentrated case: assume there is a subset $\bX' \subset \bX$ with $\mu(\bX') \ge \mu(\bX)/2$ such that at least half of the tubes in $\T_x$ are concentrated for all $x \in \bX'$. This case is where we use the fact that $\nu$ is a $s$-dimensional measure. Let $\T_x'$ denote the concentrated tubes and $\{ B_T : T \in \T_x' \}$ denote the corresponding heavy $\left( \frac{r}{r_0} \right)^\gamma$-balls. Because the family $\T_x$ has $K$-overlap on $\spt(\nu)$, the set
    \begin{equation*}
        H' = \{ (x, y) : x \in \bX', y \in T \cap B_T \cap Y_x \text{ for some } T \in \T_x' \}
    \end{equation*}
    has measure
    \begin{multline*}
        (\mu \times \nu)(H') \gesim K^{-1} \cdot \mu(\bX') \cdot \inf_{x \in \bX'} |\T_x'| \cdot \inf_{x \in \bX', T \in \T_x'} \nu(T \cap B_T \cap Y_x) \\
            \gesim \left( \frac{r}{r_0} \right)^{2\eta} \cdot \frac{r^{-\sigma+3\eta}}{r_0^{-(\sigma-3\eta-(k-1))}} \cdot \frac{r^{\sigma + 3\eta}}{r_0^{\sigma + 3\eta-(k-1)}} = \left( \frac{r}{r_0} \right)^{8\eta}.
    \end{multline*}
    Notice that if $(x, y) \in H'$, then there is a tube $T(x, y) \in \T^r$ containing $x, y$ such that 
    \begin{equation*}
        \nu(B(y, 2(r/r_0)^\gamma) \cap T(x, y)) \gesim \frac{r^{\sigma + 3\eta}}{r_0^{\sigma + 3\eta-(k-1)}}.
    \end{equation*}
    Thus, $\nu$ can't be too concentrated near $y$:
    \begin{equation*}
        \nu(B(y, r)) \le K \cdot r^s \le \frac{1}{2} \nu(B(y, 2\left( \frac{r}{r_0} \right)^\gamma) \cap T(x, y)),
    \end{equation*}
    assuming $4\eta < s - \sigma$ and $k-1 < s$. (The relevant inequalities are $K \le \left( \frac{r}{r_0} \right)^{-\eta}$ and $r^{s-\sigma-3\eta} \le r_0^{s-\sigma-3\eta} \le r_0^{k-1-\sigma-3\eta}$.)

    Therefore, for each $(x, y) \in H'$, we can choose a dyadic number $r \le \xi(x, y) \le (r/r_0)^\gamma$ such that
    \begin{equation*}
        \nu(A(y, \xi(x, y), 2\xi(x, y)) \cap T(x,y)) \ge \left( \frac{r}{r_0} \right)^{\sigma+4\eta} \left( \frac{\xi(x,y)}{(r/r_0)^\gamma} \right)^\eta r_0^k,
    \end{equation*}
    where the annulus $A(y, \xi, 2\xi) := B(y, 2\xi) \setminus B(y, \xi)$.
    (One remark: \cite{orponen2022kaufman} used dyadic pigeonholing at this step, but we can't do this because then we would introduce a $\log r_0^{-1}$ factor. Fortunately, we are allowed to introduce the decaying tail $\left( \frac{\xi(x,y)}{(r/r_0)^\gamma} \right)^\eta$, which is summable in $\xi(x,y)$.)

    Then, recalling that $(\mu \times \nu)(H') \gesim \left( \frac{r}{r_0} \right)^{7\eta}$, we can further find $r \le \xi \le \left( \frac{r}{r_0} \right)^\gamma$ such that
    \begin{equation*}
        (\mu \times \nu)(H'') \ge \left( \frac{r}{r_0} \right)^{8\eta} \left( \frac{\xi(x,y)}{(r/r_0)^\gamma} \right)^\eta, \text{ where } H'' = \{ (x, y) \in H' : \xi(x,y) = \xi \} \subset G.
    \end{equation*}
    By Fubini, we can find $y \in Y$ such that $\mu(H''|^y) \ge \left( \frac{r}{r_0} \right)^{8\eta} \left( \frac{\xi(x,y)}{(r/r_0)^\gamma} \right)^\eta$. Then by construction, $H''|^y$ can be covered by a collection of tubes $\T_y \subset \T^r$ containing $y$ that satisfy
    \begin{equation*}
        \nu(A(y, \xi, 2\xi) \cap T) \ge \nu(A(y, \xi(x, y), 2\xi(x, y)) \cap T(x,y)) \ge \left( \frac{r}{r_0} \right)^{\sigma+4\eta} \left( \frac{\xi(x,y)}{(r/r_0)^\gamma} \right)^\eta r_0^k.
    \end{equation*}
    Finally, we claim that $\T_y$ contains a subset $\T_y'$ whose directions are separated by $\ge (r/\xi)$, such that $|\T_y'| \gesim \mu(H''|^y) \cdot r^\eta \cdot \left( \frac{\xi r_0}{r} \right)^\sigma r_0^{-k}$ if $\xi > \frac{r}{r_0}$ and $|\T_y'| \gesim \mu(H''|^y) \cdot r^\eta \cdot \left( \frac{\xi r_0}{r} \right)^\sigma r_0^{-k}$ if $\xi > \frac{r}{r_0}$ if $r < \xi < \frac{r}{r_0}$. Indeed, if $\xi > \frac{r}{r_0}$, then any $r/\xi$-tube $\bT$ containing $y$ has
    \begin{equation*}
        \mu(\bT \cap H''|^y) \le \mu(\bT \cap A|^y) \le K \cdot \left( \frac{r}{\xi r_0} \right)^\sigma r_0^k \le \left( \frac{r}{r_0} \right)^\eta \cdot \left( \frac{r}{\xi r_0} \right)^\sigma r_0^k.
    \end{equation*}
    If $\xi < \frac{r}{r_0}$, then any $r/\xi$-tube $\bT$ containing $y$ lies in the union of $(\frac{r}{\xi r_0})^k$ many $r_0$-tubes, and so
    \begin{equation*}
        \mu(\bT \cap H''|^y) \le \mu(\bT \cap A|^y) \le K \cdot \left( \frac{r}{\xi r_0} \right)^{-k} r_0^k \le \left( \frac{r}{r_0} \right)^\eta \cdot \left( \frac{r}{\xi} \right)^k.
    \end{equation*}
    Thus, if $\xi > \frac{r}{r_0}$, then it takes $\gesim \mu(H''|^y) \cdot r^\eta \cdot \left( \frac{\xi r_0}{r} \right)^\sigma r_0^{-k}$ many $(r/\xi)$-tubes to cover $H''|^y$, and perhaps even more to cover $\cup \T_y$. We may now choose $\T_y' \subset \T_y$ to be a maximal subset with $(r/\xi)$-separated directions to prove the claim for $\xi > \frac{r}{r_0}$. A similar argument holds for $\xi < \frac{r}{r_0}$.

    Finally, let's first assume $\xi > \frac{r}{r_0}$. Since $\T^y$ has bounded overlap in $\R^d \setminus B(y, \xi)$, we obtain
    \begin{multline*}
        \left( \frac{r}{r_0} \right)^{\sigma+13\eta} \left( \frac{\xi}{(r/r_0)^\gamma} \right)^{2\eta} \cdot \left( \frac{\xi r_0}{r} \right)^\sigma r_0^{-k} \\
        \lesim \inf_{T \in \T_y'} \nu(A(y, \xi, 2\xi) \cap T) \cdot |\T_y'| \lesim \nu(B(y, 2\xi)) \le C \cdot (2\xi)^s.
    \end{multline*}
    We will obtain a contradiction if we show the opposite inequality holds, for $\gamma = \frac{15\eta}{s - \max(k, \sigma)}$. Since $2\eta + \sigma < s$ and $\xi \le (\frac{r}{r_0})^\gamma$, it suffices to check $\xi = (\frac{r}{r_0})^\gamma$.

    If $\xi < \frac{r}{r_0}$, then we obtain
    \begin{multline*}
        \left( \frac{r}{r_0} \right)^{\sigma+13\eta} \left( \frac{\xi}{(r/r_0)^\gamma} \right)^{2\eta} \cdot \left( \frac{\xi r_0}{r} \right)^k r_0^{-k} \\
        \lesim \inf_{T \in \T_y'} \nu(A(y, \xi, 2\xi) \cap T) \cdot |\T_y'| \lesim \nu(B(y, 2\xi)) \le C \cdot (2\xi)^s.
    \end{multline*}
    Again, since $2\eta + k < s$, it suffices to check $\xi = \frac{r}{r_0}$.

    This proves the result.
\end{proof}

\begin{proof}[Proof of Theorem \ref{thm:threshold_refined}]
    By Propositions \ref{prop:b1} (with $\frac{r_0}{K_2}$ for $r_0$) and \ref{prop:k-thin}, there exists a set $B_0 \subset X \times Y$ with $\mu \times \nu(B_0) \lesim K_1^{-1}$ such that $(\mu, \nu)$ and $(\nu, \mu)$ have $(0, K_1^N r_0^{k-1})$-thin tubes on $B_0^c$ down from scale $r_0$, and $(\mu, \nu)$ and $(\nu, \mu)$ have $(\kappa, K_1 \left( \frac{r_0}{K_2} \right)^{-\kappa})$-thin $k$-plates on $(A \cup B_0)^c$. Then iterate Proposition \ref{prop:bootstrap} applied to a uniform $\eta(\sigma, s, \kappa, k, d)$. So initially we have $K = \max(K_1^N K_2^\kappa, K_0 (\eta, k))$, and after each iteration, $K$ becomes $K^{d+1}$. After iterating $\lesim \eta^{-1}$ many times and letting $B_1 \subset X \times Y$ be the union of the $B$'s outputted from the Proposition (so $\mu \times \nu(B_1) \lesim K^{-1} \le K_1^{-1}$), we find that $(\mu, \nu)$ and $(\nu, \mu)$ have $(\sigma, K^{(d+1)\eta^{-1}} r_0^{k-1})$-thin tubes on $(A \cup B_0 \cup B_1)^c$. Then we can take $B := B_0 \cup B_1$ to be our desired set.
\end{proof}

\subsection{Proof of Theorem \ref{thm:intermediate}, general case}
We will prove Theorem \ref{thm:intermediate}, which we restate here.

\begin{theorem}\label{thm:intermediate_refined}
Let $k \in \{ 1, 2, \cdots, d-1 \}$, $k-1 < \sigma < s \le k$, and $\eps > 0$. There exist $N, K_0$ depending on $\sigma, s, k$, and $\eta(\eps) > 0$ (with $\eta(1) = 1$) such that the following holds. Fix $r_0 \le 1$, and $K \ge K_0$. Let $\mu, \nu$ be $\sim 1$-separated $s$-dimensional measures with constant $C_\mu, C_\nu$ supported on $E_1, E_2$, which lie in $B(0, 1)$. Assume that $|\mu|, |\nu| \le 1$. Let $A$ be the pairs of $(x, y) \in E_1 \times E_2$ that lie in some $K^{-1}$-concentrated $(r_0, k)$-plate. Then there exists a set $B \subset E_1 \times E_2$ with $\mu \times \nu (B) \lesim K^{-\eta}$ such that for every $x \in E_1$ and $r$-tube $T$ through $x$, we have
\begin{equation*}
    \nu(T \setminus (A|_x \cup B|_x)) \lesim \frac{r^\sigma}{r_0^{\sigma-(k-1)+N\eps}} K^N.
\end{equation*}
The implicit constant may depend on $C_\mu, C_\nu, \sigma, s, k$.
\end{theorem}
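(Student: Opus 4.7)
The plan is to deduce the global statement from the maximal plate concentration case (Theorem \ref{thm:threshold_refined}) via a dyadic multiscale decomposition indexed by the scale at which a pair $(x, y) \in E_1 \times E_2$ first becomes concentrated in a $k$-plate. Fix $R_j := r_0 \cdot 2^j$ for $j = 0, 1, \ldots, J$ with $R_J \sim 1$, and let $A^{(j)}$ denote the pairs that lie in some $K^{-1}$-concentrated $(R_j, k)$-plate. Then $A = A^{(0)} \subset A^{(1)} \subset \cdots \subset A^{(J)}$, and (modulo the harmless normalization $|\mu|, |\nu| \gtrsim 1$) $A^{(J)} = E_1 \times E_2$, yielding the partition $A^c = \bigsqcup_{j=1}^{J} (A^{(j)} \setminus A^{(j-1)})$.

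For each $j$, I would enumerate the $K^{-1}$-concentrated $(R_j, k)$-plates $\mathcal{H}_j$; by Lemma \ref{lem:few_large_plates}, $|\mathcal{H}_j| \lesssim K^N$. For each $H \in \mathcal{H}_j$, apply Theorem \ref{thm:threshold_refined} to the restricted measures $(\mu|_H, \nu|_H)$ with $r_0^{\mathrm{thm}} = R_j$, $K_1 = K$, and $K_2 = 2$, producing a bad set $B_{j, H}$ with $\mu \times \nu(B_{j, H}) \lesssim K^{-1}$ and the per-plate local bound
\[
\nu|_H\bigl(T \setminus (A_{j, H}|_x \cup B_{j, H}|_x)\bigr) \lesssim \frac{r^\sigma (2K)^N}{R_j^{\sigma - (k-1)}} \lesssim \frac{r^\sigma K^N}{r_0^{\sigma - (k-1) + N\eps}},
\]
where the last inequality uses $R_j \ge r_0$ together with the fact that $2^N$ is a constant depending only on $N(s, k)$. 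The exceptional set $A_{j, H}$, consisting of pairs in $K^{-1}$-concentrated $(R_{j-1}, k)$-plates inside $H$, is contained in $A^{(j-1)}$, so any pair in $A^{(j)} \setminus A^{(j-1)}$ automatically avoids $A_{j, H}$. Assigning each pair $(x, y) \in A^c$ to a canonical plate $H(x, y) \in \mathcal{H}_{j(x, y)}$ with $j(x, y) := \min\{j : (x, y) \in A^{(j)}\}$, the contribution to $\nu(T \setminus (A|_x \cup B|_x))$ from pairs assigned to a plate $H \ni x$ is directly controlled by the per-plate bound above, producing the target tube estimate.

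The principal obstacle is controlling $\mu \times \nu(B)$, where $B := B_0 \cup \bigcup_{j, H} B_{j, H}$ and $B_0$ is the bad set from a preliminary application of Proposition \ref{prop:b1} (which also supplies the thin-$k$-plate bounds that drive the analysis). A naive union bound yields $\mu \times \nu(B) \lesssim J \cdot K^{N-1}$, which is too large to match the target $K^{-\eta}$ for fixed $K \geq K_0$ as $r_0 \to 0$. The resolution I anticipate is a careful budgeting: either one shows that only a small number of plates at each scale carry substantial $\mu \times \nu$-mass (using the thin-$k$-plate bounds from Proposition \ref{prop:b1} to discard the rest into the bad set at small cost), or one applies Theorem \ref{thm:threshold_refined} at each scale with a suitably tiered threshold parameter, trading a mild blowup in the thin-tube constant (absorbed by taking $N$ large) for a per-plate bad-set decay that sums to $K^{-\eta(\eps)}$, with the stated $\eta(1) = 1$ limit corresponding to the degenerate case $J = 1$ of a single application.
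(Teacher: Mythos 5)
Your high-level outline — decompose by the scale at which a pair first becomes plate-concentrated, apply Theorem \ref{thm:threshold_refined} per plate, union-bound the bad sets, and use tiered thresholds to beat the union bound — matches the spirit of the paper's argument (Lemma \ref{lem:intermediate_r} is exactly your ``apply the maximal-plate case per plate at each scale''). You also correctly identify the principal obstacle (the union bound $J \cdot K^{N-1}$ is far too large) and correctly guess that the fix is a tiered threshold. But your concrete setup, dyadic scales $R_j = r_0\cdot 2^j$ with $J \approx \log(1/r_0)$, cannot support that fix, and this is a genuine gap rather than an omitted detail.

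Here is why the dyadic choice fails. In the tiered scheme, the number of $a$-concentrated $(R_j,k)$-plates scales like $(1/a)^N$ by Lemma \ref{lem:few_large_plates}, so at scale $j$ you must take the bad-set threshold at scale $j+1$ to be a factor $\gtrsim N+1$ more aggressive (in the exponent of $K$) than at scale $j$, just so the $K^{N\eta_j}$ plates times $K^{-\eta_{j+1}}$ per plate is summable. Running this geometric escalation over $J$ scales and insisting that the top scale match the hypothesis (threshold $K^{-1}$) forces the bottom threshold exponent to be $\sim (N+2)^{-J}$. With your dyadic $J = \log(1/r_0)$, the resulting $\eta$ degenerates to zero as $r_0 \to 0$; the theorem requires $\eta$ to depend only on $\eps$. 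The paper's resolution is to use only $M = 1/\eps$ scales, $R_n = r_0^{n\eps}$, so $J$ is a fixed constant independent of $r_0$. The price for this coarser ladder is that consecutive scales differ by a factor $r_0^{-\eps}$ rather than $2$, which forces $K_2 \sim r_0^{-\eps}$ in each application of Theorem \ref{thm:threshold_refined}; the factor $K_2^N = r_0^{-N\eps}$ is precisely the $r_0^{-N\eps}$ loss in the statement. (In your writeup the $r_0^{-N\eps}$ appears only as a cosmetic weakening of an already-sufficient per-plate bound, which is a clue that your $K_2 = 2$ bookkeeping isn't producing the right shape of estimate.) There is also a second coupling you have not built in: the enumeration of plates at scale $n$ must itself use the tiered threshold $K^{-\eta_n}$, not the fixed $K^{-1}$; otherwise the plate count doesn't align with the per-plate bad-set size. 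Once you replace the dyadic ladder with the $r_0^{\eps}$-ladder and couple the plate-count threshold with the bad-set threshold one step ahead, your outline closes up and reproduces the paper's proof.
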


\begin{remark}
    Note that in Theorem \ref{thm:threshold_refined}, we demand the stronger conclusion $\mu \times \nu(B) \lesim K^{-1}$.
\end{remark}

The idea is to apply Theorem \ref{thm:threshold_refined} at different scales. As a start, if $\eps = 1$, then we can directly apply Theorem \ref{thm:threshold_refined} with $K_1 = K_2 = K$ (and thus we may take $\eta(1) = 1$).

We may assume $\eps = \frac{1}{M}$ for some $M$.
Let $N$ be the large constant in Lemma \ref{lem:few_large_plates}, and let $\eta_n = (N+2)^{n-M}$. For $1 \le n \le M$, let $A_n$ be the pairs of $(x, y) \in E_1 \times E_2$ that lie in some $K^{-\eta_n}$-concentrated $(r_0^{n\eps}, k)$-plate. We remark that $A_M = A$.



\begin{lemma}\label{lem:intermediate_r}
    Fix $n \ge 1$. There exists a set $B_n \subset A_n$ with $\mu \times \nu(B_n) \lesim K^{-\eta_n}$ such that for every $x \in E_1$ and $r$-tube through $x$ that intersects $A_n|_x$, we have
    \begin{equation*}
        \nu(T \setminus (A_{n+1}|_x \cup B|_x)) \lesim \frac{r^\sigma}{r_0^{n\eps (\sigma-(k-1)) + N\eps}} K^N.
    \end{equation*}
\end{lemma}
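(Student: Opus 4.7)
The plan is to apply Theorem \ref{thm:threshold_refined} (the maximal-plate-concentration case) inside each of the $K^{-\eta_n}$-concentrated $(r_0^{n\eps},k)$-plates that cover $A_n$ and then to assemble the local outputs. By the argument behind Lemma \ref{lem:few_large_plates} applied simultaneously to $\mu$ and $\nu$, I first enumerate a family $\mathcal{H}=\{H_j\}_{j=1}^J$ of $K^{-\eta_n}$-concentrated $(r_0^{n\eps},k)$-plates with $J\lesim K^{N\eta_n}$ such that $A_n\subset\bigcup_j(H_j\times H_j)\cap(E_1\times E_2)$. I set $\mu_j:=\mu|_{H_j}$ and $\nu_j:=\nu|_{H_j}$; these remain $s$-dimensional with constants $\le C_\mu,C_\nu$, stay $\sim 1$-separated, have total mass $\le 1$, and live inside an $(r_0^{n\eps},k)$-plate.

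Next, I apply Theorem \ref{thm:threshold_refined} to each pair $(\mu_j,\nu_j)$ with the role of ``$r_0$'' played by $r_0^{n\eps}$, $K_1=K^{\eta_{n+1}}$, and $K_2=r_0^{-\eps}$, so the effective sub-plate scale is $r_0^{n\eps}/K_2=r_0^{(n+1)\eps}$. Provided $K$ is large and $r_0$ is sufficiently small, this yields $B_j\subset E_1\times E_2$ with $\mu_j\times\nu_j(B_j)\lesim K^{-\eta_{n+1}}$ and
\[
\nu_j\bigl(T\setminus (A^{(j)}|_x\cup B_j|_x)\bigr)\lesim \frac{r^\sigma}{r_0^{n\eps(\sigma-(k-1))+N\eps}}\,K^{N\eta_{n+1}}
\]
for every $x\in E_1\cap H_j$ and every $r$-tube $T$ through $x$, where $A^{(j)}$ is the set of pairs in $H_j\times H_j$ lying in some $K^{-\eta_{n+1}}$-concentrated (for $\mu_j,\nu_j$) $(r_0^{(n+1)\eps},k)$-plate. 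The key observation is that because $\mu_j\le\mu$ and $\nu_j\le\nu$, any plate that is $K^{-\eta_{n+1}}$-concentrated for $(\mu_j,\nu_j)$ is automatically so for $(\mu,\nu)$, hence $A^{(j)}\subset A_{n+1}$.

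I then set $B_n:=\bigcup_j B_j\subset A_n$. The union bound combined with the recursion $\eta_{n+1}=(N+2)\eta_n$ yields $\mu\times\nu(B_n)\lesim J\cdot K^{-\eta_{n+1}}\lesim K^{N\eta_n-\eta_{n+1}}=K^{-2\eta_n}\le K^{-\eta_n}$, as required. For the tube inequality, fix $x\in E_1$ and an $r$-tube $T$ through $x$ meeting $A_n|_x$. Every $y\in T$ with $(x,y)\in A_n$ lies in some $H_j$ with $x\in H_j$, so writing $J(x):=\{j:x\in H_j\}$ and summing the per-plate estimates controls the $A_n|_x$-portion of the tube mass.

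The main obstacle is the control of $|J(x)|$. The crude bound $|J(x)|\le J\lesim K^{N\eta_n}$ inflates the tube estimate by a factor of $K^{N(\eta_n+\eta_{n+1})}$, and since $\eta_n+\eta_{n+1}\le 1+(N+2)^{-1}<2$ this loss is absorbed by redefining $N$ at the outset (it still depends only on $\sigma,s,k$); a sharper pigeonhole assigning each relevant $y$ to a unique maximal-mass plate $H_{j(y)}$ would tighten this further if needed. The residual mass $\nu(T\setminus A_n|_x)$ is not addressed at this step: the hypothesis ``$T$ meets $A_n|_x$'' reflects the fact that Lemma \ref{lem:intermediate_r} is only one link in the iteration used to prove Theorem \ref{thm:intermediate_refined}, where the $A_n^c$-portion of the tube has been handled at earlier stages.
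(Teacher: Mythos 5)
Your overall architecture matches the paper's: enumerate the $K^{-\eta_n}$-concentrated $(r_0^{n\eps},k)$-plates via Lemma \ref{lem:few_large_plates}, apply Theorem \ref{thm:threshold_refined} inside each plate with $K_1 = K^{\eta_{n+1}}$ and $K_2 \sim r_0^{-\eps}$, and union the exceptional sets, using $\eta_{n+1} = (N+2)\eta_n$ to close the measure bound. However, there is a genuine gap in how you handle the tube estimate, and it stems from omitting the one geometric ingredient that makes the paper's argument work: Lemma \ref{lem:concentrated_points}.

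The paper observes that because $\mu,\nu$ are $\sim 1$-separated, Lemma \ref{lem:concentrated_points} forces any $r$-tube through a pair $(x,y)\in A_n$ to lie \emph{entirely} inside a single slightly-enlarged plate $H\in\cH$. Consequently $\nu(T\setminus(A_{n+1}|_x\cup B_n|_x)) = \nu|_H(T\setminus(A_{n+1}|_x\cup B_n|_x))$, and \emph{one} application of Theorem \ref{thm:threshold_refined} inside that single $H$ directly bounds the full tube mass — with no sum over plates, no $|J(x)|$ factor, and no uncontrolled residual. Your version instead sums per-plate estimates over all $H_j$ containing $x$, which introduces the $|J(x)|\lesim K^{N\eta_n}$ loss you flag, and more seriously only bounds the $A_n|_x$-portion of the tube. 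You acknowledge that $\nu(T\setminus A_n|_x)$ is "not addressed" and claim it is "handled at earlier stages," but this is not how the iteration works: in the proof of Theorem \ref{thm:intermediate_refined} one picks the largest $n$ for which $T$ meets $A_n|_x$, so that $T\cap A_{n+1}|_x=\emptyset$, and then Lemma \ref{lem:intermediate_r} is invoked to bound the \emph{entire} $\nu(T\setminus B|_x)$; the residual mass is exactly what the lemma is supposed to bound, and nothing earlier in the iteration handles it. Without the containment $T\subset H$ your argument leaves this portion uncontrolled. The fix is precisely the paper's use of Lemma \ref{lem:concentrated_points}, which converts "$T$ passes through a concentrated pair" into "$T$ lies inside a concentrated plate," after which the rest of your reasoning goes through cleanly and without the $|J(x)|$ inflation.
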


\begin{proof}
    By Lemma \ref{lem:concentrated_points}, there exists an absolute constant $C$ such that every $r$-tube through some $(x, y) \in A_n$ lies in some $K^{-\eta_n}$-concentrated $(CK^{-n}, k)$-plate. We can find a collection $\cH$ of essentially distinct $K^{-\eta_n}$-concentrated $(2CK^{-n}, k)$-plates such that each $K^{-\eta_n}$-concentrated $(CK^{-n}, k)$-plate is contained within some element of $\cH$. By Lemma \ref{lem:few_large_plates}, $|\cH| \lesim K^{-N\eta_n}$. By construction, every $r$-tube through some $(x, y) \in A_n$ is contained in some member of $\cH$. Apply Theorem \ref{thm:threshold_refined} to each $H \in \cH$ with measures $\mu|_H, \nu|_H$ and $K_1 \rarrow K^{-\eta_{n+1}}$, $K_2 \rarrow 2Cr_0^{-\eps}$, and $r_0 \rarrow r_0^{n\eps}$ to obtain a set $B_H$ with $\mu \times \nu(B_H) \lesim K^{-\eta_{n+1}}$. Let $B_n = \cup_{H \in \cH} B_H$, and then $\mu \times \nu(B_n) \le K^{N\eta_n} \cdot K^{-\eta_{n+1}} < K^{-\eta_n}$ since $(N+1)\eta_n < \eta_{n+1}$.
\end{proof}

\begin{proof}[Proof of Theorem \ref{thm:intermediate_refined}]
    Let $B = \cup_{n=1}^M B_n$; then $\mu \times \nu(B) \le K^{-\eta_0}$. Fix an $r$-tube $T$ and $x \in E_1$. Let $n \le M-1$ be the largest number such that $T$ passes through points in $A_n|_x$. Then by Lemma \ref{lem:intermediate_r}, we have $\nu(T \setminus (A_{n+1}|_x \cup B|_x)) \lesim \frac{r^\sigma}{K^{-n (\sigma-(k-1)}} K^N$. If $m < M-1$, then $T \cap A_{n+1}|_x = \emptyset$. In any case, we have $\nu(T \setminus (A|_x \cup B|_x)) \lesim \frac{r^\sigma}{r_0^{\sigma-(k-1)+N\eps}} K^N$, completing the proof of Theorem \ref{thm:intermediate_refined}.

\end{proof}

\section{Corollaries of Radial Projection Estimates}\label{sec:corollaries}
We prove a variant of Corollary \ref{cor:shm_conj}. 
\begin{prop}\label{cor:shm_conj'}
    Fix $s \in (k-1, k]$ and $\eta > 0$. Let $\mu, \nu \in \cP(\R^d)$ be measures with $\cE_s (\mu), \cE_s (\nu) < \infty$ and $\sim 1$-separated supports. Suppose that $\mu(H) = \nu(H) = 0$ for each $k$-plane $H \in \A(\R^d, k)$. Then for $\mu$-almost all $x$, for all sets $Y$ of positive $\nu$-measure,
    \begin{equation*}
        \dim_H (\pi_x Y) \ge s - \eta.
    \end{equation*}
\end{prop}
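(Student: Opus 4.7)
The plan is to derive the proposition from the discretized radial projection estimate in Theorem \ref{thm:intermediate_refined} via a covering argument, after a Frostman reduction and a careful choice of the scale $r_0$ using the no-mass-on-$k$-planes hypothesis.

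Because $\cE_s(\mu), \cE_s(\nu) < \infty$, Markov's inequality shows the Frostman sets $A_\mu^M := \{x : \mu(B(x,r)) \le Mr^s \text{ for all } r>0\}$ and the analogous $A_\nu^M$ satisfy $\mu(A_\mu^M) \uparrow |\mu|$ and $\nu(A_\nu^M) \uparrow |\nu|$ as $M \to \infty$. I would work with the restrictions $\mu_M := \mu|_{A_\mu^M}$ and $\nu_M := \nu|_{A_\nu^M}$, which are $s$-Frostman with constant $M$, have $\sim 1$-separated supports, and inherit $\mu_M(H) = \nu_M(H) = 0$ for every $k$-plane $H$. Since $\A(\R^d,k)$ restricted to planes meeting any bounded region is compact, a standard continuity argument yields, for each $K \ge 1$, some $r_0 = r_0(K) > 0$ such that no $(r_0,k)$-plate carries $\mu$- or $\nu$-mass at least $K^{-1}$. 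Consequently the ``bad pairs'' set $A$ appearing in Theorem \ref{thm:intermediate_refined} is empty for $(\mu_M, \nu_M)$ at this scale.

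Fix $\eta \in (0, s-(k-1))$, set $\sigma := s-\eta$, and let $\eps = \eps(\eta) > 0$ be small enough to feed into Theorem \ref{thm:intermediate_refined}. Applying that theorem to $(\mu_M, \nu_M)$ at each parameter $K \ge K_0$ produces a set $B_K$ with $\mu_M \times \nu_M(B_K) \lesim K^{-\eta_0}$ (for some $\eta_0 = \eta_0(\eps) > 0$) such that for every $x \in \spt(\mu_M)$ and every $r$-tube $T$ through $x$,
\begin{equation*}
    \nu_M\bigl(T \setminus B_K|_x\bigr) \lesim \frac{r^{s-\eta}}{r_0^{s-\eta-(k-1)+N\eps}} K^N.
\end{equation*}
Specializing to $K_n := 2^n$, Fubini and Markov give $\mu_M\{x : \nu_M(B_{K_n}|_x) > K_n^{-\eta_0/2}\} \lesim K_n^{-\eta_0/2}$, which is summable, so by Borel-Cantelli there is a $\mu_M$-full subset $G_M \subset \spt(\mu_M)$ on which $\nu_M(B_{K_n}|_x) \le K_n^{-\eta_0/2}$ for all $n$ past some $n_0(x)$. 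For $x \in G_M$ and any Borel $Y$ with $\nu_M(Y) > 0$, choose $n \ge n_0(x)$ with $K_n^{-\eta_0/2} < \nu_M(Y)/2$, so $\nu_M(Y \setminus B_{K_n}|_x) \ge \nu_M(Y)/2$; since each $r$-cap on $S^{d-1}$ pulls back under $\pi_x^{-1}$ to an $r$-tube through $x$ carrying at most $C(K_n, r_0) \cdot r^{s-\eta}$ of $\nu_M$-mass outside $B_{K_n}|_x$, covering $\pi_x Y$ by $r$-caps requires $\gesim r^{-(s-\eta)}$ caps, giving $\dim_H \pi_x Y \ge s-\eta$.

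To obtain the conclusion uniformly in $Y$, set $G := \bigcap_{M \in \N}\bigl(G_M \cup (A_\mu^M)^c\bigr)$; since $\mu(A_\mu^M \setminus G_M) = 0$ for each $M$, the set $G$ has full $\mu$-measure inside $\bigcup_M A_\mu^M$, which itself has full $\mu$-measure. For $x \in G$ and any $Y$ with $\nu(Y) > 0$, exhaustion produces $M$ with both $x \in A_\mu^M$ and $\nu(Y \cap A_\nu^M) > 0$; the previous paragraph then yields $\dim_H \pi_x Y \ge \dim_H \pi_x(Y \cap A_\nu^M) \ge s - \eta$. The main technical obstacle is precisely arranging that the $\mu$-null exceptional set be independent of $Y$, which is where the combination of the Borel-Cantelli step in $K$ with the countable Frostman exhaustion in $M$ is essential.
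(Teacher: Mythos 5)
Your proof is correct and takes essentially the same approach as the paper's, which likewise performs a Frostman-type reduction (via Lemma \ref{lem:energy}), uses a compactness argument in $\A(\R^d,k)$ to make the concentrated-pairs set $A$ empty at a suitable scale $r_0 = r_0(\eps)$, and feeds the result into a discretized radial projection estimate before letting the parameter tend to its limit. The only cosmetic differences are that the paper invokes Theorem \ref{thm:threshold_refined} directly (with its $r_0$ parameter set to $1$) rather than the general Theorem \ref{thm:intermediate_refined}, and it compresses the ``uniformity in $Y$'' bookkeeping that you make explicit via Borel--Cantelli and exhaustion into a one-line ``take $\eps \to 0$'' step.
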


\begin{proof}
    The proof is standard and follows \cite[Proof of Proposition 6.9]{shmerkin2022dimensions}.
    By Lemma \ref{lem:energy}, by passing to subsets of nearly full measure and replacing $s$ by an arbitrary $s' < s$, we may assume that $\mu(B_r), \nu(B_r) \lesim r^s$ for all $r \in (0, 1]$.

    Fix $\eps > 0$. By a compactness argument, there exists $r_0 > 0$ such that $\mu(H), \nu(H) < \eps$ for all $(r_0, k)$-plates $H$. In Theorem \ref{thm:threshold_refined}, we know that for $\eps > 0$ sufficiently small, the set $A = \emptyset$. Thus, there exists $B \subset X \times Y$ with $\mu \times \nu(B) \lesim \eps$ such that for every $x \in X$ and $r$-tube through $x$, we have
    \begin{equation*}
        \nu(T \setminus B|_x) \lesim_{\eta,\eps,s} r^{s - \eta}.
    \end{equation*}
    Thus, there is a set $X$ with $\mu(X) > 1-O(\eps)$ such that if $x \in X$, then
    \begin{equation*}
        \dim_H (\pi_x Y) \ge s - \eta \text{ for all } Y \text{ with } \nu(Y) \ge O(\eps).
    \end{equation*}
    Taking $\eps \to 0$ completes the proof.
\end{proof}

Using this, we prove Corollary \ref{cor:shm_conj_2}.
\begin{cor}\label{cor:shm_conj_2'}
    Let $s \in (d-2, d]$, then there exists $\eps(s, d) > 0$ such that the following holds. Let $\mu, \nu$ be Borel probability measures on $\R^d$ with disjoint supports that satisfy $\cE_s (\mu), \cE_s (\nu) < \infty$ and $\dim_H (\spt(\nu)) < s + \eps(s, d)$. Further, assume that $\mu, \nu$ don't simultaneously give full measure to any affine $(d-1)$-plane $H \subset \R^d$. Then there exist restrictions of $\mu, \nu$ to subsets of positive measure (which we keep denoting $\mu, \nu$) such that the following holds. For almost every affine 2-plane $W \subset \R^d$ (with respect to the natural measure on the affine Grassmanian), if the sliced measures $\mu_W$, $\nu_W$ on $W$ is non-trivial, then they don't simultaneously give full measure to any line.
    In other words,
    \begin{equation*}
        (\gamma_{d,2} \times \mu) \{ (V, x) : \mu_{V,x} (\ell) \nu_{V,x} (\ell) = |\mu_{V,x}| |\nu_{V,x}| > 0 \text{ for some } \ell \in \A(V + x, 1) \} = 0
    \end{equation*}
    where we parametrize affine 2-planes as $V + x$, for $x \in \R^d$ and $V$ in the Grassmannian $\Gr(d, 2)$ with the rotationally invariant Haar measure $\gamma_{d,2}$.
\end{cor}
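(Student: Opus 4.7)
The plan is to deduce the statement from the power-decay radial projection theorem (Proposition~\ref{cor:shm_conj'}) via Marstrand--Mattila slicing on the sphere $S^{d-1}$.

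If $s > d-1$, Marstrand--Mattila slicing gives that for $\gamma_{d,2}$-a.e.\ affine $2$-plane $W = V+x$, the slice $\mu_W$ has finite $(s-(d-2))$-energy on $W$, and since $s-(d-2) > 1$ exceeds the dimension of any line, $\mu_W$ cannot be supported on a line: no restriction is needed. So assume $s \in (d-2, d-1]$. The first step is to reduce to the hypothesis of Proposition~\ref{cor:shm_conj'} with $k=d-1$: only countably many affine hyperplanes $H$ carry positive $\mu$- or $\nu$-mass, and the simultaneous non-concentration assumption ensures that for each such $H$ at least one of the measures has strictly less than full mass on $H$. A standard pigeonhole/exhaustion argument then yields positive-measure restrictions of $\mu,\nu$ with $\mu(H) = \nu(H) = 0$ for all $H \in \A(d,d-1)$. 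The degenerate case where one of the original measures sits entirely in a hyperplane $H_0$ is handled directly: the bad line in $V+x$ must then coincide with $H_0 \cap (V+x)$, and restricting the other measure to $H_0^c$ kills all nontrivial bad slices.

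Apply Proposition~\ref{cor:shm_conj'} with $k = d-1$ and $\eta \in (0, (s-(d-2))/2)$ so that $s-2\eta > d-2$; the permissible size of $\eta$ determines $\eps(s,d)$. After further restricting $\mu$ to a positive-measure subset and applying Frostman's lemma pointwise, we select a measurable family of positive-$\nu$-measure subsets $\{\nu'_x \le \nu\}$ such that $\pi_x \nu'_x$ has finite $(s-2\eta)$-energy on $S^{d-1}$ for every $x$ in this subset. Since $s-2\eta > d-2 = \dim S^{d-1} - 1$, Marstrand--Mattila slicing on the sphere with respect to the great-circle family $\{V \cap S^{d-1}\}_{V \in \Gr(d,2)}$ implies that for $\gamma_{d,2}$-a.e.\ $V$, the sphere-slice $(\pi_x \nu'_x)_V$ has positive Hausdorff dimension. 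The key identification: under the natural bijection $\pi_x^{-1}(V \cap S^{d-1}) = (V+x) \setminus \{x\}$, the sphere-slice $(\pi_x \nu'_x)_V$ equals (up to a smooth Jacobian) the pushforward $\pi_x((\nu'_x)_{V,x})$ of the $2$-plane-slice.

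Suppose for contradiction the bad set still has positive $\gamma_{d,2}\times\mu$-measure after all the above restrictions. Fubini yields a positive-$\mu$-measure set of $x$ for which a positive-$\gamma_{d,2}$-measure set of $V$ has $\mu_{V,x}, \nu_{V,x}$ (hence also $(\nu'_x)_{V,x}$) supported on a common line $\ell(V,x) \subset V+x$. For $\mu$-a.e.\ $x$ and a.e.\ $V$ we have $x \in \spt \mu_{V,x}$ (a standard disintegration consequence: every neighbourhood of $x$ in $V+x$ carries positive slice mass because $\mu$ has positive mass in every neighbourhood of $x$ in $\R^d$), whence $x \in \ell(V,x)$. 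Then $\pi_x(\ell(V,x) \setminus \{x\}) = \{\pm \theta(V,x)\}$ is a two-point set, so $\pi_x((\nu'_x)_{V,x})$ is purely atomic, contradicting the positive-dimensionality of $(\pi_x \nu'_x)_V$. The main obstacle is the careful setup of the slicing identification $(\pi_x \nu'_x)_V \sim \pi_x (\nu'_x)_{V,x}$ and the measurable selection of the Frostman subsets $\nu'_x$; both follow standard arguments (cf.\ \cite{shmerkin2022non}) but require attention to the parametrization of the affine Grassmannian.
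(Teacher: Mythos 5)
Your overall route matches the paper's, which defers the main slicing argument to the proof of \cite[Proposition~6.8]{shmerkin2022dimensions} with Proposition~\ref{cor:shm_conj'} (at $k=d-1$) replacing \cite[Proposition~6.9]{shmerkin2022dimensions}; your Marstrand--Mattila-on-the-sphere argument and the ``atomic spherical slice'' contradiction are a faithful reconstruction of that cited proof, and the split at $s=d-1$ is a reasonable way to organize the range of exponents.

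There is, however, a genuine gap in the reduction to $\mu(H)=\nu(H)=0$. The claim that ``a standard pigeonhole/exhaustion argument then yields positive-measure restrictions of $\mu,\nu$ with $\mu(H)=\nu(H)=0$ for all $H\in\A(d,d-1)$'' is false in general: for $s\le d-1$ a finite-$s$-energy measure can be supported on a countable union of distinct hyperplanes each carrying positive mass (take $\mu=\sum_n 2^{-n}\mu_n$ with $\mu_n$ a compactly supported $s$-Frostman measure on $H_n$), and then \emph{every} positive-measure restriction of $\mu$ charges some hyperplane; no exhaustion can eliminate all of them. The paper's reduction is simpler and sidesteps this entirely: if $\mu(H)>0$ for some hyperplane $H$, restrict $\mu$ to $H$ and $\nu$ to $H^c$ (and if $\nu(H)=1$, so that $\nu(H^c)=0$, the non-simultaneous-full-measure hypothesis forces $\mu(H)<1$, and one instead restricts $\nu$ to $H$ and $\mu$ to $H^c$). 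After that single restriction the corollary is immediate \emph{without any slicing machinery}: the slice $\mu_W$ is supported on the line $H\cap W$ (and, being atomless, can only charge $\ell=H\cap W$), while $\nu_W$ is supported in $W\setminus H$ and gives $H\cap W$ zero mass, so no line is charged by both. Only in the remaining case $\mu(H)=\nu(H)=0$ for every hyperplane does one run the Marstrand--Mattila argument, and there the reduction you tried to force is automatic. Your treatment of the intermediate case $0<\mu(H_0)<1$ is what's missing; the direct restriction above handles it uniformly.
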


\begin{proof}
    First, if $\mu(H) > 0$ for some affine $(d-1)$-plane $H$, then $\nu(H^c) > 0$ where $H^c$ denotes the complement of $H$ in $\R^d$. By restricting $\mu$ to $H$ and $\nu$ to $H^c$ (and calling the results $\mu, \nu$), we see that the sliced measures $\mu_W$ and $\nu_W$ can't give full mass to any line $\ell$ for any affine $(d-1)$-plane $W$, for the simple reason that $\mu_W (\ell) > 0$ forces $\ell \subset H$, and $\nu_W (\ell) > 0$ forces $\ell \subset H^c$. Likewise, we are done if $\nu(H) > 0$ for some affine $(d-1)$-plane $H \subset \R^d$. Thus, assume $\mu(H) = \nu(H) = 0$ for all affine $(d-1)$-planes $H$.

    With this assumption, the remainder of the proof is nearly identical to the proof of Proposition 6.8 in \cite{shmerkin2022dimensions}, except using Proposition \ref{cor:shm_conj'} instead of \cite[Proposition 6.9]{shmerkin2022dimensions}. One can take $\eps(s, d)$ to be arbitrarily close to $s - (d-2)$.
\end{proof}

Finally, we can deduce Theorem \ref{cor:shm_conj} from either Proposition \ref{cor:shm_conj'} or Proposition \ref{cor:shm_conj_2'}, see \cite[Section 4]{orponen2022kaufman} for details. The only case not yet considered in this paper is when either $\mu, \nu$ gives positive mass to a $k$-plane. But this special case was considered in \cite[Section 4]{orponen2022kaufman} (briefly, if $X$ gives positive mass to some $k$-plane, then radial projections become orthogonal projections and then we apply Kaufman's projection theorem; if $Y$ gives positive mass to some $k$-plane $H$, then for $x \notin H$, we have $\dim_H (\pi_x (Y)) = \dim_H (Y)$.) 

\appendix

\section{Proof of Balog-Szemer\'{e}di-Gowers}\label{appendix:proof of bsg}
By a standard covering argument (e.g. see Section 3 of \cite{katz2001some}), Theorem \ref{thm:bsg} follows from the case $\delta = 0$, which we prove below.

\begin{theorem}[refined Theorem 4.1 of \cite{sudakov2005question}]\label{thm:bsg'}
    Let $K \ge 1$ be a parameter. Let $A, B$ be finite subsets of $\R^n$, and let $P \subset A \times B$ satisfy $|P| \ge K^{-1} |A| |B|$. Suppose that $|A \plusP B| \le K (|A| |B|)^{1/2}$, where $A \plusP B = \{ a + b : (a, b) \in P \}$. Then one can find subsets $A' \subset A, B' \subset B$ with $|A'| \ge \frac{1}{16K^2} |A|, |B'| \ge \frac{1}{16K^2} |B|$ such that $|A' + B'| \le 2^{12} K^8 (|A| |B|)^{1/2}$ and $|P \cap (A' \times B')| \ge \frac{|A| |B|}{16K^2}$.
\end{theorem}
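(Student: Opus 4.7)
The plan is to prove Theorem~\ref{thm:bsg'} via the classical graph-theoretic Balog--Szemer\'{e}di--Gowers argument, keeping explicit track of the constants. Let $G$ denote the bipartite graph on vertex classes $A$ and $B$ whose edge set is $P$: the hypothesis $|P| \ge |A||B|/K$ says $G$ is edge-dense of density $\ge 1/K$, while the hypothesis $|A \plusP B| \le K(|A||B|)^{1/2}$ says the additive image $S := A \plusP B$ of the edge set has cardinality $|S| \le K(|A||B|)^{1/2}$. The goal is to find a large combinatorial rectangle $A' \times B'$ on which $A+B$ is small.

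First I would perform a degree-regularization step: two applications of Markov's inequality remove vertices of low degree on each side, producing subsets $A_0 \subset A$, $B_0 \subset B$ and a refined edge set $P_0 \subset P \cap (A_0 \times B_0)$ with $|P_0| \ge |A||B|/(4K)$, in which every vertex of $A_0$ has $P_0$-degree $\ge |B|/(4K)$ and every vertex of $B_0$ has $P_0$-degree $\ge |A|/(4K)$. The lower bounds $|A_0| \ge |A|/(4K)$, $|B_0| \ge |B|/(4K)$ are then automatic.

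The heart of the argument is the ``Gowers path lemma.'' Select a random vertex $b_* \in B_0$, set $A' := N_{P_0}(b_*)$, and let $B'$ consist of those $b \in B_0$ whose $P_0$-neighbourhood has a large intersection with $A'$. A Cauchy--Schwarz computation on $\sum_{a \in A_0} |N_{P_0}(a)|^2$ lower-bounds the expected number of triples $(a, a', b)$ with $(a,b), (a',b) \in P_0$, and by a Markov argument one may choose $b_*$ so that all the required events hold simultaneously, yielding $|A'| \ge |A|/(16K^2)$, $|B'| \ge |B|/(16K^2)$, $|P \cap (A' \times B')| \ge |A||B|/(16K^2)$, together with the crucial conclusion: for every pair $(a, b) \in A' \times B'$, the number of length-three paths $a \to b_1 \to a_1 \to b$ in $G$ whose three edges lie in $P$ is at least $N := c\,|A||B|/K^5$ for an absolute constant $c > 0$.

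Finally, each length-three path yields the identity $a + b = (a + b_1) - (a_1 + b_1) + (a_1 + b) \in S - S + S$. Hence, for each $x \in A' + B'$, fixing any $(a,b) \in A' \times B'$ with $a + b = x$ produces at least $N$ distinct triples in $S \times S \times S$ whose alternating sum equals $x$. Summing over $x \in A' + B'$ gives
\begin{equation*}
    |A' + B'| \cdot N \le |S - S + S| \le |S|^3 \le K^3 (|A||B|)^{3/2},
\end{equation*}
so $|A' + B'| \le c^{-1} K^8 (|A||B|)^{1/2}$. With careful accounting one can ensure $c^{-1} \le 2^{12}$, matching the stated bound. The main technical obstacle is arranging the Gowers path lemma so as to secure all four quantitative conclusions at once: the lower bounds on $|A'|$, $|B'|$, the edge count, and the uniform path count must be balanced by tuning the Markov thresholds. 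Once that probabilistic step is in hand, the closing covering estimate is immediate.
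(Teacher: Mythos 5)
Your proposal follows the same graph-theoretic Balog--Szemer\'{e}di--Gowers argument as the paper: regularize degrees, pick a hub vertex $b_*$ to define $A' = N(b_*)$, define $B'$ by neighbourhood size in $A'$, and count length-three paths through $P$ to control $|A'+B'|$ via the identity $a+b = (a+b_1)-(a_1+b_1)+(a_1+b)$. One small correction: the intermediate inequality $|A'+B'|\cdot N \le |S-S+S|$ should read $|A'+B'|\cdot N \le |S|^3$ (the disjoint fibres over $x \in A'+B'$ partition a subset of the triples in $S^3$, not the set of distinct alternating sums), but this does not affect the final bound.
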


\begin{proof}
    We follow the exposition in \cite{sheffer2016balog}.
    
    \textbf{Claim.} There exist subsets $A' \subset A, B' \subset B$ with $|P \cap (A' \times B')| \ge \frac{|A| |B|}{16K^2}$, such that for each $a \in A', b \in B'$, there are $\ge \frac{|A||B|}{2^{12} K^5}$ many pairs $(a', b') \in A \times B$ such that $(a, b')$, $(a', b')$, and $(a', b) \in P$.

    Assuming the claim, we will see how the theorem follows. First, we get $|A'| |B'| \ge |P \cap (A' \times B')| \ge \frac{|A| |B|}{16K^2}$. Since $|A'| \le |A|$ and $|B'| \le |B|$, we get $|A| \ge \frac{|A|}{16K^2}$ and $|B| \ge \frac{|B|}{16K^2}$.

    Next, for $a \in A', b \in B'$, we have
    \begin{equation*}
        a + b = (a + b') - (a' + b') + (a' + b).
    \end{equation*}
    Thus, there are $\ge |A| |B| 2^{-12} K^{-5}$ many solutions to $a + b = x - y + z$ with $x, y, z \in A \plusP B$. Since $|A \plusP B| \le K(|A| |B|)^{1/2}$, we get $|A' + B'| \lesim \frac{K^3 (|A| |B|)^{3/2}}{|A| |B| 2^{-12} K^{-5}} = 2^{12} K^8 |A|^{1/2} |B|^{1/2}$.

    Now, we prove the claim. For convenience, we can prune $P$ to satisfy $|P| = K^{-1} |A| |B|$ (this is not necessary but will make the proof look nicer). Treat $(A \cup B, P)$ as a bipartite graph with an edge between $a \in A$ and $b \in B$ if $(a, b) \in P$. Then we want to find $A', B'$ such that there are many paths of length $3$ between any $a \in A', b \in B'$.

    The average degree of a vertex in $A$ is $K^{-1} |B|$. Thus, if we delete the vertices in $A$ with degree $\le \frac{1}{2} K^{-1} |B|$, then at least $\frac{1}{2K} |A| |B|$ many edges remain. Let $E$ be the set of edges. For $v \in A \cup B$, let $N(v)$ be the set of neighbors of $v$.

    Now pick a vertex $b \in B$. On average, it has $\frac{|E|}{|B|} \ge \frac{1}{2K} |A|$ many neighbors. 

    Now, we say $(a, a') \in A^2$ is bad if $|N(a) \cap N(a')| < \frac{1}{128K^3} |B|$. For $v \in B$, let $\Bad_v$ be the set of bad pairs in $N(v)^2$. There are $\binom{|A|}{2}$ many pairs in $A$, so (expectation is taken over uniformly chosen $v \in B$)
    \begin{equation*}
        \E[|\Bad_v|] < \binom{|A|}{2} \cdot \frac{1}{128K^3} < \frac{|A|^2}{256K^3}.
    \end{equation*}
    If $A_{bad,v}$ is the set of vertices of $A$ that lie in at least $\frac{|A|}{32K^2}$ many pairs of $B_v$, then
    \begin{equation*}
        \E[|A_{bad,v}|] \le \frac{2\E[|B_v|]}{|A|/(32K^2)} < \frac{|A|}{4K}.
    \end{equation*}
    Finally, let $A_v = N(v) \setminus A_{bad,v}$. Then by linearity of expectation,
    \begin{equation*}
        \E[|A_v|] = \E[|N(v)|] - \E[|A_{bad,v}|] > \frac{|A|}{2K} - \frac{|A|}{4K} = \frac{|A|}{4K}.
    \end{equation*}
    Thus, there exists $v \in B$ such that $|A_v| > \frac{|A|}{4K}$. Then, let $A' = A_v$ and
    \begin{equation*}
        B' = \{ w \in B : |N(w) \cap A'| \ge \frac{|A|}{16K^2}.
    \end{equation*}
    Let $E(X, Y)$ be the number of edges between $X$ and $Y$.
    We first check that $E(A', B') \ge \frac{|A||B|}{16K^2}$. Indeed, since every vertex of $A$ has degree $\ge \frac{|B|}{2K}$, we have
    \begin{equation*}
        |E(A', B)| \ge \frac{|A'| |B|}{2K} \ge \frac{|A||B|^2}{8K^2}.
    \end{equation*}
    On the other hand, every vertex in $B \setminus B'$ corresponds to fewer than $\frac{|A|}{16K^2}$ many edges of $A'$, so $|E(A', B \setminus B')| \le \frac{|A||B|^2}{16K^2}$. Hence, $|E(A', B')| \ge \frac{|A||B|^2}{16K^2}$.

    Finally, for any $v \in A'$, $w \in B'$, we know that $w$ has at least $\frac{|A|}{16K^2}$ many neighbors in $A'$, and fewer than $\frac{|A|}{32K^2}$ of those form a bad pair with $w$. For the remaining $\ge \frac{|A|}{32K^2}$ vertices $v'$ that do not form a bad pair with $w$, there are $\ge \frac{|B|}{128K^3}$ many vertices $w' \in B$ that are common neighbors of $v, v'$. Thus, we get at least $\frac{|A|}{32K} \cdot \frac{|B|}{128K^3} = \frac{|A||B|}{2^{12} K^5}$ many paths $(v, w', v', w)$ between $v$ and $w$.
\end{proof}

\bibliographystyle{plain}
\bibliography{main}

\begin{thebibliography}{10}

\bibitem{bourgain2010discretized}
Jean Bourgain.
\newblock The discretized sum-product and projection theorems.
\newblock {\em Journal d'Analyse Math{\'e}matique}, 112(1):193--236, 2010.

\bibitem{carbery2013endpoint}
A.~Carbery and S.I. Valdimarsson.
\newblock The endpoint multilinear {K}akeya theorem via the {B}orsuk--{U}lam
  theorem.
\newblock {\em Journal of Functional Analysis}, 264(7):1643--1663, 2013.

\bibitem{du2021improved}
X.~Du, A.~Iosevich, Y.~Ou, H.~Wang, and R.~Zhang.
\newblock An improved result for {F}alconer’s distance set problem in even
  dimensions.
\newblock {\em Mathematische Annalen}, 380(3):1215--1231, 2021.

\bibitem{DOKZFalconer}
X.~Du, Y.~Ou, K.~Ren, and R.~Zhang.
\newblock New improvement to {F}alconer distance set problem in higher
  dimensions.
\newblock {\em arXiv preprint}, 2023.

\bibitem{DOKZFalconerDec}
X.~Du, Y.~Ou, K.~Ren, and R.~Zhang.
\newblock Weighted refined decoupling estimates and application to {F}alconer
  distance set problem.
\newblock {\em arXiv preprint}, 2023.

\bibitem{du2019sharp}
X.~Du and R.~Zhang.
\newblock Sharp $l^2$ estimates of the {S}chr{\"o}dinger maximal function in
  higher dimensions.
\newblock {\em Annals of Mathematics}, 189(3):837--861, 2019.

\bibitem{guth2020falconer}
L.~Guth, A.~Iosevich, Y.~Ou, and H.~Wang.
\newblock On {F}alconer’s distance set problem in the plane.
\newblock {\em Inventiones Mathematicae}, 219(3):779--830, 2020.

\bibitem{guth2019incidence}
L.~Guth, N.~Solomon, and H.~Wang.
\newblock Incidence estimates for well spaced tubes.
\newblock {\em Geometric and Functional Analysis}, 29(6):1844--1863, 2019.

\bibitem{harris2021low}
T.L.J. Harris.
\newblock Low-dimensional pinned distance sets via spherical averages.
\newblock {\em The Journal of Geometric Analysis}, 31(11):11410--11416, 2021.

\bibitem{he2020orthogonal}
W.~He.
\newblock Orthogonal projections of discretized sets.
\newblock {\em Journal of Fractal Geometry}, 7(3):271--317, 2020.

\bibitem{he2016discretized}
Weikun He.
\newblock Discretized sum-product estimates in matrix algebras.
\newblock {\em Journal d'Analyse Math{\'e}matique}, 139(2):637--676, 2019.

\bibitem{katz2001some}
Nets~Hawk Katz and Terence Tao.
\newblock Some connections between falconer’s distance set conjecture and
  sets of furstenburg type.
\newblock {\em New York J. Math}, 7:149--187, 2001.

\bibitem{kaufman1968hausdorff}
R.~Kaufman.
\newblock On {H}ausdorff dimension of projections.
\newblock {\em Mathematika}, 15(2):153--155, 1968.

\bibitem{keleti2019new}
Tam{\'a}s Keleti and Pablo Shmerkin.
\newblock New bounds on the dimensions of planar distance sets.
\newblock {\em Geometric and Functional Analysis}, 29(6):1886--1948, 2019.

\bibitem{liu2020hausdorff}
B.~Liu.
\newblock Hausdorff dimension of pinned distance sets and the ${L}^2$-method.
\newblock {\em Proceedings of the American Mathematical Society},
  148(1):333--341, 2020.

\bibitem{mattila1999geometry}
P.~Mattila.
\newblock {\em Geometry of sets and measures in {E}uclidean spaces: fractals
  and rectifiability}.
\newblock Number~44. Cambridge university press, 1999.

\bibitem{orponen2018radial}
T.~Orponen.
\newblock On the dimension and smoothness of radial projections.
\newblock {\em Analysis \& PDE}, 12(5):1273--1294, 2018.

\bibitem{orponen2021hausdorff}
T.~Orponen and P.~Shmerkin.
\newblock On the {H}ausdorff dimension of {F}urstenberg sets and orthogonal
  projections in the plane.
\newblock {\em arXiv preprint arXiv:2106.03338}, 2021.

\bibitem{orponen2022kaufman}
T.~Orponen, P.~Shmerkin, and H.~Wang.
\newblock Kaufman and {F}alconer estimates for radial projections and a
  continuum version of {B}eck's {T}heorem.
\newblock {\em arXiv preprint arXiv:2209.00348}, 2022.

\bibitem{orponen2020improved}
Tuomas Orponen.
\newblock An improved bound on the packing dimension of furstenberg sets in the
  plane.
\newblock {\em J. Eur. Math. Soc.(JEMS)}, 22(3):797--831, 2020.

\bibitem{orponen2023projections}
Tuomas Orponen and Pablo Shmerkin.
\newblock Projections, {F}urstenberg sets, and the $abc$ sum-product problem,
  2023.

\bibitem{ren2023furstenberg}
Kevin Ren and Hong Wang.
\newblock Furstenberg sets estimate in the plane, 2023.

\bibitem{ruzsa1978cardinality}
I.Z. Ruzsa.
\newblock On the cardinality of ${A}+ {A}$ and ${A}- {A}$.
\newblock In {\em Combinatorics (Proc. Fifth Hungarian Colloq., Keszthely,
  1976)}, volume~2, pages 933--938, 1978.

\bibitem{sheffer2016balog}
Adam Sheffer.
\newblock The {B}alog-{S}zemer{\'e}di-{G}owers theorem, 2016.

\bibitem{shmerkin2022non}
P.~Shmerkin.
\newblock A non-linear version of {B}ourgain’s projection theorem.
\newblock {\em Journal of the European Mathematical Society}, 2022.

\bibitem{shmerkin2021distance}
P.~Shmerkin and H.~Wang.
\newblock On the distance sets spanned by sets of dimension $ d/2$ in
  $\mathbb{R}^d$.
\newblock {\em arXiv preprint arXiv:2112.09044}, 2021.

\bibitem{shmerkin2022dimensions}
P.~Shmerkin and H.~Wang.
\newblock Dimensions of {F}urstenberg sets and an extension of {B}ourgain's
  projection theorem.
\newblock {\em arXiv preprint arXiv:2211.13363}, 2022.

\bibitem{shmerkin2021improved}
Pablo Shmerkin.
\newblock Improved bounds for the dimensions of planar distance sets.
\newblock {\em J. Fractal Geom.}, 8(1):27--51, 2021.

\bibitem{stull2022pinned}
D.M. Stull.
\newblock Pinned distance sets using effective dimension.
\newblock {\em arXiv preprint arXiv:2207.12501}, 2022.

\bibitem{sudakov2005question}
B.~Sudakov, E.~Szemer{\'e}di, and V.H. Vu.
\newblock On a question of {E}rd{\H{o}}s and {M}oser.
\newblock 2005.

\bibitem{tao2006additive}
T.~Tao and V.H. Vu.
\newblock {\em Additive combinatorics}, volume 105.
\newblock Cambridge University Press, 2006.

\bibitem{wang2022improvement}
Z.~Wang and J.~Zheng.
\newblock An improvement of the pinned distance set problem in even dimensions.
\newblock In {\em Colloquium Mathematicum}, volume 170, pages 171--191.
  Instytut Matematyczny Polskiej Akademii Nauk, 2022.

\bibitem{wolff1999decay}
T.~Wolff.
\newblock Decay of circular means of {F}ourier transforms of measures.
\newblock {\em Int. Math. Res. Not.}, 10:547--567, 1999.

\bibitem{zahl2022unions}
J.~Zahl.
\newblock Unions of lines in $\mathbb{R}^n$.
\newblock {\em arXiv preprint arXiv:2208.02913}, 2022.

\end{thebibliography}

\end{document}